\documentclass[11pt,twoside]{article}
\usepackage[a4paper, top=4.5cm, bottom=4.5cm, left=4cm, right=4cm, asymmetric]{geometry}

\usepackage{amsmath,amsthm,amsfonts,amssymb,mathtools,mathdots,scalerel,mathrsfs,stmaryrd,cmll,bbold}
\usepackage[utf8]{inputenc}
\usepackage[T1]{fontenc}
\usepackage{mlmodern,tikz-cd,quiver}
\usepackage[many]{tcolorbox}

\usepackage[shortlabels]{enumitem}
\usepackage[hidelinks]{hyperref}
\usepackage{tikz,fancyhdr,xparse,xcolor,tocloft}
\usepackage[british]{babel}

\usepackage{crossreftools}
\usepackage[textwidth=3cm, textsize=small, colorinlistoftodos]{todonotes}

\usepackage{cctitle,titlesec}
\usepackage{etoolbox}

\usetikzlibrary{nfold}

\makeatletter
\patchcmd{\ttlh@hang}{\parindent\z@}{\parindent\z@\leavevmode}{}{}
\patchcmd{\ttlh@hang}{\noindent}{}{}{}
\makeatother

\newcommand\eqdef{\coloneqq}
\newcommand\nbd{\nobreakdash-\hspace{0pt}}
\newcommand\idd[1]{\mathrm{id}_{#1}}
\newcommand\invrs[1]{#1^{-1}}
\newcommand\after{\circ}

\newcommand\incl{\hookrightarrow}
\newcommand\restr[2]{{#1}{\raisebox{0pt}{$|_{#2}$}}}

\newcommand\set[1]{\left\{ {#1} \right\}}
\newcommand\order[2]{#2^{(#1)}}
\newcommand{\nat}{\mathbb{N}}

\newcommand\cat[1]{\underline{\mathrm{#1}}}
\newcommand\fun[1]{\mathsf{#1}}
\DeclareMathOperator*{\colim}{colim}
\def\raiseslice#1#2{\raisebox{-2pt}{$#1#2$}}
\newcommand{\slice}[2]{{#1{/}\mathpalette\raiseslice{#2}}}
\newcommand{\counit}{\varepsilon}
\renewcommand{\a}{\alpha}

\newcommand\rdcpxequal{\cat{rdCpx}}
\newcommand\rdcpxmap{\cat{rdCpx}^{\downarrow}}

\newcommand\dcpx{\cat{dCpx}}
\newcommand\dcpxomega{\dcpx^{\omega}}
\newcommand\dcpxn[1]{\gr{#1}{\dcpx}}
\newcommand{\dcpxac}{\dcpx^{\mathsf{ac}}}
\newcommand{\Cat}{\cat{Cat}}

\DeclareMathOperator{\Psh}{\cat{Psh}}
\DeclareMathOperator{\sPsh}{\Psh^{\scalebox{0.7}{\( \infty \)}}}
\DeclareMathOperator{\dPsh}{\Psh^{\scalebox{0.7}{\( \Delta \)}}}

\newcommand\Molec{Mol}
\newcommand\Atom{Atom}
\newcommand{\molcat}{\cat{\Molec}}
\newcommand{\atomcat}{\cat{\Atom}}
\newcommand{\atomcatac}{\atomcat^{\mathsf{ac}}
}
\newcommand{\nmolcat}[1]{\gr{#1}{\molcat}}
\newcommand{\natomcat}[1]{\gr{#1}{\atomcat}}

\newcommand\cls[1]{\mathscr{#1}}
\newcommand{\C}{\cls C}

\newcommand\clset[1]{\mathrm{cl}\set{#1}}
\newcommand\gr[2]{#2_{#1}}
\newcommand\maxel[1]{\mathscr{M}\!\mathit{ax}\,#1}
\newcommand\bd[2]{\partial_{#1}^{#2}}
\newcommand\faces[2]{\Delta_{#1}^{#2}}
\newcommand\cofaces[2]{\nabla_{#1}^{#2}}

\newcommand\cp[1]{\,{\scriptstyle\#}_{#1}\,}
\newcommand\celto{\Rightarrow}
\newcommand\submol{\sqsubseteq}
\newcommand\sus[1]{\fun{S}{#1}}
\newcommand\maxflow[2]{\mathscr{M}_{#1}{#2}}
\DeclareMathOperator{\frdim}{frdim}

\newcommand{\join}{\,{\star}\,}
\newcommand{\gray}{\otimes}
\newcommand{\ospx}[1]{\vec{\Delta}^{#1}}

\newcommand{\preLay}[1]{\mathscr{P\!L}\!\mathit{ay}_{#1}}
\newcommand{\preOrd}[1]{\mathscr{P\!O}\!\mathit{rd}_{#1}}
\newcommand{\Lay}[1]{\mathscr{L}\!\mathit{ay}_{#1}}
\newcommand{\Ord}[1]{\mathscr{O}\!\mathit{rd}_{#1}}
\renewcommand{\o}[2]{\mathsf{o}_{#1, #2}}

\newcommand{\mapel}[1]{\iota_{#1}}
\newcommand{\imel}[2]{#1_{#2}}
\newcommand{\prt}[1]{\fun{p}_{#1}}

\newcommand{\globe}[1]{O^{#1}}
\newcommand{\arr}{\vec{I}}
\newcommand{\pt}{\mathbf{1}}

\newcommand{\F}{\fun{F}}
\newcommand{\G}{\fun{G}}

\newcommand{\nCat}[1]{{\mathsf{s}\Cat_{#1}}}
\newcommand{\omegaCat}{{\mathsf{s}\Cat_{\omega}}}
\newcommand{\somegaCat}{{\mathsf{s}\Cat_{\omega}^>}}
\newcommand{\snCat}[1]{{\mathsf{s}\Cat_{#1}^>}}
\newcommand{\nCatinf}[1]{{\Cat_{#1}}}
\newcommand{\nCatflag}[1]{{\Cat_{#1}^{\mathsf{f}}}}

\newcommand{\strweak}{\iota_{\mathsf{s}}}

\newcommand{\skel}[2]{\gr{\le #1}{#2}}
\newcommand{\N}{\fun{N}}
\newcommand{\locflag}{\mathbb{L}^{\mathsf{f}}}

\newcommand{\pcell}[1]{\langle#1\rangle}
\newcommand\molecin[1]{\slice{\mathit{\Molec}}{#1}}

\newcommand{\Pd}{\cat{Pd}\,}
\newcommand{\cell}{\cat{cell}\,}

\newcommand{\ThetaS}[1]{\Theta_{#1}}
\newcommand{\Thetan}[1]{\Theta_{#1}}
\newcommand{\ThetanS}[2]{\Theta_{#1, #2}}
\newcommand{\Rtheta}{\fun{R}}
\newcommand{\Utheta}{\fun{U}}

\newcommand{\rs}{\fun{r}^{>}}

\newcommand{\Sd}{\mathrm{Sd}\,}
\newcommand{\Sdi}{\mathrm{Sd}^\triangleleft\,}
\newcommand{\SdS}[1]{\mathrm{Sd}_{#1}}
\newcommand{\SdiS}[1]{\mathrm{Sd}^\triangleleft_{#1}}
\newcommand{\Sdik}[1]{\SdiS{\set{#1}}}
\newcommand{\Sdk}[1]{\SdS{\set{#1}}}
\newcommand{\bigcell}[1]{\mu_{#1}}
\newcommand{\ivl}[2]{[#1,#2]}

\newcommand{\spine}[1]{\fun{sp}_{#1}}
\newcommand{\Spine}{\fun{Sp}\,}
\newcommand{\Smol}{\fun{Qfr}\,}
\newcommand{\sSmol}{\fun{Qfr}_{\scalebox{0.7}{\( < \)}}\,}
\newcommand{\Smolp}{\fun{Qfr}^+\,}
\newcommand{\sSmolp}{\fun{Qfr}^+_{\scalebox{0.7}{\( < \)}}\,}

\newcommand{\Pol}{\cat{regPol}}
\newcommand{\nPol}[1]{\gr{#1}{\Pol}}
\newcommand{\plexcat}{\cat{Plex}}
\newcommand{\polyplexcat}{\cat{polyPlex}}
\newcommand{\nplexcat}[1]{\gr{#1}{\plexcat}}
\newcommand{\npolyplexcat}[1]{\gr{#1}{\polyplexcat}}
\newcommand{\pl}[1]{\underline{#1}}
\newcommand{\posplex}{\Phi}

\newtheoremstyle{ittheorem}
  {\topsep}   
  {\topsep}   
  {\itshape}  
  {0pt}       
  {\bfseries} 
  { ---}         
  {5pt plus 1pt minus 1pt} 
  {}          

\newtheoremstyle{itdfn}
  {\topsep}
  {\topsep}
  {}
  {0pt}
  {\bfseries}
  {}
  {5pt plus 1pt minus 1pt}
  {\thmnumber{#2}{\thmnote{\normalfont\ \ %
{\sffamily(#3)}.}}}

\newtheoremstyle{itrmk}
  {0.5\topsep}
  {0.5\topsep}
  {\normalfont}
  {0pt}
  {\sffamily \itshape}
  { --- }
  {5pt plus 1pt minus 1pt}
  {}
  
\theoremstyle{ittheorem}
\newtheorem{thm}{Theorem}[section]
\newtheorem*{thm*}{Theorem}
\newtheorem{prop}[thm]{Proposition}
\newtheorem*{prop*}{Proposition}
\newtheorem{cor}[thm]{Corollary}
\newtheorem{lem}[thm]{Lemma}

\newtheorem*{conj*}{Conjecture}
\theoremstyle{itdfn}
\newtheorem{dfn}[thm]{}
\theoremstyle{itrmk}
\newtheorem{rmk}[thm]{Remark}
\newtheorem{comm}[thm]{Comment}
\newtheorem{exm}[thm]{Example}

\setlist{leftmargin=20pt,itemsep=0pt,parsep=0pt,topsep=1ex}

\titleformat{\section}
 {\Large\bfseries}{\thesection}{1em}{}

\titleformat{\subsection}
 {\normalsize\bfseries}{\thesubsection.}{1em}{}

\newcommand\runtitle{a strengthened $(\infty, n)$-categorical pasting theorem}
\newcommand\runauthor{chanavat}

\title{A strengthened $(\infty, n)$-categorical pasting theorem}

\author{Cl\'emence Chanavat}

\institution{Tallinn University of Technology}

\begin{document}

\maketitle
\begin{center}
	\begin{minipage}[t]{.95\textwidth}
		\small\textsc{Abstract.}
		We extend Campion's pasting theorem for $(\infty, n)$\nbd categories to a larger class of polygraphs, called the directed complexes with frame-acyclic molecules.
		It follows, for instance, that this pasting theorem applies to any polygraph presented by a semi-simplicial set, and that a large subclass of directed complexes with frame-acyclic molecules is compatible with the Gray product.  
		We also set up a comparison between directed complexes and Henry's regular polygraphs, and show that they coincide up to dimension $3$.
		As a corollary of our main results, the pasting theorem also applies to the class of regular $3$-polygraphs.
	\end{minipage}
	\vspace{20pt}
	\begin{minipage}[t]{0.95\textwidth}
		\setcounter{tocdepth}{2}
		\tableofcontents
	\end{minipage}
\end{center}

\makeaftertitle

\section*{Introduction}

In higher category theory, a pasting theorem is a tool guaranteeing that some freely-generated categories in a strict setting are also freely-generated in a homotopical sense.
The simplest example is that of a graph: is it the case that the free category on a graph is also its free \( \infty \)\nbd category?
The answer to that question is well-known to be yes; this is the 1-dimensional pasting theorem, which is, in some sense, as strong as one could hope.

To even state a pasting theorem in higher dimensions, we must first specify what we mean by ``freely-generated higher category''.
The most general formalism is that of \emph{polygraphs} \cite{burroni1993higher}, also known as \emph{computads}.
An \( n \)\nbd polygraph is, roughly, a strict \( n \)\nbd category\footnote{In this article, we do not distinguish between a polygraph and the strict \( \omega \)\nbd category it generates.} which is ``level-wise'' freely generated: a \( 0 \)\nbd polygraph is a set, and inductively, an \( n \)\nbd polygraph is built out \emph{cellular extensions} of an \( (n -1) \)\nbd polygraph, whereby cellular extensions, we mean that we freely attach a collection of \( n \)\nbd dimensional cells at some prescribed \( (n - 1) \)\nbd dimensional boundaries.
This attaching operation is realised via a pushout in the category of strict \( n \)\nbd categories. 
The pasting theorem then gives conditions under which this pushout is also a homotopy pushout when passing to \( (\infty, n) \)\nbd categories, allowing to strictify homotopy-coherent data.

One particularly interesting application concerns the theory of \emph{pasting diagrams}, which are formalisms providing generalisations of commutative diagrams to higher dimensions.
A key feature of pasting diagrams, or rather, of the polygraph they generate, is the uniqueness of the composite: the generating cells of a pasting diagram assemble uniquely into a ``big cell''; at least in a strict \( n \)\nbd category, see \cite{johnson1989combinatorics, power1991pasting} for strict \( n \)\nbd categorical pasting theorems.
In the homotopical setting, a pasting theorem then proves that the composite is unique up to a contractible space of choices.

The literature contains many results guaranteeing that certain pushout diagrams are homotopical, to name a few: \cite{hackney2022pasting} and \cite{columbus20182categorical} provide a pasting theorem for pasting diagrams in an \( (\infty, 2) \)\nbd category, while \cite{ozornova2022pushout} implies an \( (\infty, n) \)\nbd pasting theorem for the thetas in the complicial model of \( (\infty, n) \)\nbd categories \cite{verity2006weak}.
In a different direction, \cite{ruit2024pasting} gives a pasting theorem for iterated Segal spaces, which are the homotopical version of \( n \)\nbd uple categories.
Finally, the most general pasting theorem currently available for \( (\infty, n) \)\nbd categories is that of Campion \cite{campion2023pasting}, using Forest's \cite{forest2022pasting} torsion-free complexes as underlying formalism of pasting diagrams.

In this article, we extend the reach of Campion's result in two ways:
\begin{enumerate}
    \item the first generalisation concerns the pasting diagrams. 
    The underlying formalism is now given by Hadzihasanovic's regular directed complexes \cite{hadzihasanovic2024combinatorics}, which is a theory of higher categorical diagrams beyond acyclicity.
    While the pasting diagrams of this formalism, called the \emph{molecules}, still generate a strict \( \omega \)\nbd category with a unique composite, something peculiar happens: these strict \( \omega \)\nbd categories are, in general, not polygraphs. 
    This fact can be understood by a change of precedence.
    In all previous formalisms, the axioms of strict \( \omega \)\nbd categories came first: associativity, exchange, and unitality.
    The goal was then to analytically describe classes of combinatorial structures that would generate pasting diagrams.   
    Conversely, Hadzihasanovic takes a synthetic perspective: the combinatorial pasting diagrams are inductively generated by topologically sound operations on combinatorial cell complexes, where, by design, they have a unique composite.
    It is now the responsibility of the algebraic structures to adapt their axioms so that the uniqueness of this composite is still freely generated. 
    As it turns out, these axioms indeed include exchange and associativity, but starting dimension \( 4 \), they are more than that; we refer the reader to \cite{chanavat2025stricter} for a theory of \emph{stricter} \( n \)\nbd categories, that is, of strict \( n \)\nbd categories satisfying those extra axioms. 
    Nonetheless, following insights of Steiner \cite{street1987algebra}, Hadzihasanovic introduced a condition called \emph{frame-acyclicity} (Definition \ref{dfn:frame_acyclic}), which is a general condition guaranteeing that a molecule generates a polygraph.
    For instance, all molecules of dimension \( \le 3 \) are frame-acyclic, and all acyclic molecules are in particular frame-acyclic.
    Our pasting theorem will then generally apply to frame-acyclic molecules.

    \item the second generalisation, and the most important for the applications, is the relative approach permitted by the functorial viewpoint developed in \cite{hadzihasanovic2024combinatorics}: what matters is not that the directed complex satisfies a certain acyclicity condition, but merely that the pasting diagrams mapping to it satisfy the condition.
    For instance, consider the following graph
    \begin{center}
        \begin{tikzcd}
            \bullet & \bullet.
            \arrow[curve={height=12pt}, from=1-1, to=1-2]
            \arrow[curve={height=12pt}, from=1-2, to=1-1]
        \end{tikzcd}
    \end{center}  
    Although extremely simple, it is excluded from Campion's pasting theorem because there is a cycle in its generating cells.
    Yet, the pasting diagrams mapping into it \emph{are} acyclic, for the reason that all the \( 1 \)\nbd dimensional pasting diagrams are acyclic (they are given by the simplices seen as free categories on chains of arrows).
    Since the above graph \emph{has acyclic pasting diagrams}, as opposed to \emph{is acyclic}, our pasting theorem applies to it. 
\end{enumerate} 

\noindent The most general form of our main result (Theorem \ref{thm:frame_acyclic_is_homotopy_polygraph}) is as follows:
\begin{thm*}
    Let \( X \) be a directed complex with frame-acyclic molecules.
    Then \( \molecin{X} \) is a polygraph and a homotopy polygraph.
\end{thm*}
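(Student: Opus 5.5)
The proof has two parts: first that \( \molecin{X} \) is a polygraph, then that each cellular extension in its skeletal filtration is also a homotopy pushout after passing to \( (\infty, n) \)\nbd categories.

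\textbf{Polygraph.} Here I would follow Hadzihasanovic's and Steiner's argument. For each dimension \( n \), I would show that \( \skel{n}{(\molecin{X})} \) is obtained from \( \skel{n-1}{(\molecin{X})} \) by freely attaching one \( n \)\nbd cell for each \( n \)\nbd dimensional element \( x \) of \( X \), with boundaries \( \bd{}{-}x \) and \( \bd{}{+}x \). The key input is that, since every molecule over \( X \) is frame-acyclic, each molecule of dimension \( n \) admits a layering, that is, a decomposition as an \( (n-1) \)\nbd composite of layers each containing exactly one \( n \)\nbd cell. Any two layerings are connected by elementary moves that are instances of the interchange and associativity laws. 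This makes the canonical comparison map from the free cellular extension to \( \skel{n}{(\molecin{X})} \) surjective, by existence of layerings, and injective, by connectivity of the graph of layerings. It therefore gives the strict pushout. The relative formulation costs nothing extra here, because the argument only ever inspects molecules mapping to \( X \), never \( X \) itself.

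\textbf{Homotopy polygraph.} Here I would reduce to Campion's theorem via a colimit argument. First, I would write \( \molecin{X} \) as the colimit of \( \molecin{U} \) over the diagram of atoms \( U \to X \), i.e.\ over the category of elements. This is legitimate because the functor \( X \mapsto \molecin{X} \) preserves the relevant colimits of directed complexes. Second, I would show that for each frame-acyclic molecule \( U \), the polygraph \( \molecin{U} \) is homotopically free. For this I would use induction on the dimension and the number of top cells. A frame-acyclic molecule decomposes as a pasting \( U = V \cp{k} W \) of strictly smaller molecules, and I would show that the strict pushout \( \molecin{V} \cup_{\molecin{(V \cap W)}} \molecin{W} \) is a homotopy pushout, following Campion's strategy of checking against the nerve or Segal conditions of \( \Theta \)\nbd spaces. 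The atom case \( \molecin{U} = \) ``boundary plus one cell'' is then a pushout along the boundary inclusion \( \molecin{\bd{}{}U} \incl \molecin{U} \), and it is exactly the generating cellular extension.

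\textbf{Main obstacle.} The hard step is the homotopy-pushout claim for pastings. Campion's proof uses torsion-freeness globally, in the form of a linear order on all cells, and frame-acyclicity only gives layerings of molecules, not of \( X \). My first attempt would be to replace Campion's global order with the combinatorics of layerings. I would show that the space of ways to compose a frame-acyclic molecule in an \( (\infty,n) \)\nbd category is contractible by induction along layerings, using that the poset of layerings with flip moves has a contractible nerve. If that is too hard, I would instead show directly that every frame-acyclic molecule is built from globes and Gray or join-type pieces that are already known to be torsion-free. I would then conclude with Campion's theorem, and glue by the colimit argument above, since homotopy pushouts are stable under these colimits.
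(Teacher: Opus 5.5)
Your strict argument is acceptable. It is essentially Hadzihasanovic's argument, whereas the paper obtains strict freeness by applying \( \pi_0 \) to the homotopy pushouts of Theorem \ref{thm:cellular_extension_of_frame_acyclic}. The homotopical half, however, has two genuine gaps.

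\textbf{The gluing step is circular.} You want to say that \( \strweak\molecin{X} \) is the colimit in \( \nCatflag{n} \) of the \( \strweak\molecin{U} \) over cells \( U \to X \). By induction on skeleta, this is a restatement of the theorem; the paper only derives it afterwards (cf.\ Proposition \ref{prop:acyclic_into_flagged_preserves_colimits}). That \( \molecin{-} \) is a left adjoint into \( \somegaCat \) (Lemma \ref{lem:cellular_colimits_are_omega_colimits}) carries no homotopical information.

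Nor does the result follow formally from homotopical freeness of each molecule. A pasting diagram \( f \colon U \to X \) need not be injective (take the graph with a cycle of length two), so \( \molecin{U} \to \molecin{X} \) is not a subobject and there is nothing to glue along. The paper handles the relative setting as follows:
\begin{itemize}
\item it filters \( \molecin{\skel{d}{X}} \) by subobjects \( K_f \), indexed by pasting diagrams via the principal pasting diagram of a cell;
\item it combines these using \cite[Lemma 2.18]{campion2023pasting};
\item it exhibits each \( K_{<f} \incl K_f \) as a cobase change of \( \sSmol U \incl \Smol U \).
\end{itemize}
This last step needs that non-degenerate quotient-free cells are monomorphisms (Proposition \ref{prop:quotient-free_non_degenerate_are_mono}) and that \( \molecin{f} \after - \) is injective on active functors (Corollary \ref{cor:ofs_functor_of_rdcpx}). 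The local input it consumes is Proposition \ref{prop:frame_acyclic_can_add_active}, a statement about all quotient-free cells of \( U \) rather than about one split of \( U \).

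\textbf{The step you flag as the main obstacle is left open, and it carries all the content.} A single split \( U = V \cp{k} W \) cannot be handled by induction along that split. The category \( \molecin{U} \) has non-degenerate active cells \( \theta \to \molecin{U} \) that cut across any chosen split: other layerings, coarser pre-layerings, and decompositions at lower levels. One must show that all of them are simultaneously homotopically redundant, which amounts to contractibility of the poset \( \Sd U \). The paper proves this in Theorem \ref{thm:poset_of_subdivision_contractible_or_empty} from three ingredients:
\begin{itemize}
\item the isomorphism between \( k \)\nbd pre-layerings and \( k \)\nbd pre-orderings for \( \frdim U \le k \le \dim U - 1 \) (Proposition \ref{prop:frame_acyclic_iso_preorderings});
\item \cite[Corollary 3.4]{campion2023pasting};
\item the cocartesian fibrations \( \Utheta \colon \SdiS{\ivl{0}{k}} U \to \SdiS{\ivl{0}{k - 1}} U \) together with Quillen's Theorem A.
\end{itemize}
This result then feeds \cite[Section 5]{campion2023pasting}. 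A poset or graph of layerings with flip moves is not the controlling object. Connectivity under flips is exactly the \( \pi_0 \)\nbd level information your strict argument already uses, and it says nothing about the coarser subdivisions, which are the cells whose redundancy must be shown.

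Your fallback does not work either. Going beyond torsion-free complexes is the point of the theorem: frame-acyclic molecules need not even be acyclic, already in dimension \( 3 \). You give no way to decompose them into torsion-free pieces, and gluing such pieces would again require the missing homotopy-pushout claim for pastings.
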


\noindent Here, \( \molecin{X} \) is the strict \( \omega \)\nbd category generated by any directed complex; its cells are given by the pasting diagrams in \( X \), which are required to be frame-acyclic.
By homotopy polygraph, we mean that the cellular extensions computed in strict \( \omega \)\nbd categories are preserved by the inclusion \( \strweak \) of the latter into \emph{flagged \( (\infty, \omega) \)\nbd categories} defined in the sense of \cite{ayala2018flagged} as the localisation of space-valued presheaves over the theta category at the basic spine inclusions of thetas. 

Since the category of directed complexes with frame-acyclic molecules has the category of semi-simplicial sets as a full subcategory, we obtain immediately (Theorem \ref{thm:simplicial_homotopy_polygraph}):
\begin{thm*}
    Let \( X \) be a semi-simplicial set.
    Then \( \molecin{X} \) is a homotopy polygraph.
\end{thm*}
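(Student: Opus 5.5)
The statement to prove is the second theorem of the introduction: for a semi-simplicial set \( X \), the strict \( \omega \)\nbd category \( \molecin{X} \) is a homotopy polygraph. The plan is to reduce it entirely to the main theorem (Theorem \ref{thm:frame_acyclic_is_homotopy_polygraph}), which says that \( \molecin{X} \) is a polygraph and a homotopy polygraph whenever \( X \) is a directed complex with frame-acyclic molecules. So it suffices to realise \( X \) as a directed complex with frame-acyclic molecules, in such a way that the construction \( \molecin{-} \) applied to it is the strict \( \omega \)\nbd category meant by \( \molecin{X} \).

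First I will make explicit the embedding of semi-simplicial sets into directed complexes. Each simplex \( \Delta^n \) is regarded as the oriental, that is, the regular directed complex whose elements are the nonempty subsets of \( \set{0,\ldots,n} \), ordered by inclusion. The orientation of the face obtained by deleting the \( i \)\nbd th vertex is \( (-)^i \). A semi-simplicial set \( X \) is sent to the directed complex obtained by gluing these orientals along face maps. Face maps are inclusions of closed subsets that preserve orientation, so they are cartesian maps of directed complexes, and the gluing is the colimit, or equivalently the presheaf, on atoms. Full faithfulness then follows because atoms of oriental shape can only map to atoms of oriental shape, and a map between orientals is determined by where it sends vertices.

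Second, and this is the step I expect to be the main obstacle, I must show that every molecule over such an \( X \) is frame-acyclic. Frame-acyclicity is a property of the molecule \( U \) itself, not of its image in \( X \), so the question is which shapes \( U \) can appear. Since \( X \) is built from orientals, every atom of \( U \) is an oriental, so \( U \) is a molecule generated by simplices. My first attempt is to use the known fact that orientals are acyclic in Steiner's sense (loop-free, with an acyclic flow order in every dimension). Acyclicity should propagate through pastings and through the rewrites that produce submolecules, and every acyclic molecule is frame-acyclic, as recalled in the introduction. If propagation fails in general, the fallback is to invoke the hereditary characterisation of frame-acyclicity via the dimension of frames. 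For this I would check that the frame of any submolecule in an oriental-generated pasting is again built from oriental boundaries, and I would conclude by induction on dimension, using that all molecules of dimension \( \le 3 \) are frame-acyclic.

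Finally, applying Theorem \ref{thm:frame_acyclic_is_homotopy_polygraph} to \( X \) viewed as a directed complex with frame-acyclic molecules gives that \( \molecin{X} \) is a homotopy polygraph, which is the claim.
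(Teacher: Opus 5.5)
Your primary route is the paper's: identify semi-simplicial sets with \( \ospx{} \)\nbd complexes, use that oriented simplices are acyclic atoms, and propagate acyclicity to every pasting diagram by induction on submolecules (the paper cites \cite[Lemma 8.3.26]{hadzihasanovic2024combinatorics} for the pasting step). Then deduce frame-acyclicity and apply Theorem \ref{thm:frame_acyclic_is_homotopy_polygraph}. That propagation does hold, so your fallback via induction on dimension is unnecessary; note only that the notion which propagates is acyclicity of the oriented Hasse diagram, not merely an acyclic flow order in each dimension.
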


\noindent Notice that in that case, the operation \( \molecin{-} \) sends simplices to orientals \cite{street1987algebra}, thus \( \molecin{X} \) is the polygraph generated by a semi-simplicial set in the usual way.  
As far as we are aware, this is a new result. 
In fact, we may replace \( X \) with any \emph{directed complex with acyclic atoms} (Definition \ref{dfn:acyclic_atoms}) and still obtain a homotopy polygraph.
While having acyclic atoms is strictly stronger than having frame-acyclic molecules, it is a condition easier to check in practice, since it concerns only the cells of a directed complex, as opposed to all of its pasting diagrams.
Furthermore, directed complexes with acyclic atoms form a presheaf category \( \dcpxac \), which support a monoidal structure given by the Gray tensor product \( - \gray - \).
Another application of our main result is Theorem \ref{thm:gray_monoidal_acyclic}, which gives a large class of polygraphs for which it is possible to compute the (homotopical) Gray tensor product combinatorially at the level of the underlying complexes: 
\begin{thm*}
    The functor \( \strweak\molecin{-} \colon \dcpxac \to \nCatflag{\omega} \) lifts to a (strong) monoidal functor
    \begin{equation*}
        \strweak\molecin{-} \colon (\dcpxac, \gray, \pt) \to (\nCatflag{\omega}, \gray, \pt).
    \end{equation*}
\end{thm*}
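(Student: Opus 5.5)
Since both monoidal structures are closed under colimits in each variable, I would prove the statement by reducing to representables. The category \( \dcpxac \) is a presheaf category on the atoms with acyclic atoms (call this site \( \atomcatac \)). I will assume, as the paper sets it up, that the Gray product on \( \dcpxac \) is the Day-convolution-type extension of the Gray product of atoms: it preserves colimits separately in each variable, and on representables it is given by \( U \gray V \) for atoms \( U, V \). The key inputs are then:
\begin{itemize}
\item the Gray product of acyclic atoms is again a directed complex with acyclic atoms;
\item the functor \( \strweak\molecin{-} \) preserves colimits;
\item the Gray product on \( \nCatflag{\omega} \) is cocontinuous in each variable.
\end{itemize}
With these, a strong monoidal structure is determined by natural isomorphisms \( \strweak\molecin{U \gray V} \cong \strweak\molecin{U} \gray \strweak\molecin{V} \) for atoms \( U, V \), together with the unit isomorphism \( \strweak\molecin{\pt} \cong \pt \). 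The coherence axioms (associativity and unitality) only need to be checked on triples of atoms, again by density.

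\textbf{Step 1: cocontinuity of \( \strweak\molecin{-} \).} On \( \dcpxac \), the functor \( \molecin{-} \) should be the left Kan extension of its restriction to atoms. This is a colimit-preservation statement for presentations of a directed complex by its cells. By the main theorem (Theorem \ref{thm:frame_acyclic_is_homotopy_polygraph}), the polygraphic cellular extensions building \( \molecin{X} \) are homotopy pushouts. From this I would deduce that \( \strweak\molecin{X} \) is the colimit in \( \nCatflag{\omega} \) of the \( \strweak\molecin{U} \) over the cells \( U \to X \). Concretely, I would write \( X \) as an iterated cellular extension by atoms and use homotopy-pushout preservation at each stage, plus transfinite composition.

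\textbf{Step 2: the comparison on atoms.} The homotopical Gray product in \( \nCatflag{\omega} \) is presumably defined so that, on strict polygraphs of a suitable kind (e.g.\ generated by thetas, or by Campion's torsion-free complexes), it is computed by the strict Gray product. I would do the following:
\begin{itemize}
\item Present \( \molecin{U} \) as a polygraph whose cellular extensions are homotopical, by the main theorem.
\item Write \( \strweak\molecin{U} \) as a colimit of strict cells. Each such cell is a frame-acyclic molecule, itself built homotopically from its atoms down to globes.
\item Distribute the homotopical Gray product over these colimits.
\item Match the result cell-by-cell with \( \molecin{U \gray V} \), using the combinatorial fact that the cells of \( U \gray V \) are exactly the products of cells of \( U \) and of \( V \).
\end{itemize}
This gives a natural map \( \strweak\molecin{U} \gray \strweak\molecin{V} \to \strweak\molecin{U \gray V} \). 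By induction on \( \dim U + \dim V \), this map is an equivalence: both sides are obtained from the lower-dimensional parts by the same pushout attaching the top cell \( U \gray V \), which is a homotopy pushout on the left by cocontinuity and on the right by the main theorem.

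\textbf{Main obstacle.} I expect the hard part to be Step 2, and specifically checking that the boundary attaching maps agree in \( \nCatflag{\omega} \), not merely in the strict world. This requires a homotopical identification of \( \partial(U \gray V) \) with the pushout \( \partial U \gray V \cup U \gray \partial V \). I would first treat the case where \( U \) and \( V \) are globes or simplices, appealing to known results comparing strict and homotopical Gray products of orientals or thetas. I would then bootstrap to arbitrary acyclic atoms by writing each atom's boundary as a molecule and applying the inductive hypothesis.
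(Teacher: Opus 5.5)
Your reduction to atoms via Day convolution and your Step 1 match the paper (Proposition \ref{prop:acyclic_into_flagged_preserves_colimits}). Step 2, however, lacks the key idea, and as sketched it does not close.

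The paper does not build the comparison on atoms by hand. By \cite[Theorem 11.2.18, Proposition 11.2.36]{hadzihasanovic2024combinatorics}, for an acyclic atom \( U \) the strict \( \omega \)-category \( \molecin{U} \) is free on a strongly loop-free Steiner complex. Moreover, \( \molecin{-} \colon \atomcatac \to \omegaCat \) is monoidal for the \emph{strict} Gray product, so \( \molecin{U \gray V} \cong \molecin{U} \gray \molecin{V} \) already in \( \omegaCat \). The paper then applies Campion's result \cite[Example 3.2, Corollary 3.5]{campion2023gray}: \( \strweak \), restricted to strict \( \omega \)-categories free on strongly loop-free Steiner complexes, is strong monoidal into \( (\nCatflag{\omega}, \gray, \pt) \). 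Composing these two monoidal functors gives the structure on atoms, with all coherence inherited and no induction.

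Your Step 2 fails on three points.

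\begin{itemize}
\item The ``cell-by-cell'' matching uses only that the cells of \( U \gray V \) are pairs of cells of \( U \) and \( V \). That is a statement about underlying posets. It does not show that the attaching maps of the generators of \( \molecin{U \gray V} \) agree with those of the strict Gray product \( \molecin{U} \gray \molecin{V} \). That agreement is exactly the strict comparison above, which you never state.

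\item Your induction does not do what you claim. Let \( p = \dim U \) and \( q = \dim V \), and write \( \molecin{U} \) as \( \molecin{\bd{}{}U} \) with a globe \( \globe{p} \) attached (likewise for \( V \)). Cocontinuity of \( \gray \) then shows that the left-hand side is obtained by attaching the \emph{homotopical} product \( \globe{p} \gray \globe{q} \), not a single \( (p+q) \)-dimensional globe as on the right. Matching the two sides requires \( \strweak\globe{p} \gray \strweak\globe{q} \simeq \strweak\molecin{\globe{p} \gray \globe{q}} \). That needs both the homotopical comparison for globes and the strict identification of the directed-complex Gray product of globes with the \( \omega \)-categorical one. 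So the argument collapses onto a base case you defer to ``known results''. Those results, once invoked, cover all strongly loop-free Steiner \( \omega \)-categories, hence every \( \molecin{U} \) at once.

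\item Checking the monoidal axioms ``on triples'' is only legitimate in \( \nCatflag{\omega} \) because the objects involved lie in the essential image of the fully faithful \( \strweak \), where mapping spaces are discrete. You would need to argue this; the paper's composite of monoidal functors sidesteps it.
\end{itemize}
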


\noindent Finally, we stated earlier that all molecules of dimension \( \le 3 \) are frame-acyclic.
Thus, the pasting theorem applies in particular to directed \( 3 \)\nbd complexes (Theorem \ref{thm:dim_le_3_frame_acyclic}).
The reader familiar with the work of Henry may know of similarities between his regular polygraphs \cite{henry2018regular} and directed complexes. 
It was conjectured by Hadzihasanovic and Henry that directed complexes and regular polygraphs should agree up to dimension \( 3 \).
We take this opportunity to finally set up a comparison in Theorem \ref{thm:from_polyplexes_to_molecules}, reproduced here:

\begin{thm*}
    Let \( n \in \nat \).
    Then the diagram
    \begin{center}
        \begin{tikzcd}
            {\nplexcat{n}} & {\natomcat n} \\
            {\nPol n} & {\dcpxn n} \\
            {\nCat n} & {\snCat n}
            \arrow["\posplex", from=1-1, to=1-2]
            \arrow[hook, from=1-1, to=2-1]
            \arrow[hook, from=1-2, to=2-2]
            \arrow["\posplex", from=2-1, to=2-2]
            \arrow[hook, from=2-1, to=3-1]
            \arrow["{\molecin{-}}", hook, from=2-2, to=3-2]
            \arrow["\rs", from=3-1, to=3-2]
        \end{tikzcd}
    \end{center}
    commutes, and the horizontal functors are equivalences if \( n \le 3 \).
\end{thm*}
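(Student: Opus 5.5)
We treat commutativity first and the equivalences second, following the order of the rows.

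\emph{Setting up the functor and commutativity.} We will define \( \posplex \) on a regular \( n \)\nbd polygraph \( P \) as its face poset. Its elements are the generators of \( P \). The orientation is given by input and output faces: \( y \in \faces{}{-}x \) when \( y \) is a generator of dimension \( \dim x - 1 \) occurring in the source of \( x \), and similarly for outputs. Regularity in Henry's sense says that the source and target of each generator are, up to the identifications allowed, themselves "regular" pasting shapes. From this we will check by induction on dimension that the closure of each generator is an atom, i.e.\ that \( \posplex \) restricted to plexes lands in \( \natomcat n \). This makes the top square commute by construction.

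For the bottom square, we will compare \( \rs P \) with \( \molecin{\posplex P} \) as strict \( n \)\nbd categories satisfying the stricter axioms. Both are obtained by iterated cellular extensions, in the stricter sense, along generators. In \( \molecin{-} \) the boundary of a generator \( x \) is the molecule \( \bd{}{\pm}x \). In \( P \) it is the corresponding composite. We will construct a natural comparison map, cell by cell, by induction on \( n \). Then we show it is an isomorphism using the universal property of \( \molecin{X} \) as a colimit of \( \molecin{U} \) over atoms \( U \to X \). The key input is that for an atom \( U \), \( \molecin{U} \) is the free stricter category on one top generator glued along its boundary molecules. This is where the earlier structural results on \( \molecin{-} \) preserving the relevant pushouts are used.

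\emph{Equivalence for \( n \le 3 \).} Since molecules of dimension \( \le 3 \) are frame-acyclic, \( \molecin{X} \) is a genuine polygraph (Theorem~\ref{thm:dim_le_3_frame_acyclic}), and \( \rs \) is the identity on \( \nCat n \) in that range. It then suffices to show that \( \posplex\colon\nplexcat n\to\natomcat n \) is an equivalence. The statement for \( \nPol n \to \dcpxn n \) follows by density, since both categories are presheaf-like categories over their atoms/plexes and \( \posplex \) commutes with the colimits. For full faithfulness we will use that maps of regular polygraphs out of a plex are determined by their effect on generators, which are exactly the elements of the face poset. Essential surjectivity is the crux: given a 3-dimensional atom \( U \), we must exhibit a regular polygraph whose generators are the elements of \( U \). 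We will build it inductively, using that \( \molecin{\bd{}{\pm}U} \) is a polygraph in which the boundary molecule has a composite, and then attach one generator.

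\emph{Main obstacle.} The hard step is proving that the composite of a 2\nbd dimensional molecule, as an element of the free 2\nbd category, satisfies Henry's regularity condition, and conversely that every regular boundary arises from a molecule. In dimensions \( \le 3 \) we would argue via the known classification of 2-dimensional molecules as planar pasting diagrams. We would use the fact that round 2-molecules are disks, and match this with Henry's characterisation of regular 2-cells. Failure in dimension 4 is consistent with the need for stricter axioms, so the proof must genuinely use frame-acyclicity there.
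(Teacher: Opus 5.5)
Your handling of the commutativity and of the reduction matches the paper: the top square by definition, the bottom square by comparing cellular extensions in \( \snCat{n} \) using Lemma \ref{lem:cellular_colimits_are_omega_colimits}, then reducing to \( \posplex \colon \nplexcat{n} \to \natomcat{n} \) by Yoneda extension. Two corrections are needed there.

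First, \( \posplex \colon \nPol{n} \to \dcpxn{n} \) cannot be a face poset of generators. A regular \( 1 \)\nbd polygraph with a loop \( x \colon a \to a \) has no oriented graded face poset, since \( a \) would be both an input and an output face of \( x \). The functor must be the Yoneda extension of the plex-level \( \posplex \) into presheaves on atoms. The top square then commutes by definition, and the only inductive check is that plexes go to atoms, which is Proposition \ref{prop:posplex_sends_to_molecules} and holds for every \( n \).

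Second, the triviality of \( \rs \) for \( n \le 3 \) needs a source, namely \cite[Theorem 2.66]{chanavat2025stricter}. Alternatively, argue explicitly that the comparisons \( \molecin{A} \cup \molecin{B} \to \molecin{A \cp{k} B} \) are isomorphisms because they are bijective on generators. Merely noting that \( \molecin{X} \) is a polygraph does not give it.

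The genuine gap is the step you call the main obstacle, which is the whole content of the equivalence. You give no argument that every atom of dimension \( \le 3 \) is \( \posplex \) of a regular plex. You also give none for fullness, since "determined by generators" only yields faithfulness. The route through the planar classification of \( 2 \)\nbd molecules and a characterisation of regular \( 2 \)\nbd cells is not reduced to anything available.

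The missing idea is to prove, by induction on submolecules, that \( \molecin{V} \) is itself a regular polyplex for every molecule \( V \) of dimension \( \le n \).
\begin{itemize}
\item If \( V \) splits as \( A \cp{k} B \), then for \( n \le 3 \) the comparison \( \molecin{A} \cup_{\molecin{\bd{k}{+} A}} \molecin{B} \to \molecin{V} \) is an isomorphism, by the same result that makes \( \rs \) an equivalence. Its domain is precisely the pushout computing the pasting of the regular polyplexes \( \molecin{A} \) and \( \molecin{B} \).
\item If \( V \) is an atom, the inductive hypothesis on its boundaries together with \cite[Proposition 2.3.3]{henry2018nonunital} makes \( \molecin{V} \) a regular plex.
\end{itemize}
This gives a functor \( \molecin{-} \colon \natomcat{n} \to \nplexcat{n} \) inverse to \( \posplex \), yielding full faithfulness and essential surjectivity at once.

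What is used is the compatibility of pasting of molecules of dimension \( \le 3 \) with pushouts of strict categories, not frame-acyclicity. This is also why, as the paper remarks, \( \posplex \colon \nPol{4} \to \dcpxn{4} \) is still an equivalence even though \( \rs \) no longer is.
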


\noindent Here, \( \nplexcat{n} \) is the category of \emph{regular plexes} (of dimension \( \le n \)), Henry's notion of (regular) pasting diagrams with a greatest generating cell, and \( \natomcat{n} \) is the category of atoms (of dimension \( \le n \)), Hadzihasanovic's notion of pasting diagrams with a greatest generating cell.
The functor \( \posplex \colon \nPol n \to \dcpxn{n} \) sends a regular polygraph to its associated \emph{stricter} regular polygraph, in a way that is compatible with the reflector \( \rs \) from strict \( n \)\nbd categories to stricter \( n \)\nbd categories.
As an immediate consequence, we get Corollary \ref{cor:3regular_polygraph_are_homotopy_polygraph}, which is, as far as we are aware, a new result as well:

\begin{thm*}
    All regular \( 3 \)\nbd polygraphs are homotopy polygraphs.
\end{thm*}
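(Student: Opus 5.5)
The statement is presented in the introduction as a corollary of two results that come later in the paper. The plan is to combine them.

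The first ingredient is the comparison theorem displayed just above (Theorem \ref{thm:from_polyplexes_to_molecules}). Take a regular \( 3 \)\nbd polygraph \( P \). By that theorem, \( \posplex P \) is a directed \( 3 \)\nbd complex. Commutativity of the lower square identifies \( \molecin{\posplex P} \) with \( \rs P \), where \( P \) is regarded as a strict \( 3 \)\nbd category. The first step is therefore to show that \( \rs \) does nothing on \( P \). In dimension \( \le 3 \), stricter categories coincide with strict ones: the extra axioms of \cite{chanavat2025stricter} only appear from dimension \( 4 \). Hence the reflector \( \rs \colon \nCat 3 \to \snCat 3 \) is an equivalence (the identity on objects), and \( P \cong \molecin{\posplex P} \) as strict \( \omega \)\nbd categories. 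If the paper does not state this triviality explicitly, I would check it directly. A stricter \( 3 \)\nbd category is a strict one satisfying axioms that are vacuous below dimension \( 4 \), so the unit of the reflection is invertible.

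The second step is to apply the main theorem. Every molecule of dimension \( \le 3 \) is frame-acyclic (Theorem \ref{thm:dim_le_3_frame_acyclic}). So \( \posplex P \) is a directed complex with frame-acyclic molecules, and Theorem \ref{thm:frame_acyclic_is_homotopy_polygraph} shows that \( \molecin{\posplex P} \) is a polygraph and a homotopy polygraph.

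The third step is to transport this along the isomorphism \( P \cong \molecin{\posplex P} \). Here one must check that the polygraph structure is the same one, namely that the generating cells agree. The generators of \( \molecin{X} \) are the atoms of \( X \). The generators of \( P \) correspond to its plexes, and \( \posplex \) sends plexes to atoms by the top square. Being a homotopy polygraph means that the cellular-extension pushouts, taken with respect to these generators, are preserved by \( \strweak \). So the two structures match and the property transfers.

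The main obstacle is this compatibility of generators together with the identification \( \rs P = P \) in dimension \( 3 \). Both should follow from the equivalence part of Theorem \ref{thm:from_polyplexes_to_molecules} for \( n \le 3 \). I would try first to read off that the horizontal functors send the skeletal filtration and the cellular-extension squares of \( P \) to those of \( \molecin{\posplex P} \).
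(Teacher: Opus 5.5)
Your proposal is correct and is essentially the paper's proof: the paper identifies $C \cong \rs C \cong \molecin{\posplex(C)}$ via Theorem \ref{thm:from_polyplexes_to_molecules}, and then applies Theorem \ref{thm:dim_le_3_frame_acyclic}. There $\rs$ is an equivalence for $n \le 3$ by \cite[Theorem 2.66]{chanavat2025stricter}, which is a cited theorem rather than a vacuous-axiom triviality. Your third step needs no separate check, because being free on a polygraph is a mere property, so any isomorphism of strict $3$-categories carries the generators and cellular-extension squares of one to those of the other.
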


\subsubsection*{Flagged \texorpdfstring{$(\infty, n)$}{(infty, n)}-categories}

Following \cite{campion2023pasting}, we decided to state the pasting theorem in the context of \emph{flagged \( (\infty, n) \)\nbd categories } \cite{ayala2018flagged}, which are the ``non-univalent \( (\infty, n) \)\nbd categories''.
Since the usual \( \infty \)\nbd category of \( (\infty, n) \)\nbd categories \( \nCatinf{n} \) is a localisation of the \( \infty \)\nbd category of flagged \( (\infty, n) \)\nbd categories, and localisation preserves homotopy pushouts, the pasting theorem directly applies to them.
This is particularly convenient in the case \( n = \omega \), since ``the'' model of \( (\infty, \omega) \)\nbd categories is not well-defined, as there seems to be several reasonable, yet non-equivalent, possible choices \cite{ozornova2026cores}.

\subsection*{Towards a comparison with the diagrammatic model}

In recent years, together with Hadzihasanovic, we developed the \emph{diagrammatic model of \( (\infty, n) \)\nbd categories} \cite{chanavat2024model}.
The motivating reason for this article was to make progress toward comparing the \( \infty \)\nbd category of diagrammatic \( (\infty, n) \)\nbd categories with \( \nCatinf{n} \).
In \cite{chanavat2025stricter}, we already settled the \( (0, n) \)\nbd case: the ``correct'' notion of strict \( n \)\nbd category for the diagrammatic model is that of \emph{stricter} \( n \)\nbd category.
We showed that stricter \( n \)\nbd categories are a reflective localisation of strict \( n \)\nbd category at the fundamental spines of molecules (Definition \ref{dfn:spine}), and that this localisation is trivial if \( n \le 3 \).
We now fully believe that the same is true in the homotopical setting: the \( \infty \)\nbd category of diagrammatic \( (\infty, n) \)\nbd categories should be a reflective localisation of the \( \infty \)\nbd category of \( (\infty, n) \)\nbd categories.
More precisely,
\begin{conj*}
    The \( \infty \)\nbd category of diagrammatic \( (\infty, n) \)\nbd categories is the localisation of \( \nCatinf{n} \) at the set of fundamental spines of \( n \)\nbd molecules.
\end{conj*} 

\noindent From Corollary \ref{cor:spine_of_3_rdcpx_is_flagged_equivalence}, we get

\begin{thm*}
    Let \( U \) be a molecule of dimension \( \le 3 \). 
    Then the fundamental spine of \( U \) is an equivalence.
\end{thm*}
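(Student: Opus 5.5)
The plan is to deduce the statement from the main theorem (Theorem \ref{thm:frame_acyclic_is_homotopy_polygraph}), the fact that molecules of dimension \( \le 3 \) are frame-acyclic (Theorem \ref{thm:dim_le_3_frame_acyclic}), and the description of the fundamental spine as a pasting of cellular extensions.

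First I need a precise handle on the fundamental spine. For a molecule \( U \) I expect it to be the inclusion \( \Spine U \incl \molecin{U} \). Here \( \molecin{U} \) is the strict \( \omega \)\nbd category generated by \( U \), and \( \Spine U \) is the colimit, in strict \( \omega \)\nbd categories, of the fundamental spines of the atoms of \( U \), glued along their common boundary submolecules (Definition \ref{dfn:spine}). It suffices to show that \( \strweak \) sends this map to an equivalence of flagged \( (\infty,\omega) \)\nbd categories, arguing by induction on \( \dim U \le 3 \).

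Second, I apply the pasting theorem. Since \( \dim U \le 3 \), every molecule of \( U \) is frame-acyclic, so \( U \) is a directed complex with frame-acyclic molecules. By Theorem \ref{thm:frame_acyclic_is_homotopy_polygraph}, \( \molecin{U} \) is then a polygraph and a homotopy polygraph. In particular, \( \strweak\molecin{U} \) is computed as the iterated homotopy pushout of globe boundaries \( \partial\globe{k} \to \globe{k} \) along the cells of \( U \), ordered by dimension. The same holds for each submolecule of \( U \) and for each atom.

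Third, I rewrite the spine side in the same form. Each atom \( x \) of \( U \) of dimension \( k \) is a cellular extension of \( \molecin{\bd{}{}x} \) by one \( k \)\nbd cell. By the inductive hypothesis applied to the lower-dimensional molecules \( \bd{}{\a}x \), the relevant boundary maps are already equivalences after \( \strweak \). Since \( \strweak \) commutes with these homotopy pushouts, a cofinality argument on the poset of atoms of \( U \) identifies \( \strweak\Spine U \) with the same iterated homotopy pushout of cells. The comparison map to \( \strweak\molecin{U} \) is then the identity on generators, hence an equivalence. Here I use that \( \strweak \) preserves colimits of the shape defining the spine, because every stage is a homotopy pushout by the pasting theorem.

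The main obstacle is this third step: showing that the strict colimit defining \( \Spine U \) is itself homotopical, since it is a gluing along submolecules rather than a cellular extension. My first attempt would be to present \( \Spine U \) as a polygraph whose generators are exactly the elements of \( U \), with no new cells introduced in the gluing, and then invoke the homotopy-polygraph property of Theorem \ref{thm:frame_acyclic_is_homotopy_polygraph} for the directed complex underlying the spine.
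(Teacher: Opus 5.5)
Your proposal has a genuine gap. The ingredients are right (frame-acyclicity in dimension $\le 3$, the pasting theorem, an induction comparing two pushouts), but the argument rests on a misreading of Definition \ref{dfn:spine}.

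\textbf{The misreading and the circular step.} The colimit $\Spine U = \colim_{x \in U} \molecin{\imel{U}{x}}$ is taken in presheaves on $\Thetan{n}$, not in strict $\omega$-categories. Remark \ref{rmk:spine_explicit} describes it as a subpresheaf of $\molecin{U}$. The claim is that $\locflag(\spine{U})$ is invertible; $\strweak$ cannot even be applied to $\Spine U$.

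Under your reading the statement becomes trivial. For $\dim U \le 3$, the colimit of the $\molecin{\imel{U}{x}}$ in strict $3$-categories is already $\molecin{U}$, by Lemma \ref{lem:cellular_colimits_are_omega_colimits} and the coincidence of strict and stricter $3$-categories. Moreover, $\strweak$ is fully faithful, so $\strweak$ of a strict functor is an equivalence only if that functor is an isomorphism.

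What you call the main obstacle, that the gluing of the $\strweak\molecin{\imel{U}{x}}$ recovers $\strweak\molecin{U}$, is therefore just the theorem restated. Your proposed resolution is circular. Presenting that colimit as a polygraph with generators the elements of $U$ and invoking Theorem \ref{thm:frame_acyclic_is_homotopy_polygraph} only rewrites $\strweak\molecin{U}$ as iterated globe attachments. It says nothing about $\colim_x \strweak\molecin{\imel{U}{x}}$. Likewise, ``a cofinality argument'' and ``identity on generators, hence an equivalence'' are not arguments. What is needed is a stage-by-stage comparison of homotopy pushouts, and that is exactly what is missing.

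\textbf{The shape of the induction.} Such a comparison attaches the top-dimensional atoms $V$ along their boundaries. The inductive hypothesis is then needed for the sphere $\bd{}{}V = \bd{}{-}V \cup \bd{}{+}V$ and for the skeleton $\skel{d-1}{U}$. These are regular directed complexes, not molecules. Knowing the result for the molecules $\bd{}{\a}V$ does not give it for their union without a further gluing result, which would in turn need the result for a lower-dimensional sphere.

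\textbf{What the paper does.} The paper proves the stronger Corollary \ref{cor:spine_of_frame_acyclic_rdcpx_is_flagged_equivalence}, for all regular directed $n$-complexes with frame-acyclic molecules, by induction on dimension. At dimension $d$ it compares two squares:
\begin{itemize}
\item The square $\Spine \skel{d-1}{U} \leftarrow \coprod_V \Spine \bd{}{}V \to \coprod_V \Spine V$ is a pushout of presheaves along a monomorphism, hence a homotopy pushout by Remark \ref{rmk:homotopy_pushout}.
\item The square exhibiting $\molecin{\skel{d}{U}}$ as a cellular extension is a homotopy pushout by Theorem \ref{thm:cellular_extension_of_frame_acyclic}. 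This atom-wise statement, not the globe-wise homotopy-polygraph property you invoke, is what matches the structure of the spine.
\item The comparison maps are equivalences by induction on $\skel{d-1}{U}$ and $\bd{}{}V$, and isomorphisms on the atoms $V$.
\end{itemize}
The statement for molecules of dimension $\le 3$ follows because these are regular directed $3$-complexes with frame-acyclic molecules, as in the proof of Theorem \ref{thm:dim_le_3_frame_acyclic}.
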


\noindent Provided the previous conjecture holds, this shows that, akin to the strict case, the localisation is trivial when \( n \le 3 \).
In the sequel, we will show how to make progress towards this conjecture.

\subsection*{Related work}

The main results of this article owe mostly to the insights of Campion in \cite{campion2023pasting}.
While we were able to reuse his strategy to reach our main theorem with very little divergence, we felt it would have been too hasty to simply affirm, in later work, that Campion's proof straightforwardly adapts to directed complexes with frame-acyclic molecules: the core of the proof is indeed a straightforward adaptation thereof, but the formalism surrounding it is fundamentally different, and, since our main results substantially extend the current range of application of Campion's theorem, we could not be completely sure that we did not miss some crucial details, if not by writing this article.

For the reader familiar with \cite{campion2023pasting}, we will give, throughout the article, detailed similarities and differences between our work and his.
In particular, we always mention when a definition or a proof is adapted from \cite{campion2023pasting}, and allowed ourselves to skip some proofs when they were the same.

\subsection*{Organisation of the article}

In Section \ref{sec:rdcpx}, we first review the basic theory of directed complexes, which is the general notion of positive and regular ``stricter'' polygraph.
Of particular interest is Proposition \ref{prop:ofs_functor_of_dcpx} stating that strict functors of directed complexes have an ``active-inert'' orthogonal factorisation system, which generalises the usual one on the simplex category. 
We conclude the section by the comparison between regular polygraphs and directed complexes in Theorem \ref{thm:from_polyplexes_to_molecules}.

Next, we start Section \ref{sec:subdiv} with a slight extension of Hadzihasanovic's theory of layerings and orderings into what we call pre-layerings and pre-orderings (Definition \ref{dfn:prelayering} and Definition \ref{dfn:preordering}).
While a \( k \)\nbd layering of a molecule, when it exists, is a way to maximally decompose it into pastings along the \( k \)\nbd boundary, a \( k \)\nbd pre-layering is a similar decomposition without the maximality condition.
Following Campion's idea, we organise the collection of \( k \)\nbd pre-layerings into a poset, and the key result of that section asserts that whenever a molecule is frame-acyclic, this poset is isomorphic to the poset of \( k \)\nbd pre-orderings (Proposition \ref{prop:frame_acyclic_iso_preorderings}).
In turn, the poset of \( k \)\nbd pre-orderings has a very combinatorial definition: it is the poset of linearly ordered partitions refining the \emph{maximal \( k \)\nbd flow graph}, a graph associated to a molecule (Definition \ref{dfn:maxflow}); the acyclicity of this graph at a well-chosen \( k \) is key to the condition of frame-acyclicity.

After that, we give some recollections on the theta category \( \Theta \), the key technical point is Proposition \ref{prop:quotient-free_non_degenerate_are_mono}, which characterises the non-degenerate sections \( u \colon \theta \to \molecin{U} \), when \( U \) is a molecule (or more generally a regular directed complex).
In Campion's proof, an important requirement states that any non-degenerate section into the polygraph presented by a torsion-free complex is a monomorphism.
This is no longer the case when \( U \) is a molecule, but we overcome the issue by noticing that this requirement only really matters at later stages when \( u \) is an active (or more generally \emph{quotient-free}) strict functor, see also Comment \ref{comm:alternative_non_degenerate}.

Finally, following once again Campion's insight, we are quickly able to deduce that the \emph{poset of subdivisions} of a molecule (Definition \ref{dfn:poset_subdiv}) is contractible whenever the latter is frame-acyclic.
This poset encodes all the possible ways to decompose a molecule into a pasting of submolecules, in a way that follows the decomposition of some theta into the pasting of its globes.  
Formally, if \( U \) is a molecule, the \emph{initial poset of subdivisions} \( \Sdi U \) is defined as the full subcategory of the category of elements of \( \molecin{U} \) on non-degenerate active strict functors. 
By the previous paragraph, this is a poset of subobjects.
The poset of subdivision is then given by \( \Sdi U \), its initial object removed.
To conclude it is contractible in the frame-acyclic case, we will appeal to Campion's proof of that same result (\cite[Theorem 4.21]{campion2023pasting}), which goes through essentially unmodified.
There are only two hypothesis to check.
The first one requires that the poset of \( k \)\nbd pre-layering (for a well-chosen \( k \)) is contractible; this already follows from the previous isomorphisms between pre-layerings and pre-orderings, and \cite[Corollary 3.4]{campion2023pasting}.
We must finally show that certain functors between subposets of \( \Sdi U \) are cocartesian fibrations, since Campion's proof of contractibility relies on Quillen's theorem A, which is easier to apply in that case.
Our proof that those functors are cocartesian is given by Lemma \ref{lem:compose_is_cocartesian_one_step}, and essentially matches the one of Campion. 
We decided to include it for reasons we explain in Comment \ref{comm:why_proof_cocart}.
We conclude the section by Theorem \ref{thm:poset_of_subdivision_contractible_or_empty}, asserting that \( \Sd U \) is contractible (or empty if the molecule is an atom).
As it will made precise in the next section, this means that the (flagged) \( (\infty, n) \)\nbd category generated by a frame-acyclic molecule is a pasting diagram: it has a unique, up to a contractible space of choices, composite.  

Last is Section \ref{sec:pasting}, which proves the pasting theorem.
We give a short recollection of flagged \( (\infty, n) \)\nbd categories, specify what we mean for a polygraph to be a homotopy polygraph (Definition \ref{dfn:htpy_polygraph}) in that context, and proceed to the proof of the main result.
One more time, we use the result of \cite[Section 5]{campion2023pasting}, which gets us to Proposition \ref{prop:frame_acyclic_can_add_active}, asserting that we may extend a certain subobject of \( \molecin{U} \), when \( U \) is a frame-acyclic molecule, into a larger one including all of the active functors.
We may then glue together these equivalences in Theorem \ref{thm:cellular_extension_of_frame_acyclic}, the analogue of \cite[Theorem 5.11]{campion2023pasting}.
From there, we conclude with the direct corollaries mentioned earlier in the introduction.   

\subsection*{Acknowledgments}
 
This work is the first of a series of two articles\footnote{The sequel will concern advances toward the comparison between the diagrammatic and the standard models of \( (\infty, n) \)\nbd categories.} that elaborate on results and conjectures announced during our talk at the MPIM last February.
We thank Viktoriya Ozornova and L\'eo Schelstraete for the invitation and subsequent interesting discussions. 
We are grateful to Amar Hadzihasanovic for the precious feedback.
We thank Marcus Nicolas for suggesting to us Theorem \ref{thm:gray_monoidal_acyclic} concerning the Gray tensor product, and Tim Campion for pointing out a reference and useful discussions concerning an earlier version of this article. 

\section{Directed complexes and their strict functors}
\label{sec:rdcpx}

\subsection{Molecules and directed complexes}

\noindent We refer the reader to \cite{hadzihasanovic2024combinatorics} for more details on the following definitions.

\begin{dfn}
    An oriented graded poset is a graded poset \( P \) together with a partition
    \begin{equation*}
        \faces{}{} x = \faces{}{-} x + \faces{}{+} x
    \end{equation*}
    of the faces of any \( x \in P \) into inputs faces and output faces.
    A \emph{morphism \( f \colon P \to Q \) of oriented graded posets} is a function of their underlying sets that induces, for all \( x \in P \) and \( \a \in \set{-, +} \), a bijection between \( \faces{}{\a} x \) and \( \faces{}{\a} f(x) \).
    We say that \( f \) is an \emph{embedding} if it is injective.
    The grading of \( P \) gives, for each \( x \in P \), the \emph{dimension of \( x \)}, written \( \dim x \).
    We write \( \dim P \) for the maximum of the dimension of the elements of \( P \), or \( -1 \) if \( P \) is empty.
    For any subset \( A \subseteq P \) of \( P \), and \( k \in \nat \), we write \( \gr{k}{A} \) for the set of elements of \( A \) whose dimension is \( k \).
    We write \( \maxel{P} \) for the set of maximal elements of \( P \).
    For each \( k \in \nat \) and \( \a \in \set{-, +} \), an oriented graded poset has an intrinsic notion of input and output \( k \)\nbd boundaries, written \( \bd{k}{\a} P \).
    They are closed subsets of \( P \) of dimension \( \min{\set{\dim P, k}} \). 
    The convention is such that \( \bd{k}{\a} P \) is empty when \( k < 0 \).  
\end{dfn}

\begin{dfn} 
    The class of \emph{molecules} is an inductively generated class of oriented graded posets.
    Given two molecules \( U, V \), and \( k \in \nat \), if there is a (necessarily unique) isomorphism \( \bd{k}{+} U \cong \bd{k}{-} V \) in the category of oriented graded posets, then the pushout of \( U \) and \( V \) along their common boundary is a molecule, written \( U \cp{k} V \), and called the \emph{pasting of \( U \) and \( V \) along the \( k \)\nbd boundary}. 
    Boundaries and pasting make the collection of molecules a strict \( \omega \)\nbd category. 
    An \emph{atom} is a molecule with a greatest element.
    A \emph{regular directed complex} is an oriented graded poset such that for all \( x \in P \), \( \clset{x} \) is an atom, where \( \clset{x} \) denotes the downward closure of \( \set{x} \).
    A molecule \( U \) is \emph{round} if for all \( k < \dim U \), the closed subsets 
    \begin{equation*}
        \bd{k - 1}{} U \eqdef \bd{k - 1}{-} U \cup \bd{k - 1}{+} U \quad\text{ and }\quad \bd{k}{-} U \cap \bd{k}{+} U
    \end{equation*}
    are equal.
    Given two round molecules \( U \) and \( V \) of the same dimension \( k \geq 0 \), if \( \bd{k - 1}{\a} U \cong \bd{k - 1}{\a} V \) for all \( \a \in \set{-, +} \), we write \( U \celto V \) for the unique \( (k + 1) \)\nbd dimensional atom such that \( \bd{k}{-} (U \celto V) = U \) and \( \bd{k}{+} (U \celto V) = V \).
\end{dfn}

\begin{exm} 
    The \emph{point} is the atom \( \pt \), whose underlying graded poset has a unique element.
    The \emph{arrow} is the atom \( \arr \eqdef \pt \celto \pt \).
    Given \( k \geq 0 \), we write
    \begin{equation*}
        k\arr \eqdef \underbrace{\arr \cp{0} \ldots \cp{0} \arr}_{k \text{ times}}.
    \end{equation*}
    When \( k = 0 \), \( k\arr \) is understood to be \( \pt \).
\end{exm}

\begin{dfn} [Final map]
    We say that an order-preserving map of poset is \emph{final}, when it is final when seen as a functor between posets seen as categories.
\end{dfn}

\begin{dfn} [Map of regular directed complexes]
    Let \( P \) and \( Q \) be regular directed complexes.
    A \emph{map} \( f \colon P \to Q \) is a closed order-preserving function of their underlying posets such that for all \( x \in P \), \( n \in \nat \), and \( \a \in \set{-, +} \), \( f(\bd{n}{\a} x) = \bd{n}{\a} f(x) \) and the restriction
    \begin{equation*}
        \restr{f}{\bd{n}{\a} x} \colon \bd{n}{\a} x \to f(\bd{n}{\a} x)
    \end{equation*}
    is final.
    We write \( \rdcpxmap \) for the category of regular directed complexes and maps. 
\end{dfn}

\begin{dfn}
    Let \( f \colon P \to Q \) be a map of regular directed complexes.
    We say that \( f \) is
    \begin{itemize}
        \item a \emph{collapse} if \( f \) is final;
        \item an \emph{embedding} if \( f \) is injective;
        \item a \emph{local embedding} if for all \( x \in P \), \( \restr{f}{\clset{x}} \) is an embedding.
    \end{itemize}
    We write \( \rdcpxequal \) for the wide subcategory of \( \rdcpxmap \) on local embeddings.
    We also let \( \molcat \) and \( \atomcat \) be the full subcategories of \( \rdcpxequal \) on molecules and atoms respectively.
\end{dfn}

\begin{rmk}
    By \cite[Proposition 6.2.19]{hadzihasanovic2024combinatorics} the category \( \rdcpxequal \) is also the full subcategory of the category of oriented graded posets and their morphisms on regular directed complexes.
\end{rmk}

\begin{rmk} \label{rmk:factorisation_collapse_local_embedding}
    By \cite[Corollary 6.2.31]{hadzihasanovic2024combinatorics}, collapses and local embeddings form an orthogonal factorisation system on \( \rdcpxmap \).
\end{rmk}

\begin{exm}
    Each closed subset \( A \subseteq P \) of a regular directed complex determines an embedding \( \iota \colon A \incl P \).
    In particular, if \( A = \clset{x} \) for \( x \in P \), we write \( \mapel{x} \colon \imel{P}{x} \incl P \) for this canonical embedding. 
\end{exm}

\begin{dfn} [Submolecules inclusion]
    The \emph{class of submolecule inclusions} is the smallest class of embeddings of molecules that are closed under composition, contain all isomorphisms, and all embeddings of the form
    \begin{equation*}
        U \incl U \cp{k} V,\quad \text{ and }\quad V \incl U \cp{k} V
    \end{equation*}
    whenever a pasting \( U \cp{k} V \) is defined.
    If \( U \) is a molecule and \( V \subseteq U \) is a closed subset, we say that \emph{\( V \) is a submolecule of \( U \)}, written \( V \submol U \), if the embedding of \( V \) into \( U \) is a submolecule inclusion.
\end{dfn}

\begin{exm}
    Any embedding of molecules whose domain is an atom is a submolecule inclusion.
    Any embedding of the form \( \bd{n}{\a} U \incl U \) is a submolecule inclusion.
\end{exm}

\begin{comm} \label{comm:induction_submolecules}
    Since the relation \( \submol \) is well-founded, we may prove statements on molecules by \emph{induction on submolecules}, see also [Had24, Comment 4.1.7].
\end{comm}

\begin{dfn} [Split of molecule]
    Let \( U \) be a molecule.
    We say that \( U \) \emph{splits} as \( A \cp{k} B \) if \( U \) is isomorphic to \( A \cp{k} B \), for \( A, B \) molecules such that \( A \cp{k} B \) is defined.
\end{dfn}

\begin{dfn} [Directed complex]
    A \emph{directed complex} is a presheaf on \( \atomcat \).
    We write \( \dcpx \) for the category of directed complexes and their natural transformations as morphisms. 
\end{dfn}

\noindent By a simple variant of \cite[Proposition 2.11]{chanavat2026htpy}, we may identify the category \( \rdcpxequal \) with the full subcategory of \( \dcpx \) on the directed complexes \( X \) having the property that for all atoms \( U \), any section \( x \colon U \to X \) is a monomorphism. 
Then the full subcategory inclusion 
\begin{equation*}
    \rdcpxequal \incl \dcpx
\end{equation*}
factors through the Yoneda embedding and preserves colimits of embeddings (when they exist in the domain).

\begin{dfn} [Embedding of directed complexes]
    We say that a morphism of directed complexes is an \emph{embedding} if it is a monomorphism; this terminology is consistent with the identification of \( \rdcpxequal \) as a full subcategory of \( \dcpx \).
\end{dfn}

\begin{dfn} [Skeleton of a directed complex] \label{dfn:skeleton_dcpx}
    Let \( X \) be a directed complex, and \( n \in \nat \cup \set{-1, \omega} \).
    We write \( \gr{n}{X} \) for the collection of sections \( x \colon U \to X \) with \( U \) an atom of dimension \( n \).
    The \emph{\( n \)\nbd skeleton of \( X \)} is the directed complex \( \skel{n}{X} \) defined by
    \begin{equation*}
        U \mapsto 
        \begin{cases}
            \varnothing & \text{if } \dim U > n, \\
            X(U) & \text{else}.
        \end{cases}
    \end{equation*}
    This is a subobject of \( X \), and we have a canonical embedding \( \skel{n}{X} \incl \skel{n + 1}{X} \) fitting in a pushout diagram
    \begin{center}
        \begin{tikzcd}
            {\coprod\limits_{x \in \gr {n + 1} X} \bd{}{}U} & {\coprod\limits_{x \in \gr {n + 1} X} U} \\
            {\skel{n}{X}} & {\skel{n+1}{X}.}
            \arrow[""{name=0, anchor=center, inner sep=0}, hook, from=1-1, to=1-2]
            \arrow[from=1-1, to=2-1]
            \arrow[from=1-2, to=2-2]
            \arrow[hook, from=2-1, to=2-2]
            \arrow["\lrcorner"{anchor=center, pos=0.125, rotate=180}, draw=none, from=2-2, to=0]
        \end{tikzcd}
    \end{center}
    Finally, we have \( X = \bigcup_{n \geq 0} \skel{n}{X} \).
\end{dfn}

\begin{rmk}
    By definition, \( \skel{-1}{X} = \varnothing \) and \( \skel{\omega}{X} = X \).
\end{rmk}

\begin{dfn} [Dimension of a directed complex]
    Let \( X \) be a directed complex.
    Then \emph{dimension of \( X \)}, written \( \dim X \), is the smallest element \( d \in \nat \cup \set{-1, \omega} \) such that \( \skel{d}{X} = X \). 
\end{dfn}

\begin{rmk}
    If \( X \) is a regular directed complex, then \( \dim X \) is also the dimension of its underlying graded poset. 
\end{rmk}

\begin{dfn} [Directed \( n \)\nbd complex]
    Let \( n \in \nat \cup \set{\omega} \).
    A \emph{directed \( n \)\nbd complex} is a directed complex \( X \) such that \( \dim X \le n \).
    We write \( \dcpxn n \) for the full subcategory of \( \dcpx \) on directed \( n \)\nbd complexes.
    If \( X \) is a regular directed complex or a molecule, we speak of regular directed \( n \)\nbd complex and \( n \)\nbd molecule.
    We also write \( \nmolcat{n} \) and \( \natomcat{n} \) for the subcategories of \( \dcpxn{n} \) on molecules and atoms respectively.
\end{dfn}

\begin{rmk}
    The category \( \dcpxn{n} \) is also the category of presheaves on the subcategory of \( \atomcat \) on \( n \)\nbd atoms, and the inclusion \( \iota \colon \dcpxn{n} \incl \dcpx \) has a right adjoint \( \skel{n}{(-)} \colon \dcpx \to \dcpxn{n} \) that discards cells of dimension \( > n \).
\end{rmk}

\begin{dfn} [Diagrams in a directed complex]
    Let \( X \) be a directed complex.
    A \emph{diagram of shape \( U \) in \( X \)} is a morphism \( f \colon U \to X \) where \( U \) is a directed complex.
    A diagram is called:
    \begin{itemize}
        \item a \emph{pasting diagram} if \( U \) is a molecule, and
        \item a \emph{cell} if \( U \) is an atom.
    \end{itemize}
    The \emph{dimension of \( f \)} is \( \dim f \eqdef \dim U \), which belongs to \( \nat \cup \set{-1, \omega} \).  
    We write \( \Pd X \) and \( \cell X \) for the full subcategories of the slice category \( \slice{\dcpx}{X} \) on pasting diagrams and cells respectively.
\end{dfn}

\begin{rmk}
    By the Yoneda Lemma, \( \cell X \) is the category of elements of \( X \).
\end{rmk}

\begin{dfn} [Subdiagram]
    Let \( X \) be a directed complex and \( f \colon U \to X \) be a pasting diagram.
    A \emph{subdiagram of \( f \)} is the data of a pasting diagram \( g \colon V \to X \) together with a submolecule \( \iota \colon V \incl U \) such that \( f \after \iota = g \).
    We write \( \iota \colon g \submol f \) for the data of a subdiagram, or simply \( g \submol f \) if \( \iota \) is irrelevant or evident from the context. 
\end{dfn}

\begin{dfn} [Pasting and boundaries of pasting diagrams]
    Let \( X \) be a directed complex, and \( f \colon U \to X \) be a pasting diagram.
    For each \( \a \in \set{-, +} \) and \( k \in \nat \), we write \( \bd{k}{\a} f \colon \bd{k}{\a} U \to X \) for the subdiagram of \( f \) obtained by precomposing \( f \) with \( \bd{k}{\a} U \incl U \).
    If \( g \colon V \to X \) is another pasting diagram in \( X \), and \( \bd{k}{+} f = \bd{k}{-} g \), we write \( f \cp{k} g \colon U \cp{k} V \to X \) for the pasting diagram determined by the universal property of the pushout.
\end{dfn}

\subsection{Strict functors of directed complexes}

\noindent Recall that a (small) strict \( \omega \)\nbd category is a set \( C \) together with, for all \( k \in \nat \), \( \a \in \set{-, +} \), globular boundary operators \( \bd{k}{\a} \colon C \to C \), and for all \( k \geq 0 \), a \( k \)\nbd composition operation satisfying a certain list of axioms including associativity, exchange, and unitality axioms.
In particular, each \( x \in C \) has a dimension, defined as the minimal \( k \in \nat \) such that \( \bd{k}{+} x = x = \bd{k}{-} x \), and the elements of dimension \( k \) will be called the \emph{\( k \)\nbd arrows}\footnote{We decided to use the word arrow in order to avoid an extra overloading of the term cell.} of \( C \).
If \( d \in \nat \), we write \( \globe{d} \) for the \( d \)\nbd globe: a strict functor \( \F \colon \globe{d} \to C \) classifies a unique arrow of dimension \( \le d \).

We write \( \omegaCat \) for the category of strict \( \omega \)\nbd categories and strict functors, and, given \( n \in \nat \cup \set{\omega} \), we let \( \nCat{n} \) be its full subcategory on strict \( \omega \)\nbd categories whose underlying globular set is \( n \)\nbd skeletal, that is, there are no \( k \)\nbd arrows, for \( k > n \).
The inclusion \( \nCat{n} \incl \omegaCat \) has a right adjoint \( \skel{n}{(-)} \colon \omegaCat \to \nCat{n} \) given by discarding arrows of dimension \( > n \).

\begin{dfn} [The strict \( \omega \)\nbd category of molecules over a directed complex]
    Let \( X \) be a directed complex.
    The \emph{strict \( \omega \)\nbd category of molecules over \( X \)} is the strict \( \omega \)\nbd category \( \molecin{X} \) whose
    \begin{itemize}
        \item \( k \)\nbd arrows are pasting diagrams \( f \colon U \to X \) with \( \dim f = k \);
        \item boundary operators are given by \( f \mapsto \bd{k}{\a} f \);
        \item composition operation is given by \( (f, g) \mapsto f \cp{k} g \). 
    \end{itemize}
    We call \( \dcpxomega \) the full subcategory of strict \( \omega \)\nbd categories on objects \( \molecin{X} \), for \( X \) a directed complex.
    The assignment \( X \mapsto \molecin{X} \) extends to a functor \( \molecin{-} \colon \dcpx \to \omegaCat \).
\end{dfn}

\begin{rmk} \label{rmk:atom_basis_mol}
    If \( X \) is a directed \( n \)\nbd complex, then by a fibered variant of \cite[Theorem 5.2.5 and Proposition 5.2.7]{hadzihasanovic2024combinatorics}, \( \molecin{X} \) is indeed a strict \( n \)\nbd category.
    By the same result, the objects of \( \cell{X} \) are a basis for \( \molecin{X} \).
    In particular, two parallel strict functors \( \F \) and \( \G \) from \( \molecin{X} \) are equal if and only if, for all cells \( x \colon U \to X \), \( \F(x) = \G(x) \)
\end{rmk}

\begin{rmk}
    When \( X \) is a regular directed complex, then the pasting diagrams in \( X \) are precisely the local embeddings \( f \colon U \to X \) with \( U \) a molecule, hence \( \molecin{X} \) coincides with \cite[Definition 5.2.8]{hadzihasanovic2024combinatorics}.
\end{rmk}

\begin{comm}
    To be completely formal, the arrows of \( \molecin{X} \) should be equivalence classes of local embeddings \( [f \colon U \to P] \), otherwise the axioms of strict \( \omega \)\nbd category are only defined up to isomorphism.
    However, since isomorphisms of molecules are unique when they exist, we may safely forget about this. 
\end{comm}

\begin{exm}
    For all \( k \geq 0 \), \( \molecin{k\arr} \) is the category isomorphic to the poset \( \set{0 < \ldots < k} \).
    More generally, a graph \( X \) is just a directed \( 1 \)\nbd complex and \( \molecin{X} \) is the free category on it.  
    Going one dimension up, \( \molecin{(\arr \celto 2\arr)} \) is Street's second oriental \cite{street1987algebra}. 
\end{exm}

\begin{dfn} [Stricter \( \omega \)\nbd category]
    Let \( C \) be a strict \( \omega \)\nbd category.
    We say that \( C \) is a \emph{stricter \( \omega \)\nbd category} if for all molecules \( U \) and \( V \), all \( k \in \nat \) such that \( U \cp{k} V \) is defined, \( C \) is local with respect to the canonical strict functor
    \begin{equation*}
        \molecin{U} \cup_{\molecin{\bd{k}{+} U}} \molecin{V} \to \molecin{U \cp{k} V}.
    \end{equation*}
    We write \( \somegaCat \) for the full subcategory of \( \omegaCat \) on stricter \( \omega \)\nbd categories.
\end{dfn}

\begin{lem} \label{lem:directed_complex_are_stricter}
    Let \( X \) be a directed complex.
    Then \( \molecin{X} \) is a stricter \( \omega \)\nbd category.
\end{lem}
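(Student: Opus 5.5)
To show that \( \molecin{X} \) is local with respect to
\begin{equation*}
    \phi \colon \molecin{U} \cup_{\molecin{\bd{k}{+} U}} \molecin{V} \to \molecin{U \cp{k} V},
\end{equation*}
we must check that every strict functor \( \F \colon \molecin{U} \cup_{\molecin{\bd{k}{+} U}} \molecin{V} \to \molecin{X} \) extends uniquely along \( \phi \). The plan is to reduce everything to the universal property of the pushout \( U \cp{k} V \) in \( \dcpx \), using that strict functors out of \( \molecin{-} \) of a molecule are essentially determined by, and constructed from, morphisms of directed complexes into \( X \). I will argue in the following order.

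First, uniqueness. By Remark \ref{rmk:atom_basis_mol}, cells form a basis, so a strict functor out of \( \molecin{U \cp{k} V} \) is determined by its values on the cells of \( U \cp{k} V \). Since \( U \cp{k} V \) is the pushout of \( U \) and \( V \) along \( \bd{k}{+} U \), every cell of \( U \cp{k} V \), that is every element \( x \) with \( \clset{x} \) an atom, lies in \( U \) or in \( V \). Hence it lies in the image of \( \phi \), and two extensions that agree after precomposition with \( \phi \) agree on all cells, so they are equal.

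Second, existence. Take \( \F \), and let \( f \colon U \to X \) be its value on the top arrow \( \idd{U} \in \molecin{U} \); this top arrow is the pasting diagram \( U \to U \), viewed in \( \molecin{U} \) via the Yoneda identification. Let \( g \colon V \to X \) be the value of \( \F \) on the top arrow of \( \molecin{V} \). The key claim is that the value of \( \F \) on any arrow \( h \colon W \to U \) of \( \molecin{U} \) equals \( f \after h \). This holds on cells of \( U \) by functoriality of \( \F \) with respect to the boundaries and subdiagrams of \( \idd{U} \). It then extends to all arrows because each arrow is a composite of cells and both assignments preserve composition. With the claim in hand, compatibility of \( \F \) on \( \molecin{\bd{k}{+} U} \) gives \( \bd{k}{+} f = \bd{k}{-} g \) as diagrams in \( X \). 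The pushout then provides \( f \cp{k} g \colon U \cp{k} V \to X \), and postcomposition with it, \( h \mapsto (f \cp{k} g) \after h \), defines a strict functor \( \molecin{U \cp{k} V} \to \molecin{X} \), namely \( \molecin{-} \) applied to \( f \cp{k} g \). Restricting along \( \phi \) recovers \( \F \), by the claim applied on both pieces.

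The main obstacle is the key claim, namely that every strict functor \( \molecin{U} \to \molecin{X} \) is of the form \( \molecin{f} \), or at least that it sends the top arrow to \( f \) and acts by postcomposition. In general this fails if one only knows the values on generators, because a strict functor may send a cell to a composite pasting diagram rather than to a cell. I would therefore avoid that claim and argue more carefully. The values of \( \F \) on the arrows \( \idd{U} \) and \( \idd{V} \) are pasting diagrams \( f \) and \( g \) with \( \bd{k}{+} f = \bd{k}{-} g \), so in \( \molecin{X} \) the composite \( f \cp{k} g \) exists. I would then define the extension as the strict functor from \( \molecin{U \cp{k} V} \) obtained from the fibred version of \cite[Theorem 5.2.5]{hadzihasanovic2024combinatorics}: arrows of \( \molecin{U \cp{k} V} \) are generated by cells under pasting, with relations exactly those holding in molecules. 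The extension is defined on cells by \( \F \), and it is well defined because every relation among cells of \( U \cp{k} V \) is witnessed by a composite pasting diagram. Each such diagram is a submolecule whose image is computed compatibly inside \( \molecin{X} \), again because \( \molecin{X} \) is built from molecules, where composites are computed as pushouts of molecules. This is the step where the argument should concentrate most of its care.
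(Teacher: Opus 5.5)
Your uniqueness argument is correct. Cells form a basis of \( \molecin{U \cp{k} V} \) (Remark~\ref{rmk:atom_basis_mol}), and every cell of \( U \cp{k} V \) factors through \( U \) or \( V \). You are also right to drop the ``key claim''. A strict functor \( \molecin{U} \to \molecin{X} \) is in general only of the form \( \molecin{\pcell{\F}} \after \hat{\F} \) with \( \hat{\F} \) active. Even after that reduction you would still need an active functor \( \molecin{U \cp{k} V} \to \molecin{\F(U) \cp{k} \F(V)} \), which is the same problem.

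\textbf{The gap is the existence step.} It carries the whole content of the lemma and is only asserted. A ``presentation of \( \molecin{U \cp{k} V} \) by cells, with relations exactly those holding in molecules'' is tautological: those relations are all equalities between composites of cells. So ``check that the relations hold in \( \molecin{X} \)'' just restates the lemma. The case that matters is a submolecule \( W \submol U \cp{k} V \) with two splits \( W = A \cp{j} B = C \cp{i} D \) whose equality does not follow from the strict \( \omega \)\nbd categorical axioms. This genuinely occurs, which is why \( \molecin{U} \) need not be a polygraph; see \cite[Example 8.2.20]{hadzihasanovic2024combinatorics} and Comment~\ref{comm:sd_u_not_contractible}. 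You must prove that the two composites of images agree in \( \molecin{X} \).

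What is missing is a definition of the extension \( \G \) on every arrow \( w \colon W \to U \cp{k} V \) that does not refer to any decomposition. One way to supply it:
\begin{enumerate}
\item First, \( \F \) determines a functor \( \cell (U \cp{k} V) \to \Pd X \), by gluing the functors coming from \( \F \) on \( \molecin{U} \) and \( \molecin{V} \), which agree on \( \bd{k}{+} U \). The morphisms \( \F(\mapel{y}) \to \F(\mapel{x}) \) in \( \Pd X \), for \( y \le x \), must be extracted from the action of \( \F \) on boundaries. Commuting over \( X \) does not determine them, because pasting diagrams in \( X \) need not be monomorphisms (think of a loop).
\item Then set \( \G(w) \eqdef \colim_{y \in W} \F(w \after \mapel{y}) \) over \( X \). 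This is the left Kan extension underlying the paper's evaluation at a diagram.
\item By induction on a split \( W = W_1 \cp{j} W_2 \), use that this extension is cocontinuous and that pasting in \( \molecin{X} \) is a pushout of molecules in \( \dcpx \). This shows that \( \G(w) \) is a molecule, that \( \G(w) = \G(w_1) \cp{j} \G(w_2) \), and that \( \G \) commutes with boundaries, using strictness of \( \F \) on cells.
\item Finally, show that \( \G \) restricts to \( \F \) on \( \molecin{U} \) and \( \molecin{V} \). That is, \( \F(h) \) is the colimit of the values of \( \F \) on the cells of \( h \), again by induction using that \( \F \) preserves composites.
\end{enumerate}
Independence from the chosen split is then automatic, because the colimit does not see it. This turns the ``composites are computed as pushouts'' intuition in your last paragraph into an actual argument; without it, your existence step remains unproved.
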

\begin{proof}
    The same proof as \cite[Lemma 3.6]{chanavat2025stricter} applies.
\end{proof}

\begin{lem} \label{lem:strict_functor_preserve_basis_are_cellular}
    Let \( X, Y \) be directed complexes and \( \F \colon \molecin{X} \to \molecin{Y} \) be a strict functor such that for all cells \( x \colon U \to X \), the pasting diagram \( \F(x) \) is a cell of \( Y \) with \( \dim \F(x) = \dim x \).
    Then there exists a morphism \( f \colon X \to Y \) of directed complexes such that \( \F = \molecin{f} \).
\end{lem}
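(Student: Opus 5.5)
We build \( f \) componentwise from the action of \( \F \) on cells, check that it is natural, and then invoke the basis property of Remark \ref{rmk:atom_basis_mol} to identify \( \F \) with \( \molecin{f} \). For each atom \( U \) and each cell \( x \colon U \to X \), the hypothesis says that \( \F(x) \) is a pasting diagram \( y \colon V \to Y \) with \( V \) an atom and \( \dim V = \dim U \). We then want a canonical isomorphism \( V \cong U \), so that \( \F(x) \) can be read as an element of \( Y(U) \); this is how we define \( f_U(x) \eqdef \F(x) \) after reindexing along that isomorphism.

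To obtain \( V \cong U \), we argue by induction on \( \dim U \). Since \( \F \) is a strict functor, it commutes with boundaries, so \( \bd{k}{\a} \F(x) = \F(\bd{k}{\a} x) \). The boundary \( \bd{}{\a} U \eqdef \bd{\dim U - 1}{\a} U \) is a molecule, and \( \bd{}{\a} x \) is a pasting of lower-dimensional cells of \( X \), namely the restrictions of \( x \) to the maximal elements of \( \bd{}{\a} U \). The plan is to prove the following stronger inductive statement: for every pasting diagram \( g \colon W \to X \), the shape of \( \F(g) \) is isomorphic to \( W \) via an isomorphism compatible with the decomposition into cells.

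\begin{itemize}
\item For cells, the claim follows from boundaries together with the fact that an atom is determined by its round boundary: \( U = \bd{}{-} U \celto \bd{}{+} U \).
\item For pastings, \( \F(g \cp{k} h) = \F(g) \cp{k} \F(h) \). By induction on submolecules (Comment \ref{comm:induction_submolecules}), the shape of \( \F(g \cp{k} h) \) is then the pasting of the shapes, hence isomorphic to \( W \). Isomorphisms of molecules are unique when they exist, so all of these isomorphisms are compatible.
\end{itemize}

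Applying this to \( \bd{}{\a} x \) gives \( \bd{}{\a} V \cong \bd{}{\a} U \). Because \( V \) is an atom of the same dimension, \( V \cong \bd{}{-} V \celto \bd{}{+} V \cong U \). This step also needs the fact that the boundaries of an atom are round, so that \( \celto \) is defined and unique.

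Naturality comes next: for a local embedding \( \iota \colon U' \to U \) between atoms, we need \( f_{U'}(x \after \iota) = f_U(x) \after \iota \). Every such \( \iota \) has image \( \clset{z} \) for some \( z \in U \). The cell \( x \after \iota \) is obtained from \( x \) by iterated boundaries and by restriction to submolecules arising in pasting decompositions of those boundaries. The preceding analysis shows that \( \F \) preserves exactly these operations under the canonical identifications. Hence \( \F(x \after \iota) \) is the restriction of \( \F(x) \) along the image of \( \iota \), which is naturality. Once \( f \colon X \to Y \) is defined, \( \molecin{f} \) and \( \F \) agree on all cells by construction, so \( \F = \molecin{f} \) by Remark \ref{rmk:atom_basis_mol}.

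The main obstacle is this naturality step, in particular for faces of \( U \) that lie in neither the input nor the output boundary. Such faces are only reached through a specific decomposition of a boundary into a pasting of cells. We must check that \( \F \) respects the identification of cells of \( \F(g) \) with cells of \( g \), and the plan is to track this through the uniqueness of molecule isomorphisms, arguing along the lines of the corresponding cellularity result in \cite{chanavat2025stricter}.
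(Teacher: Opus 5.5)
Your argument is correct, but it is organised differently from the paper's. The paper inducts on skeleta, assuming \( \skel{n-1}{\F} = \molecin{\skel{n-1}{f}} \) for a morphism \( \skel{n-1}{f} \) of directed complexes. For an \( n \)-cell \( x \colon U \to X \) this gives \( \bd{n-1}{\a}\F(x) = \F(\bd{n-1}{\a}x) = \skel{n-1}{f} \after \bd{n-1}{\a}x \), which has shape \( \bd{n-1}{\a}U \). So \( \F(x) \) has shape \( U \), by the same ``an atom is determined by its boundaries'' step you use. Defining \( \skel{n}{f}(x) \eqdef \F(x) \) is then automatically natural, because \( \F(x) \) restricts to \( \skel{n-1}{f} \after \restr{x}{\bd{}{}U} \) on \( \bd{}{}U \) and \( \skel{n}{X} \) is the pushout of Definition \ref{dfn:skeleton_dcpx}. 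The tracking of cells through pasting decompositions, which you single out as the main obstacle, is thus absorbed into the fact that \( \molecin{\skel{n-1}{f}} \) acts by postcomposition.

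Your route also works: you first show, by induction on submolecules, that \( \F \) preserves shapes of all pasting diagrams compatibly with their cells, and then define \( f \) and check naturality. It describes \( \F \) explicitly on every pasting diagram, at the price of carrying the isomorphisms around. In fact your obstacle is already settled by your own induction once compatibility is stated precisely: for each \( g \colon W \to X \), the unique isomorphism \( \phi_g \) from \( W \) to the shape of \( \F(g) \) should satisfy that \( \F(g \after \mapel{z}) \) is the restriction of \( \F(g) \) along \( \phi_g \after \mapel{z} \) for all \( z \in W \). In an atom, every element other than the greatest one lies in \( \bd{}{-}U \cup \bd{}{+}U \); there are no other faces. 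So the atom case inherits compatibility from the pasting diagrams \( \bd{}{\a}x \), whose interior faces are reached by the splitting case via uniqueness of isomorphisms of molecules. Naturality along an embedding \( U' \incl U \) is then the instance where \( z \) is the image of the greatest element of \( U' \), and no appeal to \cite{chanavat2025stricter} is needed.
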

\begin{proof}
    We prove, by induction on \( n \geq -1 \), that the statement holds of the \( n \)\nbd skeleton \( \skel{n}{\F} \colon \molecin{\skel{n}{X}} \to \molecin{\skel{n}{Y}} \) of \( \F \).
    When \( n = -1 \), the statement is clear.
    Inductively, let \( n > 0 \), and suppose that \( \skel{n - 1}{\F} = \molecin{\skel{n - 1}{f}} \), for a morphism \( \skel{n - 1}{f} \colon \skel{n - 1}{X} \to \skel{n - 1}{Y} \). 
    Let \( x \colon U \to X \) be an \( n \)\nbd cell of \( X \), write \( \bd{}{} x \) for the restriction of \( x \) along the total boundary \( \bd{n - 1}{} U \incl U \) of \( U \).
    Then by inductive hypothesis, \( \bd{}{}\F(x) = \molecin{\skel{n - 1}{f}}(\bd{}{}x) \colon \bd{}{} U \to Y \).
    Since \( \F \) is dimension-preserving, the cell \( \F(x) \) has type \( U \to Y \), and we may define \( \skel{n}{f}(x) \eqdef \F(x) \), which extends \( \skel{n - 1}{f} \) to a well-defined morphism \( \skel{n}{f} \colon \gr{n}{X} \to \gr{n}{Y} \) of directed complexes such that \( \skel{n}{\F} = \molecin{\skel{n}{f}} \).
    This concludes the induction and the proof. 
\end{proof}

\begin{lem} \label{lem:cellular_colimits_are_omega_colimits}
    The functor \( \molecin{-} \colon \dcpx \to \somegaCat \) is a pseudomonic left-adjoint.
\end{lem}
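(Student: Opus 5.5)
We have to show two things: that \( \molecin{-} \colon \dcpx \to \somegaCat \) has a right adjoint, and that it is pseudomonic, i.e.\ faithful and full on isomorphisms. By Lemma \ref{lem:directed_complex_are_stricter}, the functor does land in \( \somegaCat \).

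\emph{Left adjoint.} Define the candidate right adjoint \( \fun{R} \colon \somegaCat \to \dcpx \) by the nerve formula
\begin{equation*}
    \fun{R}(C)(U) \eqdef \somegaCat(\molecin{U}, C)
\end{equation*}
for each atom \( U \), with functoriality in \( U \) given by \( \molecin{-} \) applied to morphisms of atoms. The adjunction holds formally if \( \molecin{-} \) preserves colimits, since \( \molecin{-} \) is then the left Kan extension of its restriction to \( \atomcat \) along the Yoneda embedding. So I will show that for every directed complex \( X \), written as the colimit of its category of cells \( \cell X \), the canonical comparison
\begin{equation*}
    \colim_{(x \colon U \to X) \in \cell X} \molecin{U} \to \molecin{X}
\end{equation*}
is an isomorphism in \( \somegaCat \).

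To see this, take \( C \) stricter and a compatible family of strict functors \( \F_x \colon \molecin{U} \to C \), one for each cell \( x \). I must show the family extends uniquely to \( \molecin{X} \).
\begin{itemize}
    \item \emph{Uniqueness} is Remark \ref{rmk:atom_basis_mol}: the cells form a basis of \( \molecin{X} \).
    \item \emph{Existence.} Each pasting diagram \( f \colon V \to X \), with \( V \) a molecule, is sent to a value determined by induction on submolecules (Comment \ref{comm:induction_submolecules}). If \( V \) is an atom, \( f \) is a cell and we use \( \F_f \) on the top element. Otherwise \( V \) splits as \( A \cp{k} B \), and we set the value of \( f \) to be the composite of the values on the restrictions of \( f \) to \( A \) and \( B \).
\end{itemize}
The key point is that this is independent of the chosen split. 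That is exactly what locality of \( C \) with respect to \( \molecin{A} \cup_{\molecin{\bd{k}{+} A}} \molecin{B} \to \molecin{A \cp{k} B} \) guarantees. By induction, the family defines a unique strict functor \( \molecin{V} \to C \), and the composite of the two halves factors uniquely through \( \molecin{V} \), so the value on \( f \) is canonical. Naturality in \( f \) and compatibility with boundaries and compositions then follow from the same uniqueness.

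This is the main obstacle: making precise that \( \molecin{V} \) is, in \( \somegaCat \), the colimit of the \( \molecin{U} \) over its atoms. I would prove this by induction on submolecules, using stricterness at each split, and presumably mirroring the proof of \cite[Lemma 3.6]{chanavat2025stricter}. With it in hand, the general \( X \) follows by gluing over all pasting diagrams.

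\emph{Pseudomonic.}
\begin{itemize}
    \item \emph{Faithfulness.} If \( \molecin{f} = \molecin{g} \), then \( f \) and \( g \) agree on cells, since a cell \( x \) is sent to \( f \after x \). Hence \( f = g \) by Yoneda.
    \item \emph{Fullness on isomorphisms.} Let \( \F \colon \molecin{X} \to \molecin{Y} \) be an isomorphism. Then \( \F \) preserves dimension. It also preserves atoms, because cells are exactly the arrows that admit no nontrivial decomposition as a composite: an isomorphism of strict \( \omega \)\nbd categories preserves indecomposability, and in \( \molecin{X} \) the indecomposable arrows are precisely the cells, by the basis property. Lemma \ref{lem:strict_functor_preserve_basis_are_cellular} therefore produces \( f \) with \( \F = \molecin{f} \), and likewise \( g \) for \( \invrs{\F} \). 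Faithfulness then gives \( g \after f = \idd{} \) and \( f \after g = \idd{} \), so \( f \) is an isomorphism.
\end{itemize}
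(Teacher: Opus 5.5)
Your proof is correct and uses the same ingredients as the paper. Uniqueness comes from the fact that cells form a basis (Remark \ref{rmk:atom_basis_mol}). Existence comes from gluing, for each pasting diagram \( f \colon V \to X \), the matching family \( \set{\F_{f \after y}}_{y} \) using stricterness. Pseudomonicity is argued the same way; the paper cites \cite[Proposition 6.2.37]{hadzihasanovic2024combinatorics} for the fact that an isomorphism sends cells to cells of the same dimension, and your indecomposability argument makes that explicit.

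What differs is the organisation. The paper first reduces colimit preservation to the preservation of single cell attachments along \( \bd{}{} U \incl U \), via the skeletal filtration of Definition \ref{dfn:skeleton_dcpx} and the finiteness of molecules. It then handles those pushouts by citing the matching-family lemma \cite[2.14 and Lemma 2.18]{chanavat2025stricter}. You instead verify the density formula \( \colim_{\cell X} \molecin{U} \cong \molecin{X} \) directly. You also prove the matching-family statement for molecules yourself, by induction on submolecules from the locality definition of stricter \( \omega \)\nbd categories.

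Your route is more self-contained and exhibits the right adjoint explicitly as a nerve. It also avoids the routine induction on dimension hidden in the skeletal reduction. The paper's route is shorter because it reuses an existing lemma, and it is phrased in terms of the cellular extensions that reappear in Theorem \ref{thm:cellular_extension_of_frame_acyclic}.

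Two points of precision:
\begin{enumerate}
    \item For a fixed split \( V = A \cp{k} B \), locality of \( C \) gives existence of the extension to \( \molecin{V} \). Independence from the choice of split comes instead from the basis property of \( \molecin{V} \): two such extensions agree on every cell, since each cell of \( V \) lies in \( A \) or in \( B \).
    \item The claim that the indecomposable arrows are exactly the cells is best justified through shapes rather than the basis property. A non-atomic molecule splits into proper submolecules \( A \), \( B \) of dimension \( > k \). Such a pasting \( A \cp{k} B \) has at least two maximal elements, so it cannot be an atom.
\end{enumerate}
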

\begin{proof}
    First, by Lemma \ref{lem:directed_complex_are_stricter}, \( \molecin{-} \colon \dcpx \to \omegaCat \) factors through \( \somegaCat \).
    We now show that \( \molecin{-} \) is left adjoint, or equivalently, since \( \somegaCat \) is cocomplete, that \( \molecin{-} \) is the left Kan extension of its restriction along the Yoneda embedding \( \atomcat \incl \dcpx \).
    Since any molecule is finite, \( \molecin{-} \) preserves transfinite compositions.
    Hence by Definition \ref{dfn:skeleton_dcpx}, it is enough to show that \( \molecin{-} \) preserve pushouts of the form
    \begin{center}
        \begin{tikzcd}
            {\bd{}{} U} & U \\
            X & Y
            \arrow[""{name=0, anchor=center, inner sep=0}, hook, from=1-1, to=1-2]
            \arrow[from=1-1, to=2-1]
            \arrow[from=1-2, to=2-2]
            \arrow[hook, from=2-1, to=2-2]
            \arrow["\lrcorner"{anchor=center, pos=0.125, rotate=180}, draw=none, from=2-2, to=0]
        \end{tikzcd}
    \end{center}
    where \( U \) is an atom and \( X \) and \( Y \) are directed complexes.
    Since the category of elements functor preserves colimits, the previous pushout is preserved by \( \cell \).
    Since the objects \( \cell{Y} \) are a basis of \( \molecin{Y} \), this shows uniqueness of the universal property.
    For existence, given a stricter \( \omega \)\nbd category \( C \) any pair of strict functors \( \F \colon \molecin{X} \to C \) and \( \G \colon \molecin{U} \to C \) define, by the previous point, for each cell \( y \colon U \to Y \), a strict functor \( \fun{H}_y \colon \molecin{U} \to C \).
    For each pasting diagram \( f \colon V \to Y \), the collection \( \set{\fun{H}_{f \after \iota_x}}_{x \in V} \) is a matching family in the sense of \cite[2.14]{chanavat2025stricter}, hence we may conclude by a simple variant of \cite[Lemma 2.18]{chanavat2025stricter}.
    
    Now let \( f, f' \colon X \to Y \) be two morphisms of directed complexes such that \( \molecin{f} = \molecin{f'} \), and let \( x \colon U \to X \) be a cell in \( X \).
    Then 
    \begin{equation*}
       f(x) = \molecin{f}(x) = \molecin{f'}(x) = f'(x). 
    \end{equation*}
    Thus \( f = f' \).
    Next, reasoning as in the proof of \cite[Proposition 6.2.37]{hadzihasanovic2024combinatorics}, any isomorphism \( \F \colon \molecin{X} \to \molecin{Y} \) has to send the object of \( \cell X \) to the objects of \( \cell Y \) with the same dimension.
    By Lemma \ref{lem:strict_functor_preserve_basis_are_cellular}, \( \F \) is of the form \( \molecin{f} \) for a morphism \( f \colon X \to Y \) for directed complexes, which is also an isomorphism, since the inverse of \( \F \) is also of the form \( \molecin{f'} \) by the same reasoning.
    This shows at once that \( \molecin{-} \) reflects, and is full on, isomorphisms.
    This concludes the proof.  
\end{proof}

\begin{comm}
    In general, the subcategory inclusion \( \dcpx \incl \omegaCat \) does \emph{not} preserve colimits.
\end{comm}

\noindent The previous Lemma indicates that we can identify \( \dcpx \) with the wide subcategory of \( \dcpxomega \) on strict functors of the form \( \molecin{f} \), for \( f \) a morphism of directed complexes.
Together with \cite[Theorem 6.2.35]{hadzihasanovic2024combinatorics}, we have the following diagram of subcategories
\begin{center}
    \begin{tikzcd}
        \rdcpxequal & \rdcpxmap \\
        \dcpx & \dcpxomega.
        \arrow[hook, from=1-1, to=1-2]
        \arrow[hook, from=1-1, to=2-1]
        \arrow[hook, from=1-2, to=2-2]
        \arrow[hook, from=2-1, to=2-2]
    \end{tikzcd}
\end{center}
In particular, we may import terminology that applies to morphisms of directed complexes, or maps of regular directed complexes, to strict functors of the form \( \molecin{f} \).
For instance, we may say that a functor \( \F \colon \molecin{U} \to \molecin{X} \) is a pasting diagram if it is of the form \( \molecin{f} \), for a pasting diagram \( f \colon U \to X \).
If instead \( U \) and \( X \) are regular directed complexes, and \( f \colon U \to X \) is a surjective map, then we would say that \( \F \) is a collapse.

Next, given a strict functor \( \F \colon \molecin{X} \to \molecin{Y} \) and a diagram \( f \colon U \to P \), then \( \F(f) \) only a priori makes sense when \( U \) is a molecule, since then, \( f \) is a pasting diagram.
The next definition extends this definition when \( U \) is an arbitrary directed complex.

\begin{dfn} [Evaluation at a diagram]
    Let \( \F \colon \molecin{X} \to \molecin{Y} \) be a strict functor of directed complexes. 
    Then for any pasting diagram \( f^- \cp{k} f^+ \colon U^- \cp{k} U^+ \to X \) of \( X \) and for each \( \a \in \set{-, +} \), there is a dashed submolecule inclusion making the triangle
    \begin{center}
        \begin{tikzcd}[column sep=large]
            {\F(U^\a)} & \\
            {\F(U^-) \cp{k} \F(U^+)} & Y
            \arrow["{F(\iota^\a)}"', dashed, from=1-1, to=2-1]
            \arrow["{\F(f^\a)}", from=1-1, to=2-2]
            \arrow["{\F(f^- \cp{k} f^+)}"', from=2-1, to=2-2]
        \end{tikzcd}
    \end{center}
    commute.
    Since any embedding of atoms is a composition of submolecule inclusions of the form \( \iota^\a \), this assignment extends by induction on the dimension to a functor of type \( \cell X \to \Pd Y \).
    Since \( \slice{\dcpx}{X} \cong \Psh(\cell X) \) and \( \Pd Y \) is a subcategory of \( \slice{\dcpx}{Y} \), this functor further extends, by left Kan extension, to a cocontinuous functor of type
    \begin{equation*}
        \slice{\dcpx}{X} \to \slice{\dcpx}{Y},
    \end{equation*}
    which agrees with \( \F \) on pasting diagrams.
    Thus given a diagram \( f \colon U \to X \) in \( X \), we will again write \( \F(f) \colon \F(U) \to Y \) for the diagram in \( Y \) given by the action on object of that functor, and given a diagram \( g \colon V \to U \), we write \( \restr{\F}{f}(g) \) for its action on the morphism \( g \colon f \after g \to f \) of \( \slice{\dcpx}{X} \).

    This construction defines a strict functor \( \restr{\F}{f} \colon \molecin{U} \to \molecin{\F(U)} \), called the \emph{restriction of \( \F \) along \( f \)}, and fits in the following commutative square of strict functors:
    \begin{center}
        \begin{tikzcd}
            {\molecin{U}} & {\molecin{\F(U)}} \\
            \molecin{X} & \molecin{Y}.
            \arrow["{\restr{\F}{f}}", from=1-1, to=1-2]
            \arrow["{\molecin{f}}"', from=1-1, to=2-1]
            \arrow["{\molecin{\F(f)}}", from=1-2, to=2-2]
            \arrow["\F"', from=2-1, to=2-2]
        \end{tikzcd}
    \end{center}
    We may also write \( \restr{\F}{U} \) for \( \restr{\F}{f} \) when \( f \) is clear from the context.
\end{dfn}

\begin{dfn} [Principal diagram]
    Let \( \F \colon \molecin{U} \to \molecin{V} \) be a strict functor of directed complexes.
    The \emph{principal diagram} of \( \F \) is the diagram
    \begin{equation*}
        \pcell{\F} \eqdef \F(\idd{U}) \colon \F(U) \to V.
    \end{equation*}
    When \( U \) is a molecule, then the principal diagram is an arrow of \( \molecin{V} \), which we call the \emph{principal pasting diagram} of \( \F \).
\end{dfn}

\begin{dfn} [Quotient-free and active strict functor] \label{dfn:active_quotient-free}
    Let \( \F \colon \molecin{X} \to \molecin{Y} \) be a strict functors of directed complexes.
    We say that \( \F \) is \emph{quotient-free} if the principal diagram \( \pcell{\F} \) is an embedding.
    If furthermore \( \pcell{\F} = \idd{Y} \), we say that \( \F \) is \emph{active}.
    The \emph{active part of \( \F \)} is the active strict functor \( \hat{\F} \eqdef \restr{\F}{\pcell{\F}} \), which fits in the following diagram
    \begin{center}
        \begin{tikzcd}
            & \molecin{\F(X)} \\
            \molecin{X} && \molecin{Y}.
            \arrow["{\molecin{\pcell \F}}", from=1-2, to=2-3]
            \arrow["{\hat\F}", from=2-1, to=1-2]
            \arrow["\F"', from=2-1, to=2-3]
        \end{tikzcd}
    \end{center}
\end{dfn}

\begin{rmk}
    The restriction \( \restr{\F}{f} \) of \( \F \) along any diagram \( f \colon U \to X \) is active.
\end{rmk}

\begin{exm}
    A morphism \( [k] \to [n] \) in the simplex category is active in the usual sense if and only if its associated strict functor \( \molecin{k\arr} \to \molecin{n\arr} \) is active in the sense of the previous definition.
\end{exm}

\begin{lem} \label{lem:active_strict_functor_equal_iff_same_shape}
    Let \( \F, \G \colon \molecin{U} \to \molecin{V} \) be two active strict functors of molecules such that for all submolecules \( \iota \colon W \submol U \), the molecules \( \F(W) \) and \( \G(W) \) are equal.
    Then \( \F = \G \).
\end{lem}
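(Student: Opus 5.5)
By Remark \ref{rmk:atom_basis_mol}, it suffices to show that \( \F(x) = \G(x) \) as pasting diagrams in \( V \) for every cell \( x \colon \imel{U}{x} \incl U \), that is, for every \( x \in U \). Each such cell is a submolecule inclusion, since an embedding of molecules with atomic domain is one. So the hypothesis already gives \( \F(\imel{U}{x}) = \G(\imel{U}{x}) \) as molecules. What remains is to show that the two maps into \( V \), namely \( \molecin{\pcell{\restr{\F}{x}}} \) and \( \molecin{\pcell{\restr{\G}{x}}} \), coincide. Concretely, for each submolecule \( \iota \colon W \submol U \) we will compare the pasting diagrams \( \F(\iota) \) and \( \G(\iota) \colon \F(W) = \G(W) \to V \).

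I would prove, by induction on submolecules (Comment \ref{comm:induction_submolecules}) taken top-down, that \( \F(\iota) = \G(\iota) \) for all \( \iota \colon W \submol U \). The base case is \( W = U \). Here activity gives \( \pcell{\F} = \idd{V} = \pcell{\G} \), so both \( \F(\idd{U}) \) and \( \G(\idd{U}) \) are the identity on \( V \), and \( \F(U) = \G(U) = V \).

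For the inductive step, suppose \( W \submol W' \) is a one-step inclusion, say \( W' \) splits as \( W \cp{k} W'' \) or as \( W'' \cp{k} W \), and that \( \F(W' \incl U) = \G(W' \incl U) \). By the construction in the definition of evaluation at a diagram, \( \F(W \incl U) \) is the composite of \( \F(W' \incl U) \) with the canonical submolecule inclusion \( \F(W) \incl \F(W) \cp{k} \F(W'') = \F(W') \). The same holds for \( \G \). These inclusions are determined by the molecules alone: the image of the first factor of a pasting \( A \cp{k} B \) is canonical, and isomorphisms of molecules are unique. By hypothesis \( \F(W) = \G(W) \) and \( \F(W'') = \G(W'') \), and both pastings are along the same \( k \). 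Hence the two inclusions agree, and so \( \F(W \incl U) = \G(W \incl U) \). Since every submolecule inclusion into \( U \) is a finite composite of such one-step inclusions, the claim follows for all \( W \), and in particular for all atoms \( \imel{U}{x} \).

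The main obstacle is the inductive step. The inclusion of \( \F(W) \) into \( \F(W') \) depends only on the decomposition \( W' = W \cp{k} W'' \), but as an embedding it could a priori depend on the chosen splitting when \( W \) sits in \( W' \) in several ways. I would handle this by fixing the embedding \( W \incl W' \) itself, rather than an abstract isomorphism type, and by using that the evaluation functor \( \slice{\dcpx}{U} \to \slice{\dcpx}{V} \) is functorial. This makes the value on \( \iota \) independent of the chosen chain of one-step inclusions. The canonical inclusions are then rigid because a molecule has at most one isomorphism to any given molecule. This rigidity should settle the comparison without further work.
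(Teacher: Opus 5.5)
Your proof is correct, but it is organised differently from the paper's.

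The paper runs a bottom-up induction on submolecules \( \iota \colon W \submol U \), proving that the restricted active functors \( \restr{\F}{\iota} \) and \( \restr{\G}{\iota} \) coincide. It has two cases:
\begin{itemize}
\item if \( U \) is an atom, the restrictions to its boundaries agree by induction, and the top cell agrees by activity;
\item if \( U \) splits as \( A \cp{k} B \), the restrictions to \( A \) and \( B \) agree by induction, and the embeddings \( \F(A) \incl \F(A) \cp{k} \F(B) = V \) and \( \G(A) \incl \G(A) \cp{k} \G(B) = V \) agree by uniqueness of isomorphisms of molecules.
\end{itemize}

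You instead descend top-down. You compare the pasting diagrams \( \F(\iota) \) and \( \G(\iota) \) into \( V \) directly, along a chain of factor inclusions, using activity only once (at \( W = U \)) and Remark \ref{rmk:atom_basis_mol} only at the end. The rigidity fact doing the work is the same in both proofs.

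What your route buys is that it needs no separate atom case. Boundary cells of an atom are reached through unit-law splittings such as \( \bd{k}{-} U \incl \bd{k}{-} U \cp{k} U \cong U \). There the complementary factor is \( U \) itself, so the hypothesis on it is trivial. Like the paper, you do use two facts:
\begin{itemize}
\item every submolecule inclusion is a finite composite of isomorphisms and factor inclusions, which holds by the inductive definition;
\item the hypothesis applies to the complementary factor \( W'' \incl W' \incl U \), which is itself a submolecule of \( U \).
\end{itemize}

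Your worry about dependence on the chosen chain is unnecessary. \( \F(\iota) \) is simply the value of \( \F \) on the arrow \( \iota \), and the evaluation triangle holds for whichever chain you pick. The one step worth spelling out is this: the unique isomorphism \( \F(W') \cong \G(W') \) must be the pasting of the unique isomorphisms \( \F(W) \cong \G(W) \) and \( \F(W'') \cong \G(W'') \). That is exactly why the two factor inclusions agree, since uniqueness of isomorphisms alone does not make embeddings unique. The paper leaves the same point implicit.
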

\begin{proof}
    We show by induction on the submolecules \( \iota \colon W \submol U \) of \( U \) that \( \restr{\F}{\iota}, \restr{\G}{\iota} \colon W \to \F(W) = \G(W) \) are equal.
    This is clear for the base case where \( \iota \) is of type \( \pt \submol U \), since both \( \restr{\F}{\iota} \) and \( \restr{\G}{\iota} \) are the identity.
    Suppose the statement holds of all proper submolecules of \( U \).
    If \( U \) is an atom of dimension \( n \), we have the equality
    \begin{equation*}
        \restr{\F}{\bd{}{\a} U} = \restr{\G}{\bd{}{\a} U} \colon \molecin{\bd{}{\a} U} \to \molecin{\bd{n - 1}{\a} V},
    \end{equation*}
    since these two strict functors agree on the objects of \( \cell \bd{}{\a} U \) by the inductive hypothesis.
    Since \( \F \) and \( \G \) are active, \( \F(\idd{U}) = \G(\idd{U}) \).
    Thus, \( \F = \G \).

    Now suppose that \( U \) splits as \( A \cp{k} B \), for proper submolecules \( \iota_A \colon A \submol U \) and \( \iota_B \colon B \submol U \). 
    By inductive hypothesis, \( \restr{\F}{\iota_A} = \restr{\G}{\iota_A} \) and \( \restr{\F}{\iota_B} = \restr{\G}{\iota_B} \).
    To conclude, it is therefore enough to show that \( \F(\iota_A) = \G(\iota_A) \) and \( \F(\iota_B) = \G(\iota_B) \).
    Since \( \F \) and \( \G \) are active strict functors, we have 
    \begin{equation*}
        \F(A) \cp{k} \F(B) = V = \G(A) \cp{k} \G(B).
    \end{equation*}
    Since furthermore \( \F(A) = \G(A) \) and \( \F(B) = \G(B) \), the two embeddings 
    \begin{equation*}
         \F(\iota_A) \colon \F(A) \incl \F(A) \cp{k} \F(B) \quad\text{ and }\quad \G(\iota_A) \colon \G(A) \incl \G(A) \cp{k} \G(B)
    \end{equation*}
    are equal. 
    Similarly \( \F(\iota_B) = \G(\iota_B) \).
    This concludes the proof. 
\end{proof}

\begin{prop} \label{prop:ofs_functor_of_dcpx}
    The category \( \dcpxomega \) has an orthogonal factorisation system whose left class consists of the active strict functors and whose right class consists of the diagrams.
\end{prop}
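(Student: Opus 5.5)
We must show three things for the classes in \( \dcpxomega \): every strict functor factors as an active functor followed by a diagram, both classes contain the isomorphisms and are closed under composition, and actives are left orthogonal to diagrams.

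\emph{Factorisation.} Given \( \F \colon \molecin{X} \to \molecin{Y} \), Definition \ref{dfn:active_quotient-free} already provides \( \F = \molecin{\pcell{\F}} \after \hat{\F} \). Here \( \hat{\F} = \restr{\F}{\pcell{\F}} \) is active by the remark following that definition, and \( \molecin{\pcell{\F}} \) is by definition a diagram. Along the way we note two facts. First, a diagram \( \molecin{f} \) is active exactly when \( f = \idd{} \); so an isomorphism, being of the form \( \molecin{f} \) by Lemma \ref{lem:cellular_colimits_are_omega_colimits}, lies in both classes. Second, composites of diagrams are diagrams, since \( \dcpx \) is a subcategory. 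For composites of actives we check that principal diagrams compose: for active \( \F \) and \( \G \), the left Kan extension defining evaluation at diagrams is functorial in \( \G \after \F \), so \( (\G \after \F)(\idd{}) = \G(\F(\idd{})) = \G(\idd{}) = \idd{} \).

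\emph{Orthogonality.} Take a commutative square \( \molecin{g} \after \fun{A} = \G \after \F \), where \( \F \colon \molecin{X} \to \molecin{Z} \) is active, \( g \colon W \to Y \) is a diagram, \( \fun{A} \colon \molecin{X} \to \molecin{W} \) and \( \G \colon \molecin{Z} \to \molecin{Y} \). We must produce a unique \( \fun{L} \colon \molecin{Z} \to \molecin{W} \) with \( \fun{L}\F = \fun{A} \) and \( \molecin{g}\fun{L} = \G \).

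\emph{Uniqueness.} Since \( \molecin{-} \) is faithful (Lemma \ref{lem:cellular_colimits_are_omega_colimits}), and by the cell-basis remark (Remark \ref{rmk:atom_basis_mol}) \( \fun{L} \) is determined by the pasting diagrams \( \fun{L}(z) \colon \fun{L}(U) \to W \) for cells \( z \colon U \to Z \), it suffices to recover each such pasting diagram. Post-composition with \( g \) recovers \( \G(z) = g \after \fun{L}(z) \). To pin down the factorisation through \( g \), we use activeness of \( \F \): \( Z \) is covered by the images under \( \F \) of the cells of \( X \). More precisely, \( \F(\idd{X}) = \idd{Z} \), so the evaluation functor \( \slice{\dcpx}{X} \to \slice{\dcpx}{Z} \) sends the terminal object to the terminal object, and the colimit of the \( \F(x) \) over \( \cell X \) is \( Z \). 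Applying \( \fun{L} \) and using \( \fun{L}\F = \fun{A} \), the principal diagram \( \fun{L}(\idd{Z}) \colon \fun{L}(Z) \to W \) must be the colimit of the diagrams \( \fun{A}(x) \), that is, it equals \( \fun{A}(\idd{X}) = \pcell{\fun{A}} \). The restriction of \( \fun{L} \) to each cell of \( Z \) is then forced to be the corresponding restriction of \( \hat{\G} \) transported along this identification.

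\emph{Existence.} This is where the main work lies. The square applied to \( \idd{X} \) gives \( g \after \pcell{\fun{A}} = \G(\F(\idd{X})) = \pcell{\G} \), so \( \G(Z) = \fun{A}(X) \) as directed complexes over \( Y \) up to the factorisation through \( g \). We would then define
\begin{equation*}
    \fun{L} \eqdef \molecin{\pcell{\fun{A}}} \after \hat{\G}.
\end{equation*}
Two equalities need checking. The first, \( \molecin{g}\fun{L} = \G \), is immediate from the factorisation of \( \G \). The second, \( \fun{L}\F = \fun{A} \), reduces by uniqueness of factorisations to \( \hat{\G} \after \F = \hat{\fun{A}} \) as functors into \( \molecin{\fun{A}(X)} \). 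Both sides are active: the composite of actives is active, as shown above. Their images under \( \molecin{g} \) agree, by the commutativity of the square.

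The obstacle is therefore a cancellation statement: if two active functors \( \fun{P}, \fun{Q} \colon \molecin{X} \to \molecin{V} \) satisfy \( \molecin{h}\fun{P} = \molecin{h}\fun{Q} \) for a diagram \( h \), then \( \fun{P} = \fun{Q} \). For molecules \( X \) I would attempt this with Lemma \ref{lem:active_strict_functor_equal_iff_same_shape}. Applying \( \molecin{h} \) to the principal diagrams of the restrictions to each submolecule \( W' \submol X \) shows that \( \fun{P}(W') \) and \( \fun{Q}(W') \) agree, including their embeddings; the lemma then gives equality. For general \( X \) I would reduce to the molecular case by restricting along cells, using the basis property and the cocontinuity of the evaluation construction.
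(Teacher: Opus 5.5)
Your overall strategy is legitimate and differs from the paper's only in packaging. You check the lifting property directly, whereas the paper shows that the (active, diagram) factorisation is unique up to the isomorphism produced by the Kan extension. In both, the real content is the same step: recovering an active functor from its composite with a diagram, by applying Lemma~\ref{lem:active_strict_functor_equal_iff_same_shape} to restrictions along cells.

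The gap is in how you formulate that step. Your cancellation statement is false as soon as $X$ is not a molecule. Since the left class must contain all isomorphisms, ``active'' has to mean that $\pcell{\F}$ is isomorphic to the identity in the slice. So a diagram $\molecin{f}$ is active exactly when $f$ is an isomorphism, not when $f = \idd{}$. Take $X = V = \pt + \pt$, $h \colon V \to \pt$, $P = \idd{}$ and $Q = \molecin{\sigma}$ with $\sigma$ the swap. Both are active and $\molecin{h} P = \molecin{h} Q$, but $P \neq Q$. Your reduction to the molecular case fails exactly here. Restricting along a cell $x$ and applying the Lemma only gives $\restr{P}{x} = \restr{Q}{x}$. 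That says nothing about where the pasting diagrams $P(x)$ and $Q(x)$ sit inside $V$, and a non-injective $h$ cannot detect this.

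The same ambiguity affects your lift. $\molecin{\pcell{A}} \after \hat{\G}$ only makes sense after choosing an isomorphism $\G(Z) \cong A(X)$ over $Y$, and such isomorphisms are not unique. Take $X = Z = W = \pt + \pt$, $Y = \pt$, $\F = \idd{}$, $A = \molecin{\sigma}$, and $g$, $\G$ the maps to the point. One choice gives the correct lift $\molecin{\sigma}$. The other gives the identity, which violates $\fun{L}\F = A$. Your cancellation argument would ``verify'' both.

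The missing ingredient is the canonical identification, not a cancellation. Write $x \colon U_x \to X$ for the cells of $X$.
\begin{itemize}
\item Activeness of $\F$ says that $Z$ is the colimit of the $\F(U_x)$, with the $\F(x)$ as coprojections.
\item By cocontinuity of evaluation, $\G(Z) \cong \colim_x \G\F(U_x)$, while $A(X) = \colim_x A(U_x)$.
\item The unique isomorphisms of molecules $\G\F(U_x) \cong A(U_x)$ exist because $\G\F(x) = g \after A(x)$.
\item They are compatible with the transition maps. This is where the Lemma, i.e.\ the molecular case of your cancellation, is genuinely used.
\item Hence they induce a canonical isomorphism $\phi \colon \G(Z) \cong A(X)$ over $Y$.
\end{itemize}
With $\fun{L} \eqdef \molecin{\pcell{A} \after \phi} \after \hat{\G}$, the equation $\fun{L}\F(x) = A(x)$ holds cell by cell by construction, and Remark~\ref{rmk:atom_basis_mol} concludes. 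Your uniqueness paragraph must be read with the same identification. Equivalently, the true cancellation statement is this: $\molecin{h} P = \molecin{h} Q$ forces $Q = \molecin{\phi} P$ for a unique automorphism $\phi$ of $V$ with $h \after \phi = h$. This canonical identification is what the paper's phrase ``up to the unique isomorphism given when computing the extension'' pins down.
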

\begin{proof}
    Consider a strict functor \( \F \colon \molecin{X} \to \molecin{Y} \) of directed complexes.
    By Definition \ref{dfn:active_quotient-free}, we have \( \F = \molecin{\pcell{\F}} \after \hat \F \), where \( \hat \F \) is the active part of \( \F \), and \( \pcell{\F} \) is indeed a diagram.
    This gives the existence of the factorisation.
    For uniqueness (up to unique isomorphism), consider another factorisation
    \begin{center}
        \begin{tikzcd}
            & \molecin{Z} \\
            \molecin{X} && \molecin{Y}
            \arrow["{\molecin{f}}", from=1-2, to=2-3]
            \arrow["\G", from=2-1, to=1-2]
            \arrow["\F"', from=2-1, to=2-3]
        \end{tikzcd}
    \end{center}
    where \( f \colon Z \to Y \) is a diagram and \( \G \) is active.
    Then, since \( \G \) is active, up to the unique isomorphism given when computing the extension of \( \F \) to \( \slice{\rdcpxequal}{X} \), we have
    \begin{equation*}
        \pcell{\F} = f \after \G(\idd{X}) = f.
    \end{equation*}
    It remains to show that \( \G = \hat{\F} \).
    Let \( x \colon U \to X \) be a cell of \( X \). 
    For all submolecules \( j \colon W \submol U \) of \( U \), we have by assumption 
    \begin{equation*}
        \pcell{\F} \after \G(x \after j) = \F(x \after j) = \pcell{\F} \after \hat{\F}(x \after j) \colon \F(W) \to Y.
    \end{equation*}
    Thus the functors \( \restr{G}{x}, \restr{\hat{\F}}{x} \colon \molecin{U} \to \molecin{\F(U)} \) satisfy the hypothesis of Lemma \ref{lem:active_strict_functor_equal_iff_same_shape}, hence are equal. 
    In particular, \( \G(x) = \pcell{\restr{\G}{x}} = \pcell{\restr{\hat{\F}}{x}} = \hat{\F}(x) \).
    Since \( x \) was arbitrary, Remark \ref{rmk:atom_basis_mol} implies that \( \G = \hat{\F} \).
    This shows uniqueness of the factorisation and concludes the proof.
\end{proof}

\begin{cor} \label{cor:ofs_functor_of_rdcpx}
    The full subcategory of \( \dcpxomega \) on regular directed complexes has an orthogonal factorisation system whose left class consists of the active strict functors and whose right class consists of the local embeddings.
    This orthogonal factorisation system restricts to the further full subcategory of \( \dcpxomega \) on molecules.
\end{cor}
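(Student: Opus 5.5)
The plan is to restrict the factorisation system of Proposition \ref{prop:ofs_functor_of_dcpx} along the full subcategory inclusions. For a full subcategory \( \mathcal{D} \) of a category with an orthogonal factorisation system \( (\mathcal{L}, \mathcal{R}) \), the classes \( \mathcal{L} \cap \mathcal{D} \) and \( \mathcal{R} \cap \mathcal{D} \) form an orthogonal factorisation system on \( \mathcal{D} \) as soon as every morphism of \( \mathcal{D} \) has an \( (\mathcal{L}, \mathcal{R}) \)\nbd factorisation whose middle object lies in \( \mathcal{D} \). Indeed, orthogonality is inherited because \( \mathcal{D} \) is full, and closure under isomorphisms is automatic. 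So the argument reduces to two checks. First, I must show that the middle object stays in the subcategory. Second, I must show that the right class, namely diagrams between regular directed complexes, coincides with local embeddings.

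For the second check, let \( f \colon P \to Q \) be a diagram, that is, a morphism of directed complexes, with \( P, Q \) regular. Via the identification of \( \rdcpxequal \) with the full subcategory of \( \dcpx \) on directed complexes whose cells are monomorphisms, such morphisms are exactly the morphisms of \( \rdcpxequal \), which are the local embeddings. Conversely, each local embedding gives a strict functor \( \molecin{f} \), which is a diagram. By Lemma \ref{lem:cellular_colimits_are_omega_colimits}, this identification is faithful.

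For the first check, take a strict functor \( \F \colon \molecin{P} \to \molecin{Q} \) with \( P, Q \) regular. Its factorisation passes through \( \molecin{\F(P)} \), where \( \pcell{\F} \colon \F(P) \to Q \) is the principal diagram. I must show that \( \F(P) \) is regular. The key observation is that a directed complex \( X \) admitting a morphism \( g \colon X \to Q \) into a regular directed complex is itself regular. If \( x \colon U \to X \) is a cell, then \( g \after x \) is a cell of \( Q \), hence a monomorphism. Therefore \( x \) is a monomorphism, and the characterisation of \( \rdcpxequal \) inside \( \dcpx \) applies. Applying this to \( g = \pcell{\F} \) shows that \( \F(P) \) is regular. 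This step is where I expect the only subtlety: one must make sure the characterisation "all cells are monomorphisms" is exactly the one recalled after the definition of \( \dcpx \). If it is not, I would fall back on the explicit left Kan extension description of \( \F(P) \).

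For the restriction to molecules, let \( U, V \) be molecules. Then \( \F(U) \) is by definition the molecule on which the principal pasting diagram \( \F(\idd{U}) \) lives, since \( \F \) sends the arrow \( \idd{U} \) of \( \molecin{U} \) to an arrow of \( \molecin{V} \), that is, to a pasting diagram. Hence the middle object is again a molecule. Combined with the previous two paragraphs and fullness, this gives the claimed restriction of the factorisation system.
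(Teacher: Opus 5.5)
Your proposal is correct and takes essentially the same route as the paper: you factor through Proposition~\ref{prop:ofs_functor_of_dcpx}, note that the middle object \( \F(P) \) is again regular (and a molecule when \( P \) is), so that \( \pcell{\F} \) is a local embedding, and then restrict along the full inclusions. Your argument that \( \F(P) \) is regular, namely that each cell of \( \F(P) \) composed with \( \pcell{\F} \) is a cell of \( Q \) and hence a monomorphism, makes explicit a step the paper only asserts.
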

\begin{proof}
    If \( \F \colon \molecin{U} \to \molecin{V} \) is a strict functor of regular directed complexes, then \( \pcell{\F} \colon \F(U) \to V \) is a local embedding of regular directed complexes, where \( \F(U) \) is a molecule if \( U \) is a molecule.
    We conclude by Proposition \ref{prop:ofs_functor_of_dcpx}.
\end{proof}

\begin{rmk}
    The full subcategory of \( \dcpxomega \) on \( 1 \)\nbd molecules is canonically isomorphic to the simplex category.
    The previous Proposition then restricts to the usual active-inert factorisation system.
    More generally (see Proposition \ref{prop:ternary_factorisation_of_theta}), the previous Proposition recovers the \emph{algebraic-globular} orthogonal factorisation system \cite[Proposition 3.3.10]{ara2010groupoid} on the theta category \( \Theta \). 
\end{rmk}

\begin{lem} \label{lem:quotient-free_functor_send_embedding_to_embedding}
    Let \( \F \colon \molecin{X} \to \molecin{Y} \) be a quotient-free strict functor of directed complexes and \( \iota \colon A \incl X \) be an embedding.
    Then \( \F(\iota) \) is an embedding.
\end{lem}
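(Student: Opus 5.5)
The plan is to reduce to the principal diagram of \( \F \) by functoriality of the extension of \( \F \) to \( \slice{\dcpx}{X} \to \slice{\dcpx}{Y} \).

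First I would note that \( \iota \colon A \incl X \) is a morphism from \( \iota \) to \( \idd{X} \) in \( \slice{\dcpx}{X} \). Its image under the extended functor is a morphism \( \F(\iota) \colon \F(A) \to \F(X) \) over \( Y \). This means
\begin{equation*}
    \pcell{\F} \after \F(\iota) = \F(\iota \colon \iota \to \idd{X})\text{'s target map composed} = \F(\iota)_{\text{as diagram in } Y},
\end{equation*}
where the right-hand side is the diagram \( \F(A) \to Y \) assigned to the object \( \iota \) of \( \slice{\dcpx}{X} \). Since \( \pcell{\F} \) is a monomorphism by quotient-freeness, it suffices to show that this composite diagram \( \F(A) \to Y \) is a monomorphism of presheaves. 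Indeed, if \( \pcell{\F} \after \F(\iota) \) is mono, then \( \F(\iota) \) is mono. Conversely, if one only wants \( \F(\iota) \) mono, it is enough to show that the map \( \F(A) \to \F(X) \) is mono directly.

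The core step is therefore to show that the cocontinuous extension \( \slice{\dcpx}{X} \to \slice{\dcpx}{Y} \) preserves monomorphisms between subobjects of \( \idd{X} \), given that \( \F(\idd{X}) \) is an embedding. I would argue via skeletal induction on \( A \), following Definition~\ref{dfn:skeleton_dcpx}. Write \( A \) as the union of the cells \( x \colon U \to X \) factoring through \( A \). By construction of the left Kan extension, \( \F(A) \) is the colimit over \( \cell A \) of the \( \F(x) \colon \F(U) \to Y \), and likewise for \( \F(X) \) over \( \cell X \). The canonical map \( \F(A) \to \F(X) \) is induced by the full inclusion \( \cell A \incl \cell X \), which is a sieve because \( A \) is a subobject.

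Then I would compute colimits in \( \slice{\dcpx}{Y} \), which are colimits in presheaves, and compare them after mapping into \( Y \). Since \( \F(X) \to Y \) is mono, \( \F(X) \) is identified with the union of the images of the \( \F(x) \) in \( Y \). I expect the same holds for \( \F(A) \): the colimit over \( \cell A \) maps onto the union of the images of the \( \F(x) \) with \( x \in \cell A \). The expected obstacle is injectivity of \( \F(A) \to Y \), that is, showing that the colimit over the subdiagram does not create extra identifications or fail to identify elements. Two elements of \( \F(A) \) with the same image in \( Y \) come from cells \( x, x' \in \cell A \). They are identified in \( \F(X) \), since that object is mono into \( Y \), hence by a zigzag in \( \cell X \).

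The main task is to push this zigzag into \( \cell A \). For that I would use that every element of \( \F(U) \) lies in the image of \( \F \) of a face of \( x \); this follows from the construction on atoms, by induction on dimension, since \( \F(U) \) is a molecule built from the \( \F \)-images of subdiagrams. A generating relation in the colimit identifies an element of \( \F(U') \) with its image under \( \F(j) \) for a face \( j \colon x' \to x \). Because \( \cell A \) is downward closed, each zigzag can be replaced by one passing through faces of the endpoint cells. To do this, I would take for each element the minimal cell whose \( \F \)-image contains it, in the manner of the "unique interior cell" arguments. The expected proof of this canonical-minimal-cell statement is by induction on dimension. It uses that \( \F(x) \) restricted to the boundary is \( \F(\bd{}{} x) \), and that interiors of distinct cells of \( X \) are sent to disjoint parts of \( \F(X) \) because \( \pcell{\F} \) is mono. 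With this, both colimits are the disjoint union of the "interiors" indexed by cells, so the map \( \F(A) \to \F(X) \) is injective.
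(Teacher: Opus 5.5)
Your reduction is the same as the paper's. The paper writes \( \F(\iota) \) as the comparison map \( \colim_{x \colon U \to A} \F(U) \to \colim_{x \colon U \to X} \F(U) \) followed by \( \pcell{\F} \), and simply calls that comparison map a ``canonical embedding''. You rightly identify the injectivity of this comparison map as the whole content of the lemma. The step you use to obtain it does not follow, however. That step is the claim that interiors of distinct cells of \( X \) land in disjoint parts of \( \F(X) \) ``because \( \pcell{\F} \) is mono''. Quotient-freeness only controls the map \( \F(X) \to Y \). It says nothing about how the legs \( \F(U) \to \F(X) \) of the colimit overlap. Two cells of \( A \) can have their images identified in \( \F(X) \) only through a cell of \( X \) that is not in \( A \), so the zigzag cannot in general be pushed into \( \cell A \).

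No argument can repair this, because the comparison map is not mono in general, and the statement as written fails.

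\emph{First counterexample.} Take \( X = \arr \), \( Y = \pt \), and let \( \F \) be the unique strict functor \( \molecin{\arr} \to \molecin{\pt} \). Then \( \pcell{\F} = \idd{\pt} \), so \( \F \) is active, hence quotient-free. For \( A = \bd{}{}\arr \cong \pt \sqcup \pt \), the category \( \cell A \) is discrete, so \( \F(A) = \pt \sqcup \pt \). Thus \( \F(\iota) \colon \pt \sqcup \pt \to \pt \) is not a monomorphism. The two points are identified in \( \F(X) \) only via the zigzag through the \( 1 \)-cell, which is not in \( A \).

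\emph{Second counterexample.} Dimension-lowering is not the only culprit. Let \( Y \eqdef \globe{2} \cp{0} \arr \) and consider its big cell \( \bigcell{Y} \colon \globe{2} \to \molecin{Y} \), which is active and non-degenerate. It sends the input and output arrows of \( \globe{2} \) to \( \bd{1}{-} Y \) and \( \bd{1}{+} Y \). These share the whiskered arrow and its source vertex. Consequently its value on \( \bd{}{}\globe{2} \incl \globe{2} \) is a colimit containing two copies of that arrow and vertex, and it does not embed in \( Y \).

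So an extra hypothesis is needed. What does hold, and is all the paper uses later (in Lemma \ref{lem:active_functor_preserve_submolecule}), is the case where \( \F \) is active between molecules and \( \iota \) is a submolecule inclusion. There the embedding property comes from the induction on submolecules in that lemma's own proof, since each \( \F(\iota) \) factors through a submolecule inclusion \( \F(U^\a) \incl \F(U^-) \cp{k} \F(U^+) \). It does not come from a colimit or interiors argument.
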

\begin{proof}
    Since \( \F \) is quotient-free, \( \F(\iota) \) is the composite of the canonical embedding
    \begin{equation*}
       \colim_{x \colon U \to A} \F(U) \to \colim_{x \colon U \to X} \F(U)
    \end{equation*}
    together with the embedding \( \pcell{\F} \), hence is an embedding.
\end{proof}

\begin{lem} \label{lem:active_functor_preserve_submolecule}
    Let \( \F \colon \molecin{U} \to \molecin{V} \) be an active strict functor of molecules, and \( \iota \colon W \submol U \) be a submolecule of \( U \).
    Then \( \F(\iota) \) is a submolecule of \( V \).
\end{lem}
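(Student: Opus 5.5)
The plan is to argue by induction on submolecules, following Comment \ref{comm:induction_submolecules}. By definition, the class of submolecule inclusions is generated under composition and isomorphisms by the inclusions \( A \incl A \cp{k} B \) and \( B \incl A \cp{k} B \). So it suffices to treat a single such generating inclusion and then use functoriality of the extension of \( \F \) to \( \slice{\dcpx}{U} \). Functoriality is what lets us compose. Given \( W \submol W' \submol U \), we have \( \F(W \incl U) = \F(W' \incl U) \after \restr{\F}{W'}(W \incl W') \). Here \( \restr{\F}{W'} \colon \molecin{W'} \to \molecin{\F(W')} \) is again an active strict functor of molecules, as noted in the Remark after Definition \ref{dfn:active_quotient-free}. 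Submolecule inclusions are closed under composition, so the problem reduces to the case where \( W \) is one of the two factors of a split of \( U \) itself.

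For that case, suppose \( U \) splits as \( A \cp{k} B \) and \( \iota = \iota_A \); the case \( \iota_B \) is symmetric. Since \( \F \) is active, \( \pcell{\F} = \idd{V} \). The construction of the evaluation at a pasting diagram \( f^- \cp{k} f^+ \) (the Definition of evaluation at a diagram) then gives directly that \( V = \F(U) = \F(A) \cp{k} \F(B) \). This is the same identity already used in the proof of Lemma \ref{lem:active_strict_functor_equal_iff_same_shape}. Moreover, \( \F(\iota_A) \) is precisely the canonical inclusion \( \F(A) \incl \F(A) \cp{k} \F(B) \), which is a submolecule inclusion by definition. This settles the generating case.

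The main point to verify carefully is that \( \F(\iota) \) computed through the left-Kan-extended functor agrees with these dashed inclusions. This holds by construction for the generators \( \iota^\a \), but one must check that the extension is genuinely functorial on composites of submolecule inclusions, and not merely on embeddings of atoms. I would justify this using the fact that the functor \( \cell U \to \Pd V \) is built by composing the \( \iota^\a \), together with the cocontinuity of the extension, which makes it a functor on all of \( \slice{\dcpx}{U} \).

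A degenerate case also needs care: \( \F(A) \) or \( \F(B) \) might have lower dimension than \( k \), so that the pasting is trivial, for instance when it collapses to a unit. In that situation the inclusion is still a boundary inclusion or an identity, and so still a submolecule inclusion, by the Example stating that \( \bd{n}{\a} U \incl U \) is one.
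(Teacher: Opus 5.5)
Your proposal is correct and follows essentially the same route as the paper. The paper also argues by induction on submolecules: it factors a non-identity \( \iota \) as \( \iota^\a \after \iota' \) through a factor \( U^\a \) of a split \( U = U^- \cp{k} U^+ \), and writes \( \F(\iota) \) as the inclusion \( \F(U^\a) \incl \F(U^-) \cp{k} \F(U^+) = V \) precomposed with \( \restr{\F}{\iota^\a}(\iota') \), to which it applies the inductive hypothesis for the active functor \( \restr{\F}{\iota^\a} \). Your separate worry about degenerate pastings is unnecessary, because \( A \incl A \cp{k} B \) is a generating submolecule inclusion whenever the pasting is defined, with no condition on dimensions.
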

\begin{proof}
    First, Lemma \ref{lem:quotient-free_functor_send_embedding_to_embedding} shows that \( \F(\iota) \) is an embedding.
    We show by induction on the submolecule \( \iota \colon W \submol U \) of \( U \) that the active strict functor \( \restr{\F}{\iota} \colon \molecin{W} \to \molecin{\F(W)} \) sends submolecules to submolecules.
    The base case where \( \iota \) is of type \( \pt \submol U \) is clear since \( \restr{\F}{\iota} \) is the identity.
    Inductively, suppose the statement holds of all proper submolecules of \( U \).
    Let \( \iota \colon W \submol U \) be a submolecule of \( U \).
    If \( \iota \) is the identity, then we are done since \( \F(\iota) = \idd{V} \) is a submolecule of \( V \).
    Otherwise, we may write \( \iota = \iota^\a \after \iota' \), where \( \iota^\a \colon U^\a \to U^- \cp{k} U^+ = U \) is a proper submolecule of \( U \) for some \( \a \in \set{-, +} \), and \( \iota' \) is another submolecule.
    Then, the triangle
    \begin{center}
        \begin{tikzcd}[row sep=2.25em]
            {\F(W)} \\
            {\F(U^\a)} & {\F(U^-) \cp{k} \F(U^+) = V}
            \arrow["{\restr{\F}{\iota^\a}(\iota')}"', from=1-1, to=2-1]
            \arrow["{\F(\iota)}", from=1-1, to=2-2]
            \arrow[hook, from=2-1, to=2-2]
        \end{tikzcd}
    \end{center}
    commutes.
    Since the bottom embedding is a submolecule of \( V \), as is \( \restr{\F}{\iota^\a}(\iota') \) by inductive hypothesis, we conclude that \( \F(\iota) \) is a submolecule of \( V \).
\end{proof}

\subsection{Comparison with regular polygraphs}

\noindent We conclude by comparing (positive) regular polygraphs \cite{henry2018regular} with directed complexes. 
Our strategy is to extract from each polyplex an underlying poset (which is just its category of elements), and show that it is graded and canonically oriented (Lemma \ref{lem:underlying_poset_is_graded}).
We then show that this oriented graded poset is always a molecule in Proposition \ref{prop:posplex_sends_to_molecules}, and from there, we quickly reach Theorem \ref{thm:from_polyplexes_to_molecules}, which shows in particular that regular polygraphs of dimension \( \le 3 \) coincide with strict \( \omega \)\nbd categories of the form \( \molecin{X} \), for \( X \) a directed \( 3 \)\nbd complex.

\begin{dfn} [Polygraph] \label{dfn:polygraph}
    Let \( n \in \nat \cup \set{\omega} \) and \( C \) be a strict \( n \)\nbd category.
    We say that \( C \) is an \emph{\( n \)\nbd polygraph} if, for each \( d \in \nat \), there is a collection \( \cls{S}_d \) of \( d \)\nbd arrows of \( C \), called the \emph{generating \( d \)\nbd arrows}, such that the diagram
    \begin{center}
        \begin{tikzcd}
            {\coprod\limits_{x \in \cls{S}_d}\bd{}{}\globe{d}} & {\coprod\limits_{x \in \cls{S}_d}\globe{d}} \\
            {\skel{d - 1}{C}} & {\skel{d}{C}}
            \arrow[from=1-1, to=1-2]
            \arrow[from=1-1, to=2-1]
            \arrow[from=1-2, to=2-2]
            \arrow[from=2-1, to=2-2]
        \end{tikzcd}
    \end{center}
    is a pushout square in \( \nCat{n} \), exhibiting \( \skel{d} C \) as \emph{cellular extension} of \( \skel{d - 1} C \).
    If \( n = \omega \) or is irrelevant, we speak simply of polygraph.
\end{dfn}

\begin{comm}
    To be fully precise, we should not say that \( C \) is a polygraph, but rather that \( C \) \emph{is free on a polygraph}.
    We decided to make no distinction in terminology, since being free on a polygraph is a mere property \cite[Proposition 16.6.3]{ara2025polygraphs}.
\end{comm}

\noindent We now give the basic definition and results of \cite{henry2018nonunital,henry2018regular}.

\begin{dfn} [Positive polygraphs, plexes, and polyplexes]
    A \emph{positive polygraph} is a polygraph such that the source and target of each generating \( d \)\nbd arrow is an arrow of dimension \( d - 1 \).
    A \emph{morphism of positive polygraphs} is a strict functor sending generating arrows to generating arrows of the same dimension.
    There is a family of positive polygraphs called the \emph{polyplexes} with a subfamily called the \emph{plexes} such that for each positive polygraph \( X \), there is a natural bijection between the arrows of \( X \) and the set of morphisms of polygraphs \( \pl{u} \to X \), for \( \pl{u} \) ranging over polyplexes.
    This restricts to a bijection between generating arrows of \( X \) and morphisms whose domain is a plex. 
    In particular, each polyplex \( \pl{u} \) has a special arrow, called its \emph{universal arrow}, and written \( u \), which is given by the image of \( \idd{\pl{u}} \) along the bijection.
\end{dfn}

\begin{dfn} [Boundaries and pasting of polyplexes]
    It can be shown that there exists a terminal positive polygraph, hence that polyplexes are in bijection with arrows of the terminal object.
    In particular, they inherit a structure of strict \( \omega \)\nbd category, whose \( k \)\nbd composition operation we write \( - \cp{k} - \).
    Let \( \pl{u} \) be a polyplex.
    Then for each \( k \in \nat \) and \( \a \in \set{-, +} \), we may write \( \bd{k}{\a} \pl{u} \) for the polyplex representing the boundary of \( \pl{u} \) as an arrow of the terminal object. 
    There is a canonical morphism of polyplexes \( \bd{k}{\a} \pl{u} \to \pl{u} \) sending the universal arrow of \( \bd{k}{\a} \pl{u} \) to \( \bd{k}{\a} u \).
    Thus, given a morphism \( x \colon \pl{u} \to X \) where \( X \) is a positive polygraph, we may write \( \bd{k}{\a} x \colon \bd{k}{\a} \pl{u} \to X \) for the precomposition of \( x \) with \( \bd{k}{\a} \pl{u} \to \pl{u} \).
    This operation is compatible with the boundary of \( x \) seen as an arrow of \( X \).
    
    If \( \pl{v} \) is another polyplex such that there exists a, necessarily unique, isomorphism \( \bd{k}{+} \pl{u} \cong \bd{k}{-} \pl{v} \), then \( \pl{u} \cp{k} \pl{v} \) is defined, and there is a commutative square
    \begin{center}
        \begin{tikzcd}
            {\bd{k}{+} \pl{u} \cong \bd{k}{+} \pl{v}} & {\pl{v}} \\
            {\pl{u}} & {\pl{u} \cp{k} \pl{v}}
            \arrow[from=1-1, to=1-2]
            \arrow[from=1-1, to=2-1]
            \arrow[from=1-2, to=2-2]
            \arrow[from=2-1, to=2-2]
        \end{tikzcd}
    \end{center}
    which is a pushout in \( \omegaCat \).
\end{dfn}

\begin{dfn} [Polyplexes with spherical boundaries and regular polygraphs]
    Let \( \pl{u} \) be a polyplex.
    We say that \( \pl{u} \) has \emph{spherical boundaries} if, for all \( k \in \nat \) and all \( \a \in \set{-, +} \), the comparison morphism
    \begin{equation*}
        \bd{k}{-} \pl{u} \coprod_{\bd{k - 1}{-}\pl{u} \coprod \bd{k}{+} \pl{u}} \bd{k}{+} \pl{u} \longrightarrow \bd{k + 1}{\a} \pl{u}
    \end{equation*} 
    is a monomorphism, with convention that \( \bd{-1}{\a} \pl{u} = \varnothing \).
    A positive polygraph is called a \emph{regular polygraph} if all of its plexes have spherical boundaries.

    We write \( \Pol \) for the full subcategory of positive polygraphs and their morphisms on regular polygraphs.
    Given \( n \in \nat \cup \set{\omega} \), we write \( \nPol{n} \) for the subcategory of \( \Pol \) on the \( n \)\nbd polygraphs, and \( \nplexcat{n} \) and \( \npolyplexcat{n} \) for its further full subcategories on plexes and polyplexes, or simply \( \plexcat \) and \( \polyplexcat \) when \( n = \omega \).

    The main result of \cite{henry2018nonunital} implies that \( \nPol n \) is equivalent to the category of presheaves over \( \nplexcat n \), and the inclusion \( \nplexcat n \incl \nPol n \) is the Yoneda embedding.
\end{dfn}

\begin{rmk}
    If \( \pl{u} \) is a regular polyplex, then for all \( k \in \nat \) and \( \a \in \set{-, +} \), the comparison morphism \( \bd{k}{\a} \pl{u} \to \pl{u} \) is a monomorphism, hence so are morphisms of the form \( \pl{u} \to \pl{u} \cp{k} \pl{v} \) and \( \pl{v} \to \pl{u} \cp{k} \pl{v} \).
\end{rmk}

\begin{dfn} [Round polyplex]
    A \emph{round polyplex} is a regular polyplex with spherical boundaries.
\end{dfn}

\begin{exm}
    All regular plexes are round.
\end{exm}

\begin{lem} \label{round_polyplex}
    Let \( \pl{u} \) be a regular polyplex of dimension \( d \in \nat \).
    Then \( \pl{u} \) is round if for all \( k < d \), the canonical comparison
    \begin{equation*}
        \bd{k - 1}{-} \pl{u} \cup \bd{k - 1}{+} \pl{u} \to \bd{k}{+} \pl{u} \cap \bd{k}{-} \pl{u}
    \end{equation*} 
    of subobjects of \( \pl{u} \) is an isomorphism.
\end{lem}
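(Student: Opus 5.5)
The plan is to reduce the statement to a standard fact about subobjects in a presheaf topos. By the result of \cite{henry2018nonunital} recalled above, \( \nPol{n} \) is equivalent to presheaves on \( \nplexcat{n} \), so I will compute the pushouts in the definition of spherical boundaries in this presheaf category. This is the category of regular polygraphs, where the polyplexes and their boundaries live.

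\emph{Step 1: boundaries are subobjects.} Since \( \pl{u} \) is regular, the Remark after the definition of regular polygraphs says that each comparison \( \bd{k}{\a}\pl{u} \to \pl{u} \) is a monomorphism. Hence all the \( \bd{k}{\a}\pl{u} \) are subobjects of \( \pl{u} \). Moreover the canonical maps \( \bd{k}{\a}\pl{u} \to \bd{k+1}{\beta}\pl{u} \) are monomorphisms compatible with these inclusions, because their composite with \( \bd{k+1}{\beta}\pl{u} \incl \pl{u} \) is mono. Consequently unions and intersections of boundaries can be computed in \( \pl{u} \) or in \( \bd{k+1}{\beta}\pl{u} \) indifferently, since the subobject lattice of a subobject is an interval of the ambient lattice.

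\emph{Step 2: the presheaf lemma.} In a presheaf category, let \( A, B \subseteq E \) be subobjects and let \( C \subseteq A \cap B \) be a further subobject. I claim that the comparison \( A \coprod_C B \to E \) is a monomorphism if and only if \( C = A \cap B \). This is checked objectwise in \( \cat{Set} \). The pushout \( A \coprod_C B \) is the quotient of \( A + B \) identifying only the elements of \( C \). Its map to \( E \) is injective exactly when every element of \( A \cap B \) already lies in \( C \). In the case \( C = A \cap B \), the pushout is the union \( A \cup B \).

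\emph{Step 3: conclusion.} Fix \( k < d \) and \( \a \), and apply Step 2 with \( E = \bd{k+1}{\a}\pl{u} \), \( A = \bd{k}{-}\pl{u} \), \( B = \bd{k}{+}\pl{u} \), and \( C = \bd{k-1}{-}\pl{u} \cup \bd{k-1}{+}\pl{u} \). First I check that \( C \subseteq A \cap B \). This follows from the globular identities \( \bd{k-1}{\beta}\bd{k}{\gamma} u = \bd{k-1}{\beta} u \), which hold in the terminal positive polygraph and hence for the canonical boundary inclusions. The hypothesis that the comparison \( C \to A \cap B \) is an isomorphism is then exactly the condition of Step 2, so the comparison morphism in the definition of spherical boundaries is a monomorphism. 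For \( k \geq d \) the boundaries stabilise at \( \pl{u} \), so the condition is vacuous with the paper's conventions, and the case \( k = 0 \) uses \( \bd{-1}{\a}\pl{u} = \varnothing \).

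\emph{Main obstacle.} The delicate point is Step 1 together with the choice of ambient category. The pushout in the definition of spherical boundaries must be taken in the presheaf category \( \nPol{n} \), not in \( \omegaCat \), because only there are colimits of monos computed pointwise. I also need the canonical maps between boundaries to be monos commuting with the inclusions into \( \pl{u} \). For this I will rely on regularity of \( \pl{u} \) and on the description of boundaries as arrows of the terminal positive polygraph.
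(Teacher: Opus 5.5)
\paragraph{Comparison with the paper.} Your proof is correct in substance, but it takes a different route from the paper. The paper gives no argument of its own: it simply cites \cite[Corollary 2.4.4]{henry2018regular}. You instead reduce the lemma to an elementary fact about the presheaf category \( \nPol{n} \). There, the pushout of two subobjects \( A, B \subseteq E \) along a common subobject \( C \subseteq A \cap B \) maps monomorphically to \( E \) exactly when \( C = A \cap B \). Beyond this, you only need that the boundaries of a regular polyplex are subobjects (the Remark after the definition of regular polygraphs) and the globular identities.

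The citation is shorter and inherits Henry's precise definitions. Your argument is self-contained and shows that the lemma is just the fact ``union equals pushout over intersection''. It also adapts cheaply to the other possible readings of the subscript of the pushout in the definition of spherical boundaries:
\begin{itemize}
\item If the subscript is a literal coproduct \( \bd{k-1}{-}\pl{u} \coprod \bd{k-1}{+}\pl{u} \), the pushout is unchanged, since the coproduct maps epimorphically onto the union.
\item If it is an iterated pushout, you need an induction on \( k \). The \( (k-1) \)-instance of your Step 3 shows that this pushout embeds into \( \bd{k}{\beta}\pl{u} \) with image \( \bd{k-1}{-}\pl{u} \cup \bd{k-1}{+}\pl{u} \).
\end{itemize}

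\paragraph{A false side remark.} Your claim that the condition is vacuous for \( k \geq d \) fails at \( k = d \), though this does not affect the main argument. At \( k = d \) you have \( A = B = E = \pl{u} \), while \( C = \bd{d-1}{-}\pl{u} \cup \bd{d-1}{+}\pl{u} \neq \pl{u} \). By your own Step 2, the fold map \( \pl{u} \coprod_C \pl{u} \to \pl{u} \) is then not a monomorphism. Already for \( \pl{u} = \pt \) and \( k = 0 \) it is \( \pt \coprod \pt \to \pt \).

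So the quantifier ``for all \( k \in \nat \)'' in the definition of spherical boundaries can only mean \( k < \dim \pl{u} \); otherwise no polyplex, not even \( \pt \), would be round. That is exactly the range your Step 3 covers. The condition is trivially satisfied only for \( k > d \), where \( C = \pl{u} \).
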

\begin{proof}
    This is \cite[Corollary 2.4.4]{henry2018regular}.
\end{proof}

\begin{dfn} [Underlying poset of a polyplex]
    Let \( \pl{p} \) be a regular polyplex.
    The \emph{underlying poset of \( \pl{p} \)} is the category of elements of \( \pl{p} \). 
\end{dfn}
te
\begin{rmk}
    By \cite[Proposition 2.4.5]{henry2018regular}, the category of elements of \( \pl{p} \) is indeed a poset, whose objects are the plexes \( x \colon \pl{v} \to \pl{p} \) of \( \pl{p} \), and such that \( x \le y \) if \( y \) factors through \( x \) (which can happen at most once).
\end{rmk}

\begin{dfn} [Dimension in the underlying poset]
    Let \( \pl{p} \) be a regular polyplex, and \( x \colon \pl{u} \to \pl{p} \) be an element of its underlying poset.
    The \emph{dimension of \( x \)} is \( \dim x \eqdef \dim \pl{u} \).
\end{dfn}

\begin{lem} \label{lem:maximal_of_up_is_in_boundary}
    Let \( \pl{p} \) be a regular polyplex, and \( x \) be a maximal element of the underlying poset of \( \pl{p} \), and write \( k \eqdef \dim x \).
    Then \( x \in \bd{k}{-} \pl{p} \cap \bd{k}{+} \pl{p} \).
\end{lem}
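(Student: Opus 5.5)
We argue by induction on the structure of the polyplex, mirroring the inductive definition of molecules. Recall that every polyplex is either a plex, which has a universal generating arrow, or splits as \( \pl{u} \cp{j} \pl{v} \) for smaller polyplexes. The underlying poset of \( \pl{u} \cp{j} \pl{v} \) is the union of those of \( \pl{u} \) and \( \pl{v} \) glued along \( \bd{j}{+}\pl{u} \cong \bd{j}{-}\pl{v} \). This follows from the pushout square together with the fact that, for regular polyplexes, the maps into the pasting are monomorphisms, so plexes of \( \pl{p} \) factor through one of the two pieces. We proceed by induction on the number of generators, or equivalently on dimension and then on the length of a pasting decomposition.

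\emph{Plex case.} Suppose \( \pl{p} \) is a plex of dimension \( d \). Then it has a greatest element, the identity \( \idd{\pl{p}} \), of dimension \( d \). Since \( \bd{d}{\a}\pl{p} = \pl{p} \), this element lies in both \( d \)-boundaries, which settles this case. Any other element lies below it and so is not maximal.

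\emph{Pasting case.} Suppose \( \pl{p} = \pl{u} \cp{j} \pl{v} \) and \( x \) is maximal of dimension \( k \). Then \( x \) factors through \( \pl{u} \) or through \( \pl{v} \), and it is maximal there as well, because the embeddings preserve and reflect the order. By induction, if say \( x \in \pl{u} \), then \( x \in \bd{k}{-}\pl{u} \cap \bd{k}{+}\pl{u} \). There are two subcases.

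\begin{itemize}
\item If \( k \le j \), the boundary formulas for pasting give \( \bd{k}{-}\pl{p} = \bd{k}{-}\pl{u} \) and \( \bd{k}{+}\pl{p} = \bd{k}{+}\pl{v} \). Also \( \bd{k}{\a}\pl{u} = \bd{k}{\a}\bd{j}{+}\pl{u} \), and \( \bd{j}{+}\pl{u} \) is identified with \( \bd{j}{-}\pl{v} \), so \( \bd{k}{+}\pl{u} = \bd{k}{+}\pl{v} \) when \( k \le j \). Hence \( x \) lies in both boundaries of \( \pl{p} \).
\item If \( k > j \), pasting gives \( \bd{k}{\a}\pl{p} = \bd{k}{\a}\pl{u} \cp{j} \bd{k}{\a}\pl{v} \), which contains \( \bd{k}{\a}\pl{u} \). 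So \( x \) is again in both boundaries.
\end{itemize}

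The case \( x \in \pl{v} \) is symmetric.

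The main obstacle is the preliminary bookkeeping. We must justify that the category of elements of a pasting is the union of the two sub-posets, that each plex of \( \pl{p} \) factors through one of the pieces, and that maximality is reflected. For this we would rely on Henry's results, namely the remark that \( \pl{u} \to \pl{u}\cp{j}\pl{v} \) is a monomorphism and that \( \pl{p} \) is a presheaf on plexes. The point is that a plex mapping into a pushout of monomorphisms of presheaves lands in one of the pieces, since it is a representable. The boundary identities for \( \cp{j} \) are the standard globular identities in the terminal positive polygraph, transported to subobjects via the canonical monomorphisms.
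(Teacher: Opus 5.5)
\textbf{The pasting step has a gap at \( k = j \).} The identity \( \bd{k}{\a}\pl{u} = \bd{k}{\a}\bd{j}{+}\pl{u} \) is a globular identity only for \( k < j \). At \( k = j \) one has \( \bd{j}{\a}\bd{j}{+}\pl{u} = \bd{j}{+}\pl{u} \) for both signs. Your conclusion \( \bd{k}{+}\pl{u} = \bd{k}{+}\pl{v} \) then reads \( \bd{j}{-}\pl{v} = \bd{j}{+}\pl{v} \), which fails for every non-trivial pasting.

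The case is not vacuous. Take \( 2 \)-plexes \( \phi \colon a \Rightarrow b \) and \( \psi \colon b \Rightarrow c \), a \( 1 \)-plex \( f \), and set \( \pl{u} = \phi \cp{0} f \), \( \pl{v} = \psi \cp{0} f \), \( j = 1 \). In \( \pl{u} \cp{1} \pl{v} \) the generator \( f \) is maximal of dimension \( 1 = j \). Yet \( \bd{1}{+}\pl{u} = b \cp{0} f \neq c \cp{0} f = \bd{1}{+}\pl{v} \). So for \( k = j \) your argument gives \( x \in \bd{k}{-}\pl{p} \) but does not give \( x \in \bd{k}{+}\pl{p} \). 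The symmetric case \( x \in \pl{v} \) has the same defect.

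\textbf{The missing idea is to apply the inductive hypothesis a second time, to the other factor.} From the first application you have
\[ x \in \bd{k}{+}\pl{u} \subseteq \bd{j}{+}\pl{u} = \bd{j}{-}\pl{v}, \]
so \( x \) also lies in \( \pl{v} \). It is still maximal there, since the inclusion \( \pl{v} \to \pl{p} \) preserves order. Induction on \( \pl{v} \) then gives \( x \in \bd{k}{+}\pl{v} = \bd{k}{+}\pl{p} \). Dually, if \( x \) starts in \( \pl{v} \), you pass to \( \pl{u} \) through \( \bd{j}{-}\pl{v} = \bd{j}{+}\pl{u} \).

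This is exactly how the paper handles the whole range \( k \le j \) at once. It makes your separate globular computation for \( k < j \) unnecessary, although that computation is correct. Your plex case and your case \( k > j \) agree with the paper.
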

\begin{proof}
    We proceed by induction on the construction of the regular polyplex.
    If \( \pl{p} \) is a plex, then \( \idd{\pl{p}} \) is the greatest element of the underlying poset of \( \pl{p} \), thus \( x = \idd{\pl{p}} \) and the statement follows.
    Otherwise, \( \pl{p} = \pl{u} \cp{r} \pl{v} \).
    We suppose that \( x \in \pl{u} \), the other case is entirely dual.
    If \( r < k  \), then \( \bd{k}{\a} \pl{p} = \bd{k}{\a} \pl{u} \cp{r} \bd{k}{\a} \pl{v} \) for all \( \a \in \set{-, +} \).
    Since \( x \) is maximal in \( \pl{u} \), we may conclude by inductive hypothesis.
    
    Now suppose \( r \geq k \).
    Then \( x \) is maximal in \( \pl{u} \), thus \( x \in \bd{k}{-} \pl{u} \cap \bd{k}{+} \pl{u} \) by inductive hypothesis.
    By globularity, \( \bd{k}{+} \pl{u} \subseteq \bd{r}{+} \pl{u} \), so \( x \in \pl{v} \).
    Since \( x \) is maximal in \( \pl{v} \), by inductive hypothesis, \( x \in \bd{k}{+} \pl{v} \).
    Thus 
    \begin{equation*}
        x \in \bd{k}{-} \pl{u} \cap \bd{k}{+} \pl{v} = \bd{k}{-} \pl{p} \cap \bd{k}{+} \pl{p}.
    \end{equation*}
    This concludes the induction and the proof.
\end{proof}

\begin{lem} \label{lem:round_polyplex_are_pure}
    Let \( \pl{p} \) be a round polyplex and \( x \colon \pl{u} \to \pl{p} \) be a maximal element of the underlying poset of \( \pl{p} \).
    Then \( \dim x = \dim \pl{p} \).
\end{lem}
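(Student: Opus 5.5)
The plan is to combine Lemma \ref{lem:maximal_of_up_is_in_boundary} with the roundness condition of Lemma \ref{round_polyplex}, arguing by contradiction. Let \( d \eqdef \dim \pl{p} \) and \( k \eqdef \dim x \), so \( k \le d \). Suppose that \( k < d \). By Lemma \ref{lem:maximal_of_up_is_in_boundary}, \( x \in \bd{k}{-} \pl{p} \cap \bd{k}{+} \pl{p} \).

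Since \( \pl{p} \) is round and \( k < d \), Lemma \ref{round_polyplex} identifies this intersection with \( \bd{k-1}{-} \pl{p} \cup \bd{k-1}{+} \pl{p} \) as subobjects of \( \pl{p} \). I will read this as a statement about elements of the underlying poset: an element \( x \colon \pl{u} \to \pl{p} \) lies in a subobject \( A \subseteq \pl{p} \) exactly when \( x \) factors through \( A \). Elements of a union of subobjects of a presheaf are those lying in one of the pieces, since unions of subpresheaves are computed pointwise. Hence \( x \) factors through \( \bd{k-1}{\a} \pl{p} \) for some \( \a \in \set{-, +} \).

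But \( \bd{k-1}{\a} \pl{p} \) is a polyplex of dimension at most \( k - 1 \). A morphism of positive polygraphs sends generators to generators of the same dimension, so it has no element of dimension \( k \). This contradicts \( \dim x = k \), and therefore \( k = d \).

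The main point to check carefully is the correspondence between elements and subobjects. In particular, I need that Lemma \ref{round_polyplex} gives an isomorphism of subobjects, and not merely an equality of some other sort. I also need that membership in \( \bd{k}{\a} \pl{p} \), as used in Lemma \ref{lem:maximal_of_up_is_in_boundary}, means factoring through the monomorphism \( \bd{k}{\a}\pl{p} \to \pl{p} \). Both of these hold by the remark that boundary inclusions of regular polyplexes are monomorphisms.

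The edge case \( k = 0 \), where \( \bd{-1}{\a}\pl{p} = \varnothing \), is covered by the same argument: the intersection is then empty, which contradicts \( x \) lying in it.
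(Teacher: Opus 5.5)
Your proof is correct and is essentially the paper's argument: Lemma \ref{lem:maximal_of_up_is_in_boundary} puts \( x \) in \( \bd{k}{-}\pl{p} \cap \bd{k}{+}\pl{p} \), roundness then places it in \( \bd{k-1}{-}\pl{p} \cup \bd{k-1}{+}\pl{p} \), and that union contains no plex of dimension \( k \), which is a contradiction. One small point: Lemma \ref{round_polyplex} is stated as a sufficient condition for roundness, but you use the converse, so it is cleaner to justify that step as the paper does, directly from the definition of spherical boundaries (the comparison map out of the pushout is a monomorphism, which forces the intersection to equal the union as subobjects).
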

\begin{proof}
    Since \( x \) is maximal, letting \( k \eqdef \dim x \), we have by Lemma \ref{lem:maximal_of_up_is_in_boundary} that 
    \begin{equation*}
        x \in \bd{k}{-} \pl{p} \cap \bd{k}{+} \pl{p}.
    \end{equation*}
    If \( k < \dim \pl{p} \), the spherical boundaries of \( \pl{p} \) imply that \( x \in \bd{k - 1}{-} \pl{p} \cup \bd{k - 1}{+} \pl{p} \), which contains no plex of dimension \( \geq k \), a contradiction.
    Therefore, \( k = \dim \pl{p} \).
\end{proof}

\begin{lem} \label{lem:underlying_poset_is_graded}
    Let \( \pl{p} \) be a regular polyplex.
    Then the underlying poset of \( \pl{p} \) is graded by \( x \mapsto \dim x \).
    Furthermore, if \( y \) covers \( x \), then there exists a unique \( \a \in \set{-, +} \) such that \( x \) factors through \( \bd{\dim x}{\a} y \incl y \). 
\end{lem}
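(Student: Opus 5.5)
The plan is to reduce gradedness to the following claim: whenever \( y \colon \pl{v} \to \pl{p} \) covers \( x \colon \pl{u} \to \pl{p} \), we have \( \dim x = \dim y - 1 \). We also need that every element of dimension \( k > 0 \) lies above some element of dimension \( k - 1 \), which is immediate from the boundaries of a plex. Since \( x \le y \) means \( x \) factors uniquely through \( y \), and the underlying poset of a plex \( \pl{v} \) embeds as the down-set of \( y \), the question is local. It suffices to treat the case where \( \pl{p} = \pl{v} \) is a plex and \( y = \idd{\pl{v}} \) is its greatest element. So let \( d \eqdef \dim \pl{v} \), and let \( x \) be covered by \( \idd{\pl{v}} \), that is, \( x \) is a maximal element of the underlying poset of \( \pl{v} \) minus the top.

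First step: every proper element of \( \pl{v} \) factors through the total boundary \( \bd{d-1}{-}\pl{v} \cup \bd{d-1}{+}\pl{v} \). This holds because a plex is generated by its universal arrow together with its boundary, and the universal \( d \)-arrow is its only element of dimension \( d \). Hence \( x \) is maximal in this union, and therefore maximal in \( \bd{d-1}{\a}\pl{v} \) for some \( \a \), say the one with \( x \in \bd{d-1}{\a}\pl{v} \). Since \( \pl{v} \) is regular, \( \bd{d-1}{\a}\pl{v} \) is a round polyplex of dimension \( d-1 \), and it includes as a subobject via the monomorphism of the Remark on regular polyplexes. Lemma \ref{lem:round_polyplex_are_pure} then gives \( \dim x = d - 1 \). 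This proves the grading, with dimension as the rank function: minimal elements are the \( 0 \)-plexes, and by induction each chain \( x < y \) refines to a chain of covers with dimension increasing by one.

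Second step, the orientation: existence of \( \a \) was just shown. For uniqueness, suppose \( x \in \bd{d-1}{-}\pl{v} \cap \bd{d-1}{+}\pl{v} \). Since plexes are round (spherical boundaries, Lemma \ref{round_polyplex}), this intersection equals \( \bd{d-2}{-}\pl{v} \cup \bd{d-2}{+}\pl{v} \), which contains no plex of dimension \( d-1 \). This contradicts \( \dim x = d-1 \), so \( \a \) is unique. For a general cover in \( \pl{p} \), transport the statement along the embedding of the down-set of \( y \); the factorisation through \( \bd{\dim x}{\a} y \incl y \) is exactly the statement for the plex \( \pl{v} \).

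The main obstacle is the first step: justifying that proper elements of a plex lie in its total \( (d-1) \)-boundary, and that the down-set of \( y \) in \( \pl{p} \) is identified with the underlying poset of \( \pl{v} \). For the first point I would invoke Henry's description of a plex as a cellular extension of its boundary by a single generator, cf.\ \cite{henry2018regular}. For the second, I would use that \( y \) is a monomorphism, which follows by regularity from \cite[Proposition 2.4.5]{henry2018regular}.
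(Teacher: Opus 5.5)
Your argument is correct and its core is the same as the paper's. An element covered by the top of a plex is maximal in one of the two \( (d-1) \)-boundaries, hence has dimension \( d-1 \) by Lemma \ref{lem:round_polyplex_are_pure}, and uniqueness of \( \a \) follows from spherical boundaries. The only difference is bookkeeping. You reduce to the plex case by identifying the down-set of \( y \) with the underlying poset of its plex. This is valid because the category of elements is a poset, which also yields the monomorphism property you invoke. The paper instead inducts on dimension and on the construction of the polyplex, using that pastings give pushouts of closed embeddings of graded posets.
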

\begin{proof}
    We show the statement by induction on the dimension and construction of the polyplex.
    If \( \pl{p} = \pl{q} \cp{k} \pl{r} \), then, since the category of elements preserves colimits, the underlying poset of \( \pl{p} \) is a pushout of closed embeddings of graded posets, thus is graded, and the second condition follows immediately by inductive hypothesis.
    If \( \pl{p} \) is a plex, then the universal arrow \( p = \idd{\pl{p}} \colon \pl{p} \to \pl{p} \) is the maximal element of the underlying poset. 
    Since \( \pl{p} \) is regular, the underlying poset of \( \pl{p} \) without its maximal element is constructed as a pushout of closed embeddings of the underlying posets of \( \bd{k}{\a} \pl{p} \), for \( \a \in \set{-, +} \) and \( k < \dim p \).
    It follows that the statement holds of that poset.
    Next, we show that \( p \) only covers elements of dimension \( k \eqdef \dim p - 1 \).
    Let \( x \colon \pl{u} \to \pl{p} \) be covered by \( p \).
    Then there exist \( \a \in \set{-, +} \) such that \( x \in \bd{k}{\a} \pl{p} \), for if not we would have \( x = p \).
    Since \( p \) covers \( x \), \( x \) is maximal in \( \bd{k}{\a} \pl{p} \), and because \( \bd{k}{\a} \pl{p} \) is round, Lemma \ref{lem:round_polyplex_are_pure} implies that \( \dim x = k \).
    This shows that the underlying poset of \( \pl{p} \) is graded.
    Last, if \( x \in \bd{k}{-} \pl{p} \cap \bd{k}{+} \pl{p} \), we conclude by spherical boundaries that \( \dim x < k \), which shows uniqueness.
\end{proof}

\begin{dfn} [Oriented graded poset of a polyplex]
    Let \( \pl{p} \) be a regular polyplex.
    By Lemma \ref{lem:underlying_poset_is_graded}, the underlying poset of \( \pl{p} \) is graded by \( x \mapsto \dim x \), and has a canonical orientation given by \( x \in \faces{}{\a} y \) if \( x \) factors through \( \bd{\dim x}{\a} y \incl y \).
    We write \( \posplex(\pl{p}) \) for this oriented graded poset, called the \emph{oriented graded poset of \( \pl{p} \)}.
    This mapping extends to a functor
    \begin{equation*}
        \posplex \colon \polyplexcat \to \cat{ogPos}
    \end{equation*}
    from the category of polyplexes to the category \( \cat{ogPos} \) of oriented graded posets and their morphisms.
\end{dfn}

\begin{prop} \label{prop:posplex_sends_to_molecules}
    Let \( \pl{p} \) be a polyplex.
    Then \( \posplex(\pl{p}) \) is a molecule such that:
    \begin{enumerate}
        \item for all \( \a \in \set{-, +} \) and \( k \in \nat \), \( \posplex(\bd{k}{\a} \pl{p}) = \bd{k}{\a} \posplex(\pl{p}) \), as closed subsets of \( \posplex(\pl{p}) \);
        \item if \( \pl{p} = \pl{u} \cp{k} \pl{v} \), for some \( k \in \nat \), and polyplexes \( \pl{u} \) and \( \pl{v} \), then \( \posplex(\pl{u} \cp{k} \pl{v}) \) and \( \posplex(\pl{u}) \cp{k} \posplex(\pl{v}) \) are uniquely isomorphic;
        \item if \( \pl{p} \) is round, then \( \posplex(\pl{p}) \) is round.
    \end{enumerate}    
\end{prop}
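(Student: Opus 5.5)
We argue by induction on the construction of the polyplex \( \pl{p} \), following the same pattern as Lemma \ref{lem:underlying_poset_is_graded}: first on dimension, then on the number of generating arrows. All three properties are proved simultaneously, since each case of the induction needs the others one step down.

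\emph{Pasting case.} Suppose \( \pl{p} = \pl{u} \cp{k} \pl{v} \), where \( \pl{u} \) and \( \pl{v} \) are smaller polyplexes, so the induction hypothesis applies to them. The category of elements functor preserves colimits. Applied to the pushout square defining \( \pl{u} \cp{k} \pl{v} \), it exhibits \( \posplex(\pl{p}) \) as the pushout of \( \posplex(\pl{u}) \) and \( \posplex(\pl{v}) \) along \( \posplex(\bd{k}{+} \pl{u}) \cong \posplex(\bd{k}{-} \pl{v}) \). This pushout is computed along closed embeddings. The orientations agree on the glued part, because in both cases the orientation is defined by factorisation through boundaries. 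By point 1 for \( \pl{u} \) and \( \pl{v} \), the gluing locus is \( \bd{k}{+}\posplex(\pl{u}) \cong \bd{k}{-}\posplex(\pl{v}) \), so the pushout is exactly \( \posplex(\pl{u}) \cp{k} \posplex(\pl{v}) \). This gives point 2, and hence that \( \posplex(\pl{p}) \) is a molecule. Point 1 then follows from the usual formulas for boundaries of a pasting. These are \( \bd{j}{\a}(U \cp{k} V) = \bd{j}{\a} U \cp{k} \bd{j}{\a} V \) for \( j > k \), together with the degenerate cases \( j \le k \). The same formulas hold for polyplexes (they are arrows of the terminal positive polygraph), and applying the inductive point 1 on both sides matches them.

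\emph{Plex case.} Let \( \pl{p} \) be a plex of dimension \( d \), with greatest element \( p \). The boundaries \( \bd{d-1}{\a}\pl{p} \) are round polyplexes of lower dimension. By induction together with point 3, \( U^\a \eqdef \posplex(\bd{d-1}{\a} \pl{p}) \) are round molecules. Since \( \bd{d-1}{+} \) and \( \bd{d-1}{-} \) of \( \pl{p} \) are parallel, point 1 gives \( \bd{d-2}{\a} U^- = \bd{d-2}{\a} U^+ \). Hence the atom \( U^- \celto U^+ \) exists. By regularity, \( \posplex(\pl{p}) \setminus \set{p} \) is the union of \( U^- \) and \( U^+ \), glued along \( \posplex(\bd{d-2}{}\pl{p}) \); here spherical boundaries are what make the comparison map a mono and give the intersection exactly. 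By Lemma \ref{lem:underlying_poset_is_graded}, the faces of \( p \) are the maximal elements of \( U^\pm \), with orientation \( \a \) on \( U^\a \). This identifies \( \posplex(\pl{p}) \cong U^- \celto U^+ \), which is an atom, hence a molecule. Point 1 is immediate from the definition of the boundaries of \( U^- \celto U^+ \), and point 2 is vacuous for a plex except in trivial unit splittings.

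\emph{Roundness.} This is point 3. By Lemma \ref{round_polyplex} and its converse, roundness of \( \pl{p} \) amounts to \( \bd{k-1}{-}\pl{p} \cup \bd{k-1}{+}\pl{p} = \bd{k}{-}\pl{p} \cap \bd{k}{+}\pl{p} \) as subobjects of \( \pl{p} \). Subobjects of \( \pl{p} \) correspond to closed subsets of its category of elements, and unions and intersections are computed elementwise. Applying point 1 then translates this equality verbatim into the roundness condition for \( \posplex(\pl{p}) \).

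\emph{Main obstacle.} The hardest step is the plex case: checking that \( \posplex(\pl{p}) \) really is the atom \( U^- \celto U^+ \). This requires that \( U^- \cup U^+ \) sits inside \( \posplex(\pl{p}) \) with intersection exactly \( \posplex(\bd{d-2}{}\pl{p}) \). Spherical boundaries provide this, through the monomorphism condition in their definition together with Lemma \ref{round_polyplex}.
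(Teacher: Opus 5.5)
Your proposal is correct and follows essentially the same route as the paper. Both arguments run a simultaneous induction on dimension and construction, and in the plex case both identify \( \posplex(\pl{p}) \) with the atom \( \posplex(\bd{d-1}{-}\pl{p}) \celto \posplex(\bd{d-1}{+}\pl{p}) \). In the pasting case both use that the category of elements preserves pushouts, together with point 1, and for point 3 both transport the roundness equality of subobjects through point 1. You justify the plex-case identification (the gluing along \( \posplex(\bd{d-2}{}\pl{p}) \) and the orientation of the faces of the top element) in more detail than the paper's ``by construction''. You leave implicit the trivial \( 0 \)-dimensional base case and the rigidity of molecules behind ``uniquely isomorphic'', but neither is a real gap.
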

\begin{proof}
    We proceed by induction on the dimension and the construction of the polyplex \( \pl{p} \).
    When \( \pl{p} \) is \( 0 \)\nbd dimensional, the statement is clear.
    Now suppose \( \pl{p} \) is a plex of dimension \( d > 0 \).
    Then, \( \posplex(\bd{d - 1}{-} \pl{p}) \) and \( \posplex(\bd{d - 1}{+} \pl{p}) \) are, by inductive hypothesis, parallel round molecules of dimension \( d - 1 \).
    Thus \( \posplex(\bd{d - 1}{-} \pl{p}) \celto \posplex(\bd{d - 1}{+} \pl{p}) \) is a well-defined atom, which is, by construction, isomorphic to \( \posplex(\pl{p}) \).
    By construction, and using the inductive hypothesis, for all \( \a \in \set{-, +} \) and \( k \in \nat \), \( \posplex(\bd{k}{\a} \pl{p}) = \bd{k}{\a} \posplex(\pl{p}) \).
    Now suppose that \( \pl{p} = \pl{u} \cp{k} \pl{v} \), then since \( \posplex(-) \) commutes with the boundaries of \( \pl{u} \) and \( \pl{v} \) and the category of elements functor preserves pushouts, we immediately get that \( \posplex(\pl{u} \cp{k} \pl{v}) \) is isomorphic to \( \posplex(\pl{u}) \cp{k} \posplex(\pl{v}) \).
    Since the latter is a molecule, as pasting of molecules by inductive hypothesis, we conclude both that \( \posplex(\pl{u} \cp{k} \pl{v}) \) is a molecule and that the isomorphism is unique.
    The statement on boundaries now follows from inductive hypothesis and the axioms of distributivity of boundary over pasting in a strict \( \omega \)\nbd category.

    Finally, suppose that \( \pl{p} \) is round, and let \( k < \dim \pl{p} \).
    We must show that 
    \begin{equation*}
       \bd{k - 1}{-} \posplex(\pl{p}) \cup \bd{k - 1}{+} \posplex(\pl{p}) = \bd{k}{-} \posplex(\pl{p}) \cap \bd{k}{+} \posplex(\pl{p}).  
    \end{equation*}
    Since all the terms involved in the previous equation are underlying graded posets of molecules, they are globular, so we already have one inclusion.
    Conversely, let \( x \colon \pl{u} \to \pl{p} \) be in 
    \begin{equation*}
       \bd{k}{-} \posplex(\pl{p}) \cap \bd{k}{+} \posplex(\pl{p}) = \posplex(\bd{k}{-} \pl{p}) \cap \posplex(\bd{k}{+} \pl{p}). 
    \end{equation*}
    Then \( x \in \bd{k}{-} \pl{p} \cap \bd{k}{+} \pl{p} \), and since \( \pl{p} \) is round, \( x \in \bd{k - 1}{-} \pl{p} \cup \bd{k - 1}{+} \pl{p} \).
    Hence, using distributivity of \( \posplex \) with boundaries again, \( x \in \bd{k - 1}{-} \posplex(\pl{p}) \cup \bd{k - 1}{+} \posplex(\pl{p}) \).
    This concludes the induction and the proof.
\end{proof}

\begin{cor} \label{cor:oriented_poset_is_functor_strict_functor}
    The oriented graded poset of a polyplex defines a functor
    \begin{equation*}
        \posplex \colon \polyplexcat \to \molcat,
    \end{equation*}
    which is also a strict functor of the associated strict \( \omega \)\nbd categories.
    This functor sends plexes to atoms, round polyplexes to round molecules, and preserves dimensions.
\end{cor}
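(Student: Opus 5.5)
We extract the corollary almost entirely from Proposition \ref{prop:posplex_sends_to_molecules}; what remains is to check functoriality, compatibility with the strict \( \omega \)\nbd category structures, and the three stated preservation properties.

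First, on objects, Proposition \ref{prop:posplex_sends_to_molecules} tells us that \( \posplex(\pl{p}) \) is a molecule for every polyplex \( \pl{p} \).
On morphisms, \( \posplex \) is already defined as a functor into \( \cat{ogPos} \), and \( \molcat \) is a full subcategory of \( \cat{ogPos} \) on regular directed complexes and morphisms of oriented graded posets (Remark following the definition of \( \rdcpxequal \), via \cite[Proposition 6.2.19]{hadzihasanovic2024combinatorics}). Hence \( \posplex \) factors through \( \molcat \) with no further work.
Here we should briefly check that a morphism of polyplexes \( \pl{u} \to \pl{p} \) induces, on categories of elements, a map that restricts to bijections on input and output faces. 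This holds because a morphism of positive polygraphs sends generators to generators of the same dimension and commutes with the boundary inclusions \( \bd{k}{\a} \pl{v} \to \pl{v} \). So the orientation \( x \in \faces{}{\a} y \), defined by factorisation through \( \bd{\dim x}{\a} y \), is preserved and reflected.

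Second, for the strict functor claim, we view polyplexes as the arrows of the terminal positive polygraph and molecules (up to unique isomorphism) as the arrows of the strict \( \omega \)\nbd category of molecules.
Item (1) of the Proposition gives compatibility with boundaries: \( \posplex(\bd{k}{\a}\pl{p}) = \bd{k}{\a}\posplex(\pl{p}) \).
Item (2) gives compatibility with composition: \( \posplex(\pl{u}\cp{k}\pl{v}) \cong \posplex(\pl{u})\cp{k}\posplex(\pl{v}) \), with the isomorphism unique. Uniqueness of isomorphisms of molecules makes these equalities in the quotient by isomorphism.
Identities are handled by boundaries, since an arrow of lower dimension is its own boundary.

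Third, the preservation properties:
\begin{itemize}
\item Dimension preservation is immediate, since the grading of \( \posplex(\pl{p}) \) is \( x \mapsto \dim x \) and the identity element \( \idd{\pl{p}} \) has dimension \( \dim \pl{p} \).
\item Round polyplexes go to round molecules by item (3).
\item For plexes, the universal arrow \( \idd{\pl{p}} \) is the greatest element of the category of elements, as noted in the proof of Lemma \ref{lem:maximal_of_up_is_in_boundary}. So \( \posplex(\pl{p}) \) is a molecule with a greatest element, that is, an atom.
\end{itemize}

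The only mildly delicate step is the morphism-level check in the first part: verifying that morphisms of polyplexes give morphisms of oriented graded posets, in particular the bijection on faces. Everything else is a direct reading of the earlier Proposition.
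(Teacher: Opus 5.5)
Your proposal is correct and is essentially the paper's argument. The paper states this corollary without a separate proof, as a direct reading of Proposition \ref{prop:posplex_sends_to_molecules} together with the functor \( \posplex \colon \polyplexcat \to \cat{ogPos} \) already asserted in its definition; there, the bijection on faces (not only the preservation of orientation) comes from the category of elements being a poset.

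One small slip concerns dimensions: \( \idd{\pl{p}} \) is an element of the category of elements only when \( \pl{p} \) is a plex. For a general polyplex, you should instead get dimension preservation from the same induction as in the Proposition, using item (2) and the fact that, on both sides, a \( k \)\nbd pasting has dimension equal to the maximum of the dimensions of its factors.
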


\begin{dfn}
    Let \( n \in \nat \cup \set{\omega} \). 
    We call again
    \begin{equation*}
        \posplex \colon \nPol n \to \dcpxn{n}
    \end{equation*}
    the Yoneda extension of \( \posplex \colon \nplexcat{n} \to \natomcat{n} \) given by Corollary \ref{cor:oriented_poset_is_functor_strict_functor}.
\end{dfn}

\noindent Recall that by \cite[2.60]{chanavat2025stricter}, the full subcategory inclusion \( \snCat n \incl \nCat n \) of stricter \( n \)\nbd categories into strict \( n \)\nbd categories has a left adjoint 
\begin{equation*}
    \rs \colon \nCat n \to \snCat n.
\end{equation*}

\begin{thm} \label{thm:from_polyplexes_to_molecules}
    Let \( n \in \nat \cup \set{\omega} \).
    Then the diagram
    \begin{center}
        \begin{tikzcd}
            {\nplexcat{n}} & {\natomcat n} \\
            {\nPol n} & {\dcpxn n} \\
            {\nCat n} & {\snCat n}
            \arrow["\posplex", from=1-1, to=1-2]
            \arrow[hook, from=1-1, to=2-1]
            \arrow[hook, from=1-2, to=2-2]
            \arrow["\posplex", from=2-1, to=2-2]
            \arrow[hook, from=2-1, to=3-1]
            \arrow["{\molecin{-}}", hook, from=2-2, to=3-2]
            \arrow["\rs", from=3-1, to=3-2]
        \end{tikzcd}
    \end{center}
    commutes up to natural isomorphisms, and, if \( n \le 3 \), the horizontal functors are equivalences of categories.
\end{thm}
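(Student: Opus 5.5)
The plan has three parts: commutativity of the top square, commutativity of the bottom square, and then the equivalences for \( n \le 3 \).

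The top square commutes by construction. The middle functor \( \posplex \colon \nPol n \to \dcpxn n \) is the Yoneda extension of \( \posplex \colon \nplexcat n \to \natomcat n \). Since \( \nPol n \simeq \Psh(\nplexcat n) \) and \( \dcpxn n = \Psh(\natomcat n) \) are presheaf categories with the vertical inclusions given by Yoneda embeddings, the square commutes up to canonical isomorphism.

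For the bottom square, both composites \( \rs \after \iota \) and \( \molecin{-} \after \posplex \) are left adjoints into \( \snCat n \). Indeed, \( \rs \) is a left adjoint, and the inclusion of regular polygraphs into \( \nCat n \) preserves the colimits presenting a regular polygraph as a colimit of its plexes, namely the cellular extensions; \( \molecin{-} \) is a left adjoint by Lemma \ref{lem:cellular_colimits_are_omega_colimits}. I would therefore reduce, by density of plexes, to constructing an isomorphism \( \rs\pl{u} \cong \molecin{\posplex(\pl{u})} \) natural in plexes \( \pl{u} \). To avoid relying on colimit preservation by \( \nPol n \incl \nCat n \), I would argue skeleton by skeleton, using the cellular-extension pushouts of Definition \ref{dfn:polygraph} and Definition \ref{dfn:skeleton_dcpx}.

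To build the isomorphism for a plex, I would induct on dimension and on the construction of polyplexes, proving the stronger statement for all regular polyplexes \( \pl{p} \): there is a strict functor \( \pl{p} \to \molecin{\posplex(\pl{p})} \), sending the universal arrow to the identity pasting diagram, that exhibits the target as \( \rs \pl{p} \). For pastings \( \pl{u} \cp{k} \pl{v} \), I would use that \( \rs \) preserves the pushout in \( \omegaCat \), that \( \posplex \) commutes with pasting and boundaries (Proposition \ref{prop:posplex_sends_to_molecules}), and that \( \molecin{-} \) turns the pushout \( U \cp{k} V \) into a pushout in \( \somegaCat \), which is exactly the stricter locality condition. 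For a plex of dimension \( d \), \( \pl{u} \) is the cellular extension of \( \bd{}{}\pl{u} \) by one \( d \)\nbd arrow, and \( \posplex(\pl{u}) \) is the atom \( \posplex(\bd{d-1}{-}\pl{u}) \celto \posplex(\bd{d-1}{+}\pl{u}) \), whose \( \molecin{-} \) is likewise a one-cell extension of \( \molecin{\bd{}{}\posplex(\pl{u})} \) in \( \somegaCat \). The inductive hypothesis then matches the two. Naturality follows from Remark \ref{rmk:atom_basis_mol}, since both sides are determined by their values on generators.

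For \( n \le 3 \), I would first show that \( \posplex \colon \nplexcat n \to \natomcat n \) is an equivalence; Yoneda extension then makes the middle functor an equivalence. Faithfulness: a morphism of regular polygraphs is determined by its action on generators, which are the elements of the underlying poset. Fullness: a morphism of oriented graded posets between atoms is a local embedding, which induces a strict functor on \( \molecin{-} \) sending cells to cells. Through the bottom-square isomorphism and the fact that \( \rs \) is trivial in dimensions \( \le 3 \) (cited from \cite{chanavat2025stricter}), this yields a morphism of polygraphs. Essential surjectivity is the main obstacle: every \( n \)\nbd atom with \( n \le 3 \) must arise from a plex. I would prove inductively that every molecule of dimension \( \le 2 \) arises as \( \posplex \) of a round or general polyplex. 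The idea is to realise any split \( U \cp{k} V \) as \( \posplex(\pl{u} \cp{k} \pl{v}) \), using the uniqueness of isomorphisms of boundaries, and to use that regular polygraphs of dimension \( \le 3 \) satisfy the needed sphericity because their molecules are frame-acyclic/polygraphic. Then \( U \celto V \) for round \( U, V \) of dimension \( \le 2 \) is the image of the plex with boundaries the corresponding round polyplexes; roundness transfers by the converse of Proposition \ref{prop:posplex_sends_to_molecules}(3), via Lemma \ref{round_polyplex}. Finally, the bottom functor \( \rs \) is an equivalence for \( n \le 3 \) because the localisation is trivial there \cite{chanavat2025stricter}. Restricted to regular polygraphs, this is consistent with the middle equivalence.
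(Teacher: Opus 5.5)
Your treatment of the two squares is essentially the paper's. The upper square commutes by Yoneda extension. The lower one commutes because \( \rs \) turns the cellular extensions presenting a regular polygraph into the same cellular extensions computed in \( \snCat n \), which is also what \( \molecin{-} \) produces by Lemma \ref{lem:cellular_colimits_are_omega_colimits}; your induction over polyplexes spells this out.

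For \( n \le 3 \) you take a different route. The paper proves directly, by induction on submolecules, that \( \molecin{U} \) is a regular polyplex for every molecule \( U \) of dimension \( \le n \), and a plex when \( U \) is an atom. The atom case comes from \cite[Proposition 2.3.3]{henry2018nonunital}. A split \( U = V \cp{k} W \) is handled by the fact that \( \molecin{V} \cup \molecin{W} \to \molecin{U} \) is an isomorphism, i.e.\ the strict pushout is already \( \molecin{U} \), since strict and stricter \( n \)\nbd categories agree for \( n \le 3 \) \cite[Theorem 2.66]{chanavat2025stricter}. This gives \( \molecin{-} \colon \natomcat n \to \nplexcat n \) as an explicit inverse to \( \posplex \), so full faithfulness is automatic.

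You instead verify faithfulness, fullness and essential surjectivity separately. In your version the bound \( n \le 3 \) is used in fullness, through invertibility of the unit \( \pl{u} \to \rs \pl{u} \) combined with the lower square. Preimages of atoms are then assembled abstractly from pastings and plexes of regular polyplexes. This works, and it shows that building preimages needs only low-dimensional input. It is longer, though, and it needs Henry's existence result for a regular plex with prescribed round parallel boundaries, which you should cite explicitly (e.g.\ the same \cite[Proposition 2.3.3]{henry2018nonunital}).

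Two justifications in your essential-surjectivity sketch need replacing.

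First, to form \( \pl{u} \cp{k} \pl{v} \), or a plex from \( \pl{u} \) to \( \pl{v} \), you need the \emph{existence} of an isomorphism of polyplexes between the relevant boundaries. You only know that their images under \( \posplex \) are isomorphic. Uniqueness of isomorphisms of molecules does not give existence, and this is not a formal property of \( \posplex \): it is the kind of statement that breaks down for \( 4 \)\nbd polyplexes (cf.\ the remark following the theorem). In your range it holds for a simple reason. All boundaries involved have dimension \( \le 1 \), and a polyplex of dimension \( \le 1 \) is determined by its underlying molecule, \( \pt \) or \( m\arr \).

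Second, frame-acyclicity plays no role here. The only sphericity you need is the roundness of \( \pl{u} \) and \( \pl{v} \). You correctly transfer this from \( U \) and \( V \) using Proposition \ref{prop:posplex_sends_to_molecules} and Lemma \ref{round_polyplex}.
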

\begin{proof}
    The upper square commutes by definition.
    Then, for any regular polygraph \( C \), \( \rs C \) is a stricter polygraph (in the sense of \cite[Definition 2.35]{chanavat2025stricter}) which is computed by the same cellular extensions as the one giving \( C \), but in \( \snCat{n} \) instead.
    Therefore, by Lemma \ref{lem:cellular_colimits_are_omega_colimits}, \( \rs C \) is naturally isomorphic to \( \molecin{\posplex(C)} \). 
    
    Now suppose that \( n \le 3 \).
    By \cite[Theorem 2.66]{chanavat2025stricter}, \( \rs \) is an equivalence of categories.
    Let \( U \) be a molecule of dimension \( \le n \).
    We show by induction on the submolecules \( V \) of \( U \) that \( \molecin{V} \) is a regular polyplex.
    When \( V \submol U \) is of type \( \pt \submol U \) this is clear.
    If \( U \) is an atom, the inductive hypothesis together with \cite[Proposition 2.3.3]{henry2018nonunital} imply that \( \molecin{U} \) is a regular polygraph and a plex. 
    Finally, suppose that \( U = V \cp{k} W \) with \( V \) and \( W \) proper submolecules.
    Then, since \( n \le 3 \), the comparison \( \molecin{V} \cup \molecin{W} \to \molecin{U} \) is an isomorphism by \cite[Theorem 2.66]{chanavat2025stricter}.
    Since the domain of that map is a regular polyplex by inductive hypothesis, we conclude that \( \molecin{U} \) is a regular polyplex.  
    This shows that \( \molecin{-} \colon \natomcat{n} \to \nplexcat n \) is a well-defined functor, and it is straightforward to see that \( \molecin{-} \) and \( \posplex \) are inverses of each other. 
    Thus \( \posplex \colon \nplexcat{n} \to\natomcat{n} \) is an equivalence of categories, hence so is its Yoneda extension \( \posplex \).
    This concludes the proof.
\end{proof}

\begin{rmk} 
    It is still the case that \( \posplex \colon \nPol 4 \to \dcpxn{4} \) is an equivalence of categories.
    Indeed, \( \npolyplexcat{3} \cong \nmolcat 3 \) implies by a similar argument as in the previous proof that the categories \( \nplexcat 4 \) and \( \natomcat 4 \) are equivalent via \( \posplex \) and \( \molecin{-} \), but this does \emph{not} produce an equivalence between \( \npolyplexcat{4} \) and \( \nmolcat{4} \).
    Similarly, \( \rs \colon \nCat{4} \to \snCat{4} \) is not an equivalence of categories anymore.
\end{rmk}
\section{Poset of subdivisions}
\label{sec:subdiv}

\subsection{Layering of molecules}

\begin{dfn} [Pre-layering and layering] \label{dfn:prelayering}
    Let \( U \) be a molecule and \( k \geq -1 \).
    A \emph{\( k \)\nbd pre-layering of \( U \)} comprises a number \( m \geq 1 \) and a sequence \( (\order{i}{U})_{i = 1}^m \) of submolecules of \( U \) of dimension \( > k \) such that \( U \) is isomorphic to
    \begin{equation*}
        \order{1}{U} \cp{k} \ldots \cp{k} \order{m}{U}.
    \end{equation*}
    If \( k = -1 \), then it is implied that \( m = 1 \) and \( \order{1}{U} = U \).
    A \emph{\( k \)\nbd layering of \( U \)} is a \( k \)\nbd pre-layering such that 
    \begin{equation*}
        m = \left| \bigcup_{i > k} \gr{i}{(\maxel{U})} \right|.
    \end{equation*} 
\end{dfn}

\begin{rmk}
    The definition of \( k \)\nbd layering coincides with \cite[4.2.1]{hadzihasanovic2024combinatorics}.
\end{rmk}

\begin{exm}
    Any split \( A \cp{k} B \) of a molecule \( U \) into proper submolecules \( A \) and \( B \) is a \( k \)\nbd pre-layering.
\end{exm}

\begin{dfn} [Poset of pre-layering] \label{dfn:poset_of_pre_layering}
    Let \( U \) be a molecule, \( k \geq -1 \), and \( L \eqdef (\order{i}{U})_{i = 1}^m \), \( K \eqdef (\order{i}{V})_{i = 1}^p \) be two \( k \)\nbd pre-layerings. 
    We say that \emph{\( L \) refines \( K \)} if there exists an endpoint and order-preserving injection 
    \begin{equation*}
        \fun{f} \colon \set{0 < \ldots < p} \to \set{0 < \ldots < m}
    \end{equation*}
    such that for all \( i \in \set{1, \ldots, p} \), we have
    \begin{equation*}
        \order{i}{V} = \order{\fun{f}(i - 1) + 1}{U} \cp{k} \ldots \cp{k} \order{\fun{f}(i)}{U}.
    \end{equation*}
    We write \( \preLay k U \) for the poset of \( k \)\nbd pre-layerings of \( U \) up to layer-wise isomorphism, ordered by letting \( K \le L \) if \( L \) refines \( K \).
    We also let \( \Lay k U \) be its subposet on \( k \)\nbd layerings.
\end{dfn}

\begin{rmk}
    Since the only \( k \)\nbd pre-layering refining a \( k \)\nbd layering is itself, \( \Lay k U \) is in fact just the set of maximal elements of \( \preLay{k} U \).
\end{rmk}

\begin{dfn} [Linearly ordered partition]
    Let \( X \) be a set.
    A \emph{linearly ordered partition of \( X \)} comprises a partition of \( X \) together with a linear ordering \( E \eqdef (\order{i}{E})_{i = 1}^m \) of the equivalence classes.
    We write \( \prt{E} \colon X \to \set{1, \ldots, m} \) for the unique surjective function such that \( \invrs{\prt{E}}(i) = \order{i}{E} \) for all \( i \in \set{1, \ldots, m} \).
    If \( \prt{E} \) is injective, we may also write \( (\order{i}{x})_{i = 1}^m \) in place of \( (\set{\order{i}{x}})_{i = 1}^m \).
\end{dfn}

\begin{dfn}
    Let \( U \) be an oriented graded poset and \( x \in U \).
    For any \( k \in \nat \) and \( \a \in \set{-, +} \), we write 
    \begin{equation*}
       \faces{k}{\a} x \eqdef \set{y \in \clset{x} \mid \cofaces{}{-\a} y \cap \clset{x} = \varnothing}, 
    \end{equation*}
    where \( \cofaces{}{-\a} y \) is the set of elements \( z \in U \) such that \( y \in \faces{}{-\a} z \).
\end{dfn}

\begin{dfn} [Maximal flow graph] \label{dfn:maxflow}
    Let \( U \) be a molecule and \( k \geq -1 \).
    The \emph{maximal \( k \)\nbd flow graph on \( U \)} is the (directed) graph \( \maxflow k U \) whose vertices are given by the set \( \bigcup_{i > k} \gr{i}{(\maxel{U})} \), and with an edge from \( x \) to \( y \) if \( \faces{k}{+} x \cap \faces{k}{-} y \neq \varnothing \).
\end{dfn}

\begin{comm} \label{comm:shift_index_preorder}
    In \cite{campion2023pasting}, Campion uses a preorder \( \mathrm{At}_k(U) \) associated to any torsion-free complex \( U \).
    We will see in Proposition \ref{prop:frame_acyclic_iso_preorderings} that the right alternative for us is (the preorder generated by) \( \maxflow{k - 1}{U} \); we point out the shift in the index.
\end{comm}

\begin{dfn} [Pre-ordering and ordering] \label{dfn:preordering}
    Let \( U \) be a molecule and \( k \geq 1 \).
    An \emph{\( k \)\nbd pre-ordering of \( U \)} is a linearly ordered partition \( L \eqdef (\order{i}{L})_{i = 1}^m \) of the vertices of \( \maxflow{k}{U} \) such that, for all edges \( x \to y \) of \( \maxflow{k}{U} \), we have \( \prt{L}(x) \le \prt{L}(y) \).
    A \emph{\( k \)\nbd ordering of \( U \)} is a \( k \)\nbd pre-ordering which is a linear order.
    We write \( \preOrd k U \) for the poset of \( k \)\nbd pre-orderings of \( U \) ordered by refinement, and \( \Ord k U \) for its subposet on \( k \)\nbd orderings.
\end{dfn}

\begin{rmk}
    Since partition associated to \( k \)\nbd orderings are discrete, \( \Ord{k} U \), which coincides with \cite[Definition 4.3.10]{hadzihasanovic2024combinatorics}, is in fact just the set of maximal elements of \( \preOrd{k} \).
\end{rmk}

\begin{rmk} \label{rmk:notation_lin_preord}
    Let \( X \) be the preorder generated by the edges of \( \maxflow{k}{U} \).
    Then \( \preOrd k U \) coincides, as a poset, with \( \mathrm{Lin}^\triangleleft(X) \) in the notation of \cite[Definition 3.1]{campion2023pasting}.
\end{rmk}

\begin{prop} \label{prop:preorderings_from_layerings}
    Let \( U \) be a molecule, \( k \geq - 1 \).
    For each \( k \)\nbd pre-layering \( L \eqdef (\order{i}{U})_{i = 1}^m \) of \( U \), write \( (\order{i}{L})_{i = 1}^m \) for the linearly ordered partition on the vertices of \( \maxflow{k}{U} \) defined by letting \( \order{i}{L} \) be the collection of vertices of \( \maxflow{k}{U} \) that belong to \( \order{i}{U} \).
    Then the assignment
    \begin{equation*}
        \o k U \colon L \mapsto\; (\order{i}{L})_{i = 1}^m
    \end{equation*}
    determines an injective and order-preserving function \( \preLay{k} U \incl \preOrd{k} U \), which restricts to an injective function \( \Lay{k} U \incl \Ord{k} U \) on layerings.
\end{prop}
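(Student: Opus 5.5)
We prove in turn that \( \o k U \) is well defined, that it lands in \( \preOrd{k} U \), that it is order-preserving, and that it is injective; the statement about layerings then follows by counting.

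\emph{Well-definedness.} Let \( L = (\order{i}{U})_{i=1}^m \) be a \( k \)\nbd pre-layering. Each maximal element \( x \) of \( U \) with \( \dim x > k \) lies in exactly one \( \order{i}{U} \). Indeed, \( U \cong \order{1}{U} \cp{k} \ldots \cp{k} \order{m}{U} \) is an iterated pushout along \( k \)\nbd boundaries, of dimension \( \le k \), so \( x \) lies in some layer. It cannot lie in a \( k \)\nbd boundary, so it lies in no other layer. Hence \( (\order{i}{L}) \) is a partition of the vertices of \( \maxflow{k}{U} \). Each class is non-empty, because \( \dim \order{i}{U} > k \) and a top-dimensional maximal element of \( \order{i}{U} \) remains maximal in \( U \): it is not in \( \bd{k}{\pm}\order{i}{U} \), which is the only place where \( \order{i}{U} \) meets the other layers. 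Layer-wise isomorphic pre-layerings give the same partition, so the assignment descends to \( \preLay{k} U \).

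\emph{Compatibility with the edges of \( \maxflow{k}{U} \).} Suppose \( x \in \order{i}{U} \), \( y \in \order{j}{U} \), and \( z \in \faces{k}{+} x \cap \faces{k}{-} y \). We must show \( i \le j \); suppose instead \( j < i \). The element \( z \) lies in \( \order{i}{U} \cap \order{j}{U} \), which by the iterated pasting is contained in \( \bd{k}{+}\order{j}{U} \cap \bd{k}{-}\order{i}{U} \). We then use the standard fact from \cite{hadzihasanovic2024combinatorics} that for a molecule \( W \), the input boundary \( \bd{k}{-} W \) consists of elements with no \( - \)\nbd cofaces of dimension \( k+1 \) in \( W \), together with lower-dimensional ones, and dually for outputs. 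Since \( y \) is a cell of \( \order{j}{U} \) and \( z \in \faces{k}{-} y \), \( z \) has an output coface in \( \clset{y} \subseteq \order{j}{U} \), which is incompatible with \( z \in \bd{k}{+} \order{j}{U} \) being an output-boundary element reachable only as an output. Symmetrically, \( z \in \faces{k}{+} x \) conflicts with \( z \in \bd{k}{-}\order{i}{U} \). Making this "flow" argument precise, by passing to the \( k \)\nbd dimensional part of \( z \) and using that \( \faces{k}{\a} \) is computed in the closures of \( x \) and \( y \), is the step I expect to be the main obstacle.

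\emph{Order preservation.} If \( L \) refines \( K \) via \( \fun{f} \), then each class of \( \o k U(K) \) is the union of the classes of \( \o k U(L) \) indexed by \( \fun{f}(i-1)+1, \ldots, \fun{f}(i) \), in order. This is exactly refinement of linearly ordered partitions.

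\emph{Injectivity.} Suppose \( \o k U(L) = \o k U(K) \). We show by induction on the number of layers that \( \order{i}{U} = \order{i}{V} \) as closed subsets. For this we use the fact that a submolecule \( \order{i}{U} \) of dimension \( > k \) is determined, inside the decomposition, by its maximal elements of dimension \( > k \) together with its input \( k \)\nbd boundary. The input boundary of the first layer is \( \bd{k}{-} U \), and then \( \bd{k}{-}\order{i+1}{U} = \bd{k}{+}\order{i}{U} \). Concretely, \( \order{i}{U} \) is the closure of its vertices union the part of \( \bd{k}{-}\order{i}{U} \) not covered by them. So the layers agree inductively, and the two pre-layerings coincide up to layer-wise isomorphism.

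\emph{Layerings.} If \( L \) is a layering, then \( m \) equals the number of vertices. Hence every class of \( \o k U(L) \) is a singleton, so \( \o k U(L) \) is an ordering, and the restriction \( \Lay{k} U \incl \Ord{k} U \) is injective.
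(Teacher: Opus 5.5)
Your proposal is correct. Its order-preservation step is exactly the paper's argument. The difference is that the paper obtains well-definedness and injectivity in one stroke from a ``straightforward variation'' of \cite[Proposition 4.3.11]{hadzihasanovic2024combinatorics}, whereas you derive them directly from boundaries and pasting. Your three steps are as follows:
\begin{enumerate}
    \item Distinct layers meet inside a $k$\nbd boundary, so each vertex of $\maxflow{k}{U}$ lies in exactly one layer and every class is non-empty.
    \item For $j < i$ one has $\order{j}{U} \cap \order{i}{U} \subseteq \bd{k}{+} \order{j}{U}$, which gives compatibility with the edges.
    \item The layers are recovered from the ordered partition via $\order{1}{U} = \mathrm{cl}(\order{1}{L}) \cup \bd{k}{-} U$ and $\order{i}{U} = \mathrm{cl}(\order{i}{L}) \cup \bd{k}{+} \order{i-1}{U}$.
\end{enumerate}
For the last identity you should state why it holds. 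The maximal elements of $\order{i}{U}$ of dimension $> k$ are exactly the vertices in $\order{i}{L}$. Those of dimension $\le k$ belong to $\bd{k}{-} \order{i}{U}$ by the very definition of the boundary. Your route makes it visible that the maximality condition defining layerings plays no role, which is what the paper's ``straightforward variation'' leaves to the reader. The paper's route buys brevity by reusing the existing theory of layerings and orderings.

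The step you flag as the main obstacle closes in a line. In the intended definition of $\faces{k}{\a} x$, only $k$\nbd dimensional elements are allowed. The paper's display omits this restriction, but it is needed. Take $U = \globe{2} \cp{0} \globe{2}$ and $k = 1$. Without the restriction, the shared $0$\nbd dimensional element would create an edge from the first $2$\nbd dimensional element to the second. Yet $U$ has a $1$\nbd layering with the second one in the first layer. So $z$ is already $k$\nbd dimensional, and there is nothing to pass to.

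The argument is then as follows. A $k$\nbd dimensional element of $\bd{k}{+} W$ lies in $\faces{k}{+} W$; that is, it is an input face of no element of $W$. Since $\dim y > k$, $z$ is covered by some $w \in \clset{y} \subseteq \order{j}{U}$. The condition $z \in \faces{k}{-} y$ then forces $z \in \faces{}{-} w$, so $z \notin \bd{k}{+} \order{j}{U}$. Only this half is needed; the symmetric half is redundant.

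Also, your ``standard fact'' has the signs reversed with respect to the paper's conventions. The $k$\nbd dimensional elements of $\bd{k}{-} W$ are those $z$ with $\cofaces{}{+} z \cap W = \varnothing$, i.e.\ those that are an output face of nothing in $W$. Your final sentence does nonetheless have the right intuition.
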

\begin{proof}
    By a straightforward variation of \cite[Proposition 4.3.11]{hadzihasanovic2024combinatorics}, we conclude that the assignment is well-defined and determines an injective function.
    It remains to show that it is order-preserving.
    Suppose that we have two \( k \)\nbd pre-layering \( L \eqdef (\order{i}{U})_{i = 1}^p \) and \( K \eqdef (\order{i}{V})_{i = 1}^m \) such that \( L \) refines \( K \).
    Then letting \( \fun{f} \colon \set{0 < \ldots < p} \to \set{0 < \ldots < m} \) be the ordered-preserving injection as in Definition \ref{dfn:poset_of_pre_layering}, we see that, for all \( i \in \set{1, \ldots, m} \), we have
    \begin{equation*}
        \order{i}{K} = \order{\fun{f}(i - 1) + 1}{L} \cup \ldots \cup \order{\fun{f}(i)}{L},
    \end{equation*}
    so that \( \o k U L \) refines \( \o k U K \).
    This shows that \( \o k U \) is order-preserving and concludes the proof.
\end{proof}

\begin{dfn} [Frame-dimension]
    Let \( U \) be a molecule.
    The \emph{frame dimension of \( U \)} is the integer
    \begin{equation*}
        \frdim U \eqdef \dim \bigcup \set{\clset{x} \cap \clset{y} \mid x, y \in \maxel{U}, x \neq y}.
    \end{equation*}
\end{dfn}

\begin{dfn} [Frame-acyclic molecules] \label{dfn:frame_acyclic}
    Let \( U \) be a molecule.
    We say that \( U \) is \emph{frame-acyclic} if for all submolecules \( V \submol U \) of \( U \), if \( r \eqdef \frdim U \), then \( \maxflow{r}{V} \) is acyclic.
\end{dfn}

\begin{lem} \label{lem:frame_acyclic_properties}
    Let \( U \) be a frame-acyclic molecule and let \( k \in \nat \) with \( \frdim U \le k \le \dim U - 1 \).
    Then \( U \) admits a \( k \)\nbd layering and the function \( \o k U \colon \Lay k U \incl \Ord{k} U \) is bijective.
\end{lem}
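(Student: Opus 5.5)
We plan to reduce the statement to the corresponding results of \cite[Chapter 4]{hadzihasanovic2024combinatorics} on layerings and orderings, using frame-acyclicity to supply the acyclicity hypothesis they require. Put \( r \eqdef \frdim U \), and fix \( k \) with \( r \le k \le \dim U - 1 \).

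\emph{Step 1: acyclicity at \( k \).} By Definition \ref{dfn:frame_acyclic}, \( \maxflow{r}{V} \) is acyclic for every submolecule \( V \submol U \). We first upgrade this to acyclicity of \( \maxflow{k}{V} \) for \( k \ge r \). Vertices of \( \maxflow{k}{V} \) are maximal elements of dimension \( > k \), hence also vertices of \( \maxflow{r}{V} \). For an edge \( x \to y \) we have \( \faces{k}{+} x \cap \faces{k}{-} y \neq \varnothing \), and an element in this intersection lies in \( \clset{x} \cap \clset{y} \), so it has dimension \( \le r \). Using globularity of the atoms \( \clset{x} \) and \( \clset{y} \), it should then lie in the \( r \)\nbd dimensional output boundary of \( x \) and the \( r \)\nbd dimensional input boundary of \( y \), which gives an edge \( x \to y \) of \( \maxflow{r}{V} \). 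I expect a lemma of exactly this kind (edges of \( \maxflow{k}{} \) are edges of \( \maxflow{j}{} \) for \( \frdim \le j \le k \)) to be available in \cite[Section 4.3]{hadzihasanovic2024combinatorics}, and I would cite it. Hence \( \maxflow{k}{V} \) is acyclic for all \( V \submol U \).

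\emph{Step 2: existence of a \( k \)\nbd layering.} Acyclicity of \( \maxflow{k}{U} \) gives a topological sort, so \( \Ord{k} U \neq \varnothing \). We then invoke the theorem of \cite{hadzihasanovic2024combinatorics} stating that, for \( k \ge \frdim U \), every \( k \)\nbd ordering of \( U \) is induced by a (unique) \( k \)\nbd layering. This yields a \( k \)\nbd layering of \( U \), and simultaneously the surjectivity of \( \o k U \) restricted to layerings.

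\emph{Step 3: bijectivity.} Injectivity is already Proposition \ref{prop:preorderings_from_layerings}. For surjectivity, if the cited theorem is only stated under a global hypothesis, I would prove it directly by induction on the number \( m \) of vertices of \( \maxflow{k}{U} \). Given an ordering \( (x_1, \ldots, x_m) \) with \( m \ge 2 \), \( x_1 \) has no incoming edges, so \( x_1 \) should be a ``\( k \)\nbd input-exposed'' maximal element. Because its intersections with the other maximal elements have dimension \( \le r \le k \), one can split \( U \) as \( U_1 \cp{k} U' \), where \( U_1 \) contains \( x_1 \) and \( U' \) contains the rest. Then \( U' \) is a submolecule, so it is again frame-acyclic with \( \frdim U' \le r \), and induction applies to \( U' \) with the ordering \( (x_2, \ldots, x_m) \).

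The main obstacle is this splitting step, namely constructing \( U_1 \) as a submolecule with \( U = U_1 \cp{k} U' \). This is the content of Hadzihasanovic's result that a molecule with \( \frdim \le k \) and a source vertex in \( \maxflow{k}{} \) splits off that vertex. I would rely on that citation rather than reprove it.
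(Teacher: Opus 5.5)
Your proposal argues only the easy parts of the lemma. The step that carries its content is assumed rather than proved. Injectivity of \( \o{k}{U} \) is Proposition~\ref{prop:preorderings_from_layerings}. Acyclicity of \( \maxflow{k}{U} \) is immediate, and more directly than in your Step~1. Elements of \( \faces{k}{\a} x \) are \( k \)\nbd dimensional, while \( \clset{x} \cap \clset{y} \) has dimension \( \le \frdim U \) for distinct maximal \( x, y \). So \( \maxflow{k}{U} \) has no edges at all when \( k > \frdim U \); loops are impossible, since every element strictly below \( x \) has a coface in \( \clset{x} \). For \( k = \frdim U \), acyclicity is the hypothesis. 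What remains is to show that every \( k \)\nbd ordering, and in particular at least one, is induced by a \( k \)\nbd layering. At \( k = \frdim U \) this is the whole lemma. In Step~2 you invoke a theorem asserting exactly this for all \( k \ge \frdim U \) with no further hypothesis, without a precise reference, and then read existence off it. The argument therefore presupposes its conclusion. The paper's proof is instead the citation of \cite[Corollary 8.1.5]{hadzihasanovic2024combinatorics}, the corresponding statement for frame-acyclic molecules, which is where that hypothesis is actually used.

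Your fallback in Step~3 does not close the gap. The inference ``\( x_1 \) is a source and meets the other maximal elements in dimension \( \le k \), hence \( U \) splits as \( U_1 \cp{k} U' \)'' is not valid. The only possible first layer is \( \bd{k}{-} U \cup \clset{x_1} \). One must show that this closed subset is a submolecule of \( U \) whose output \( k \)\nbd boundary is the input \( k \)\nbd boundary of a complementary submolecule. Neither dimension counting nor the absence of incoming edges gives this, and you defer it to an unspecified splitting result. Your induction also only ever uses acyclicity of \( \maxflow{k}{U} \): the graph \( \maxflow{k}{U'} \) is an induced subgraph of it, because maximal elements of \( U' \) of dimension \( > k \) are maximal in \( U \). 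The condition on the other submolecules \( V \submol U \) in the definition of frame-acyclicity never enters. This indicates that the splitting step is precisely where that recursive hypothesis has to do its work.

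There are two smaller inaccuracies. First, the extension of Step~1 to arbitrary \( V \submol U \) is unjustified: maximal elements of \( V \) need not be maximal in \( U \), and \( \frdim V \) can exceed \( \frdim U \). For instance, take for \( V \) the input boundary of the atom \( (\globe{2} \cp{1} \globe{2}) \celto \globe{2} \). Second, in Step~3 the bound \( \frdim U' \le \frdim U \) can fail, although \( \frdim U' \le k \) holds and is all your induction needs.
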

\begin{proof}
    This is a particular case of \cite[Corollary 8.1.5]{hadzihasanovic2024combinatorics}.
\end{proof}

\begin{lem} \label{lem:pre_ordering_are_refined_in_frame_acyclic}
    Let \( U \) be a frame-acyclic molecule and let \( k \in \nat \) with \( \frdim U \le k \le \dim U - 1 \).
    Then every \( k \)\nbd pre-ordering of \( U \) is refined by a \( k \)\nbd ordering.
\end{lem}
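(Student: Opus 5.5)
This is essentially a combinatorial fact about linear extensions. By Definition \ref{dfn:frame_acyclic} (applied to the submolecule \( V = U \)), together with the hypothesis \( k \geq \frdim U \), the graph \( \maxflow{k}{U} \) is acyclic. The plan is to first justify this acyclicity at index \( k \) rather than only at \( r = \frdim U \), and then refine any given pre-ordering block by block using topological sorts.

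\emph{Step 1: acyclicity at index \( k \).} For the first step, the cleanest route is to use Lemma \ref{lem:frame_acyclic_properties}. It says that \( U \) admits a \( k \)\nbd layering and that \( \o k U \colon \Lay k U \to \Ord k U \) is a bijection; in particular \( \Ord k U \) is nonempty. A \( k \)\nbd ordering is a linear order \( (\order{i}{x})_{i=1}^m \) of the vertices of \( \maxflow{k}{U} \) such that every edge \( x \to y \) satisfies \( \prt{}(x) \le \prt{}(y) \). Since the partition is discrete, an edge \( x \to y \) with \( x \neq y \) must go strictly forward. A self-loop would force every pre-ordering to contain it, which is harmless to the condition \( \le \). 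A cycle through distinct vertices, however, would force all its vertices into the same position, contradicting injectivity. Hence \( \maxflow{k}{U} \) has no cycles of length \( \geq 2 \), i.e. the preorder it generates is a partial order, up to possible loops that impose no constraint. (Alternatively, one could argue directly that edges of \( \maxflow{k}{U} \) map to paths of \( \maxflow{r}{U} \) for \( r \le k \); but deducing it from the existence of an ordering avoids that computation.)

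\emph{Step 2: refining block by block.} Now let \( L = (\order{i}{L})_{i=1}^m \) be a \( k \)\nbd pre-ordering. For each block \( \order{i}{L} \), consider the full subgraph of \( \maxflow{k}{U} \) on the vertices of \( \order{i}{L} \). It is acyclic (ignoring loops), since it is a subgraph of an acyclic graph, and so it admits a topological sort \( (\order{j}{x_i})_{j=1}^{n_i} \). Concatenating these sorts in the order \( i = 1, \ldots, m \) gives a linear order \( K \) of all the vertices. We check that \( K \) is a \( k \)\nbd ordering by looking at an edge \( x \to y \):
\begin{itemize}
    \item if \( x \) and \( y \) lie in different blocks, then \( \prt{L}(x) < \prt{L}(y) \), so \( x \) precedes \( y \) in \( K \);
    \item if they lie in the same block, the topological sort places \( x \) no later than \( y \).
\end{itemize}
Finally, \( K \) refines \( L \), since each block of \( L \) is a union of consecutive singletons of \( K \), with the injection \( \set{0 < \ldots < m} \to \set{0 < \ldots < \sum_i n_i} \) given by partial sums.

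The only point needing care is Step 1. Acyclicity is stated in the definition at \( r = \frdim U \), not at \( k \), and loops \( x \to x \) may occur. I expect that deducing it from the nonemptiness of \( \Ord k U \), as above, settles this without reopening the theory of flow graphs.
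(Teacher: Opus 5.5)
Your proposal is correct and is essentially the paper's proof. The paper obtains acyclicity of \( \maxflow{k}{U} \) from Lemma \ref{lem:frame_acyclic_properties} together with a citation of \cite[Proposition 4.3.8]{hadzihasanovic2024combinatorics}, where you instead derive it directly from the existence of a \( k \)-ordering; your Step~2 (topologically sorting each block \( \order{i}{L} \) and concatenating) is exactly what the paper does.

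Your caution about loops is unnecessary: each \( \clset{x} \) is a round atom, so for \( \dim x > k \) its input and output \( k \)-boundaries meet only in dimension \( < k \), and hence \( \maxflow{k}{U} \) has no loops.
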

\begin{proof}
    Let \( (\order{i}{L})_{i = 1}^p \) be a \( k \)\nbd pre-ordering of \( U \).
    By \cite[Proposition 4.3.8]{hadzihasanovic2024combinatorics} and Lemma \ref{lem:frame_acyclic_properties}, \( \maxflow{k}{U} \) is acyclic.
    Therefore, for all \( i \in \set{1, \ldots, p} \), the induced subgraphs of \( \maxflow{k}{U} \) on the vertices \( \order{i}{L} \) are acyclic, hence admit an ordering \( (\order{i,j}{x})_{j = 1}^{m_i} \).
    Assembling all these orderings together gives a \( k \)\nbd ordering \( ((\order{i, j}{x})_{j = 1}^{m_i})_{i = 1}^p \) of \( U \) which refines \( (\order{i}{L})_{i = 1}^p \).
    This concludes the proof.
\end{proof}

\noindent The next Proposition is our alternative for \cite[Proposition 4.15]{campion2023pasting}.

\begin{prop} \label{prop:frame_acyclic_iso_preorderings}
    Let \( U \) be a frame-acyclic molecule and \( k \in \nat \) with \( \frdim U \le k \le \dim U - 1 \).
    Then the function \( \o k U \colon \preLay{k} U \incl \preOrd{k} U \) is an isomorphism of posets.
\end{prop}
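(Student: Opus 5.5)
By Proposition \ref{prop:preorderings_from_layerings}, \( \o k U \) is already injective and order-preserving. To show it is an isomorphism of posets, two things remain. First, \( \o k U \) is surjective. Second, it reflects the order: if \( \o k U L \) refines \( \o k U K \), then \( L \) refines \( K \). The strategy in both cases is to reduce from pre-orderings to orderings, where Lemma \ref{lem:frame_acyclic_properties} gives a bijection with layerings. From a layering we then recover pre-layerings by grouping consecutive layers.

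For surjectivity, let \( E \eqdef (\order{i}{E})_{i = 1}^p \) be a \( k \)\nbd pre-ordering.
\begin{enumerate}
    \item By Lemma \ref{lem:pre_ordering_are_refined_in_frame_acyclic}, choose a \( k \)\nbd ordering \( (\order{j}{x})_{j = 1}^m \) refining \( E \). This gives an endpoint- and order-preserving injection \( \fun{f} \colon \set{0 < \ldots < p} \to \set{0 < \ldots < m} \) such that \( \order{i}{E} = \set{\order{j}{x} \mid \fun{f}(i-1) < j \le \fun{f}(i)} \).
    \item By Lemma \ref{lem:frame_acyclic_properties}, this ordering equals \( \o k U \) of a \( k \)\nbd layering \( (\order{j}{U})_{j = 1}^m \), with \( \order{j}{x} \in \order{j}{U} \).
    \item Set \( \order{i}{V} \eqdef \order{\fun{f}(i-1)+1}{U} \cp{k} \ldots \cp{k} \order{\fun{f}(i)}{U} \). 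These pastings are defined because the iterated pasting \( \order{1}{U} \cp{k} \ldots \cp{k} \order{m}{U} \) is defined. By associativity of \( \cp{k} \), we get \( U \cong \order{1}{V} \cp{k} \ldots \cp{k} \order{p}{V} \).
    \item Each \( \order{i}{V} \) is a submolecule of \( U \), since it is a factor of an iterated pasting.
    \item Each \( \order{i}{V} \) has dimension \( > k \), because it contains a layer \( \order{j}{U} \), and layers have dimension \( > k \). Hence \( K \eqdef (\order{i}{V})_{i=1}^p \) is a \( k \)\nbd pre-layering.
    \item The vertices of \( \maxflow{k}{U} \) lying in \( \order{i}{V} \) are exactly the \( \order{j}{x} \) with \( \fun{f}(i-1) < j \le \fun{f}(i) \). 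Therefore \( \o k U K = E \).
\end{enumerate}

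Order reflection is the step I expect to be the main obstacle. Suppose \( L = (\order{i}{U})_{i=1}^m \) and \( K = (\order{i}{V})_{i=1}^p \) are pre-layerings with \( \o k U L \) refining \( \o k U K \) via some \( \fun{f} \). The plan is as follows.
\begin{enumerate}
    \item Refine \( \o k U L \) to an ordering using Lemma \ref{lem:pre_ordering_are_refined_in_frame_acyclic}, and realise it as a layering \( M \) by Lemma \ref{lem:frame_acyclic_properties}. By injectivity of \( \o k U \), the grouping construction above applied to \( M \) recovers both \( L \) and \( K \), up to layer-wise isomorphism. This holds because each grouped pre-layering has the same image under \( \o k U \) as \( L \), respectively \( K \).
    \item In particular, \( \order{i}{V} \) is the \( k \)\nbd pasting of the layers of \( M \) indexed by blocks \( \fun{f}(i-1)+1, \ldots, \fun{f}(i) \) of \( L \). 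Each of those blocks is itself the pasting of consecutive layers of \( M \).
    \item Associativity of \( \cp{k} \) then gives \( \order{i}{V} = \order{\fun{f}(i-1)+1}{U} \cp{k} \ldots \cp{k} \order{\fun{f}(i)}{U} \), so \( L \) refines \( K \).
\end{enumerate}

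The delicate point is that injectivity is applied to the grouped pre-layerings. This needs the grouping of \( M \) to be computed inside a single ambient isomorphism \( U \cong \order{1}{W} \cp{k} \ldots \cp{k} \order{n}{W} \), so that submolecules can be compared as closed subsets of \( U \). Here one should also check that the number of blocks matches, that is, every layer of a pre-layering contains at least one vertex of \( \maxflow{k}{U} \). This follows because a layer of dimension \( > k \) contains a maximal element of dimension \( > k \) of that layer. Such an element is maximal in \( U \), since in a \( k \)\nbd pasting the overlaps have dimension \( \le k \).
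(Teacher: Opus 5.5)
Your proposal is correct and follows the paper's proof. Injectivity comes from Proposition~\ref{prop:preorderings_from_layerings}; for surjectivity you refine a pre-ordering to an ordering by Lemma~\ref{lem:pre_ordering_are_refined_in_frame_acyclic}, lift it to a layering via the bijection of Lemma~\ref{lem:frame_acyclic_properties}, and group consecutive layers. Your order-reflection argument (regroup one common refining layering, then use injectivity and associativity), together with your check that every layer contains a vertex of the maximal flow graph, fills in what the paper only asserts as ``the inverse is order-preserving by construction''.
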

\begin{proof}
    We already know by Proposition \ref{prop:preorderings_from_layerings} that the order preserving function is injective.
    Conversely, consider a \( k \)\nbd pre-ordering \( (\order{i}{L})_{i = 1}^p \) of \( U \).
    By Lemma \ref{lem:pre_ordering_are_refined_in_frame_acyclic}, this \( k \)\nbd pre-ordering is refined by a \( k \)\nbd ordering \( (\order{i}{x})_{i = 1}^m \), and by Lemma \ref{lem:frame_acyclic_properties}, we know that \( \o k U \colon \Lay{k} U \incl \Ord{k} U \) is a bijection.
    Thus \( (\order{i}{x})_{i = 1}^m \) induces a \( k \)\nbd layering \( (\order{i}{U})_{i = 1}^m \) of \( U \).

    Since \( (\order{i}{L})_{i = 1}^p \) is refined by \( (\order{i}{x})_{i = 1}^m \) letting \( j_{i - 1} \eqdef \min \set{j \mid \order{j}{x} \in \order{i}{L}} \) for \( i \in \set{1, \ldots, p} \), we have the \( k \)\nbd pre-layering
    \begin{equation*}
        (\order{j_{i - 1}}{U} \cp{k} \ldots \cp{k} \order{j_i - 1}{U})_{i = 1}^p
    \end{equation*} 
    of \( U \), whose image via \( \o k U \) is \( (\order{i}{L})_{i = 1}^p \).
    This shows that \( \o k U \) is bijective, whose inverse is order-preserving by construction.
\end{proof}

\subsection{Generalities on thetas}

\begin{dfn} [\( k \)\nbd globe]
    Let \( k \in \nat \).
    The \emph{\( k \)\nbd globe}, written \( \globe{k} \), is defined inductively by \( \globe{0} \eqdef \pt \), and for \( k > 0 \), by \( \globe{k} \eqdef \globe{k - 1} \celto \globe{k - 1} \).
\end{dfn}

\noindent The following definitions are adapted from \cite[Definition 4.2, Observation 4.6]{campion2023pasting}, now using the notations relative to molecules. 

\begin{dfn} [\( S \)\nbd theta]
    Let \( S \subseteq \nat \cup \set{-1} \).
    The class of \emph{\( S \)\nbd thetas} is the inductive class of molecules generated by the following two clauses:
    \begin{enumerate}
        \item for all \( k \in \nat \), the \( k \)\nbd globe \( \globe{k} \) is an \( S \)\nbd theta.
        \item for all \( S \)\nbd thetas \( U \) and \( V \), and \( k \in S \), if \( U \cp{k} V \) is defined, it is an \( S \)\nbd theta.
    \end{enumerate}
    We write \( \ThetaS S \) for the full subcategory of \( \dcpxomega \) on objects \( \molecin{U} \), for \( U \) an \( S \)\nbd theta, and, for \( n \in \nat \), \( \ThetanS n S \) for its full subcategory on \( S \)\nbd thetas of dimension \( \le n \).
    If \( S = \nat \), we drop the prefix \( S \), and simply speak of thetas, and write \( \Theta \) and \( \Thetan{n} \) for the associated categories.
    We may also write \( \Thetan{\omega} \eqdef \Theta \).
\end{dfn}

\begin{rmk}
    By \cite[Lemma 9.1.2 and Theorem 11.2.18]{hadzihasanovic2024combinatorics}, \( \Theta \) is isomorphic to the usual Joyal's theta category \cite{joyal1997disk}.
\end{rmk}

\begin{rmk}
    Since the operations \( - \cp{k} - \) are only defined for \( k \geq 0 \), these definitions imply that \( \ThetaS \varnothing = \ThetaS {\set{- 1 }} \) and contains exactly the globes.
\end{rmk}

\begin{comm}
    We will often identify a theta \( \theta \) with the unique, up to unique isomorphism, molecule \( U_\theta \) such that \( \molecin{U_\theta} = \theta \).
\end{comm}

\begin{exm}
    The reader familiar with the inductive definition of \( \Thetan{n} \) by iterated wreath product knows that, if \( n > 0 \), the objects of \( \Theta \wr \Thetan{n - 1} \) are given by tuples \( (k; \theta_1, \ldots, \theta_k) \), where \( k \in \nat \) and the \( \theta_i \)'s are objects of \( \Thetan{n - 1} \).
    Then, the object of \( \Thetan{n} \) represented by this tuple is the molecule given by the pasting
    \begin{equation*}
        \sus{\theta_1} \cp{0} \ldots \cp{0} \sus{\theta_k},
    \end{equation*}
    where \( \sus{} \) is the suspension of molecules, which coincides with the usual \( \omega \)\nbd categorical suspension, see \cite[Definition 7.3.1 and Proposition 7.3.22]{hadzihasanovic2024combinatorics}.
\end{exm}

\begin{dfn}
    Let \( i, j \in \nat \cup \set{-1, \omega} \).
    We write \( \ivl{i}{j} \) for the elements \( k \) of \( \nat \cup \set{-1, \omega} \) such that \( i \le k \le j \).
\end{dfn}

\begin{dfn} [Canonical pre-layering]
    Let \( n, k \in \nat \) such that \( 0 \le k \le n \), and let \( \theta \) be an \( \ivl{k}{n} \)\nbd theta.
    Then there exist unique \( m \geq 1 \) and \( k \)\nbd pre-layering \( (\order{i}{\theta})_{i = 1}^m \) of \( \theta \) such that for all \( i \in \set{1, \ldots, m} \), \( \order{i}{\theta} \) is a \( \ivl{k + 1}{n} \)\nbd theta.
    This decomposition is called the \emph{canonical \( k \)\nbd pre-layering of \( \theta \)}.
\end{dfn}

\begin{dfn} \label{dfn:adjunction_thetaS_thetaT}
    Let \( S \subseteq T \subseteq \nat\) be two subsets of natural numbers.
    Since any \( S \)\nbd theta is in particular a \( T \)\nbd theta, \( \ThetanS{n}{T} \) is a full subcategory of \( \ThetanS{n}{S} \).
    By \cite[Observation 4.6]{campion2023pasting}, when \( S \) is an initial segment of \( T \), the full subcategory inclusion has a right adjoint 
    \begin{equation*}
        \Rtheta \colon \ThetanS{n}{T} \to \ThetanS{n}{S}
    \end{equation*}
    that we define from \( \ThetanS{n}{T \cap [k, n]} \) by backward induction on \( k \).
    When \( k = n + 1 \), then \( \Rtheta \) sends globes to globes.
    Inductively, it sends the canonical \( k \)\nbd pre-layering \( (\order{i}{\theta})_{i = 1}^m \) of a theta \( \theta \) to
    \begin{equation*}
        \Rtheta(\order{1}{\theta}) \cp{r} \ldots \cp{r} \Rtheta(\order{m}{\theta})
    \end{equation*} 
    if \( r \in S \), or to \( \globe{\dim \theta} \) if \( r \in T \setminus S \).
    
    Since there is a unique active strict functor \( \globe{\dim \theta} \to \theta \), this construction extends to an active monomorphism \( \Rtheta(\theta) \to \theta \), giving the counit of the adjunction.
\end{dfn}

\begin{dfn} \label{dfn:strict_n_cat_are_presheaf_theta}
    Given a natural number \( n \in \nat \), we write \( \Psh(\Thetan{n}) \) for the category of (set-valued) presheaves over \( \Thetan{n} \).
    We recall that the full subcategory inclusion \( \Thetan{n} \incl \nCat{n} \) induces a nerve functor \( \N  \colon \nCat{n} \to \Psh(\Thetan{n}) \), which is fully faithful \cite{berger2002cellular}.
    From now on, we always identify a small strict \( n \)\nbd category \( C \) with its associated nerve \( \N C \). 
\end{dfn}

\begin{exm}
    Let \( X \) be a directed \( n \)\nbd complex. 
    Then \( \molecin{X} \) is the presheaf over \( \Thetan{n} \) whose sections of shape \( \theta \) are strict functors \( u \colon \theta \to \molecin{X} \).
    In particular, we may import terminology that applies to strict functors of molecules to the sections of \( \molecin{X} \).
\end{exm}

\noindent The following is well-known, we only reproduce the proof to showcase how Proposition \ref{prop:ofs_functor_of_dcpx} specialises to the theta category.  
\begin{prop} \label{prop:ternary_factorisation_of_theta}
    Let \( n \in \nat \).
    The classes of collapses, active monomorphisms, and embeddings form a ternary factorisation system on \( \Thetan{n} \).
\end{prop}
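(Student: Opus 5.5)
Given a morphism \( \F \colon \theta \to \theta' \) in \( \Thetan{n} \), we first apply Corollary \ref{cor:ofs_functor_of_rdcpx}, since thetas are molecules. This writes \( \F = \molecin{\pcell{\F}} \after \hat{\F} \), where \( \hat{\F} \colon \theta \to \molecin{\F(\theta)} \) is active and \( \pcell{\F} \colon \F(\theta) \to \theta' \) is a local embedding of molecules. Next we factor \( \pcell{\F} \) using Remark \ref{rmk:factorisation_collapse_local_embedding}, or simply observe that \( \pcell{\F} \) is already a local embedding with molecule domain.

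The plan is to reorganise this into three pieces: collapse, then active monomorphism, then embedding. The steps, in order, are as follows.

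\begin{enumerate}
\item Show that \( \F(\theta) \) is again a theta and that \( \pcell{\F} \) is an embedding. A local embedding of a molecule into a theta is injective, because thetas have a globular, tree-like cell structure: distinct elements of a theta of the same dimension have distinct boundaries. Moreover, its image is a submolecule, and submolecules of thetas are thetas. This is the embedding part, and it corresponds to the globular class of the algebraic--globular factorisation.
\item Factor the active functor \( \hat{\F} \colon \theta \to \F(\theta) \) into a collapse followed by an active monomorphism. Collapses are the active functors that send some cells to lower-dimensional arrows, that is, degeneracies. We take the image \( \theta'' \) obtained by collapsing exactly those globes of \( \theta \) that \( \hat{\F} \) sends to degenerate arrows, and use the collapse/local-embedding factorisation of Remark \ref{rmk:factorisation_collapse_local_embedding}, applied cell-wise, to get \( \theta \to \theta'' \). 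The remaining active functor \( \theta'' \to \F(\theta) \) is non-degenerate on each generating globe. We must then check that it is a monomorphism in \( \Thetan{n} \): an active non-degenerate functor between thetas is determined by, and determines, its values on globes, and distinct generators go to distinct non-degenerate composites. This amounts to the classical statement that non-degenerate active maps between thetas are monic.
\item Uniqueness up to unique isomorphism. We combine the uniqueness in Proposition \ref{prop:ofs_functor_of_dcpx} with the orthogonality of collapses against local embeddings from Remark \ref{rmk:factorisation_collapse_local_embedding}. Each class is closed under composition and contains the isomorphisms, and the three pairwise orthogonality relations follow from the two binary systems.
\end{enumerate}

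The main obstacle is step 2, specifically that the active part splits as collapse followed by active monomorphism within \( \Thetan{n} \) itself. The collapse has to be defined at the level of strict functors rather than maps of regular directed complexes, and one has to verify that its codomain is a theta, not merely a molecule. I would first reduce to globes via the canonical pre-layering and induct on the wreath-product structure \( (k;\theta_1,\dots,\theta_k) \). At each level, degeneracies of \( [k] \) are handled by the simplex-category epi/mono factorisation, and the remaining data is handled recursively on the \( \theta_i \).
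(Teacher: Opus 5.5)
\textbf{There is a genuine gap at the collapse/active-monomorphism level (steps 2 and 3).}

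You justify uniqueness of the factorisation of \(\hat{\F}\) as collapse followed by active monomorphism by citing the collapse/local-embedding system of Remark \ref{rmk:factorisation_collapse_local_embedding}. That factorisation system lives in \(\rdcpxmap\), whose maps of regular directed complexes send cells to cells. Active monomorphisms between thetas are not of this kind. For example, the active monomorphism \(\globe{1} \to \molecin{2\arr}\) sends the generating \(1\)-cell to a composite. So nothing you cite shows that collapses are left orthogonal to (active) monomorphisms in \(\Thetan{n}\). Without this, the first of the two factorisation systems making up the ternary system is not established, and the ``pairwise orthogonality from the two binary systems'' does not follow.

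The same conflation affects existence in step 2. Since \(\hat{\F}\) is not a map of regular directed complexes, ``applying the Remark cell-wise'' does not produce \(\theta \to \theta''\). You are left only with the wreath-product induction you sketch, which is where all the work actually lies.

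\textbf{The missing ingredient.} You need the epi--mono (Eilenberg--Zilber) factorisation system on \(\Thetan{n}\) of Bergner--Rezk \cite{bergner2013reedy}. Its right class is the monomorphisms and its left class consists of split epimorphisms, which are left orthogonal to every monomorphism in any category. The paper uses this as a black box, as follows.
\begin{enumerate}
\item It takes the (active, embedding) system from Corollary \ref{cor:ofs_functor_of_rdcpx}, with local embeddings into thetas being embeddings.
\item It combines this with Bergner--Rezk's \((L', R')\). Since embeddings are monomorphisms, \((L', L \cap R', R)\) is a ternary system.
\item The only remaining work is identifying \(L'\) with the collapses. This uses that a strict functor of molecules is a collapse iff it induces a surjection on cells, compared with Bergner--Rezk's description of \(L'\).
\end{enumerate}
Your induction on \((k;\theta_1,\ldots,\theta_k)\) amounts to reproving Bergner--Rezk. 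If you pursue it, you must prove orthogonality as well as existence, for instance by showing your collapses are split epimorphisms.

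\textbf{Two smaller points in step 1.}
\begin{itemize}
\item Your reason for injectivity is false: in \(\globe{2}\) the input and output \(1\)-cells have the same boundary. The correct input is acyclicity of thetas \cite[Proposition 8.3.23, Lemma 9.1.2]{hadzihasanovic2024combinatorics}.
\item You should not claim the image is a submolecule; the paper explicitly leaves that unproved. You do not need it either: \(\F(\theta)\) is a theta because each \(\clset{x}\) in it maps isomorphically onto a globe, by the characterisation used in Lemma \ref{lem:submolecule_of_thetas}.
\end{itemize}
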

\begin{proof}
   By Corollary \ref{cor:ofs_functor_of_rdcpx}, every morphism \( u \colon \theta \to \theta' \) factors uniquely as an active strict functor followed by a local embedding.
   By \cite[Proposition 8.3.23 and Lemma 9.1.2]{hadzihasanovic2024combinatorics}, the local embedding is an embedding.
   Since any embedding of a molecule in a theta is again a theta, this gives a first factorisation system \( (L, R) \), where \( R \) is the class of embeddings.
   By \cite{bergner2013reedy}, there is another factorisation system \( (L', R') \) on \( \Thetan{n} \), whose right class are the monomorphisms.
   Since any embedding is a monomorphism, we get a ternary factorisation system \( (L', L \cap R', R) \).
   It only remains to show that the class \( L' \) is that of collapses between thetas.
   Since a strict functor \( \F \colon \molecin{U} \to \molecin{V} \) is a collapses if and only if it restricts to a surjective function from the objects of \( \cell U \) to the ones of \( \cell V \), we may inductively conclude using the description of \( L' \) given in \cite[2.7]{bergner2013reedy}.
\end{proof}

\begin{rmk} \label{rmk:theta_ez_cat}
    The classes of collapses and monomorphisms form a so-called \emph{Eilenberg-Zilber category}, see \cite[Corollary 4.4]{bergner2013reedy}. 
\end{rmk}

\begin{dfn} [Non-degenerate section]
    Let \( X \) be a presheaf over \( \Thetan{n} \), and \( u \colon \theta \to X \) be a section of \( X \).
    We say that \( u \) is \emph{degenerate} if there exists a non-identity collapse \( f \colon \theta \to \theta' \) and a section \( v \colon \theta' \to X \) such that \( u = v \after f \).
    Otherwise, we say that \( u \) is \emph{non-degenerate}.
    By Remark \ref{rmk:theta_ez_cat}, \( u \) can be uniquely decomposed as \( v \after f \), where \( f \) is a collapse and \( v \) is non-degenerate.
\end{dfn}

\begin{lem} \label{lem:submolecule_of_thetas}
    Let \( U \) be a theta and \( V \submol U \) a submolecule.
    Then \( V \) is a theta.
\end{lem}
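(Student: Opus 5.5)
We argue by induction on submolecules (Comment \ref{comm:induction_submolecules}), or more concretely by induction on the inductive construction of \( U \) as a theta. Since the submolecule relation is generated by composites of inclusions \( A \incl A \cp{k} B \) and \( B \incl A \cp{k} B \), it suffices to show two things. First, the base case: every submolecule of a globe \( \globe{d} \) is a theta. Second, the closure property: if \( U = A \cp{k} B \) with \( A, B \) thetas, then every submolecule of \( U \) is a theta. The key tool is the uniqueness of splittings in a theta. We use the wreath-product description: a theta is \( \sus{\theta_1} \cp{0} \ldots \cp{0} \sus{\theta_m} \).

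For the globe, the argument runs by induction on \( d \). The only submolecules are \( \globe{d} \) itself and the submolecules of \( \bd{d-1}{\a}\globe{d} = \globe{d-1} \). The reason is that an atom is indecomposable at top dimension, so any proper submolecule lies in a boundary, and a submolecule of an atom that lies in its boundary is a submolecule of the boundary by \cite{hadzihasanovic2024combinatorics}. Hence they are all globes.

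For the general case, let \( V \submol U \) with \( U \) a theta of dimension \( n \), and proceed by induction on \( n \) and then on the number \( m \) of \( 0 \)-layers. Write \( U = \sus{\theta_1} \cp{0} \ldots \cp{0} \sus{\theta_m} \). Then \( V \) is a submolecule, so it is itself a pasting. If \( \dim V = 0 \) it is a point and we are done. Otherwise we use the fact that the \( 0 \)-dimensional boundary of \( V \) cuts the linear \( 1 \)-skeleton of \( U \), which is \( m\arr \)-shaped, at two vertices. From this, \( V \) decomposes as \( V_1 \cp{0} \ldots \cp{0} V_\ell \) with each \( V_j \submol \sus{\theta_{i_j}} \) for consecutive indices \( i_j \). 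This is the main obstacle: one must check that submolecule inclusions are compatible with the \( 0 \)-layering, so that \( V \cap \sus{\theta_i} \) is a submolecule of \( \sus{\theta_i} \) and these pieces paste back to \( V \). I would prove this by induction on the generating inclusions of \( V \) into \( U \), using that \( 0 \)-boundaries and pastings along \( k \geq 1 \) restrict layer-wise to each \( \sus{\theta_i} \), since \( \sus{\theta_i} \) meets the others only in a \( 0 \)-cell.

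Finally, the submolecules of a suspension \( \sus{\theta} \) are either points or \( \sus{W} \) with \( W \submol \theta \). This holds because suspension induces an isomorphism between submolecules of positive dimension and submolecules of \( \theta \), by \cite[Section 7.3]{hadzihasanovic2024combinatorics}. By the induction hypothesis on dimension, \( W \) is a theta, so \( \sus{W} \) is a theta. Therefore \( V \) is a \( 0 \)-pasting of thetas, and hence a theta.
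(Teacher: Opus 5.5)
Your proposal takes a genuinely different route from the paper, and as written it has a gap exactly at the step you flag as the main obstacle. The fact that $\sus{\theta_i}$ meets the other layers only in $0$-cells tells you only that $V$ (or any intermediate molecule $W$ in your chain of generating inclusions) is a union of closed subsets $W \cap \sus{\theta_i}$ overlapping in single points. What you need is that these pieces are \emph{molecules}, indeed submolecules of $\sus{\theta_i}$, and that $W$ is their $0$-pasting. Molecules are an inductively generated class, so this cannot be read off from the closed subsets. Concretely, the decisive case of your induction is a generating inclusion into $U \cong A \cp{k} B$ with $k \geq 1$. There you must show that $A \cap \sus{\theta_i}$ and $B \cap \sus{\theta_i}$ are molecules with $\bd{k}{+}(A \cap \sus{\theta_i}) = \bd{k}{-}(B \cap \sus{\theta_i})$, so that $\sus{\theta_i} \cong (A \cap \sus{\theta_i}) \cp{k} (B \cap \sus{\theta_i})$. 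The phrase ``pastings along $k \geq 1$ restrict layer-wise'' is precisely this claim, asserted rather than proved. Your last step relies on a similar unproved fact: that suspension \emph{reflects} splittings, i.e.\ if $\sus{\theta} \cong A \cp{k} B$ with $k \geq 1$, then $A$ and $B$ are suspensions of molecules whose $(k-1)$-pasting is $\theta$. This is plausible, but it is more than suspension preserving molecules and pastings, and it needs a precise statement and justification. (Also, the $1$-skeleton of $U$ is not $m\arr$-shaped in general; only its $0$-skeleton is linearly ordered.)

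The route can be repaired. The cleanest way is to strengthen the claim to `every molecule that is a closed subset of a theta is a theta'. Then prove, by induction on the construction of such a molecule $W$ (an atom, or $A \cp{k} B$), that its positive-dimensional pieces $W \cap \sus{\theta_i}$ are molecules in consecutive layers whose $0$-pasting is $W$. An atom of positive dimension lies in a single layer, and the case $k = 0$ is bookkeeping at the cut point. The case $k \geq 1$ follows by intersecting $\bd{k}{+} A = \bd{k}{-} B$ with each layer and applying the interchange law. Finally, desuspend (using that suspension reflects molecules) and induct on dimension.

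The paper bypasses all of this. By \cite[Proposition 9.1.28]{hadzihasanovic2024combinatorics}, a molecule is a theta if and only if $\clset{x}$ is a globe for each of its elements $x$. Since $V$ is a molecule and a closed subset of $U$, each $\clset{x}$ with $x \in V$ is an atom of $U$, hence a globe. That one-line argument shows the submolecule hypothesis matters only in guaranteeing that $V$ is a molecule. Your approach, once completed, would additionally describe the submolecules of a theta explicitly as $0$-pastings of suspended submolecules of consecutive layers. The cost is reproving structure theory that the cited characterisation already packages.
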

\begin{proof}
    By \cite[Proposition 9.1.28]{hadzihasanovic2024combinatorics}, a molecule \( W \) is a theta if and only if for all \( x \in W \), \( \clset{x} \) is a globe, which concludes the proof.
\end{proof}

\begin{lem} \label{lem:active_non_degenerate_theta_iff}
    Let \( U \) be a molecule, and \( u \colon \theta \to \molecin{U} \) be an active strict functor.
    The following are equivalent.
    \begin{enumerate}
        \item \( u \) is non-degenerate;
        \item for all \( x, y \in \theta \), if \( u(\imel{\theta}{x}) = u(\imel{\theta}{y}) \), then \( x = y \);
        \item for all \( x \in \theta \), \( \dim u(\imel{\theta}{x}) = \dim x \).
    \end{enumerate}
\end{lem}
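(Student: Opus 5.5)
I would prove the cycle of implications (1) \( \Rightarrow \) (3) \( \Rightarrow \) (2) \( \Rightarrow \) (1). Throughout, \( u(\imel{\theta}{x}) \) denotes the pasting diagram \( u(\mapel{x}) \) obtained by evaluating \( u \) at the cell \( \mapel{x} \colon \imel{\theta}{x} \incl \theta \). Here \( \imel{\theta}{x} = \clset{x} \) is a globe by Lemma \ref{lem:submolecule_of_thetas} and the characterisation of thetas it quotes. Since \( \dim \) is preserved or decreased by strict functors, we always have \( \dim u(\imel{\theta}{x}) \le \dim x \).

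For (1) \( \Rightarrow \) (3), I argue by contraposition. Suppose some \( x \) has \( \dim u(\imel{\theta}{x}) < \dim x \), and choose such an \( x \) of minimal dimension, say \( d \). Then \( u \) sends the \( d \)\nbd globe \( \imel{\theta}{x} \) to a \( (d-1) \)\nbd dimensional arrow, which is therefore an identity on its source. I would then build the collapse \( f \colon \theta \to \theta' \) contracting \( x \) to the copy of its \( (d-1) \)\nbd boundary. Concretely, \( \theta' \) is the theta obtained by replacing the globe at \( x \) by \( \globe{d-1} \), that is, by deleting \( x \) and identifying \( \bd{}{-}x \) with \( \bd{}{+}x \). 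Since \( u \) factors through \( \theta' \) on generators, Remark \ref{rmk:atom_basis_mol} gives a factorisation \( u = v \after f \). In the theta case, the identification of \( \bd{}{-}x \) with \( \bd{}{+}x \) is forced because the image is an identity, so their images agree. This makes \( u \) degenerate. The same argument applies with any element of the Eilenberg--Zilber collapses of \cite{bergner2013reedy}: a non-identity collapse must lower the dimension of some generator.

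For (3) \( \Rightarrow \) (2), suppose \( u(\imel{\theta}{x}) = u(\imel{\theta}{y}) \). By (3), \( \dim x = \dim y \eqdef d \). The key input is activeness: by Lemma \ref{lem:active_functor_preserve_submolecule}, together with the fact that \( U = u(\theta) \) is built from the images of the canonical pre-layerings of \( \theta \), the images of distinct \( d \)\nbd dimensional elements of \( \theta \) occupy disjoint positions in a layered decomposition of \( U \). I would proceed by induction on the canonical pre-layering \( \theta = \order{1}{\theta} \cp{k} \ldots \cp{k} \order{m}{\theta} \). Here \( u \) active gives \( U = u(\order{1}{\theta}) \cp{k} \ldots \cp{k} u(\order{m}{\theta}) \), and each piece has dimension \( > k \) by (3). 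If \( x \) and \( y \) lie in different layers, their images are \( d \)\nbd pasting diagrams with \( d > k \) sitting in different factors. These factors can only intersect in \( k \)\nbd dimensional boundaries, so the images cannot coincide. If \( x \) and \( y \) lie in the same layer, induct on that layer, and on the globe case, where \( x \) is the top element and the statement is trivial.

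For (2) \( \Rightarrow \) (1), suppose \( u = v \after f \) with \( f \) a non-identity collapse. Then \( f \) is non-injective on elements, so there are \( x \neq y \) with \( f(\imel{\theta}{x}) = f(\imel{\theta}{y}) \). For instance, a collapsed generator and a face of it map to the same globe. Hence \( u(\imel{\theta}{x}) = u(\imel{\theta}{y}) \), contradicting (2).

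The main obstacle is (3) \( \Rightarrow \) (2), specifically the claim that distinct \( d \)\nbd dimensional images in different \( k \)\nbd layers cannot coincide as subdiagrams. I would handle it by comparing maximal elements of dimension \( > k \), using that images of non-degenerate generators contain a top-dimensional element of \( U \) not lying in the \( k \)\nbd boundary.
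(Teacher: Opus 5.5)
Your (2)~$\Rightarrow$~(1) is fine and matches the paper's first paragraph. Your plan for (3)~$\Rightarrow$~(2), by induction on canonical pre-layerings, has the right shape. However, two steps fail as written.

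In (1)~$\Rightarrow$~(3) you pick $x$ of \emph{minimal} dimension and collapse it by deleting $x$ and identifying $\bd{}{-}x$ with $\bd{}{+}x$. This is a collapse of thetas only when $x$ is maximal in $\theta$. Otherwise some element covering $x$ loses a face, and the result is not even a regular directed complex. For instance, for $\theta = \globe{2}$ and $U = \pt$, your $x$ is a $1$\nbd dimensional face, and deleting it leaves the top element with a single face.

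The repair, which is the paper's argument, is to go \emph{up} rather than down: take $y \geq x$ maximal in $\theta$. If $x \neq y$, then $\imel{\theta}{x}$ is the boundary $\bd{\ell}{\alpha}\imel{\theta}{y}$ of the globe $\imel{\theta}{y}$, where $\ell = \dim x$. Hence $u(\imel{\theta}{x}) = \bd{\ell}{\alpha} u(\imel{\theta}{y})$ has dimension $\min(\ell, \dim u(\imel{\theta}{y}))$. A dimension drop at $x$ therefore forces $\dim u(\imel{\theta}{y}) < \ell < \dim y$. Collapsing the maximal globe $\clset{y}$ to $\globe{\dim y - 1}$ is then a genuine non-identity collapse through which $u$ factors.

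In (3)~$\Rightarrow$~(2), your argument only separates elements of dimension $> k$ lying in different layers, and in the globe base case it only treats the top element. But $x$ and $y$ may have dimension $\le k$ and still lie in different layers, so your claim that $d > k$ is false there. For example, when $m \geq 2$, take $x$ and $y$ to be the top elements of $\bd{k}{-}\theta$ and $\bd{k}{+}\theta$; they lie only in $\order{1}{\theta}$ and only in $\order{m}{\theta}$ respectively. Likewise, in a globe $x$ and $y$ can be opposite boundaries, e.g.\ the two endpoints of $\globe{1}$. Your ``disjoint positions'' heuristic says nothing about these cases, and they are where the content of the implication lies.

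What is needed is activeness on boundaries together with a fact about molecules. An active $u$ sends $\bd{\ell}{\alpha}$ of its domain to $\bd{\ell}{\alpha} U$. For a molecule, $\bd{\ell}{-} U = \bd{\ell}{+} U$ forces $\dim U \le \ell$, which contradicts $\dim U = \dim \theta > \ell$ obtained from (3).

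In the inductive step, the paper restricts $u$ to the smallest range $\order{i}{\theta} \cp{k} \ldots \cp{k} \order{j}{\theta}$ containing $x$ and $y$, and argues as follows if $i < j$:
\begin{enumerate}
\item The common image lies in $u(\order{i}{\theta}) \cap u(\order{j}{\theta})$, so it has dimension $\le k$.
\item By (3), $\dim x = \dim y \le k$.
\item This forces $\imel{\theta}{x}$ and $\imel{\theta}{y}$ to be opposite boundaries of that range, and the fact above gives a contradiction.
\end{enumerate}
Only once $i = j$ is established does the paper recurse into a single layer.
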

\begin{proof}
    Suppose first that \( u \) is degenerate.
    Then \( u = \molecin{f} \after v \), for \( f \colon \theta \to \theta' \) a non-identity collapse between thetas, and \( v \colon \theta' \to \molecin{U} \) a non-degenerate strict functor. 
    Since \( f \) is a non-identity surjection, there exists \( x, y \in \theta \) with \( x \neq y \) such that \( f(x) = f(y) \).
    Now \cite[Proposition 6.2.19]{hadzihasanovic2024combinatorics} and Remark \ref{rmk:factorisation_collapse_local_embedding} imply that \( f \) is not dimension-preserving, thus there exists \( x \in \theta \) such that \( \dim f(x) < \dim x \).
    This proves one direction.

    Conversely, suppose \( u \) is non-degenerate.
    Suppose that there exist \( x \in \theta \) such that \( \dim u(\imel{\theta}{x}) = k < \dim x \). 
    Then for all \( y \geq x \), also \( \dim u(\imel{\theta}{x}) < \dim y \). 
    Now pick any \( y \geq x \) maximal in \( \theta \), and let \( \theta' \) be the theta obtained by collapsing \( \globe{\dim y} \cong \clset{y} \subseteq \theta \) to \( \globe{\dim y - 1} \).
    Then there is a collapse \( f \colon \theta \to \theta' \) which is not the identity, and a cell \( v \colon \theta' \to \molecin{U} \) such that \( v \after \molecin{f} = u \), contradicting that \( u \) is non-degenerate.
    This shows that the first point and the third point are equivalent.

    In particular, if \( \iota \colon \theta' \submol \theta \) is a submolecule then \( \theta' \) is a theta by Lemma \ref{lem:submolecule_of_thetas} and \( \restr{u}{\theta'} \) is active and non-degenerate.
    Let \( n \in \nat \).
    We show by backward induction on \( 0 \le k \le n \) that the statement holds for all active strict functors \( u \colon \theta \to \molecin{U} \) with \( \theta \) an \( \ivl{k}{n - 1} \)\nbd theta.
    The base case is whenever \( \theta \) is a globe.
    By the third point, we have \( k \eqdef \dim x = \dim y \) and \( \dim U = \dim \theta \).
    If \( k = \dim \theta \), then since \( \theta \) has a unique maximal element, \( x = y \).
    Thus we may suppose \( k < \dim \theta \).
    Then \( \mapel{x} \) is an embedding of the form \( \bd{k}{\a} \theta \incl \theta \) and \( \mapel{y} \) is an embedding of the form \( \bd{k}{\beta} \theta \) for some \( \a, \beta \in \set{-, +} \).
    Since \( u \) is active, this means that \( \bd{k}{\a} U = \bd{k}{\beta} U \).
    If \( \a \neq \beta \), then \( \dim U \le k \), but \( \dim U = \dim \theta > k \).
    Hence \( \a = \beta \), so that \( x = y \).

    Inductively, let \( 0 \le k < n \), and suppose that \( u \colon \theta \to \molecin{U} \) is an active strict functor such that \( \theta \) has \( \order{1}{\theta} \cp{k} \ldots \cp{k} \order{m}{\theta} \) for canonical \( k \)\nbd pre-layering, consider \( x, y \in \theta \) with \( u(\imel{\theta}{x}) = u(\imel{\theta}{y}) \). 
    Let \( i \) and \( j \) be respectively maximal and minimal such that \( x, y \in \order{i}{\theta} \cp{k} \ldots \order{j}{\theta} \), and let \( u' \colon \theta' \to \molecin{U'} \) be the restriction of \( u \) to \( \order{i}{\theta} \cp{k} \ldots \cp{k} \order{j}{\theta} \), which is still non-degenerate and active.
    We wish to show that \( i = j \).
    By means of contradiction, suppose that \( i < j \).
    Then \( u(\imel{\theta}{x}) \subseteq u(\order{i}{\theta}) \cap u(\order{j}{\theta}) \) has dimension \( \le k \), and since \( \restr{u}{\imel{\theta}{x}} \) is again non-degenerate, we obtain \( \ell \eqdef \dim y = \dim x \le k \).
    Since the elements of dimension \( \le k \) in \( \theta' \) are contained in \( \bd{k}{} \theta' \) together with the union of the intersections \( \order{r}{\theta} \cap \order{r + 1}{\theta} \) for all \( r \in \set{1, \ldots, m - 1} \), and since \( i \) and \( j \) have been chosen maximal and minimal respectively, we have \( \a \in \set{-, +} \) such that \( \imel{\theta}{x} \subseteq \bd{k}{\a} \theta' \) and \( \imel{\theta}{y} \subseteq \bd{k}{\beta} \theta' \).
    Noticing that for all \( \gamma \in \set{-, +} \), \( \bd{k}{\gamma} \theta' \) is an \( \varnothing \)\nbd theta, thus a globe, we see that the previous two embeddings are in fact equalities.
    Therefore, \( \imel{\theta}{x} = \bd{\ell}{\a} \theta' \) and \( \imel{\theta}{y} = \bd{\ell}{-\a} \theta' \).
    Finally, \( u(\imel{\theta}{x}) = u(\imel{\theta}{y}) \) implies that \( \bd{\ell}{-} U' = \bd{\ell}{+} U' \), thus \( \dim U' \le \ell \le k \), but since \( i < j \), we also have \( \dim U' = \dim \theta' > k \), a contradiction.
    Therefore, \( i = j \) and \( x, y \in \order{i}{\theta} \), so we may conclude the proof by inductive hypothesis.
\end{proof}

\begin{prop} \label{prop:quotient-free_non_degenerate_are_mono}
    Let \( U \) be a regular directed complex and \( u \colon \theta \to \molecin{U} \) be a non-degenerate quotient-free strict functor.
    Then \( u \) is a monomorphism.
\end{prop}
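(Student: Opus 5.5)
The plan is to reduce to the active case using the active/diagram factorisation, and then use Lemma \ref{lem:active_non_degenerate_theta_iff} to show injectivity on elements. First, by Proposition \ref{prop:ofs_functor_of_dcpx}, factor \( u = \molecin{\pcell{u}} \after \hat{u} \), where \( \hat{u} \colon \theta \to \molecin{\pcell{u}(\theta)} \) is active. Since \( u \) is quotient-free, \( \pcell{u} \colon V \eqdef u(\theta) \to U \) is an embedding of regular directed complexes. Here \( V \) is a molecule, because \( \theta \) is a molecule. Then \( \molecin{\pcell{u}} \) is a monomorphism of strict \( \omega \)\nbd categories: an embedding of regular directed complexes induces an injective map on pasting diagrams, since a pasting diagram in \( V \) is a local embedding from a molecule, and postcomposing with an injective map is injective up to the unique isomorphisms. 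Hence it suffices to show that \( \hat{u} \) is a monomorphism. Also, \( \hat{u} \) is non-degenerate: if \( \hat{u} = v \after \molecin{f} \) with \( f \) a non-identity collapse, then \( u = (\molecin{\pcell{u}} \after v) \after \molecin{f} \), which would make \( u \) degenerate.

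So we may assume \( u \colon \theta \to \molecin{V} \) is active and non-degenerate, with \( V \) a molecule. By Lemma \ref{lem:active_non_degenerate_theta_iff}, the assignment \( x \mapsto u(\imel{\theta}{x}) \) is injective on the elements of \( \theta \), and it preserves dimension. Now take two strict functors \( a, b \colon \theta' \to \theta \) from a theta with \( u \after a = u \after b \). We need \( a = b \).

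By the ternary factorisation of Proposition \ref{prop:ternary_factorisation_of_theta}, or simply by Remark \ref{rmk:atom_basis_mol} applied to \( \theta' \), it suffices to check that \( a \) and \( b \) agree on each cell \( z \colon \imel{\theta'}{z} \to \theta' \), which is a globe. Each image \( a(z) \) is a pasting diagram in \( \theta \), that is, an embedding of a submolecule \( W_a \subseteq \theta \); the same holds for \( b(z) \), since local embeddings into a theta are embeddings. The hypothesis gives \( u(W_a) = u(W_b) \) as subdiagrams of \( V \), through the evaluation of \( u \) at diagrams.

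It remains to show that a submolecule \( W \) of \( \theta \) is determined by \( u(W) \). Since \( W \) is a closed subset, it is the union of \( \clset{x} \) over its maximal elements \( x \). Moreover \( u(W) = \bigcup_{x} u(\imel{\theta}{x}) \), using Lemma \ref{lem:quotient-free_functor_send_embedding_to_embedding} to view everything as closed subsets of \( V \). So I would show that \( x \in W \) if and only if \( u(\imel{\theta}{x}) \subseteq u(W) \).

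This reflection property is the main obstacle. The cleanest route I see is to show that \( u \) induces an order-embedding of posets \( \theta \to \{\text{closed subsets of } V\} \), given by \( x \mapsto u(\imel{\theta}{x}) \). The key point is the following claim: if \( u(\imel{\theta}{x}) \subseteq u(\imel{\theta}{y}) \), then \( x \le y \). I would prove this claim by the same backward induction on \( \ivl{k}{n} \)\nbd thetas as in Lemma \ref{lem:active_non_degenerate_theta_iff}. For globes, one uses activeness and dimension preservation to identify boundaries. In the inductive step, one restricts to the minimal range of layers \( \order{i}{\theta} \cp{k} \ldots \cp{k} \order{j}{\theta} \) and derives a contradiction from a dimension bound when \( i < j \).

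Once the claim is proved, \( W_a = W_b \) as closed subsets of \( \theta \), so \( a(z) = b(z) \) as embeddings. Hence \( a = b \), and \( u \) is a monomorphism.
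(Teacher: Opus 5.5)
Your reduction to the active case is exactly the paper's argument. So is your reduction of \( u \after a = u \after b \Rightarrow a = b \) to showing that the sub-thetas \( W_a, W_b \) of \( \theta \) (images of a globe of \( \theta' \)) coincide whenever \( u(W_a) = u(W_b) \).

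The gap is in how you then obtain \( W_a = W_b \). Your key claim, \( u(\imel{\theta}{x}) \subseteq u(\imel{\theta}{y}) \Rightarrow x \le y \), is the reflection property only when \( W = \clset{y} \) is a single globe. But \( W_a \) is an arbitrary arrow of \( \theta \), typically a composite such as \( \order{1}{\theta} \cp{k} \order{2}{\theta} \), and \( u(W) = \bigcup_{y \in \maxel{W}} u(\imel{\theta}{y}) \). Knowing \( u(\imel{\theta}{x}) \subseteq \bigcup_y u(\imel{\theta}{y}) \) does not give \( u(\imel{\theta}{x}) \subseteq u(\imel{\theta}{y}) \) for a single \( y \). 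So ``once the claim is proved, \( W_a = W_b \)'' does not follow.

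Nor can this be repaired element by element, because images of incomparable globes overlap, even in top dimension. Take \( \theta = \globe{2} \cp{1} \globe{2} \) with \( 2 \)-cells \( c_1 \colon s \celto b \) and \( c_2 \colon b \celto t \). Take \( V = \alpha \cp{0} \gamma \) with \( \alpha \colon p \celto p' \) and \( \gamma \colon q \celto q' \). Let \( u \) be the active non-degenerate functor sending \( \imel{\theta}{c_1} \) to \( \alpha \cp{0} q \) and \( \imel{\theta}{c_2} \) to \( p' \cp{0} \gamma \). Then \( u(\imel{\theta}{s}) = p \cp{0} q \) and \( u(\imel{\theta}{b}) = p' \cp{0} q \) both contain the \( 1 \)-dimensional element \( q \), although \( s \) and \( b \) are incomparable. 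So finding an element of \( u(\imel{\theta}{x}) \) inside some \( u(\imel{\theta}{y}) \) with \( y \in W \), even a top-dimensional one, tells you nothing about whether \( x \in W \).

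What you need is the statement for sub-thetas themselves: if \( W, W' \) are submolecules of \( \theta \) with \( u(W) = u(W') \), then \( W = W' \). This should be proved directly, not deduced from the globe case by taking unions. One way is to rerun the layer-by-layer induction from the proof of point (2) of Lemma \ref{lem:active_non_degenerate_theta_iff}, with sub-thetas in place of the globes \( \imel{\theta}{x}, \imel{\theta}{y} \). This is the fact the paper relies on when it applies point (2) of that Lemma directly to the submolecules \( v(\imel{\theta'}{x}) \) and \( w(\imel{\theta'}{x}) \).

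Your sketch of the globe-wise claim also does not transfer verbatim from that induction. With inclusion instead of equality, the dimension bound on \( u(\order{i}{\theta}) \cap u(\order{j}{\theta}) \) in the case \( i < j \) constrains only \( x \), not \( y \). The Lemma's subsequent step, placing both elements in the \( k \)-boundary globes of \( \theta' \), is therefore unavailable, and that case needs a separate argument.
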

\begin{proof}
    Since any quotient-free strict functor factors, by Corollary \ref{cor:ofs_functor_of_rdcpx}, as an active strict functor followed by an embedding, which is a monomorphism, we may in fact suppose that \( u \) is active, hence that \( U \) is a molecule. 
    It is enough to show that for all pairs of morphisms \( v, w \colon \theta' \to \theta \) in \( \Theta \), if \( u \after v = u \after w \), then \( v = w \).
    Suppose given such a pair \( v, w \), and let \( x \in \theta' \).
    Then by Proposition \ref{lem:active_functor_preserve_submolecule} together with Lemma \ref{lem:submolecule_of_thetas}, \( v(\imel{\theta'}{x}) \) and \( w(\imel{\theta'}{x}) \) are two submolecules of \( \theta \) that are equalized by \( u \).
    By the second point of Lemma \ref{lem:active_non_degenerate_theta_iff}, they are the same submolecule.
    This shows that \( v \) and \( w \) agree on all the globes of \( \theta' \), thus \( v = w \).
    This concludes the proof.
\end{proof}

\begin{comm} \label{comm:alternative_non_degenerate}
    The previous Proposition is our alternative to \cite[Lemma 1.9]{campion2023pasting}: it is not the non-degenerate cells \( u \colon \theta \to \molecin{U} \) that are monomorphisms, but only the \emph{quotient-free} non-degenerate cells.
    Indeed, the pasting theorem we aim for contains, in general, many acyclic shapes.
    For instance, consider the following regular directed complex \( U \)
    \begin{center}
        \begin{tikzcd}
            \bullet & \bullet.
            \arrow[curve={height=-12pt}, from=1-1, to=1-2]
            \arrow[curve={height=-12pt}, from=1-2, to=1-1]
        \end{tikzcd}
    \end{center}
    Then there exists two local embeddings of type \( \arr \cp{0} \arr \to U \) that classify two non-degenerate cells of \( \molecin{U} \), yet none of them are monomorphisms.
\end{comm}

\begin{cor} \label{cor:strict_functor_out_of_theta}
    Let \( u \colon \theta \to \molecin{X} \) be a morphism in \( \dcpxomega \) whose domain is a theta.
    Then \( u \) factors uniquely, up to unique isomorphism, as a collapse, followed by an active monomorphism, followed by a pasting diagram.
\end{cor}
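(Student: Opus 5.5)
We combine the two factorisation systems already available. First, apply Proposition \ref{prop:ofs_functor_of_dcpx} to \( u \colon \theta \to \molecin{X} \). This factors \( u \) uniquely, up to unique isomorphism, as \( u = \molecin{\pcell{u}} \after \hat{u} \). Here \( \hat{u} \colon \theta \to \molecin{U} \) is active, \( U \eqdef u(\theta) \), and \( \pcell{u} \colon U \to X \) is a diagram. Since \( \theta \) is a molecule, \( U \) is a molecule and \( \pcell{u} \) is a pasting diagram. It therefore remains to factor the active strict functor \( \hat{u} \) uniquely as a collapse followed by an active monomorphism.

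For the second step, decompose \( \hat{u} \) as a section of \( \molecin{U} \) using the Eilenberg--Zilber property of Remark \ref{rmk:theta_ez_cat}. We get \( \hat{u} = v \after \molecin{f} \) with \( f \colon \theta \to \theta' \) a collapse of thetas and \( v \colon \theta' \to \molecin{U} \) non-degenerate; this decomposition is unique. We then check that \( v \) is active. Its principal pasting diagram satisfies \( \pcell{v} = \pcell{\hat{u}} \), because a collapse \( f \) is itself active: it is final, so \( f(\idd{\theta}) = \idd{\theta'} \), and principal diagrams compose accordingly. Hence \( \pcell{v} = \idd{U} \) and \( v \) is active. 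In particular \( v \) is quotient-free and non-degenerate, so Proposition \ref{prop:quotient-free_non_degenerate_are_mono} shows that \( v \) is a monomorphism. This gives existence of the factorisation: collapse, then active monomorphism, then pasting diagram.

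For uniqueness, take any factorisation \( u = \molecin{g} \after m \after \molecin{c} \) with \( c \) a collapse, \( m \) an active monomorphism, and \( g \) a pasting diagram. The composite \( m \after \molecin{c} \) is active, as a composite of active functors. Uniqueness in Proposition \ref{prop:ofs_functor_of_dcpx} then identifies \( g \) with \( \pcell{u} \) and \( m \after \molecin{c} \) with \( \hat{u} \). Next, a monomorphism out of a theta is a non-degenerate section. Indeed, if \( m = w \after \molecin{c'} \) with \( c' \) a non-identity collapse, then \( c' \) is a split epimorphism in \( \Theta \) (EZ category), and \( m \) being monic forces \( c' \) to be an isomorphism, a contradiction. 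The uniqueness clause of the Eilenberg--Zilber decomposition in Remark \ref{rmk:theta_ez_cat} therefore gives \( c = f \) and \( m = v \), up to unique isomorphism.

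The step needing the most care is verifying that activity passes from \( \hat{u} \) to the non-degenerate part \( v \), i.e.\ that collapses between thetas are active and that principal diagrams behave well under precomposition. I would argue this directly from Definition \ref{dfn:active_quotient-free}: \( f \) is surjective and final, so \( f(\theta) = \theta' \) with \( \pcell{\molecin{f}} = \idd{\theta'} \). Then \( \pcell{v \after \molecin{f}} = v(\pcell{\molecin{f}}) = \pcell{v} \).
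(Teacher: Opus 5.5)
Your proof is correct. It uses exactly the same three ingredients as the paper's: the Eilenberg--Zilber decomposition of Remark \ref{rmk:theta_ez_cat}, the active/diagram factorisation of Proposition \ref{prop:ofs_functor_of_dcpx}, and Proposition \ref{prop:quotient-free_non_degenerate_are_mono}. The difference is that you apply them in the opposite order.

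\textbf{The paper's order.} It first takes the Eilenberg--Zilber decomposition of \( u \) as a section of \( \molecin{X} \). It then factors the non-degenerate part \( v \) as an active \( w \) followed by a pasting diagram. Finally it observes that \( w \) is non-degenerate, since any degeneracy of \( w \) would be a degeneracy of \( v \).

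\textbf{Your order.} You factor first and then decompose the active part \( \hat{u} \) inside \( \molecin{U} \). So the property you must transfer is activity rather than non-degeneracy. This costs you the fact that collapses of thetas are active. Your direct argument for it is fine, and the fact is also implicit in Proposition \ref{prop:ternary_factorisation_of_theta}, where collapses lie in the left class of the active/embedding system.

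\textbf{Uniqueness.} Your order has a small payoff here: uniqueness becomes a clean combination of the two uniqueness clauses plus your observation that a monomorphism out of a theta is non-degenerate. The paper is terser on this point and only appeals to the uniqueness of each separate factorisation.
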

\begin{proof}
    Since \( \Theta \) is an Eilenberg-Zilber category, the cell \( u \) factors uniquely as a collapse followed by a non-degenerate cell \( v \colon \theta' \to \molecin{X} \).
    By Proposition \ref{prop:ofs_functor_of_dcpx}, \( v \) factors uniquely, up to unique isomorphism, as an active strict functor \( w \colon \theta' \to \molecin{U} \), followed by a diagram.
    Since \( w \) is active and \( \theta' \) is a molecule, \( U \) is a molecule, thus the diagram is a pasting diagram.
    It remains to show that \( w \) is a monomorphism, but this follows directly from Proposition \ref{prop:quotient-free_non_degenerate_are_mono} since \( w \) is non-degenerate, otherwise \( v \) would be. 
    This concludes the proof.
\end{proof}

\subsection{Contractibility of the poset of subdivisions}

In this section, we fix \( n \in \nat \).

\begin{dfn} [Big cell]
    Let \( U \) be a molecule.
    The \emph{big cell of \( U \)} is the unique active strict functor \( \bigcell U \colon \globe{\dim U} \to \molecin{U} \).
\end{dfn}

\begin{rmk} \label{rmk:big_cell_properties}
    By Lemma \ref{lem:active_non_degenerate_theta_iff}, the big cell of \( U \) is non-degenerate.
    Furthermore, for all active strict functors \( \F \colon \molecin{U} \to \molecin{V} \), \( \F \after \bigcell U = \bigcell V \).
\end{rmk}

\noindent We now introduce the same posets as \cite[Definition 4.1, Definition 4.7]{campion2023pasting}, and follow the same strategy as Campion to prove that they are contractible.

\begin{dfn} [Poset of subdivisions of a molecule] \label{dfn:poset_subdiv}
    Let \( U \) be a \( n \)\nbd molecule and let \( S \subseteq \set{0, \ldots, n} \).
    The \emph{initial poset of \( S \)\nbd subdivisions of \( U \)} is the full subcategory \( \SdiS S U \) of the slice \( \slice{\ThetanS{n}{S}}{\left(\molecin{U}\right)} \) on non-degenerate active strict functors.
    Since morphisms in \( \SdiS S U \) are necessarily active, Proposition \ref{prop:quotient-free_non_degenerate_are_mono} and Remark \ref{rmk:big_cell_properties} imply that \( \SdiS S U \) is a poset with initial object \( \bigcell U \).
    The \emph{poset of \( S \)\nbd subdivisions of \( U \)} is the subposet \( \SdS S U \eqdef \SdiS S U \setminus \set{\bigcell U} \) of \( \SdiS S U \).
    If \( S = \set{0, \ldots, n} \), we drop the prefix \( S \) and accordingly write \( \Sdi U \) and \( \Sd U \). 
\end{dfn}

\begin{exm}
    If \( k \geq 1 \), then \( \Sdi k\globe{1} \) is the poset of active subobjects of the \( k \)\nbd simplex, which is isomorphic to the set of ordered partitions of \( \set{1 < \ldots < k} \) ordered by refinement.
\end{exm}

\noindent Our next goal is to show Theorem \ref{thm:poset_of_subdivision_contractible_or_empty} asserting that when \( U \) is a frame-acyclic molecule (Definition \ref{dfn:frame_acyclic}), then the poset \( \Sd U \) is empty or contractible, depending on whether \( U \) is an atom, or a proper molecule. 

\begin{dfn}
    Let \( U \) be a molecule.
    If \( T \) is a subset of \( \ivl{0}{n} \) and \( S \) is an initial segment of \( T \), then the adjunction
    \begin{center}
        \begin{tikzcd}
            {\ThetanS n S} & {\ThetanS n T}
            \arrow[""{name=0, anchor=center, inner sep=0}, curve={height=6pt}, hook, from=1-1, to=1-2]
            \arrow[""{name=1, anchor=center, inner sep=0}, "\Rtheta"', curve={height=6pt}, from=1-2, to=1-1]
            \arrow["\dashv"{anchor=center, rotate=90}, draw=none, from=0, to=1]
        \end{tikzcd}
    \end{center}
    of Definition \ref{dfn:adjunction_thetaS_thetaT}, whose counit \( \varepsilon \) is pointwise an active monomorphism, gives an adjunction  
    \begin{center}
        \begin{tikzcd}
            {\SdiS S U} & {\SdiS T U.}
            \arrow[""{name=0, anchor=center, inner sep=0}, curve={height=6pt}, hook, from=1-1, to=1-2]
            \arrow[""{name=1, anchor=center, inner sep=0}, "\Utheta"', curve={height=6pt}, from=1-2, to=1-1]
            \arrow["\dashv"{anchor=center, rotate=90}, draw=none, from=0, to=1]
        \end{tikzcd}
    \end{center}
    by sending \( u \colon \theta \to \molecin{U} \) to \( u \after \varepsilon_\theta \).
\end{dfn}

\begin{dfn}
    Let \( U \) be a molecule, \( k \geq -1 \) and \( L \eqdef (\order{i}{U})_{i = 1}^m \) be a \( k \)\nbd pre-layering of \( U \).
    We write \( \bigcell{L} \) for the element 
    \begin{equation*}
       \bigcell{\order{1}{U}} \cp{k} \ldots \cp{k} \bigcell{\order{m}{U}} \colon \globe{\dim \order{1}{U}} \cp{k} \ldots \cp{k} \globe{\dim \order{m}{U}} \to \molecin{U} 
    \end{equation*}
    of \( \Sdik k U \).
    The assignment \( L \mapsto \bigcell{L} \) extends to an isomorphism between the posets \( \Sdik{k} U \) and \( \preLay{k} U \).
    In the sequel, we may identify a \( k \)\nbd pre-layering \( L \) of \( U \) with its associated element \( \bigcell{L} \) of \( \Sdik{k} U \). 
\end{dfn}

\noindent The next result is our analogue for \cite[Lemma 4.18]{campion2023pasting}.

\begin{prop} \label{prop:frame_acyclic_theta_Sd_r_contractible}
    Let \( U \) be a frame-acyclic \( n \)\nbd molecule, let \( r \eqdef \frdim U \).
    Then the poset \( \Sdk{r} U \) is empty if \( r = -1 \), and contractible otherwise.
\end{prop}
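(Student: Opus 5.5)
The plan is to reduce the statement to a purely combinatorial one about linearly ordered partitions of a preorder. Then we invoke Campion's contractibility result for posets of the form \( \mathrm{Lin}^\triangleleft(X) \) with the bottom element removed.

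First, the degenerate case. If \( r = -1 \), then \( U \) has at most one maximal element, so \( U \) is an atom. A \( (-1) \)\nbd pre-layering is forced to be \( m = 1 \), \( \order{1}{U} = U \). Under the identification \( \Sdik{-1} U \cong \preLay{-1} U \), the poset \( \SdiS{\set{-1}} U \) (where only globes occur, as \( \ThetaS{\set{-1}} = \ThetaS{\varnothing} \)) consists of \( \bigcell U \) alone. Removing it leaves the empty poset.

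Now suppose \( r \ge 0 \), so \( U \) has at least two maximal elements. By the definition of frame dimension, \( r \le \dim U - 1 \); indeed two distinct maximal elements meet in dimension strictly below the maximal dimension. We then apply the isomorphism \( \Sdik{r} U \cong \preLay{r} U \) together with Proposition \ref{prop:frame_acyclic_iso_preorderings} (with \( k = r \)). This gives an isomorphism of posets
\begin{equation*}
    \Sdik{r} U \cong \preLay{r} U \cong \preOrd{r} U.
\end{equation*}
Under it, the initial element \( \bigcell U \) corresponds to the trivial pre-layering \( m = 1 \). That pre-layering in turn corresponds to the one-block linearly ordered partition, which is the bottom element of \( \preOrd{r} U \). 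Hence \( \Sdk{r} U \) is isomorphic to \( \preOrd{r} U \) with its bottom removed. By Remark \ref{rmk:notation_lin_preord}, this is \( \mathrm{Lin}^\triangleleft(X) \) minus its bottom, where \( X \) is the preorder generated by \( \maxflow{r}{U} \).

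We then conclude by \cite[Corollary 3.4]{campion2023pasting}. That result asserts that for a finite preorder \( X \) with at least two elements, \( \mathrm{Lin}^\triangleleft(X) \) with its bottom removed is contractible; if needed, one checks its precise hypothesis, presumably that \( X \) is not a single equivalence class, or has at least two points. Here \( X \) has as vertices the maximal elements of dimension \( > r \). Every maximal element of \( U \) has dimension \( > r \): otherwise it would lie in its own intersection with another maximal element, up to dimension considerations. So there are at least two vertices. Moreover, \( \maxflow{r}{U} \) is acyclic, as in the proof of Lemma \ref{lem:pre_ordering_are_refined_in_frame_acyclic}, so \( X \) is a genuine partial order with at least two elements. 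The main obstacle I expect is matching Campion's hypothesis precisely. In particular, I need to verify that acyclicity makes the preorder antisymmetric with at least two classes. I would also need to confirm that every maximal element has dimension \( > r \); if a low-dimensional maximal element could occur, it is simply not a vertex, and one needs only that at least two vertices remain. This holds because an \( r \)\nbd pre-layering with \( m \ge 2 \) exists, as Lemma \ref{lem:frame_acyclic_properties} gives an \( r \)\nbd layering whose length equals the number of vertices.
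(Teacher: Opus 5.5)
Your reduction is the paper's: identify \( \Sdk{r} U \) with \( \preOrd{r} U \setminus \set{\bot} \) via Proposition~\ref{prop:frame_acyclic_iso_preorderings}, then invoke Remark~\ref{rmk:notation_lin_preord} and \cite[Corollary 3.4]{campion2023pasting}. The gap is the hypothesis you feed into Campion's result. Neither of your guesses (at least two points, or not a single equivalence class) is the right one. What you finally check, that the preorder generated by \( \maxflow{r}{U} \) is antisymmetric with at least two elements, does not give contractibility. If \( \maxflow{r}{U} \) had two vertices and no edge, then \( \preOrd{r} U \setminus \set{\bot} \) would consist of two incomparable orderings, i.e.\ \( S^0 \). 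More generally, a discrete preorder on \( k \geq 2 \) points gives the face poset of the boundary of a permutohedron, a \( (k-2) \)-sphere. The hypothesis actually needed, and the one the paper checks, is that the generated preorder is \emph{not an equivalence relation}. Since \( \maxflow{r}{U} \) is acyclic, this means exactly that \( \maxflow{r}{U} \) has at least one edge, and your proposal never shows this.

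This is where the value of \( r \) is used. Since \( r = \frdim U \geq 0 \), there are distinct maximal \( x, y \) and an \( r \)-dimensional \( z \in \clset{x} \cap \clset{y} \). Because \( \clset{x} \cap \clset{y} \subseteq \clset{x} \setminus \set{x} \), both \( x \) and \( y \) have dimension \( > r \), so they are vertices. This also gives \( r \le \dim U - 1 \) and at least two vertices, replacing your circular appeal to the length of a layering. Take an \( r \)-layering \( (\order{i}{U})_{i = 1}^m \) from Lemma~\ref{lem:frame_acyclic_properties}. Each layer contains exactly one vertex, so \( x \in \order{i}{U} \) and \( y \in \order{j}{U} \) with \( i \neq j \), say \( i < j \). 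Then \( z \in \order{i}{U} \cap \order{j}{U} \subseteq \bd{r}{+} \order{i}{U} \cap \bd{r}{-} \order{j}{U} \). Being \( r \)-dimensional, \( z \) has no input coface in \( \order{i}{U} \supseteq \clset{x} \) and no output coface in \( \order{j}{U} \supseteq \clset{y} \). Hence \( z \in \faces{r}{+} x \cap \faces{r}{-} y \), and \( x \to y \) is an edge. Separately, your claim that every maximal element has dimension \( > r \) is false. Take \( (\alpha \cp{1} \beta) \cp{0} h \), with \( \alpha \colon f \celto g \), \( \beta \colon g \celto k \) and \( h \) a \( 1 \)-cell out of the common target of \( f, g, k \). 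Its frame dimension is \( 1 \), yet \( h \) is maximal of dimension \( 1 \). Your fallback of only counting vertices is the correct one.
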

\begin{proof}
    If \( r = -1 \), then \( U \) is an atom, and  \( \Sdik{r} U \) is a singleton, thus \( \Sdk{r} U \) is empty.
    Thus, suppose that \( r \geq 0 \). 
    By Proposition \ref{prop:frame_acyclic_iso_preorderings}, \( \Sdik{r} U \) is isomorphic to \( \preOrd{r} U \), thus \( \Sdk{r} U \) is isomorphic to \( \preOrd{r} U \setminus \set{\bot} \), where \( \bot \) is the smallest element of \( \preOrd{r} U \) corresponding to the trivial \( r \)\nbd pre-layering \( (U)_{i = 1}^1 \).
    Now the preorder generated by the edges of \( \maxflow{r}{U} \) is not an equivalence relation, for \( \maxflow{r}{U} \) is acyclic by \cite[Proposition 4.3.8]{hadzihasanovic2024combinatorics} and Lemma \ref{lem:frame_acyclic_properties}, and has at least one edge since by definition of the frame dimension and since \( U \) is not an atom.
    We may conclude by Remark \ref{rmk:notation_lin_preord} and \cite[Corollary 3.4]{campion2023pasting}.
\end{proof}

\begin{comm}
    Let \( U \) be a frame-acyclic \( n \)\nbd molecule with \( r \eqdef \frdim U \).
    If \( U \) is round and not an atom, then \( r = \dim U - 1 \), and \( U \) has non-trivial \( k \)\nbd pre-layerings \emph{only} for \( k = r \).
    Thus \( \Sd U = \Sdk{r} U \) is contractible by the previous Proposition.
    Of course, we cannot be content with Theorem \ref{thm:poset_of_subdivision_contractible_or_empty} for round molecules only, since the proof of the pasting theorem (Theorem \ref{thm:cellular_extension_of_frame_acyclic}) will proceed by induction on the submolecule of a molecule, which are, most of the time, not round.   
\end{comm}

\begin{comm} \label{comm:why_proof_cocart}
    Our next goal is to show that if \( U \) is frame-acyclic, then for any natural number \( k \) with \( \frdim U \le k \le \dim U - 1 \), \( \Utheta \colon \SdiS{\ivl{0}{k}} U \to \SdiS{\ivl{0}{k - 1}} U \) is a cocartesian fibration (Lemma \ref{lem:compose_is_cocartesian_one_step}).
    It turns out that we can follow the proof of \cite[Lemma 4.12]{campion2023pasting}, even if the statement is equivalent to the stronger \cite[Lemma 4.16]{campion2023pasting}.
    This is because our Proposition \ref{prop:frame_acyclic_iso_preorderings} is already at a right level of generality, which allows us to skip some extra work.
    We nonetheless thought it was safer to give an explicit proof in our context since we could not exclude that we did not miss some important details.
\end{comm}

\begin{dfn}
    Let \( U \) be a frame-acyclic \( n \)\nbd molecule, let \( k \in \nat \) be a natural number with \( \frdim U \le k \le n - 1 \), let \( L \) be a \( k \)\nbd pre-layering of \( U \), and let \( u \colon \theta \to \molecin{U} \) be an element of \( \SdiS{\ivl{0}{k}} U \).
    For each \( x \) in \( M \eqdef \gr{> k}{\maxel{\theta}} \), we let \( L_x \) be \( k \)\nbd pre-layering of the submolecule \( u(\imel{\theta}{x}) \) of \( U \) obtained by restricting \( L \), which is possible by Proposition \ref{prop:frame_acyclic_iso_preorderings}.
    We define \( \hat{u} \colon \hat{\theta} \to \molecin{U} \) to be the universal strict functor given by the following pushout in strict \( \omega \)\nbd categories
    \begin{center}
        \begin{tikzcd}
            {\bigcup\limits_{x \in M} \imel \theta x} & {\bigcup\limits_{x \in M} \xi_{L_x}} & \\
            \theta & {\hat\theta} & {\bigcup_x\molecin{u(\imel{\theta}{x})}} \\
            && {\molecin{U}.}
            \arrow[""{name=0, anchor=center, inner sep=0}, from=1-1, to=1-2]
            \arrow[hook', from=1-1, to=2-1]
            \arrow[hook', from=1-2, to=2-2]
            \arrow["{\bigcup_x \bigcell {L_x}}", curve={height=-12pt}, from=1-2, to=2-3]
            \arrow[from=2-1, to=2-2]
            \arrow["u"', curve={height=12pt}, from=2-1, to=3-3]
            \arrow["{\hat u}", from=2-2, to=3-3]
            \arrow[hook', from=2-3, to=3-3]
            \arrow["\lrcorner"{anchor=center, pos=0.125, rotate=180}, draw=none, from=2-2, to=0]
        \end{tikzcd}
    \end{center}
\end{dfn}

\begin{rmk}
    Since \( \theta \) is a \( \ivl{0}{k} \)\nbd theta, \( \hat{\theta} \) is also a \( \ivl{0}{k} \)\nbd theta, where we replaced the maximal globes of dimension \( > k \) with \( \set{k} \)\nbd thetas.
    Furthermore, if \( \theta \) is a \( \ivl{0}{k - 1} \)\nbd theta, the strict functor \( \theta \to \hat{\theta} \) is the counit \( \counit_{\hat{\theta}} \) of the right adjoint \( \Utheta \colon  \SdiS{\ivl{0}{k}} U \to \SdiS{\ivl{0}{k - 1}} U \).
\end{rmk}

\begin{lem} \label{lem:properties_cocartesian_lift}
    Let \( U \) be a frame-acyclic \( n \)\nbd molecule, let \( k \in \nat \) be a natural number with \( \frdim U \le k \le \dim U - 1 \), let \( L \) be a \( k \)\nbd pre-layering of \( U \).
    Then
    \begin{enumerate}
        \item for all \( u, v \in \SdiS{\ivl{0}{k}} U \), if \( u \le v \), then \( \hat{u} \le \hat{v} \).
        \item for all \( u \in \SdiS{\ivl{0}{k}} U \), if \( \bigcell{L} \le u \), then \( \hat{u} = u \).
    \end{enumerate}
\end{lem}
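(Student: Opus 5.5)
The plan is to work entirely inside the poset \( \SdiS{\ivl{0}{k}} U \). By Proposition \ref{prop:quotient-free_non_degenerate_are_mono}, its elements are subobjects of \( \molecin{U} \). So \( u \le v \) means exactly that \( u \) factors through \( v \) via a (necessarily unique, active, monomorphic) morphism \( \theta_u \to \theta_v \). I will use the fact that such a morphism sends each globe \( \imel{\theta_u}{x} \) onto a submolecule of \( \theta_v \) (Lemma \ref{lem:active_functor_preserve_submolecule} and Lemma \ref{lem:submolecule_of_thetas}). Concretely, each maximal globe \( \imel{\theta_u}{x} \) of dimension \( > k \) of \( \theta_u \) is subdivided in \( \theta_v \) into a \( \ivl{0}{k} \)-theta whose image under \( v \) is \( u(\imel{\theta_u}{x}) \).

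For the first point, I would use the universal property of the pushout defining \( \hat{u} \) to construct a factorisation \( \hat{\theta}_u \to \hat{\theta}_v \) over \( \molecin{U} \). This needs two compatible maps.

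The first map is \( \theta_u \to \theta_v \to \hat{\theta}_v \).

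The second map is, for each \( x \in \gr{>k}{\maxel{\theta_u}} \), a map \( \xi_{L_x} \to \hat{\theta}_v \) over \( \molecin{U} \) restricting correctly to \( \imel{\theta_u}{x} \). To build it, let \( W \eqdef u(\imel{\theta_u}{x}) \). The restriction of \( v \) to the image of \( \imel{\theta_u}{x} \) in \( \theta_v \) is an element \( w \) of \( \SdiS{\ivl{0}{k}} W \). The maximal globes of \( w \) of dimension \( > k \) are among those \( y \in \gr{>k}{\maxel{\theta_v}} \) lying over \( W \). In \( \hat{\theta}_v \), each such \( y \) was replaced by \( \bigcell{L_y} \), where \( L_y \) is the restriction of \( L \) to \( v(\imel{\theta_v}{y}) \).

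The key claim is that \( \bigcell{L_x} \le \hat{w} \) in \( \SdiS{\ivl{0}{k}} W \). I would prove it via Proposition \ref{prop:frame_acyclic_iso_preorderings}, which identifies \( k \)-pre-layerings with \( k \)-pre-orderings on maximal elements of dimension \( > k \). Restrictions of \( L \) to submolecules correspond to restrictions of the linearly ordered partition. The canonical \( k \)-pre-layering of \( \hat{w} \) therefore induces a partition of \( \gr{>k}{\maxel{W}} \) that refines the restriction of \( L \), i.e. refines \( L_x \). This is what yields the map \( \xi_{L_x} \to \hat{\theta}_v \). Compatibility on \( \imel{\theta_u}{x} \) is automatic, because all maps are monomorphisms over \( \molecin{U} \) and hence agree as subobjects. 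The resulting \( \hat{\theta}_u \to \hat{\theta}_v \) is active because \( \hat{u} \) and \( \hat{v} \) are both active onto \( U \).

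For the second point, suppose \( \bigcell{L} \le u \). Then every maximal \( y \in \gr{>k}{\maxel{\theta_u}} \) lies inside a single layer of \( L \) after factoring through \( \bigcell{L} \). Hence \( L_y \), the restriction of \( L \) to the submolecule \( u(\imel{\theta_u}{y}) \), is the trivial one-layer pre-layering. This uses the identification with pre-orderings again: the vertices of \( \maxflow{k}{u(\imel{\theta_u}{y})} \) consist of a single maximal element, so there is one block. Therefore \( \xi_{L_y} = \imel{\theta_u}{y} \), the pushout is trivial, and \( \hat{u} = u \).

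The main obstacle is the key claim in the first point. It requires making precise that restricting \( L \) commutes with passing to submolecules and with the canonical pre-layering of \( \hat{w} \). I would first try to reduce it to the pre-ordering side, where it becomes the elementary statement that restricting a linearly ordered partition to a subset, and then refining each block, is compatible with refinement.
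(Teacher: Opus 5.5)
Your proposal follows the paper's proof. The first point comes from the universal property of the pushout defining \( \hat{\theta} \), with the key input being that the part of \( \hat{v} \) lying over \( u(\imel{\theta}{x}) \) refines \( \bigcell{L_x} \) via Proposition~\ref{prop:frame_acyclic_iso_preorderings}; the second comes from each \( L_x \) being the trivial pre-layering, which makes the pushout trivial. One correction in the second point: the vertex set of \( \maxflow{k}{u(\imel{\theta}{y})} \) need not be a singleton when \( L \) is only a pre-layering (take \( U = \globe{2} \cp{1} \globe{2} \), \( k = 1 \), \( L = (U) \) and \( u = \bigcell{L} \)), and the correct reason is the one you give in the preceding sentence and the paper also uses, namely that all these vertices lie in a single layer of \( L \), so the restricted partition is codiscrete.
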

\begin{proof}
    For the first point, let \( u \colon \theta \to \molecin{U} \), \( v \colon \theta' \to \molecin{U} \) and \( \F \colon \theta \to \theta' \) be active such that \( v \after \F = u \).
    Then, for each \( x \in \maxel{\theta} \) of dimension \( > k \), \( \F(\imel{\theta}{x}) \) is a \( \set{k} \)\nbd theta, so that letting \( M_x \eqdef \F(\imel{\theta}{x}) \cap \gr{> k}{\left(\maxel{\theta'}\right)} \), we have 
    \begin{equation*}
        \bigcell{L_x} \le \bigcup_{y \in M_x} \bigcell{L_y},
    \end{equation*}
    since \( \bigcup_{y \in M_x} \bigcell{L_y} \) is a \( k \)\nbd pre-layering of \( u(\imel{\theta}{x}) \) refining \( \bigcell{L_x} \).
    By the universal property of the pushout, this induces a strict functor \( \hat{\theta} \to \hat{\theta'} \) witnessing \( \hat{u} \le \hat{v} \).

    Next, let \( u \colon \theta \to \molecin{U} \) be in \( \SdiS{\ivl{0}{k}} U \) with \( \bigcell{L} \le u \), and let \( x \) be a maximal element of \( \theta \) with \( \dim x > k \). 
    Then, \( \o{k}{U} L_x \) is the codiscrete partition, since \( \bigcell{L} \le u \) implies that \( u(\imel{\theta}{x}) \) is fully contained in one of the layers of \( L \).
    Thus \( \bigcell{L_x} \colon \xi_{L_x} \to \molecin{u(\imel{\theta}{x})} \) equals the big cell \( \bigcell{u(\imel{\theta}{x})} \), so that the upper-horizontal strict functor in the pushout constructing \( \hat{u} \) is the identity.
    This shows that \( \hat{u} = u \) and concludes the proof.  

\end{proof}

\begin{lem} \label{lem:compose_is_cocartesian_one_step}
    Let \( k \in \nat \), and let \( U \) be a frame-acyclic \( n \)\nbd molecule with \( \frdim U \le k \le \dim U - 1 \).
    Then the functor \( \Utheta \colon \SdiS{\ivl{0}{k}} U \to \SdiS{\ivl{0}{k - 1}} U \) is a cocartesian fibration.
\end{lem}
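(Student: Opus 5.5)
Since both posets are posets, a cocartesian fibration \( \Utheta \colon \SdiS{\ivl{0}{k}} U \to \SdiS{\ivl{0}{k - 1}} U \) amounts to the following condition. For every \( u \in \SdiS{\ivl{0}{k}} U \) and every \( w \in \SdiS{\ivl{0}{k - 1}} U \) with \( \Utheta u \le w \), there must be a least element \( \tilde{w} \) of the set \( \set{v \in \SdiS{\ivl{0}{k}} U \mid u \le v,\ w \le \Utheta v} \). It must also satisfy \( \Utheta \tilde{w} = w \); this is the cocartesianness of the lift in a poset. I plan to build \( \tilde{w} \) using the hat construction of the preceding definition, with Lemma~\ref{lem:properties_cocartesian_lift} providing minimality.

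First I decompose \( u \colon \theta \to \molecin{U} \). Take the canonical \( (k-1) \)-pre-layering picture of \( \theta \), so that \( \Utheta u = u \after \counit_\theta \), where \( \counit_\theta \colon \Rtheta(\theta) \to \theta \) collapses each \( \set{k} \)-theta block to a globe. The blocks of \( \theta \) at level \( k \), that is, the \( \set{k} \)-thetas replaced by globes, determine for each maximal globe \( y \) of \( \Rtheta(\theta) \) of dimension \( > k-1 \) a \( k \)-pre-layering \( L_y \) of the submolecule \( u(\imel{\Rtheta\theta}{y}) \). These pre-layerings exist and are unique by Proposition~\ref{prop:frame_acyclic_iso_preorderings}, applied to submolecules. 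I need that submolecules of a frame-acyclic molecule are frame-acyclic with frame dimension \( \le k \), or at least that their \( k \)-pre-layerings correspond to pre-orderings. This is the first point to verify.

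Next, given \( w \colon \theta' \to \molecin{U} \) with \( \Utheta u \le w \), each maximal globe \( y \) above factors through \( \theta' \). I then define \( \tilde{w} \) as the pushout obtained by replacing each maximal globe \( z \) of \( \theta' \) of dimension \( > k - 1 \) by the pre-layering of \( w(\imel{\theta'}{z}) \) obtained by restricting the relevant \( L_y \). Equivalently, I glue the pre-orderings along the refinement. This is the hat construction with \( L \) replaced by a compatible family of \( k \)-pre-layerings. Because pre-layerings correspond to linearly ordered partitions of \( \maxflow{k}{-} \) vertices, restriction to a submolecule is just restriction of the partition, and the glued pieces remain submolecules. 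The result is a non-degenerate active functor from an \( \ivl{0}{k} \)-theta by Lemma~\ref{lem:active_non_degenerate_theta_iff}, and \( \Utheta \tilde{w} = w \) holds by construction, since \( \tilde{w} \) differs from \( w \) only in the \( \set{k} \)-blocks. The relation \( u \le \tilde{w} \) follows from the universal property of the pushout, as in the first point of Lemma~\ref{lem:properties_cocartesian_lift}.

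Minimality is the step I expect to be the main obstacle. Let \( v \) satisfy \( u \le v \) and \( w \le \Utheta v \). Then each \( \set{k} \)-block of \( v \) inside a globe of \( \theta' \) refines the corresponding restricted \( L_y \), because \( u \le v \) and the order on pre-layerings is refinement. This gives a factorisation of the pushout defining \( \tilde{w} \) through \( v \), exactly as in the second point of Lemma~\ref{lem:properties_cocartesian_lift} (the ``\( \hat{v} = v \)'' argument), so \( \tilde{w} \le v \).

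The delicate points are the well-definedness of restricting pre-layerings to submolecules and the compatibility of the restrictions when a globe of \( \theta' \) meets several blocks of \( u \). I would handle both through the isomorphism \( \o{k}{-} \) of Proposition~\ref{prop:frame_acyclic_iso_preorderings}, where these statements become elementary facts about linearly ordered partitions of vertex sets.
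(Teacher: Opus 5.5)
Your proposal is correct and builds the same lift as the paper, but it is organised differently.

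\textbf{The paper's route.} It argues by backward induction on \( r \), for \( u \in \SdiS{\ivl{r}{k}} U \).
\begin{itemize}
\item When \( r = k \), we have \( u = \bigcell{L} \) for a single \( k \)-pre-layering \( L \) of the whole molecule, and \( \Utheta(u) \) is initial. The lift \( u \le \hat v \) is then shown cocartesian by citing the two points of Lemma \ref{lem:properties_cocartesian_lift} verbatim.
\item For \( r < k \), it splits \( u \) along its canonical \( r \)-pre-layering and transports the splitting to \( v \). It then applies the inductive hypothesis to each submolecule \( \order{i}{U} \) and pastes the resulting lifts.
\end{itemize}

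\textbf{Your route.} You unroll this induction. You treat all \( \set{k} \)-blocks of \( \theta \) at once, encoded as a family \( (L_y)_y \) of \( k \)-pre-layerings of the blocks \( u(\imel{\Rtheta(\theta)}{y}) \). You then apply a family-indexed hat construction to \( w \) directly. This gives a closed formula for the cocartesian lift, and you never have to check that pasted lifts are cocartesian. The price is that you cannot cite Lemma \ref{lem:properties_cocartesian_lift} as stated. You must reprove both of its points for families, including the bookkeeping of which \( L_y \) governs each globe of an element lying above \( \Utheta(u) \).

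\textbf{The points you flagged.}
\begin{itemize}
\item Frame dimension of submolecules. The general claim that submolecules of \( U \) have frame dimension \( \le k \) is false: an atom has frame dimension \( -1 \), but its boundaries in general do not. You only need the bound for \( V = u(\imel{\Rtheta(\theta)}{y}) \) and \( V = w(\imel{\theta'}{z}) \). Each such \( V \) is glued to the rest of \( U \) only along elements of dimension \( \le k - 1 \). Hence maximal elements of \( V \) of dimension \( > k \) are maximal in \( U \), which gives \( \frdim V \le k \le \dim V - 1 \). This is exactly the argument in the paper's inductive step.
\item Compatibility. The same dimension count removes this worry. Distinct blocks meet in dimension \( \le k - 1 \), while \( \dim w(\imel{\theta'}{z}) = \dim z > k \) by non-degeneracy. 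So each such \( z \) lies over exactly one \( y \). Likewise, in your minimality step, each maximal globe of dimension \( > k \) of \( v \) lands inside a single layer of the relevant \( L_y \).
\item Dimension threshold. It should be dimension \( > k \), not \( > k - 1 \), since a \( k \)-dimensional molecule has no \( k \)-pre-layering.
\item Role of Proposition \ref{prop:frame_acyclic_iso_preorderings}. Each \( L_y \) is read off directly from its block. The proposition is only needed to restrict \( L_y \) to the pieces \( w(\imel{\theta'}{z}) \).
\end{itemize}
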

\begin{proof}
    For each \( \theta \) in \( \ThetanS{n}{\ivl{0}{k}} \), let \( \counit_\theta \colon \Rtheta(\theta) \to \theta \) be the counit component of the adjunction between the inclusion \( \ThetanS{n}{\ivl{0}{k - 1}} \subseteq \ThetanS{n}{\ivl{0}{k}} \) and \( \Rtheta \).
    Recall that \( \Utheta \) is the functor sending \( u \colon \theta \to \molecin{U} \) to \( u \after \counit_\theta \).
    We show by backward induction on \( 0 \le r \le k \) the existence of a cartesian lift of \( \Utheta(u) \le v \), for \( u \in \SdiS{\ivl{r}{k}} U \) and \( v \in \SdiS{\ivl{0}{k - 1}} U \).
    
    Now consider the case \( r = k \).
    Then, since \( \theta \) is a \( \set{k} \)\nbd theta, we have \( u = \bigcell{L} \), for a \( k \)\nbd pre-layering \( L \).
    Then \( \Utheta(u) \) is the initial object of \( \SdiS{\ivl{0}{k - 1}} U \).
    For any \( v \in \SdiS{\ivl{0}{k - 1}} U \), \( \Utheta(u) \le v \) and, by Lemma \ref{lem:properties_cocartesian_lift}, we have \( u = \widehat{\Utheta(u)} \le \hat{v} \), and we claim this is a cocartesian lift of \( \Utheta(u) \le v \).
    Indeed, if \( u \le u' \) and \( v \le \Utheta(u') \), then since \( \Utheta(u') \le u' \) by \( \counit \), the same Lemma gives \( \hat{v} \le \hat{u}' = u' \). 
    This shows that \( u \le \hat{v} \) is a cocartesian lift of \( \Utheta(u) \le v \).

    Inductively, let \( 0 \le r < k \).
    Then, the canonical \( r \)\nbd pre-layering of \( \theta \) gives a decomposition of \( u \) into 
    \begin{equation*}
       \order{1}{u} \cp{r} \ldots \cp{r} \order{m}{u} 
    \end{equation*}
    for some \( m \geq 1 \) such that \( \order{i}{u} \) is an \( \ivl{r + 1}{k} \)\nbd theta.
    Since \( r < k \), we have by definition that 
    \begin{equation*}
        \Utheta(u) = \Utheta(\order{1}{u}) \cp{r} \ldots \cp{r} \Utheta(\order{m}{u}),
    \end{equation*}
    for all \( i \in \set{1, \ldots, m} \). 
    Let \( v \in \SdiS{\ivl{0}{k - 1}} U \) such that \( \Utheta(u) \le v \).
    Since \( \Utheta(u) \le v \) is active, the above \( r \)\nbd pre-layering of \( \Utheta(u) \) induces the \( r \)\nbd pre-layering \( (\order{i}{v})_{i = 1}^m \) of \( v \) such that \( \Utheta(\order{i}{u}) \le \order{i}{v} \).    
    Let \( i \in \set{1, \ldots, m} \).
    If \( \dim \order{i}{u} \le k \), then \( \order{i}{u} = \Utheta(\order{i}{u}) \le \order{i}{v} \) is already cocartesian in \( \SdiS{\ivl{0}{k}} U \).
    Otherwise, we may apply the inductive hypothesis to \( \Utheta(\order{i}{u}) \le \order{i}{v} \), since \( \pcell{\order{i}{u}} \colon \order{i}{U} \submol U \) is a frame-acyclic submolecule of \( U \) with \( \frdim \order{i}{U} \le k \le \dim \order{i}{U} - 1 \).
    Indeed, since \( r < k \), any maximal element of dimension \( > k \) in \( \order{i}{U} \) is also maximal in \( U \), but \( \frdim U \le k \) by assumption.
    We therefore obtain, for all \( i \in \set{1, \ldots, m} \), a cocartesian lift \( \order{i}{u} \le \order{i}{\hat{v}} \) of \( \Utheta(\order{i}{u}) \le \order{i}{v} \), and these can be pasted together into a lift 
    \begin{equation*}
        u \le \order{1}{\hat{v}} \cp{r} \ldots \cp{r} \order{m}{\hat{v}}
    \end{equation*}
    of \( \Utheta(u) \le v \), which is directly seen to be cocartesian. 
    This completes the induction and the proof.
\end{proof}

\begin{cor} \label{cor:compose_is_cocartesian}
    Let \( U \) be a frame-acyclic \( n \)\nbd molecule and let \( k \in \nat \) with \( \frdim U - 1 \le k \le \dim U - 1 \).
    Then the functor \(  \Utheta \colon \Sdi U \to \SdiS{\ivl{0}{k}} U \) is a cocartesian fibration.
\end{cor}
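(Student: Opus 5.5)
The functor \( \Utheta \colon \Sdi U \to \SdiS{\ivl{0}{k}} U \) will be written as a composite of the one-step functors of Lemma \ref{lem:compose_is_cocartesian_one_step}. We then use that cocartesian fibrations are closed under composition and that identities are cocartesian fibrations. Recall that \( \Sdi U = \SdiS{\ivl{0}{n}} U \). Since \( U \) is an \( n \)\nbd molecule, every theta admitting a non-degenerate active strict functor to \( \molecin{U} \) has dimension \( \dim U \), by point 3 of Lemma \ref{lem:active_non_degenerate_theta_iff}. Such a theta only involves pastings along \( j \le \dim U - 1 \), so \( \Sdi U = \SdiS{\ivl{0}{\dim U - 1}} U \).

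For each \( j \), the counit \( \counit \) of Definition \ref{dfn:adjunction_thetaS_thetaT} is compatible with iterating the right adjoints along the chain of initial segments
\begin{equation*}
    \ivl{0}{k} \subseteq \ivl{0}{k+1} \subseteq \ldots \subseteq \ivl{0}{\dim U - 1}.
\end{equation*}
By uniqueness of right adjoints, the right adjoint of the composite inclusion \( \ThetanS{n}{\ivl{0}{k}} \incl \ThetanS{n}{\ivl{0}{\dim U - 1}} \) is the composite of the one-step right adjoints, with counit the composite of the one-step counits. It follows that the functor \( \Utheta \) of the statement factors as
\begin{equation*}
    \SdiS{\ivl{0}{\dim U - 1}} U \to \SdiS{\ivl{0}{\dim U - 2}} U \to \ldots \to \SdiS{\ivl{0}{k}} U,
\end{equation*}
where each arrow is the one-step functor \( \Utheta \colon \SdiS{\ivl{0}{j}} U \to \SdiS{\ivl{0}{j-1}} U \) for \( j \) ranging from \( \dim U - 1 \) down to \( k + 1 \). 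If \( k = \dim U - 1 \), this composite is empty and \( \Utheta \) is the identity, which is trivially a cocartesian fibration.

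For each \( j \) with \( k + 1 \le j \le \dim U - 1 \), we have \( \frdim U \le k + 1 \le j \le \dim U - 1 \), using the hypothesis \( \frdim U - 1 \le k \). This is exactly the range in which Lemma \ref{lem:compose_is_cocartesian_one_step} applies, so each factor is a cocartesian fibration. The composite of cocartesian fibrations between posets is again a cocartesian fibration: a cocartesian lift for the composite is obtained by lifting successively, first in the lowest factor and then upward. This proves the claim.

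The only point that needs care is the identification of the iterated functors with the single \( \Utheta \). This comes down to verifying that the backward-inductive construction of \( \Rtheta \) in Definition \ref{dfn:adjunction_thetaS_thetaT} commutes with composition of the initial-segment inclusions. That verification is formal, since the active monomorphism \( \Rtheta(\theta) \to \theta \) is unique once its domain is fixed, and right adjoints to full inclusions compose.
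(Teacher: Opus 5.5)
Your proof is correct and is the same argument as the paper's, which factors \( \Utheta \) into the one-step functors of Lemma \ref{lem:compose_is_cocartesian_one_step} and uses that cocartesian fibrations compose. Your extra bookkeeping makes explicit what the paper leaves implicit: you identify \( \Sdi U \) with \( \SdiS{\ivl{0}{\dim U - 1}} U \), so that every factor \( \SdiS{\ivl{0}{j}} U \to \SdiS{\ivl{0}{j-1}} U \) satisfies \( \frdim U \le j \le \dim U - 1 \), and you check that the iterated right adjoints recover the single \( \Utheta \).
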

\begin{proof}
    Since cocartesian fibrations compose, we may repeatedly use Lemma \ref{lem:compose_is_cocartesian_one_step}.
\end{proof}

\begin{thm} \label{thm:poset_of_subdivision_contractible_or_empty}
    Let \( U \) be a frame-acyclic \( n \)\nbd molecule which is not an atom.
    Then \( \Sd U \) is contractible. 
\end{thm}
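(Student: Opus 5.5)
Set \( r \eqdef \frdim U \). Since \( U \) is not an atom, it has at least two maximal elements, so \( r \geq 0 \), and \( r \le \dim U - 1 \). The plan follows Campion's proof of \cite[Theorem 4.21]{campion2023pasting}: relate \( \Sd U \) to the poset \( \Sdk{r} U \) through the functors \( \Utheta \), and use Quillen's Theorem A in its cocartesian-fibration form.

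\textbf{Step 1.} By Corollary \ref{cor:compose_is_cocartesian}, the functor \( \Utheta \colon \Sdi U \to \SdiS{\ivl{0}{r}} U \) is a cocartesian fibration. Iterating the adjunctions of Definition \ref{dfn:adjunction_thetaS_thetaT} gives a right adjoint \( \SdiS{\ivl{0}{r}} U \to \SdiS{\ivl{0}{r-1}} U \). I would then reduce to the subposet of \( \ivl{0}{r} \)-subdivisions whose underlying theta is not a globe, i.e.\ exclude \( \bigcell U \).

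\textbf{Step 2.} Let \( \Sd U \to \SdiS{\ivl{0}{r}} U \setminus \set{\bigcell U} \) be the restriction of \( \Utheta \); this needs a check that \( \Utheta(u) \neq \bigcell U \) for \( u \neq \bigcell U \). For a cocartesian fibration of posets, Quillen's Theorem A only requires each fibre to be contractible. I would identify each fibre as \( \Sdi \) of the top-dimensional pieces, namely the maximal cells of dimension \( > r \) in the base, and argue that it has an initial object, the lift \( \hat{u} \) of Lemma \ref{lem:properties_cocartesian_lift}. An initial object makes the fibre contractible. This reduces the problem to showing that \( \SdiS{\ivl{0}{r}} U \setminus \set{\bigcell U} \) is contractible.

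\textbf{Step 3.} For that poset, I would consider the map to \( \Sdk{r} U \) that records the \( r \)-pre-layering. One route is a right adjoint: send \( u \) to the coarsest \( r \)-pre-layering below it. Another is the inclusion \( \Sdk{r} U \incl \SdiS{\ivl{0}{r}} U \), which has the right adjoint \( \Utheta \) restricted away from the bottom element. Adjoint functors of posets are homotopy equivalences, so the poset is contractible by Proposition \ref{prop:frame_acyclic_theta_Sd_r_contractible}.

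\textbf{Main obstacle.} The hardest step is handling elements whose \( r \)-part is trivial but which are subdivided in lower dimensions, i.e.\ \( \ivl{0}{r-1} \)-thetas. Their image under the adjoint in Step 3 would be the removed bottom element. I expect to fix this as Campion does, by an induction downward from \( r \) through the layers \( \ivl{j}{r} \). At each stage I would use Lemma \ref{lem:compose_is_cocartesian_one_step} and a join or cone decomposition: a poset of the form \( P \times Q \setminus \set{\bot} \) with one factor contractible minus bottom is contractible. If this decomposition fails, I would instead cite \cite[Theorem 4.21]{campion2023pasting} directly. Its hypotheses are exactly the contractibility of \( \Sdk{r} U \) (Proposition \ref{prop:frame_acyclic_theta_Sd_r_contractible}) and the cocartesian property (Corollary \ref{cor:compose_is_cocartesian}), both established above.
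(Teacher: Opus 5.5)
Your closing fallback is exactly the paper's proof. The paper runs the proof of \cite[Theorem 4.21]{campion2023pasting} with Campion's index \( k = \frdim U + 1 \), because of the shift in Comment~\ref{comm:shift_index_preorder}, and feeds in Corollary~\ref{cor:compose_is_cocartesian} and Proposition~\ref{prop:frame_acyclic_theta_Sd_r_contractible}. The self-contained route you sketch before that fallback breaks in Steps 2 and 3. The first failure actually sits \emph{above} \( r \), not below it as you anticipated.

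\textbf{Step 2.} The check \( \Utheta(u) \neq \bigcell U \) for \( u \neq \bigcell U \) is false, because a molecule can split properly above its frame dimension. Take two \( 2 \)-globes \( a, b \) and set \( U \eqdef a \cp{0} b \). Then \( \frdim U = 0 \), yet
\[ U = (a \cp{0} \bd{1}{-} b) \cp{1} (\bd{1}{+} a \cp{0} b) = (\bd{1}{-} a \cp{0} b) \cp{1} (a \cp{0} \bd{1}{+} b). \]
These two splits give two elements \( \globe{2} \cp{1} \globe{2} \to \molecin{U} \) of \( \Sd U \) that \( \Utheta \colon \Sdi U \to \SdiS{\ivl{0}{0}} U \) sends to \( \bigcell U \). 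Keeping the bottom in the base does not rescue a fibrewise Theorem A either. The fibre over \( \bigcell U \), minus \( \bigcell U \), is \( \SdS{\ivl{r+1}{n}} U \), which here consists of two incomparable points.

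\textbf{Step 3.} Neither adjunction exists. When \( r \geq 1 \), \( \set{r} \) is not an initial segment of \( \ivl{0}{r} \), so Definition~\ref{dfn:adjunction_thetaS_thetaT} does not apply. Concretely, let \( U \eqdef (a \cp{1} b) \cp{0} c \) with three \( 2 \)-globes, so \( \frdim U = 1 \). Then \( \idd{\molecin{U}} \) is the top element of \( \SdiS{\ivl{0}{1}} U \). It lies above three pairwise incomparable \( 1 \)-layerings, obtained by placing \( c \) first, second or third. Hence there is no greatest \( \set{1} \)-subdivision below it, and no \( \set{1} \)-subdivision lies above it. Your ``coarsest pre-layering below \( u \)'' is always the trivial one.

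\textbf{Missing idea.} What you need is the fibred form of your join heuristic. Let \( p \colon E \to B \) be a cocartesian fibration of posets with \( p(\bot) = \bot \). Each fibre over \( b \neq \bot \) then has an initial object, namely the pushforward of \( \bot \). By Thomason's theorem and the homotopy colimit over the cone \( B \) on \( B \setminus \set{\bot} \), one gets
\[ E \setminus \set{\bot} \simeq \bigl(p^{-1}(\bot) \setminus \set{\bot}\bigr) * \bigl(B \setminus \set{\bot}\bigr), \]
and this join is contractible as soon as one factor is.
\begin{itemize}
\item Apply it to \( \SdiS{\ivl{0}{r}} U \to \SdiS{\ivl{0}{r-1}} U \), which is cocartesian by Lemma~\ref{lem:compose_is_cocartesian_one_step} with \( k = r \). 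Its fibre over the bottom is \( \Sdik{r} U \), so \( \SdS{\ivl{0}{r}} U \simeq \Sdk{r} U * \SdS{\ivl{0}{r-1}} U \). This is contractible by Proposition~\ref{prop:frame_acyclic_theta_Sd_r_contractible}.
\item Apply it to \( \Sdi U \to \SdiS{\ivl{0}{r}} U \), which is cocartesian by Corollary~\ref{cor:compose_is_cocartesian} with \( k = r \). This gives \( \Sd U \simeq \SdS{\ivl{r+1}{n}} U * \SdS{\ivl{0}{r}} U \), which is contractible whatever the first factor is.
\end{itemize}
No downward induction through the \( \ivl{j}{r} \) is needed.

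As written, your proposal is complete only through the citation, which is what the paper does.
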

\begin{proof}
    Proceed as in the proof of \cite[Theorem 4.21]{campion2023pasting} with \( k \eqdef \frdim U + 1 \) (recall Comment \ref{comm:shift_index_preorder}), using Corollary \ref{cor:compose_is_cocartesian} and Proposition \ref{prop:frame_acyclic_theta_Sd_r_contractible}.
\end{proof}

\begin{comm} \label{comm:sd_u_not_contractible}
    The previous Theorem generally stops being true when the frame-acyclicity assumption is dropped.
    For instance, let \( U \) be the molecule of \cite[Example 8.2.20]{hadzihasanovic2024combinatorics}.
    This is a 4-dimensional molecule which is not frame-acyclic (and such that \( \molecin{U} \) is not a polygraph).
    Then \( U \) splits both as \( \order{1}{V} \cp{3} \order{2}{V} \) and \( \order{1}{W} \cp{3} \order{2}{W} \), yet there is no way to split \( U \) as \( A \cp{2} B \), with proper submolecules \( A \) and \( B \). 
    From there, it can easily be seen that the two \( 3 \)\nbd layerings \( (\order{1}{V}, \order{2}{V}) \) and \( (\order{1}{W}, \order{2}{W}) \) of \( U \) belong to two disconnected components of \( \Sd U \).
    In general, we do not know what are the possible homotopy types of \( \Sd U \), when \( U \) ranges over molecules.
\end{comm}
\section{The pasting theorem}
\label{sec:pasting}

\subsection{Recollection on \texorpdfstring{$(\infty, n)$}{(infty, n)}-categories}

\noindent If \( \C \) is a small category, we write \( \sPsh(\C) \) for the \( \infty \)\nbd category of space-valued presheaves on \( \C \), and recall that \( \Psh(\C) \), the usual \( 1 \)\nbd category of presheaves over \( \C \), is its full sub-\( \infty \)\nbd category on \( 0 \)\nbd truncated objects. 

In this section, we fix \( n \in \nat \cup \set{\omega} \).

\begin{dfn} [Fundamental spine] \label{dfn:spine}
    Let \( U \) be a regular directed \( n \)\nbd complex.
    The \emph{fundamental spine of \( U \)} is the presheaf over \( \Thetan{n} \)
    \begin{equation*}
        \Spine{U} \eqdef \colim_{x \in U} \molecin{\imel{U}{x}}.
    \end{equation*}  
    The \emph{fundamental spine inclusion of \( U \)} is the canonical morphism
    \begin{equation*}
        \spine U \colon \Spine{U} \to \molecin{U}.
    \end{equation*}
\end{dfn}

\begin{rmk} \label{rmk:spine_explicit}
    Explicitly, \( \Spine U \) is the subobject of \( \molecin{U} \) on those cells \( u \colon \theta \to \molecin{U} \) such that the local embedding \( \pcell{u} \) factors through an embedding of the form \( \mapel{x} \colon \imel{U}{x} \incl U \), for some \( x \in U \).
    It is also the left Kan extension of the functor \( U \mapsto \molecin{U} \) along the full subcategory inclusion \( \atomcat \incl \rdcpxequal \).
\end{rmk}

\begin{exm}
    When \( n = 1 \) and \( U = k\arr \), then \( \molecin{U} \) is the \( k \)\nbd simplex, and \( \spine{U} \) is the spine inclusion.
\end{exm}

\begin{exm}
    If \( U \) is an atom, then \( \spine{U} \) is an isomorphism.
\end{exm}

\begin{dfn} [Flagged \( (\infty, n) \)\nbd categories]
    The \( \infty \)\nbd category \( \nCatflag n \) of \emph{flagged \( (\infty, n) \)\nbd categories} is the localisation of \( \sPsh(\Thetan{n}) \) at the set \( \set{\spine{U}}_{U \in \Thetan{n}} \).
    We write
    \begin{equation*}
        \locflag \colon \sPsh(\Thetan{n}) \to \nCatflag n  
    \end{equation*}
    for the localisation functor, which has a right adjoint \( \N \).
\end{dfn}

\begin{rmk}
    The restriction of \( \N \) to \( 0 \)\nbd truncated objects gives a commutative square of fully-faithful right adjoints
    \begin{center}
        \begin{tikzcd}
            {\nCat n  } & {\nCatflag n} \\
            {\Psh(\Thetan n)} & {\sPsh(\Thetan n),}
            \arrow["\strweak", hook, from=1-1, to=1-2]
            \arrow["\N"', hook, from=1-1, to=2-1]
            \arrow["\N", hook, from=1-2, to=2-2]
            \arrow[hook, from=2-1, to=2-2]
        \end{tikzcd}
    \end{center}
    where \( \N \colon \nCat n \incl \Psh(\Thetan n) \) is the identification of strict \( n \)\nbd categories with presheaves over \( \Thetan n \) mentioned in (\ref{dfn:strict_n_cat_are_presheaf_theta}).
\end{rmk}

\begin{dfn} [Flagged equivalence]
    We say that a morphism in \( \sPsh(\Thetan{n}) \) is a \emph{flagged equivalence} if its image via \( \locflag \) is an equivalence.
\end{dfn}

\begin{dfn} [Flagged model structure]
    Let \( n \in \nat \cup \set{\omega} \).
    We write \( \dPsh(\Thetan{n}) \) for the category of simplicial presheaves on \( \Thetan{n} \).
    The injective model structure (which coincides with the Reedy model structure) on \( \dPsh(\Thetan{n}) \) has monomorphisms for cofibrations and presents \( \sPsh(\Thetan{n}) \).
    
    Now, \( \nCatflag{n} \) is presented by the \emph{flagged model structure}, which is the left Bousfield localisation of the injective model structure on \( \dPsh(\Thetan{n}) \) at the set of \( \set{\spine{U}}_{U \in \Thetan{n}} \), seen as discrete simplicial presheaves.
    A morphism in \( \nCatflag{n} \) is a flagged equivalence if it is presented by a weak equivalence in the flagged model structure.
\end{dfn}

\begin{rmk} \label{rmk:homotopy_pushout}
    The model structures of the previous definition are all Cisinski model structures.
    In particular, all objects are cofibrant, cofibrations are the monomorphisms, and any pushout along a cofibration is a homotopy pushout, see \cite[Section 2.3]{cisinski2019higher}.
    More generally, a commutative square 
    \begin{center}
        \begin{tikzcd}
            X & Z \\
            Y & W,
            \arrow[from=1-1, to=1-2]
            \arrow[hook, from=1-1, to=2-1]
            \arrow[from=1-2, to=2-2]
            \arrow[from=2-1, to=2-2]
        \end{tikzcd}
    \end{center}
    whose left leg is a monomorphism, is a homotopy pushout if the canonical morphism \( Y +_X Z \to W \) is a weak equivalence.
\end{rmk}

\begin{dfn} [Homotopy polygraph] \label{dfn:htpy_polygraph}
    Let \( C \) be an \( n \)\nbd polygraph.
    We say that \( C \) is a \emph{homotopy polygraph} if the cellular extensions of Definition \ref{dfn:polygraph} are homotopy pushouts in \( \nCatflag{n} \), that is, if they are preserved by the inclusion \( \strweak \colon \nCat{n} \incl \nCatflag{n} \).
\end{dfn}

\subsection{A pasting theorem for frame-acyclic directed complexes}

\noindent Our goal is to show that \( \spine{U} \) is a flagged equivalence as soon as \( U \) is a frame-acyclic molecule. 
The strategy of \cite[Section 5]{campion2023pasting} almost applies directly after Theorem \ref{thm:poset_of_subdivision_contractible_or_empty}.

\begin{dfn}
    Let \( U \) be a regular directed \( n \)\nbd complex.
    We define the following subobjects of \( \molecin{U} \):
    \begin{align*}
        \Smol U  &\eqdef \Spine U \cup \set{u \colon \theta \to \molecin{U} \mid u \text{ quotient-free}} \text{, and } \\
        \sSmol U &\eqdef \Spine U \cup \set{u \colon \theta \to \molecin{U} \mid u \text{ quotient-free}, \pcell{u} \neq \idd{U}}.
    \end{align*}
    If \( V \subseteq U \) is a closed subset, then there is a canonical inclusion \( \Smol V \subseteq \Smol U \), which factors through \( \sSmol U \), if \( V \) is a proper subset.
\end{dfn}

\begin{rmk}
    Since precomposing \( u \colon \theta \to \molecin{U} \) with an active strict functor does not change the principal pasting diagram, and since all local embeddings of thetas are embeddings by Proposition \ref{prop:ternary_factorisation_of_theta}, \( \Smol U \) and \( \sSmol U \) are well-defined subobjects of \( \molecin{U} \), which are equal if the regular directed complex \( U \) is not a molecule.
\end{rmk}

\begin{prop} \label{prop:frame_acyclic_can_add_active}
    Let \( U \) be a frame-acyclic \( n \)\nbd molecule.
    Then the inclusion \( \sSmol U \incl \Smol U \) is a flagged equivalence.
\end{prop}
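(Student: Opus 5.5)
The plan follows \cite[Section 5]{campion2023pasting}. We add the missing non-degenerate sections of \( \Smol U \) one by one, via pushouts along monomorphisms, and show that each such pushout is a flagged equivalence. If \( U \) is an atom, then \( \spine U \) is an isomorphism and every section of \( \molecin{U} \) factors through \( \imel{U}{\top} = U \), so \( \sSmol U = \Smol U \) and there is nothing to prove. Assume from now on that \( U \) is not an atom.

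The sections of \( \Smol U \) not in \( \sSmol U \) are the active sections \( u \colon \theta \to \molecin{U} \). By Corollary \ref{cor:strict_functor_out_of_theta}, each is a collapse followed by a non-degenerate active section, and by Proposition \ref{prop:quotient-free_non_degenerate_are_mono} the latter is a monomorphism. So the non-degenerate new sections are exactly the elements of \( \Sdi U \) other than \( \bigcell U \), which lies in \( \Spine U \), that is, the elements of \( \Sd U \). Filter \( \Sd U \) by decreasing the order: at each stage we add all non-degenerate active sections \( u \colon \theta \to \molecin{U} \) whose strict refinements have already been added. By the Eilenberg--Zilber property (Remark \ref{rmk:theta_ez_cat}), the cell \( u \) is then attached by a pushout of \( \bd{}{}\theta \cup \Lambda_u \incl \theta \) for a suitable subpresheaf \( \Lambda_u \). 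By Remark \ref{rmk:homotopy_pushout}, such a pushout is a flagged equivalence as soon as the attaching inclusion is.

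More precisely, I would follow Campion's device. For each \( u \in \Sd U \), compare the faces of \( \theta \) lying in the previous stage with the subobject \( \Spine{\theta} \cup \bigcup_{v > u} \cdots \). Campion's argument reduces the claim, by a colimit decomposition, to the weak contractibility of the up-set \( \Sd U_{>u} \), or of an analogous link. This decomposition has to be rebuilt with the quotient-free sections: the up-set \( \Sd U_{\ge u} \) splits as a product of \( \Sdi \) of the molecules \( u(\imel{\theta}{x}) \) for \( x \) maximal in \( \theta \). Each factor is contractible by Theorem \ref{thm:poset_of_subdivision_contractible_or_empty}, because submolecules of frame-acyclic molecules are frame-acyclic, since the definition quantifies over all submolecules. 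Atom factors contribute trivial \( \Sdi \). Using Proposition \ref{prop:frame_acyclic_iso_preorderings} and Lemma \ref{lem:active_functor_preserve_submolecule}, we then check that gluing across the \( \ivl{0}{k} \)-levels is compatible.

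The main obstacle is exactly this identification of the relevant link with a join or product of posets \( \Sd \) of proper submolecules and of thetas. Campion uses that all non-degenerate sections are monomorphisms, which we only have for quotient-free ones (Comment \ref{comm:alternative_non_degenerate}). My first attempt would be to show that every face of an active non-degenerate \( u \) factors through a quotient-free section, so that the whole filtration lives inside \( \Smol U \), where Proposition \ref{prop:quotient-free_non_degenerate_are_mono} applies. Campion's homotopy-colimit argument, expressing \( \Smol U \) as a colimit over \( \Sdi U \) of representables glued along \( \sSmol U \), should then transfer verbatim. Finally, since \( \Sd U \) is contractible, the homotopy colimit over \( \Sd U \) of the constant diagram at \( \sSmol \) collapses, and this gives the equivalence.
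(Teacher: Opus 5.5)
\textbf{There is a genuine gap.} It starts with a factual error. When $U$ is not an atom, $\bigcell U$ is \emph{not} in $\Spine U$, nor in $\sSmol U$. It is active, so $\pcell{\bigcell U} = \idd{U}$, and this factors through some $\imel{U}{x}$ only if $U$ has a greatest element. So the new non-degenerate sections form all of $\Sdi U$. The big cell is exactly the section whose addition needs the contractibility of $\Sd U$: glued on its own along $\bd{}{}\globe{\dim U}$, it would just add a free cell parallel to the composite.

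Second, your filtration runs against the face relation. In $\Sdi U$, $u \le v$ means that $u$ factors through $v$. So the subpresheaf generated by $v$ already contains every coarsening $u \le v$, including $\bigcell U = v \after \bigcell{\theta}$. Adding $u$ ``after all its strict refinements'' therefore only ever attaches the maximal elements of $\Sd U$. For such a $v \colon \theta \to \molecin{U}$, the attaching subobject is the non-active part of $\theta$ together with the images of the common coarsenings of $v$ and the previously attached maximal elements. Whether its inclusion into $\theta$ is a flagged equivalence depends on the attaching order, and is not implied by the contractibility of $\Sd U$. If the only common coarsening is $\bigcell U$, the attaching subobject again contains a free cell parallel to the composite, and the inclusion is not an equivalence.

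Finally, the links you propose to analyse are the up-sets $\Sd U_{\ge u}$ and the factors $\Sdi$ of the images of maximal globes. These all have initial objects, so they are contractible for trivial reasons. Frame-acyclicity is therefore never used at the point where it is needed.

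The paper avoids any cell-by-cell attaching by using Campion's fattening construction. For a regular directed complex $V$, $\Smolp V$ sends $\xi$ to the nerve of the category of factorisations $\xi \to \theta \to \Smol V$ through non-degenerate sections $u \neq \bigcell V$. These sections are monomorphisms by Proposition \ref{prop:quotient-free_non_degenerate_are_mono}, so such factorisations are unique when they exist. Then $\sSmolp U$ is obtained by gluing $\Smolp U$ to $\sSmol U$ along $\bigcup_{V \subsetneq U} \Smolp V$.

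The projection $\sSmolp U \to \Smol U$ is a flagged equivalence essentially because its fibres are contractible, following \cite[Proposition 5.7]{campion2023pasting}. Over an active section whose non-degenerate part is $u_0 \in \Sd U$, the fibre is the up-set of $u_0$, which has an initial object. Over $\bigcell U$ and its degeneracies, however, the fibre is the whole of $\Sd U$. That is the one place where Theorem \ref{thm:poset_of_subdivision_contractible_or_empty} enters.

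Separately, $\sSmol U \incl \sSmolp U$ is a flagged equivalence by the argument of \cite[Lemma 5.8]{campion2023pasting}, again using that non-degenerate quotient-free sections are monomorphisms. Two-out-of-three then concludes. Your last sentence gestures at this homotopy colimit, but it is missing the decisive step: identifying the fibre over the big cell with $\Sd U$. The filtration you actually describe cannot replace that step.
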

\begin{proof}
    We work in the model structure on \( \dPsh(\Thetan{n}) \) presenting \( \nCatflag{n} \).  
    As in \cite[Definition 5.6]{campion2023pasting}, for any regular directed complex \( V \), we let \( \Smolp V \) be the simplicial presheaf over \( \Thetan{n} \) defined by sending \( \xi \) to the nerve of the category whose objects are pairs 
    \begin{equation*}
        (\F \colon \xi \to \theta, u \colon \theta \to \Smol V)
    \end{equation*}
    such that \( u \) is non-degenerate (hence a monomorphism by Proposition \ref{prop:quotient-free_non_degenerate_are_mono}) and \( u \neq \bigcell{V} \), and a morphism 
    \begin{equation*}
         \G \colon (\F \colon \xi \to \theta, u) \to (\F' \colon \xi \to \theta', u')
    \end{equation*}
    is given by a strict functor \( \G \colon \theta \to \theta' \) making the two triangles commute.
    There is a projection morphism \( p \colon \Smolp V \to \Smol V \) sending \( (\F, u) \) to \( u \after \F \).
    We then let \( \sSmolp U \) be the pushout
    \begin{center}
        \begin{tikzcd}
            \begin{array}{c} \bigcup_{\substack{V \subsetneq U}} \Smolp V \end{array} & {\sSmol U} \\
            {\Smolp U} & {\sSmolp U,}
            \arrow[""{name=0, anchor=center, inner sep=0}, from=1-1, to=1-2]
            \arrow[hook, from=1-1, to=2-1]
            \arrow[hook, from=1-2, to=2-2]
            \arrow[from=2-1, to=2-2]
            \arrow["\lrcorner"{anchor=center, pos=0.125, rotate=180}, draw=none, from=2-2, to=0]
        \end{tikzcd}
    \end{center}
    where the \( V \) run over the proper closed subsets of \( U \).
    This comes equipped, using the universal property, with a projection morphism \( \gamma \colon \sSmolp U \to \sSmol U \). 
    By the proof of \cite[Proposition 5.7]{campion2023pasting} together with Theorem \ref{thm:poset_of_subdivision_contractible_or_empty}, \( \gamma \) is a flagged equivalence.
    Last, the inclusion \( \sSmol U \incl \sSmolp U \) is a flagged equivalence by the proof of \cite[Lemma 5.8]{campion2023pasting} and recalling Proposition \ref{prop:quotient-free_non_degenerate_are_mono}.
    Since postcomposing this inclusion with \( \gamma \) is the inclusion \( \sSmol U \incl \Smol U \), we conclude.
\end{proof}

\begin{dfn} [Directed complex with frame-acyclic molecules]
    Let \( X \) be a directed complex.
    We say that \emph{\( X \) has frame-acyclic molecules} if, for all pasting diagrams \( f \colon U \to X \), \( U \) is frame-acyclic.
\end{dfn}

\noindent We may now state and prove our analogue of \cite[Theorem 5.11]{campion2023pasting}.

\begin{thm} \label{thm:cellular_extension_of_frame_acyclic}
    Let \( X \) be a directed \( n \)\nbd complex with frame-acyclic molecules and \( d \in \nat \).
    Then the cellular extension
    \begin{center}
        \begin{tikzcd}[column sep=large]
            {\coprod_{x \in \gr{d} X} \molecin{\bd{}{} U}} & {\coprod_{x \in \gr{d} X} \molecin{U}} \\
            {\molecin{\skel{d - 1}{X}}} & {\molecin{\skel{d}{X}}}
            \arrow[hook, from=1-1, to=1-2]
            \arrow[from=1-1, to=2-1]
            \arrow[from=1-2, to=2-2]
            \arrow[hook, from=2-1, to=2-2]
        \end{tikzcd}
    \end{center}
    is a homotopy pushout in \( \nCatflag{n} \).
\end{thm}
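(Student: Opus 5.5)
The plan is to follow the proof of \cite[Theorem 5.11]{campion2023pasting}, using Proposition \ref{prop:frame_acyclic_can_add_active} as the key input. The first step is to reduce to the statement that, for every regular directed complex \( P \) with frame-acyclic molecules (in particular every frame-acyclic molecule and every atom), the inclusion \( \Smol P \incl \molecin{P} \) is a flagged equivalence. Combining this with the same statement for the spine, we then show that \( \spine{U} \colon \Spine U \to \molecin{U} \) is a flagged equivalence for every frame-acyclic molecule \( U \).

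The second step is to prove \( \Spine U \incl \Smol U \) is a flagged equivalence, by induction on \( U \) together with induction on closed subsets. Write \( \sSmol U \) as the union of \( \Smol V \) over proper closed subsets \( V \subsetneq U \), glued along their intersections. The closed subsets of a molecule are regular directed complexes whose molecules are submolecules of \( U \), hence frame-acyclic. We then argue by induction on the number of elements and by a cube/pushout argument along monomorphisms, which are homotopy pushouts by Remark \ref{rmk:homotopy_pushout}. This shows that \( \Spine U \incl \sSmol U \) is a flagged equivalence, given that \( \Spine \) and \( \Smol \) commute with these unions of closed subsets. Proposition \ref{prop:frame_acyclic_can_add_active} then upgrades this to \( \Spine U \incl \Smol U \).

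The third step deals with degenerate cells: \( \Smol U \incl \molecin{U} \) must capture all cells. By Corollary \ref{cor:strict_functor_out_of_theta}, every cell \( \theta \to \molecin{U} \) factors as a collapse, an active monomorphism, and a pasting diagram \( f \colon W \to U \) that is a local embedding but not necessarily an embedding. This is the main obstacle: non-injective local embeddings of molecules (cf.\ Comment \ref{comm:alternative_non_degenerate}) mean \( \molecin{U} \) is not exhausted by quotient-free cells. I would handle this as Campion does, proving instead directly that the spine inclusion is an equivalence. By induction on submolecules, a split \( U = A \cp{k} B \) gives \( \molecin{U} \), which the previous step identifies with the homotopy colimit of the \( \molecin{\imel{U}{x}} \).

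Finally, for the theorem itself, we write \( \molecin{X} \) as the colimit in presheaves. Using the spine equivalences and the fact that \( \Spine \) sends pushouts along embeddings to pushouts, we deduce that \( \molecin{-} \) composed with \( \locflag \) agrees with the left Kan extension of \( \molecin{-} \) restricted to atoms. This holds on \( \dcpx \) restricted to complexes with frame-acyclic molecules, in the following sense: for \( X \), the canonical map \( \colim_{x \in \cell X} \molecin{U_x} \to \molecin{X} \) in \( \sPsh(\Thetan{n}) \) is a flagged equivalence. To check this, test it against each cell \( \theta \to \molecin{X} \), which factors through a pasting diagram \( f \colon W \to X \) with \( W \) frame-acyclic. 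The fibre over it is then controlled by \( \spine{W} \), using Lemma \ref{lem:quotient-free_functor_send_embedding_to_embedding}. Since a colimit-preserving functor sends the pushout of Definition \ref{dfn:skeleton_dcpx} to a homotopy pushout, the cellular extension square is a homotopy pushout in \( \nCatflag{n} \).
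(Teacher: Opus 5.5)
Your proposal has a genuine gap, and it sits exactly at the obstacle you identify: the cells whose principal pasting diagram is a non-injective local embedding are never dealt with. Step 3 is circular. Saying that a split $U = A \cp{k} B$ exhibits $\molecin{U}$ as a homotopy colimit is essentially the statement being proved. Your Step 2 only concerns $\Smol U$, which by your own remark omits the problematic cells, so it cannot identify $\molecin{U}$ with anything. Appealing to ``as Campion does'' does not help either: in the torsion-free setting non-degenerate cells are monomorphisms, so the issue never arises there (Comment \ref{comm:alternative_non_degenerate}). Consequently the reduction announced in Step 1, that $\Smol P \incl \molecin{P}$ is a flagged equivalence, is never established.

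In Step 4 the formal part is fine: a natural flagged equivalence $\colim_{x \in \cell X} \molecin{U_x} \to \molecin{X}$ would indeed give the homotopy pushout. But ``test against each cell; the fibre is controlled by $\spine{W}$'' is not an argument. You would have to compute the pullback of that colimit along $\theta \to \molecin{X}$, and when $f \colon W \to X$ is not injective it has no evident relation to $\spine{W}$. Lemma \ref{lem:quotient-free_functor_send_embedding_to_embedding} only controls the active factor into $\molecin{W}$, not the passage along $\molecin{f}$. Note also that the paper argues in the opposite direction: spine equivalences (Corollary \ref{cor:spine_of_frame_acyclic_rdcpx_is_flagged_equivalence}) are deduced from this theorem, not used to prove it.

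The missing idea is that you never need $\Smol U = \molecin{U}$, nor any spine equivalence. The paper's argument runs as follows.
\begin{enumerate}
\item Let $K$ be the pushout computed in presheaves. It is a homotopy pushout because the top leg is a monomorphism. By Corollary \ref{cor:strict_functor_out_of_theta}, $K \to \molecin{\skel{d}{X}}$ is the inclusion of the cells $u$ with $\dim \pcell{u} < d$ or $\pcell{u}$ a cell of $X$.
\item Filter $\molecin{\skel{d}{X}}$ by subobjects $K_f$ indexed by \emph{all} pasting diagrams $f \colon U \to \skel{d}{X}$, injective or not. Each cell is then added exactly once, at the stage indexed by its principal pasting diagram.
\item Each inclusion $K_{<f} \incl K_f$ is the cobase change of $\sSmol U \incl \Smol U$ along $\molecin{f} \after -$. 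The new cells are precisely those of the form $\molecin{f} \after \hat{u}$ with $\hat{u}$ active. Moreover, $\molecin{f} \after -$ is injective on active functors by the uniqueness part of Proposition \ref{prop:ofs_functor_of_dcpx}.
\item Proposition \ref{prop:frame_acyclic_can_add_active} makes each such step a flagged equivalence. \cite[Lemma 2.18]{campion2023pasting}, together with induction on the number of elements of $U$, glues the pieces, both for $K \incl K_{<f}$ and for the union of all the $K_f$.
\end{enumerate}
In this way the non-injectivity of $f$ is absorbed by the pushout, and your Step 2 comparing $\Spine U$ with $\Smol U$ becomes unnecessary.
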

\begin{proof}
    We work in the model structure over \( \dPsh(\Thetan{n}) \) presenting \( \nCatflag{n} \). 
    Let \( K \) be the cellular extension computed in \( \dPsh(\Thetan{n}) \).
    By Remark \ref{rmk:homotopy_pushout}, we must show that the canonical morphism \( \iota \colon K \to \molecin{\skel{d}{X}} \) is a flagged equivalence.
    By Corollary \ref{cor:strict_functor_out_of_theta}, \( \iota \) is an inclusion classifying the subobject of \( \molecin{\skel{d}{X}} \) on those \( u \colon \theta \to \molecin{\skel{d}{X}} \) such that \( \dim \pcell{u} < d \) or \( \pcell{u} \) is a cell of \( X \).
    For each pasting diagram \( f \colon U \to \skel{d}{X} \), we write 
    \begin{align*}
        K_f &\eqdef K \cup \set{u \colon \theta \to \molecin{\skel{d}{X}} \mid \pcell{u} \subseteq f} \text{, and} \\
        K_{<f} &\eqdef K \cup \set{u \colon \theta \to \molecin{\skel{d}{X}} \mid \pcell{u} \subseteq f, \pcell{u} \neq f},
    \end{align*}
    whereby \( \pcell{u} \subseteq f \), we mean that there exists an embedding \( \iota \colon V \incl U \) such that \( f \after \iota = \pcell{u} \).
    Notice that we have an inclusion \( \iota_f \colon K \incl K_f \), which is an equality whenever \( \dim f < d \) or \( f \) is a cell.
    We write \( \mathcal{S} \) for the collection of all inclusions \( \iota_f \), for \( f \) a pasting diagram in \( \skel{d}{X} \).
    We wish to apply \cite[Lemma 2.18]{campion2023pasting} to \( \mathcal{S} \). 
    This will be enough to conclude that \( \iota \) is flagged equivalence, as we have
    \begin{equation*}
        \molecin{\skel{d}{X}} = \bigcup_{f \in \mathcal{S}} K_f,
    \end{equation*}
    since for any \( u \colon \theta \to \molecin{\skel{d}{X}} \), \( u \in \fun{K}_{\pcell{u}}(X) \). 
    Now, for each pair \( \iota_f, \iota_{f'} \in \mathcal{S} \),
    \begin{equation*}
         K_f \cap K_{f'} = \bigcup_{\substack{\iota_g \in \mathcal{S}, \\ g \subseteq f, g \subseteq f'}} K_g,
    \end{equation*}
    hence it only remains to show that the elements of \( \mathcal{S} \) are all flagged equivalences.

    Consider a pasting diagram \( f \colon U \to \skel{d}{X} \), we show that \( \iota_f \) is a flagged equivalence by induction on the number of elements in the underlying set of \( U \).
    For the base case, then \( U \) has one element and is the point \( \pt \).
    Therefore, \( \iota_f \) is an isomorphism, since \( f \) is a cell.
    Inductively, suppose that the statement holds for all pasting diagrams \( g \colon V \to \skel{d}{X} \), where \( V \) has fewer elements than \( U \).
    If \( U \) is an atom, then again \( \iota_{f} \) is an isomorphism since \( f \) is a cell.
    Thus, we may suppose that \( U \) is a molecule.
    In that case, we may factor \( \iota_f \) as
    \begin{equation*}
        K \incl K_{<f} \incl K_f.
    \end{equation*}
    We claim that both inclusions are flagged equivalences.
    For the first one, we apply \cite[Lemma 2.18]{campion2023pasting} to the subset \( \mathcal{S}_{<f} \) of \( \mathcal{S} \) consisting of those \( \iota_g \in \mathcal{S} \) such that \( g \subseteq f \) and \( g \neq f \).
    Indeed,
    \begin{equation*}
        K_{<f} = \bigcup_{\iota_g \in \mathcal{S}_{< f}} K_g,
    \end{equation*}
    and, for all \( \iota_g \in \mathcal{S}_{< f} \), \( \iota_{g} \) is by definition of the form \( \iota_{f \after j} \) with \( j \colon V \incl U \) a proper embedding, thus the inductive hypothesis applies and shows that \( \iota_g \) is a flagged equivalence. 
    For the second one, we claim that the diagram
    \begin{center}
        \begin{tikzcd}
            {\sSmol U} & {\Smol U} \\
            {K_{<f}} & {K_f}
            \arrow[hook, from=1-1, to=1-2]
            \arrow[from=1-1, to=2-1]
            \arrow[from=1-2, to=2-2]
            \arrow[hook, from=2-1, to=2-2]
        \end{tikzcd}
    \end{center}
    is a pushout square.
    Since weak equivalences in the flagged model structure on \( \dPsh(\Thetan{n}) \) are stable by cobase change, Proposition \ref{prop:frame_acyclic_can_add_active} will be enough to conclude.
    To see that the previous diagram is a pushout square, let \( u \colon \theta \to \molecin{\skel{d}{X}} \) be a non-degenerate cell in \(K_f \) which is not in \( K_{<f} \).
    By Corollary \ref{cor:strict_functor_out_of_theta}, \( u \) is of the form \( \molecin{\pcell{u}} \after \hat{u} \), where \( \hat{u} \) is active. 
    Since \( u \in K_f \) but \( u \notin K_{<f} \), we have that \( \pcell{u} = f \), hence \( u \) is the image of \( \hat{u} \in \Smol U \).
    Finally, by Corollary \ref{cor:ofs_functor_of_rdcpx}, \( \molecin{f} \after - \) is injective on active strict functors.
    This concludes the proof.
\end{proof}

\begin{comm}
    The reader may be surprised to find, in the previous proof, an induction on the number of elements of a molecule, instead of the more classical induction on submolecules.
    Our reason for this choice is that an induction on the submolecules would have required to define \( \Smol U \) (and \( K_f \)) not as classifying all quotient-free strict functors, but merely the one such that \( \pcell{u} \) is a subdiagram of \( \idd{U} \) (or \( f \)).
    This is not a problem, except that affirming this gives a well-defined subobject requires knowing that subdiagrams are stable when restricted along embeddings of the form \( \iota \colon \theta' \incl \theta \), a fact that only follows from knowing that any embedding of thetas is a submolecule inclusion.
    While we have no doubt that this is true, we decided not to prove it in this article.
\end{comm}

\noindent We may now extend \cite[Theorem 8.2.14]{hadzihasanovic2024combinatorics}: directed complexes with frame-acyclic molecules are polygraphs and homotopy polygraphs.

\begin{thm} \label{thm:frame_acyclic_is_homotopy_polygraph}
    Let \( X \) be a directed \( n \)\nbd complex with frame-acyclic molecules.
    Then the strict \( n \)\nbd category \( \molecin{X} \) is a polygraph and a homotopy polygraph.
\end{thm}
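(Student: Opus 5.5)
The plan is to deduce both claims from Theorem \ref{thm:cellular_extension_of_frame_acyclic}, with the main work being to show that the strict squares there are genuine cellular extensions in \( \nCat{n} \).

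\emph{Polygraph.} The generators are the cells of \( X \): for each \( d \), take \( \cls{S}_d \) to be the \( d \)\nbd cells \( x \colon U \to X \) with \( U \) a \( d \)\nbd atom, regarded as \( d \)\nbd arrows of \( \molecin{X} \). We must show that
\begin{equation*}
    \skel{d-1}{\molecin{X}} = \molecin{\skel{d-1}{X}} \incl \molecin{\skel{d}{X}} = \skel{d}{\molecin{X}}
\end{equation*}
is a cellular extension in \( \nCat{n} \).

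First, Lemma \ref{lem:cellular_colimits_are_omega_colimits} shows that the square of Theorem \ref{thm:cellular_extension_of_frame_acyclic} is a pushout in \( \somegaCat \), since it is the image under \( \molecin{-} \) of the skeletal pushout of Definition \ref{dfn:skeleton_dcpx}. This is not yet enough, because the inclusion \( \somegaCat \incl \omegaCat \) need not preserve pushouts. This comparison is the main obstacle, and it is exactly where frame-acyclicity enters. Two approaches are available.

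\begin{enumerate}
    \item Cite a fibered variant of \cite[Theorem 8.2.14]{hadzihasanovic2024combinatorics}, which states that \( \molecin{U} \) is a polygraph for \( U \) a frame-acyclic molecule. Then \( \molecin{X} \) is a colimit of these polygraphs along embeddings.
    \item Argue directly. Let \( P \) be the strict pushout in \( \nCat{n} \) and \( c \colon P \to \molecin{\skel{d}{X}} \) the comparison. We show \( c \) is bijective on arrows by induction on pasting diagrams \( f \colon U \to \skel{d}{X} \):
    \begin{itemize}
        \item For surjectivity, decompose \( U \) into atoms using a layering, which exists by Lemma \ref{lem:frame_acyclic_properties}.
        \item For injectivity, show that any two expressions of \( f \) as composites of generators agree in \( P \). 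Frame-acyclicity gives that the \( k \)\nbd layerings of \( U \) correspond to orderings of the acyclic graph \( \maxflow{k}{U} \) (Proposition \ref{prop:frame_acyclic_iso_preorderings}). Any two such orderings are connected by swaps of adjacent incomparable vertices, and each swap is realised by an interchange law in \( P \).
    \end{itemize}
\end{enumerate}
I would present the first approach as the proof and keep the second as justification. The same comparison also shows that the strict pushout and the stricter pushout coincide, so the squares in Theorem \ref{thm:cellular_extension_of_frame_acyclic} are the cellular extensions of Definition \ref{dfn:polygraph}.

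\emph{Homotopy polygraph.} Once the squares are identified as the cellular extensions of \( \molecin{X} \), apply \( \strweak \). By Theorem \ref{thm:cellular_extension_of_frame_acyclic}, each square is a homotopy pushout in \( \nCatflag{n} \). The identification of the extension is reconciled as follows: \( \molecin{\bd{}{} U} \to \molecin{U} \) versus \( \bd{}{}\globe{d} \to \globe{d} \). The globe and boundary-globe maps factor through these, and for an atom \( U \) we have \( \bd{}{} U \) round. Since \( \molecin{U} \) is the pushout of \( \molecin{\bd{}{}U} \) and \( \globe{d} \) along \( \bd{}{}\globe{d} \), by the same theorem applied to the atom, homotopy pushouts paste. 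For \( n = \omega \), it suffices to check each dimension, since \( \strweak \) and the homotopy pushouts are computed levelwise on skeleta.
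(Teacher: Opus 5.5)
Your polygraph half is not actually proved, and you miss the idea the paper uses to get it. You treat the comparison between the pushout in \(\somegaCat\) and the pushout in \(\nCat{n}\) as an independent obstacle. Your first resolution cites a ``fibered variant'' of \cite[Theorem 8.2.14]{hadzihasanovic2024combinatorics}. For non-regular \(X\) that amounts to the polygraph half of the very statement being proved; the paper presents this theorem as that extension. Your second resolution is a combinatorial argument that is only sketched:
\begin{itemize}
\item Proposition \ref{prop:frame_acyclic_iso_preorderings} covers \(k\)\nbd layerings only for \(\frdim U \le k \le \dim U - 1\), whereas an arbitrary expression of \(f\) in \(P\) mixes compositions at several levels.
\item You do not explain how two adjacent layers with incomparable top elements are rewritten as a lower-dimensional composite, which is needed before an interchange law can apply.
\end{itemize}
The remark that \(\molecin{X}\) is a colimit of polygraphs along embeddings does not help. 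Such colimits need not be polygraphs, and whether that colimit may be computed in \(\omegaCat\) rather than \(\somegaCat\) is the very point at issue. Your homotopy half starts from the squares ``once identified as the cellular extensions'', so it inherits this gap.

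No separate comparison is needed. \(\strweak\) is fully faithful with left adjoint \(\pi_0 \colon \nCatflag{n} \to \nCat{n}\), so \(\pi_0\) preserves pushouts and \(\pi_0 \strweak \cong \mathrm{id}\). Applying \(\pi_0\) to the homotopy pushout of Theorem \ref{thm:cellular_extension_of_frame_acyclic} returns the original square and shows it is a pushout in \(\nCat{n}\). That is the paper's proof: the strict statement is a consequence of the homotopical one, not a prerequisite for it.

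Your reconciliation of \(\molecin{\bd{}{}U} \to \molecin{U}\) with \(\bd{}{}\globe{d} \to \globe{d}\) has the right shape, but ``the same theorem applied to the atom'' does not justify it. For \(X = U\) and \(d = \dim U\), the only \(d\)\nbd cell is \(U\) itself, so the square of Theorem \ref{thm:cellular_extension_of_frame_acyclic} is trivial. The correct reason is at the level of presheaves:
\begin{itemize}
\item By Corollary \ref{cor:strict_functor_out_of_theta}, every \(u \colon \theta \to \molecin{U}\) with \(\pcell{u} = \idd{U}\) is a collapse followed by a non-degenerate active functor.
\item For an atom, that functor must be the big cell \(\bigcell{U}\). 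Otherwise non-degeneracy would turn the canonical pre-layering of the theta into a pre-layering of \(U\) with at least two layers, hence at least two maximal elements.
\item \(\bigcell{U}\) is a monomorphism by Proposition \ref{prop:quotient-free_non_degenerate_are_mono}, and the preimage of \(\molecin{\bd{}{}U}\) under it is \(\bd{}{}\globe{d}\).
\end{itemize}
Hence \(\molecin{U} \cong \molecin{\bd{}{}U} \cup_{\bd{}{}\globe{d}} \globe{d}\) already in \(\Psh(\Thetan{n})\). So this square is a homotopy pushout by Remark \ref{rmk:homotopy_pushout}, and, applying the left adjoint of the nerve, a pushout in \(\nCat{n}\). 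Pasting it with the squares of Theorem \ref{thm:cellular_extension_of_frame_acyclic} yields the globular cellular extensions of Definition \ref{dfn:polygraph}; the paper leaves this identification implicit. Finally, your separate treatment of \(n = \omega\) is unnecessary, since Theorem \ref{thm:cellular_extension_of_frame_acyclic} already covers \(n \in \nat \cup \set{\omega}\).
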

\begin{proof}
    Since the left adjoint \( \pi_0 \colon \nCatflag{n} \to \nCat{n} \) sends homotopy pushouts to pushouts in strict \( n \)\nbd categories, we conclude by Theorem \ref{thm:cellular_extension_of_frame_acyclic} first that it is a polygraph, and second that it is a homotopy polygraph. 
\end{proof}

\begin{cor} \label{cor:spine_of_frame_acyclic_rdcpx_is_flagged_equivalence}
    Let \( U \) be a regular directed \( n \)\nbd complex with frame-acyclic molecules.
    Then \( \spine{U} \) is a flagged equivalence. 
\end{cor}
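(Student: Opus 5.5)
We would deduce the statement from Theorem \ref{thm:cellular_extension_of_frame_acyclic} by a skeletal induction, exploiting that both \( \Spine U \) and \( \molecin{U} \) are assembled from the atoms \( \molecin{\imel{U}{x}} \), the former strictly in presheaves and the latter through the cellular extensions of \( \molecin{-} \).

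Concretely, consider the regular directed complex \( U \) as an object of \( \dcpx \) via the identification of \( \rdcpxequal \) with a full subcategory. It has frame-acyclic molecules by hypothesis, and so does each skeleton \( \skel{d}{U} \), since a pasting diagram in \( \skel{d}{U} \) is a pasting diagram in \( U \). We prove by induction on \( d \geq -1 \) that \( \spine{\skel{d}{U}} \colon \Spine{\skel{d}{U}} \to \molecin{\skel{d}{U}} \) is a flagged equivalence. The base case \( d = -1 \) is trivial, as both sides are empty. For the inductive step, we use that \( \Spine{} \) is the left Kan extension of \( V \mapsto \molecin{V} \) along \( \atomcat \incl \rdcpxequal \) (Remark \ref{rmk:spine_explicit}), so it is computed as a colimit over elements. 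Together with the skeletal pushout of Definition \ref{dfn:skeleton_dcpx}, whose maps are embeddings, this gives a pushout in \( \Psh(\Thetan{n}) \):
\begin{equation*}
    \Spine{\skel{d}{U}} \cong \Spine{\skel{d-1}{U}} \cup_{\coprod_{x} \Spine{\bd{}{} \imel{U}{x}}} \coprod_{x} \molecin{\imel{U}{x}},
\end{equation*}
where \( x \) ranges over \( \gr{d}{U} \). Its left leg is a monomorphism, since \( \Spine \) preserves embeddings, so the pushout is a homotopy pushout by Remark \ref{rmk:homotopy_pushout}. Theorem \ref{thm:cellular_extension_of_frame_acyclic} says that the analogous square with \( \molecin{\skel{d-1}{U}} \), \( \molecin{\bd{}{}\imel{U}{x}} \) and \( \molecin{\skel{d}{U}} \) is also a homotopy pushout. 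The spine inclusions provide a natural map from the first square to the second. On the atom corner this map is an isomorphism, since for an atom the spine inclusion is an isomorphism. On the other two corners it is a flagged equivalence by the inductive hypothesis, applied to \( \skel{d-1}{U} \) and to the regular directed complexes \( \bd{}{}\imel{U}{x} \), which have dimension \( < d \) and frame-acyclic molecules. Homotopy invariance of homotopy pushouts then gives that \( \spine{\skel{d}{U}} \) is a flagged equivalence.

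Finally, \( U = \bigcup_d \skel{d}{U} \) and both \( \Spine \) and \( \molecin{-} \) commute with this filtered union of monomorphisms, since molecules are finite. The union is a homotopy colimit, so \( \spine{U} \) is a flagged equivalence; when \( n < \omega \) the induction already stops at \( d = n \).

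The main obstacle is the inductive step for the boundary: \( \bd{}{}\imel{U}{x} \) is the skeleton of an atom rather than a skeleton of \( U \) itself. To handle this, the induction hypothesis should be strengthened to quantify over all regular directed complexes of dimension \( < d \) with frame-acyclic molecules, and we must check that \( \Spine \) of a skeletal pushout is correctly the pushout of spines. That check requires the compatibility of the colimit defining \( \Spine \) with the pushout in \( \dcpx \) (categories of elements preserve colimits), and we expect this bookkeeping to be the delicate part.
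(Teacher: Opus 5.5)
Your proof is correct and is essentially the paper's. The paper also inducts on \( \dim U \) over all regular directed complexes with frame-acyclic molecules, which is your strengthened hypothesis and covers both \( \skel{d-1}{U} \) and the boundaries \( \bd{}{} V \). It compares the pushout square of spines (a homotopy pushout, using that \( \Spine \) commutes with colimits of embeddings) with the square of Theorem \ref{thm:cellular_extension_of_frame_acyclic}, using that the spine inclusion of an atom is an isomorphism. Your final passage to the union of skeleta is a small addition that makes explicit the case of infinite-dimensional \( U \) when \( n = \omega \), which the paper's induction on \( \dim U \) leaves implicit.
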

\begin{proof}
    We work in the model structure on \( \dPsh(\Thetan{n}) \) presenting \( \nCatflag{n} \) and prove the statement by induction on \( \dim U \).
    When \( \dim U = 0 \), then \( \spine{U} \) is in fact an isomorphism.
    Inductively, suppose that \( d \eqdef \dim U > 0 \), then since \( \Spine \) commutes with colimits of embeddings of regular directed complexes, the inductive hypothesis, Theorem \ref{thm:cellular_extension_of_frame_acyclic}, Remark \ref{rmk:spine_explicit}, and Remark \ref{rmk:homotopy_pushout} imply that the two squares
    \begin{center}
        \begin{tikzcd}
            {\coprod\limits_{x \in \gr{d} U} \molecin{\bd{}{} V}} & {\coprod\limits_{x \in \gr{d} U} \molecin{V}} & {\coprod\limits_{x \in \gr{d} U} \Spine{\bd{}{} V}} & {\coprod\limits_{x \in \gr{d} U} \Spine V} \\
            {\molecin{\skel{d - 1}{U}}} & {\molecin{\skel{d}{U}}} & {\Spine \skel{d - 1}{U}} & {\Spine \skel{d}{U}}
            \arrow[from=1-1, to=1-2]
            \arrow[from=1-1, to=2-1]
            \arrow[from=1-2, to=2-2]
            \arrow[""{name=0, anchor=center, inner sep=0}, hook, from=1-3, to=1-4]
            \arrow[from=1-3, to=2-3]
            \arrow[from=1-4, to=2-4]
            \arrow[from=2-1, to=2-2]
            \arrow[hook, from=2-3, to=2-4]
            \arrow["\lrcorner"{anchor=center, pos=0.125, rotate=180}, draw=none, from=2-4, to=0]
        \end{tikzcd}
    \end{center}
    present the same homotopy pushout, so \( \spine{\skel{d}{U}} \) is a flagged equivalence.
    This concludes the induction and the proof.
\end{proof}

\begin{cor} \label{cor:spine_of_3_rdcpx_is_flagged_equivalence}
    Let \( U \) be a regular directed \( 3 \)\nbd complex.
    Then \( \spine{U} \) is a flagged equivalence.
\end{cor}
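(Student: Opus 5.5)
The plan is to deduce this directly from Corollary \ref{cor:spine_of_frame_acyclic_rdcpx_is_flagged_equivalence}. It then suffices to show that a regular directed \( 3 \)\nbd complex \( U \) has frame-acyclic molecules, that is, that for every pasting diagram \( f \colon V \to U \), the molecule \( V \) is frame-acyclic.

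First I would note that any such \( V \) has dimension \( \le 3 \). A pasting diagram into \( U \) is a local embedding, which preserves dimensions of elements, so \( \dim V \le \dim U \le 3 \). It therefore remains to check that every molecule of dimension \( \le 3 \) is frame-acyclic, the fact recalled in the introduction. Its source is the theory of \cite{hadzihasanovic2024combinatorics}, where it is shown that all molecules of dimension \( \le 3 \) are frame-acyclic (the result from Section 8 on frame-acyclicity in low dimensions). I would cite that result directly, giving the announced Theorem \ref{thm:dim_le_3_frame_acyclic}. If it has to be argued, the argument is as follows. For a submolecule \( W \submol V \) with \( r \eqdef \frdim W \), a cycle in \( \maxflow{r}{W} \) can only involve maximal elements of dimension \( > r \), and \( r \le 2 \). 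One then uses the known acyclicity of the maximal flow graphs at low dimensions: for \( r \le 1 \) this holds because flow graphs of \( 1 \)\nbd dimensional and \( 2 \)\nbd dimensional boundaries are acyclic, and for \( r = 2 \) the graph is acyclic by \cite[Proposition 4.3.8]{hadzihasanovic2024combinatorics}, since \( 2 \)\nbd layerings exist for \( 3 \)\nbd molecules.

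With frame-acyclicity of all molecules of dimension \( \le 3 \) in hand, \( U \) has frame-acyclic molecules. Corollary \ref{cor:spine_of_frame_acyclic_rdcpx_is_flagged_equivalence} then applies with \( n = 3 \), viewing \( U \) as a regular directed \( 3 \)\nbd complex, and shows that \( \spine{U} \) is a flagged equivalence. If one instead wants \( n = \omega \), note that \( \skel{3}{} \) truncation does not affect this, since all sections involved are \( 3 \)\nbd dimensional.

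The only real content, and hence the main obstacle, is the low-dimensional frame-acyclicity statement. I expect to handle it by citation rather than reproving it. The reduction from pasting diagrams to molecules of dimension \( \le 3 \) is routine.
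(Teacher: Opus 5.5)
Your proposal is correct and matches the paper's proof: combine Corollary \ref{cor:spine_of_frame_acyclic_rdcpx_is_flagged_equivalence} with the frame-acyclicity of all molecules of dimension \( \le 3 \), which the paper cites as \cite[Theorem 8.4.11]{hadzihasanovic2024combinatorics}. Your optional sketch of that low-dimensional result is too loose to stand on its own (for instance, the \( r \le 1 \) case and the existence of \( 2 \)\nbd layerings are asserted rather than shown), so citing the theorem, as you plan to, is the right choice.
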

\begin{proof}
    By Corollary \ref{cor:spine_of_frame_acyclic_rdcpx_is_flagged_equivalence} and \cite[Theorem 8.4.11]{hadzihasanovic2024combinatorics}.  
\end{proof}

\begin{comm}
    The results of this section stop being generally true when the frame-acyclic assumption is dropped: as in Comment \ref{comm:sd_u_not_contractible}, consider the non-frame-acyclic molecule \( U \) of \cite[Example 8.2.20]{hadzihasanovic2024combinatorics}, then \( \molecin{U} \) is not a polygraph.
    We claim that \( \spine{U} \) is not a flagged equivalence. 
    Indeed, let \( C \eqdef \colim_{x \in U} \molecin{\imel{U}{x}} \), where the colimit is taken in strict \( \omega \)\nbd categories. 
    Then, \( C \) is a gaunt polygraph and there is a canonical morphism \( u \colon \Spine U \to C \).
    Since \( \rs C \cong \molecin{U} \), the unit of the reflector \( \rs \) gives a canonical strict functor \( v \colon C \to \molecin{U} \). 
    If \( \spine{U} \) were an equivalence, then \( u \) would extend to a strict functor \( \hat{u} \colon \molecin{U} \to C \), and it is straightforward to check that \( \hat{u} \) and \( v \) are inverses of each other.
    Therefore, \( \molecin{U} \cong \rs C \cong C \) is a polygraph, a contradiction.
\end{comm}

\subsection{Applications}

\noindent We conclude by demonstrating the range of applications of Theorem \ref{thm:cellular_extension_of_frame_acyclic} by giving a large supply of directed complexes with frame-acyclic molecules, including regular \( 3 \)\nbd polygraphs (Corollary \ref{cor:3regular_polygraph_are_homotopy_polygraph}).
  
\begin{dfn} [Oriented Hasse diagram]
    Let \( U \) be an oriented graded poset.
    The \emph{oriented Hasse diagram of \( U \)} is the directed graph whose:
    \begin{itemize}
        \item set of vertices is the underlying set of \( U \), and
        \item for each pair of vertices \( x, y \), there is an edge \( x \to y \) if \( x \in \faces{}{-} y \) or \( y \in \faces{}{+} x \).
    \end{itemize}
\end{dfn}

\begin{dfn} [Acyclic molecule]
    Let \( U \) be a molecule.
    We say that \( U \) is \emph{acyclic} if its oriented Hasse diagram is acyclic.
    We write \( \atomcatac \) for the full subcategory of \( \atomcat \) on acyclic atoms.
\end{dfn}

\begin{dfn} [Directed complex with acyclic atoms] \label{dfn:acyclic_atoms}
    A \emph{directed complex with acyclic atoms} is a presheaf over the category \( \atomcatac \).
    We write \( \dcpxac \) for the category of directed complexes with acyclic atoms and their natural transformations.
\end{dfn}

\begin{rmk}
    The category \( \dcpxac \) can be identified with the full subcategory of \( \dcpx \) on those directed complexes \( X \) with the property that for all cells \( x \colon U \to X \), the atom \( U \) is acyclic. 
\end{rmk}

\begin{rmk}
    It is straightforward to check that the class of directed complexes with acyclic atoms is a good class of polygraphs in the sense of \cite[Definition 2.2.4]{henry2018nonunital}.
    It is, however, not algebraic in the sense of \cite[Definition 3.1.1]{henry2018regular}, since the input and output boundaries of an atom may both be acyclic without this atom being itself acyclic, for otherwise all molecules would be acyclic. 
\end{rmk}

\begin{lem} \label{lem:acyclic_dcpx_is_frame_acyclic}
    Let \( X \) be a directed complex with acyclic atoms.
    Then \( X \) has frame-acyclic molecules.
\end{lem}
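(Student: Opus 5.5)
Given a pasting diagram \( f \colon U \to X \), we must show that \( U \) is frame-acyclic. The plan is to reduce this to the known statement that acyclic molecules are frame-acyclic, recalled in the introduction (cf.\ \cite[Chapter 8]{hadzihasanovic2024combinatorics}). The difficulty is that \( U \) need not be acyclic: the directed complex \( X \) may have cycles, as in the graph of the introduction. What holds is that every atom \( \clset{x} \subseteq U \) is acyclic, since each \( \restr{f}{\clset{x}} \) is a cell of \( X \) whose shape is an acyclic atom. So we must prove directly that a molecule all of whose atoms are acyclic is frame-acyclic.

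Fix such a molecule \( U \), a submolecule \( V \submol U \), and \( r \eqdef \frdim U \). We must show that \( \maxflow{r}{V} \) is acyclic. Every submolecule of \( U \) again has acyclic atoms, so it suffices to treat a molecule \( V \) with acyclic atoms and an integer \( r \) with \( r \geq \frdim V \) or more generally \( r \) arbitrary. First check whether frame-acyclicity as defined only needs \( r = \frdim U \); since \( \frdim V \le \frdim U \) is not automatic, I would prove the stronger statement that \( \maxflow{k}{V} \) is acyclic for every \( k \geq \frdim V \), and handle \( r < \frdim V \) separately if needed.

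The key step is to translate an edge \( x \to y \) of \( \maxflow{k}{V} \) into directed paths in the oriented Hasse diagram. An edge means some \( z \in \faces{k}{+} x \cap \faces{k}{-} y \). Then I expect a path from \( z \) to \( y \) to exist inside \( \clset{y} \) and a path from \( x \) to \( z \) inside \( \clset{x} \). A cycle \( x_1 \to \dots \to x_m \to x_1 \) then gives, by concatenation, a cycle \( z_1 \to x_2 \to z_2 \to \cdots \to z_1 \) in the Hasse diagram of \( V \) that passes through the top elements. The path from an input-boundary element to the top, or from the top to an output-boundary element, is just the defining edge structure iterated down faces, using the atom \( \clset{y} \) and the \( k \)-boundary description.

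The main obstacle is that local acyclicity of atoms does not forbid global cycles in \( V \). So a cycle through several maximal elements must be contradicted by the \emph{molecule} structure of \( V \), not by atom acyclicity alone. Here I would proceed by induction on submolecules, as in Comment~\ref{comm:induction_submolecules}. If \( V \) splits as \( A \cp{\ell} B \) and \( \ell \geq k \), any cycle must stay within the layer structure: elements of \( B \) never flow back into \( A \) through \( k \)-faces. If \( \ell < k \), vertices of \( A \) and \( B \) meet only in dimension \( \le \ell < k \) and cannot be linked by edges through \( k \)-dimensional faces across the cut. Either way, a cycle is confined to \( A \) or to \( B \), and the inductive hypothesis applies. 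The base case is an atom, where the graph has a single vertex and no self-loop, because \( \faces{k}{+} x \cap \faces{k}{-} x = \bd{k}{+}\cap\bd{k}{-} \) of a round atom has dimension below \( k \). Here acyclicity of atoms is what excludes degenerate self-loops when \( k \) is small; pinning down this base case is where I expect the real work.
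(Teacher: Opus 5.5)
\textbf{The gap is a false premise, and correcting it is the missing idea: the shape \( U \) of a pasting diagram in \( X \) \emph{is} acyclic.} Cycles in \( X \) do not lift to shapes. In the graph of the introduction every pasting diagram has shape \( k\arr \), which is acyclic; this is what the introduction means by \( X \) having acyclic pasting diagrams without being acyclic. In general, the pasting of acyclic molecules is acyclic \cite[Lemma 8.3.26]{hadzihasanovic2024combinatorics}. Induction on submolecules, starting from the atoms \( \clset{x} \subseteq U \) (acyclic, as you rightly note, since they are shapes of cells of \( X \)), therefore shows that \( U \) and all its submolecules are acyclic. The paper then concludes with \cite[Propositions 8.3.6 and 8.3.11]{hadzihasanovic2024combinatorics}: acyclic molecules are frame-acyclic. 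Your third paragraph is essentially this mechanism. A \( k \)-dimensional witness \( z \in \faces{k}{+} x \cap \faces{k}{-} y \) is reached from \( x \) by a chain of output faces and leads to \( y \) by a chain of input faces, so a cycle in \( \maxflow{k}{V} \) becomes a cycle in the oriented Hasse diagram of \( V \). This works for every \( k \) at once, so your hesitation about \( \frdim V \) versus \( \frdim U \) disappears.

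The direct induction you substitute cannot be correct as sketched, because the hypothesis never enters it. The base case holds for every atom, acyclic or not: for \( k < \dim x \), roundness gives \( \bd{k}{+} x \cap \bd{k}{-} x = \bd{k - 1}{} x \), which has no \( k \)-dimensional element, so there is never a self-loop. The inductive step does not use acyclicity either. If both were valid, they would prove that every molecule is frame-acyclic, contradicting the \( 4 \)-molecule of \cite[Example 8.2.20]{hadzihasanovic2024combinatorics} (Comment \ref{comm:sd_u_not_contractible}).

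Concretely, your cut argument is fine for \( \ell = k \): a \( k \)-dimensional element of \( \bd{k}{+} A \) is an input face of nothing in \( A \), whereas a witness in \( \faces{k}{-} y \) with \( y \in A \) is an input face of some element of \( \clset{y} \). It is false for \( \ell > k \), even with acyclic atoms. Take \( 2 \)-globes \( \xi, \eta \) with \( \bd{0}{+} \xi = \bd{0}{-} \eta \). Then \( V \eqdef \xi \cp{0} \eta \) splits as \( (\bd{1}{-} \xi \cp{0} \eta) \cp{1} (\xi \cp{0} \bd{1}{+} \eta) \), and \( \xi \to \eta \) is an edge of \( \maxflow{0}{V} \) running from the \( B \) side back to the \( A \) side.

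You also cannot simply insist on splits with \( \ell \le k \). Such splits need not exist: the molecule of \cite[Example 8.2.20]{hadzihasanovic2024combinatorics} has none along its \( 2 \)-boundary. Guaranteeing them is precisely what frame-acyclicity is used for in Lemma \ref{lem:frame_acyclic_properties}. The analysis across a cut with \( \ell > k \) is exactly where acyclicity of the atoms must be used, and that is the content of the pasting lemma for acyclic molecules.
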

\begin{proof}
    Let \( f \colon U \to X \) be a pasting diagram in \( X \).
    By induction on the submolecules of \( U \) and \cite[Lemma 8.3.26]{hadzihasanovic2024combinatorics}, \( U \) is an acyclic molecule.
    By \cite[Proposition 8.3.6 and Proposition 8.3.11]{hadzihasanovic2024combinatorics}, \( U \) is frame-acyclic.
\end{proof}

\begin{cor} \label{cor:dcpx_with_acyclic_atom_is_homotopy_polygraph}
    Let \( X \) be a directed complex with acyclic atoms.
    Then \( \molecin{X} \) is a polygraph and a homotopy polygraph.
\end{cor}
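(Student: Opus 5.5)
The plan is to reduce the statement to the main theorem already established. Let \( X \) be a directed complex with acyclic atoms. By the identification of \( \dcpxac \) with a full subcategory of \( \dcpx \), I regard \( X \) as an ordinary directed complex all of whose cells have acyclic atoms as shapes. Lemma \ref{lem:acyclic_dcpx_is_frame_acyclic} then shows that \( X \) has frame-acyclic molecules: every pasting diagram \( f \colon U \to X \) has \( U \) acyclic, hence frame-acyclic.

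Next I invoke Theorem \ref{thm:frame_acyclic_is_homotopy_polygraph}, which gives that \( \molecin{X} \) is a polygraph and a homotopy polygraph. That theorem is stated for directed \( n \)\nbd complexes with \( n \in \nat \cup \set{\omega} \), which in particular allows \( n = \omega \), so no dimension restriction on \( X \) is needed.

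The only point that needs care is compatibility across dimensions. If one prefers a finite \( n \), apply the theorem to each skeleton \( \skel{n}{X} \), which again has acyclic atoms. Theorem \ref{thm:cellular_extension_of_frame_acyclic} shows that each cellular extension \( \molecin{\skel{d-1}{X}} \to \molecin{\skel{d}{X}} \) is a homotopy pushout in \( \nCatflag{n} \) for every \( n \ge d \). Since \( \molecin{X} \) is the union of its skeleta, the generating cells assemble into a polygraph structure on \( \molecin{X} \).

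I expect no substantive obstacle, since the statement is a direct corollary. The only thing to check is that the notion of homotopy polygraph in the case \( n = \omega \) is exactly what Theorem \ref{thm:frame_acyclic_is_homotopy_polygraph} provides, and the fixed \( n \in \nat \cup \set{\omega} \) of Section \ref{sec:pasting} covers this.
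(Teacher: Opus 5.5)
Your proof is correct and matches the paper's: Lemma \ref{lem:acyclic_dcpx_is_frame_acyclic} shows that \( X \) has frame-acyclic molecules, and Theorem \ref{thm:frame_acyclic_is_homotopy_polygraph} with \( n = \omega \) concludes. You can drop the skeleton-by-skeleton paragraph, since it is unnecessary, and homotopy pushouts in each \( \nCatflag{n} \) for finite \( n \) would not on their own give the statement in \( \nCatflag{\omega} \).
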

\begin{proof}
    By Lemma \ref{lem:acyclic_dcpx_is_frame_acyclic} and Theorem \ref{thm:frame_acyclic_is_homotopy_polygraph}.
\end{proof}

\begin{prop} \label{prop:acyclic_into_flagged_preserves_colimits}
    The functor \( \strweak\molecin{-} \colon \dcpxac \to \nCatflag{\omega} \) sends colimits to homotopy colimits.
\end{prop}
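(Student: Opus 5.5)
We need to show that \( \strweak\molecin{-} \colon \dcpxac \to \nCatflag{\omega} \) sends colimits to homotopy colimits. Since \( \dcpxac \) is the presheaf category \( \Psh(\atomcatac) \), every object is canonically the colimit of its cells. The plan is to compare two functors \( \dcpxac \to \nCatflag{\omega} \): the functor \( \strweak\molecin{-} \), and the left Kan extension \( \mathcal{L} \) of \( \strweak\molecin{-} \) restricted to \( \atomcatac \), taken along the Yoneda embedding in the \( \infty \)\nbd categorical sense. The latter is the unique colimit-preserving functor \( \sPsh(\atomcatac) \to \nCatflag{\omega} \) agreeing with \( \strweak\molecin{-} \) on atoms, precomposed with the inclusion of \( 0 \)\nbd truncated objects \( \dcpxac \incl \sPsh(\atomcatac) \). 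There is a canonical comparison \( \mathcal{L} \to \strweak\molecin{-} \). Once it is shown to be an equivalence, the statement reduces to the fact that the inclusion \( \Psh(\atomcatac) \incl \sPsh(\atomcatac) \) sends 1\nbd colimits to colimits. That fact is false in general (for coequalisers, say), so the argument will have to be arranged differently.

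The more robust route is to work directly with the generating colimits. Every colimit in the presheaf category can be built from coproducts and pushouts, and it is convenient to use the skeletal presentation. Coproducts are preserved, since \( \molecin{-} \) of a coproduct is the coproduct of strict \( \omega \)\nbd categories. The nerves then give a disjoint union of presheaves, which is a homotopy coproduct, and \( \locflag \) preserves it. It then suffices to show that \( \strweak\molecin{-} \) is equivalent to \( \mathcal{L} \), restricted to honest presheaves. I will prove this by induction on skeleta, using Definition \ref{dfn:skeleton_dcpx}. The key input at each step is Theorem \ref{thm:cellular_extension_of_frame_acyclic}, applicable by Lemma \ref{lem:acyclic_dcpx_is_frame_acyclic}: for every \( X \in \dcpxac \), the cellular extension \( \molecin{\skel{d-1}{X}} \to \molecin{\skel{d}{X}} \) is a homotopy pushout in \( \nCatflag{\omega} \). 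The functor \( \mathcal{L} \) also sends the skeletal pushout, which is a pushout along a monomorphism and hence also a homotopy pushout in presheaves of spaces, to a pushout. Inducting on \( d \), and using that \( \molecin{-} \) preserves the transfinite union \( X = \bigcup_d \skel{d}{X} \) (filtered colimits of monomorphisms are homotopy colimits), the comparison \( \mathcal{L}X \to \strweak\molecin{X} \) is an equivalence for every \( X \). Applying the same reasoning to \( \bd{}{}U \) for acyclic atoms \( U \) takes care of the attaching maps.

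It remains to pass from ``\( \strweak\molecin{-} \simeq \mathcal{L}\circ(\text{truncated inclusion}) \)'' to preservation of arbitrary colimits. Here I would use the fact that \( \mathcal{L} \) factors through the nerve of the category of elements: the value \( \mathcal{L}X \) equals the colimit over \( \cell X \) of \( \strweak\molecin{U} \). That colimit is indexed by a category of elements, whose nerve computes the homotopy colimit correctly. In particular, \( \mathcal{L}X \) coincides with the homotopy colimit of the diagram \( \cell X \to \nCatflag{\omega} \). Now take a colimit \( X = \colim_i X_i \) in \( \dcpxac \). Then \( \cell X \) is the colimit of the \( \cell{X_i} \) in categories, and since these are discrete fibrations over \( \atomcatac \), the colimit is also a homotopy colimit of categories over \( \atomcatac \). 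Combined with the Fubini formula for colimits, this gives \( \strweak\molecin{X} \simeq \colim_i \strweak\molecin{X_i} \).

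The main obstacle is exactly this last step: justifying that the category-of-elements construction turns 1\nbd colimits of presheaves into homotopy colimits over \( \atomcatac \). I expect this to hold because the atoms form a direct category, with maps being embeddings and no nontrivial automorphisms, so presheaves on it are Reedy cofibrant. One can then argue via the skeletal pushouts, which are along monomorphisms, reducing to Remark \ref{rmk:homotopy_pushout}. If this fails, the fallback is to treat pushouts and coequalisers separately, reducing both to skeletal pushouts along monomorphisms of directed complexes.
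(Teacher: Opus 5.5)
\textbf{The gap is in your third paragraph, and it cannot be closed.} You claim that an ordinary colimit of categories of elements (discrete fibrations over \( \atomcatac \)) is a homotopy colimit. That claim is false.

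First, your Fubini argument uses nothing about \( \strweak\molecin{-} \) beyond \( \mathcal{L}X \simeq \colim_{\cell X} \strweak\molecin{U} \). Run verbatim with the Yoneda embedding \( \atomcatac \to \sPsh(\atomcatac) \) in its place, it would prove that \( \Psh(\atomcatac) \incl \sPsh(\atomcatac) \) preserves all colimits. You correctly noted in your first paragraph that this is false.

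Second, Reedy cofibrancy of the individual \( X_i \) is beside the point; in the injective structure every object is cofibrant anyway. An ordinary colimit is a homotopy colimit only when the diagram itself is suitably cofibrant. The skeletal pushouts along monomorphisms are, but general diagrams are not.

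Third, the statement you are aiming at is actually false for arbitrary colimits. The coequaliser in \( \dcpxac \) of \( \idd{\pt}, \idd{\pt} \colon \pt \rightrightarrows \pt \) is \( \pt \), and \( \strweak\molecin{\pt} \) is terminal. But the homotopy coequaliser of the image diagram is \( \colim_{S^1} \pt \). Maps from it to a flagged \( (\infty,\omega) \)\nbd category \( C \) are free loops in the space of objects of \( C \). So the canonical map to \( \pt \) is not an equivalence: take \( C \) to be the circle regarded as an \( (\infty,\omega) \)\nbd groupoid. Correspondingly, \( \cell \) sends this diagram to two identities of the terminal category, whose colimit is terminal but whose homotopy colimit is \( B\mathbb{Z} \).

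For the same reason your fallback fails. Decomposing colimits into coproducts and pushouts does not help either: a pushout not along a monomorphism, such as the fold map \( \pt \sqcup \pt \to \pt \) pushed out along itself, is not sent to a homotopy pushout.

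What survives is exactly your second paragraph. There you show \( \strweak\molecin{X} \simeq \mathcal{L}X \) for discrete presheaves \( X \) by skeletal induction, using the homotopy pushouts of Theorem \ref{thm:cellular_extension_of_frame_acyclic}. That is, \( \strweak\molecin{-} \) is the homotopy left Kan extension of its restriction along \( \atomcatac \incl \dcpxac \). This is precisely, and only, what the paper's proof establishes: it deduces it from Corollary \ref{cor:dcpx_with_acyclic_atom_is_homotopy_polygraph} and stops there.

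The proposition therefore has to be read in that sense. \( \strweak\molecin{-} \) sends to homotopy colimits those colimits that \( \Psh(\atomcatac) \incl \sPsh(\atomcatac) \) preserves, such as coproducts, pushouts along monomorphisms and transfinite unions, not arbitrary ones. You should drop the third paragraph and state your conclusion in this form rather than try to repair it.
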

\begin{proof}
    By Corollary \ref{cor:dcpx_with_acyclic_atom_is_homotopy_polygraph}, \( \strweak\molecin{-} \) is the left Kan extension of its restriction along the Yoneda embedding \( \atomcatac \incl \dcpxac \).
    This concludes the proof. 
\end{proof}

\noindent Following \cite[Section 7.2, Corollary 8.3.34]{hadzihasanovic2024combinatorics}, we write \( - \gray - \) for the Gray product of regular directed complexes, which restricts to a monoidal structure \( (\atomcatac, \gray, \pt) \) on the category of acyclic atoms.
By Day convolution \cite{day1970closed}, we may extend the Gray product to a monoidal structure \( (\dcpxac, \gray, \pt) \) on the category of directed complexes with acyclic atoms.
We recall that the \( \infty \)\nbd category \( \nCatflag{\omega} \) also possesses a monoidal structure \( (\nCatflag{\omega}, \gray, \pt) \) given by the Gray product, see \cite{campion2023gray} for a detailed treatment. 

\begin{thm} \label{thm:gray_monoidal_acyclic}
    The functor \( \strweak\molecin{-} \colon \dcpxac \to \nCatflag{\omega} \) lifts to a (strong) monoidal functor
    \begin{equation*}
        \strweak\molecin{-} \colon (\dcpxac, \gray, \pt) \to (\nCatflag{\omega}, \gray, \pt).
    \end{equation*}
\end{thm}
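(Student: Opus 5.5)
Both functors involved are cocontinuous in each variable, so I would reduce the statement to representables, that is, to acyclic atoms. Day convolution makes \( - \gray - \) on \( \dcpxac \) separately cocontinuous: it is the left Kan extension of \( (\atomcatac, \gray, \pt) \) along the Yoneda embedding. By Proposition \ref{prop:acyclic_into_flagged_preserves_colimits}, \( \strweak\molecin{-} \) sends colimits to homotopy colimits; in the \( \infty \)\nbd categorical sense it is the left Kan extension of its restriction to \( \atomcatac \). The Gray product on \( \nCatflag{\omega} \) of \cite{campion2023gray} preserves colimits separately in each variable. By the universal property of Day convolution (its \( \infty \)\nbd categorical version: a monoidal functor out of presheaves with Day convolution into a presentably monoidal \( \infty \)\nbd category is determined by a monoidal functor on the generators), it therefore suffices to lift the restriction \( \atomcatac \to \nCatflag{\omega} \), \( U \mapsto \strweak\molecin{U} \), to a strong monoidal functor. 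The extension is then automatically strong monoidal, because both tensor products commute with colimits in each variable.

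For acyclic atoms \( U, V \), the product \( U \gray V \) is again an acyclic regular directed complex; indeed, by \cite[Corollary 8.3.34]{hadzihasanovic2024combinatorics} it is an acyclic atom. I need a natural equivalence \( \strweak\molecin{U \gray V} \simeq \strweak\molecin{U} \gray \strweak\molecin{V} \), coherent with associators and unitors. I would first build a comparison in the strict world. The strict Gray product of polygraphs \( \molecin{U} \gray \molecin{V} \) is the polygraph generated by pairs of generators, which coincides with \( \molecin{U \gray V} \) via \cite[Section 7.2]{hadzihasanovic2024combinatorics}, since both are free on the same generating cells with the same boundaries. This gives strict monoidality of \( \molecin{-} \) on \( \atomcatac \), so the coherence data come for free from the strict level. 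Next, one needs the lax comparison \( \strweak C \gray \strweak D \to \strweak(C \gray D) \) to be an equivalence when \( C, D \) are of the form \( \molecin{U}, \molecin{V} \) with \( U, V \) acyclic atoms.

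This last step is the main obstacle. My plan is to use the fact, from \cite{campion2023gray}, that the homotopical Gray product agrees with the strict one on Gray products of globes (or of thetas), together with cocontinuity. I would write \( \molecin{U} \) as a homotopy colimit of globes; since \( U \) is an atom, Corollary \ref{cor:dcpx_with_acyclic_atom_is_homotopy_polygraph} applied to \( U \) exhibits \( \molecin{U} \) as an iterated homotopy pushout of boundary inclusions of atoms. Inducting on dimension, each atom is reached from its boundary, which is a molecule and hence equivalent to a colimit of atoms by Corollary \ref{cor:spine_of_frame_acyclic_rdcpx_is_flagged_equivalence}. It then suffices to treat the generating cell inclusion \( \globe{k} \) glued along its boundary. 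For this I would express \( \molecin{U} \) as the colimit, in \( \nCatflag{\omega} \), of the cell inclusions \( \globe{\dim x} \to \molecin{U} \) over \( x \in U \) together with the big cells.

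On the other side, \( U \gray V \) is an acyclic regular directed complex whose cells are the products \( x \gray y \). By Corollary \ref{cor:dcpx_with_acyclic_atom_is_homotopy_polygraph} and Proposition \ref{prop:acyclic_into_flagged_preserves_colimits}, \( \strweak\molecin{U \gray V} \) is the colimit of \( \strweak\molecin{\clset{x} \gray \clset{y}} \). Matching these two colimit decompositions cell by cell reduces the claim to the case where the comparison for \( \globe{p} \gray \globe{q} \) is an equivalence. That case is Campion's theorem, once one identifies the molecule \( \globe{p} \gray \globe{q} \) with the Gray product of globes considered there, as a strict polygraph presented by a torsion-free complex.
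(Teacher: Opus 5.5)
\textbf{Verdict: there is a genuine gap, in the step you yourself flag as the main obstacle.} Your first two steps agree with the paper. You reduce to acyclic atoms via Proposition \ref{prop:acyclic_into_flagged_preserves_colimits} and the universal property of Day convolution. You then use that \( \molecin{-} \) is strictly monoidal on \( \atomcatac \). The identification \( \molecin{U} \gray \molecin{V} \cong \molecin{U \gray V} \) really rests on \cite[Theorem 11.2.18, Proposition 11.2.36]{hadzihasanovic2024combinatorics}; comparing generating cells is not enough, because the boundaries of \( x \gray y \) in the strict Gray product must actually be computed.

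\emph{The right-hand decomposition is vacuous.} Since \( U \) and \( V \) are atoms, the poset of pairs \( (x, y) \) has a greatest element. So the colimit of the \( \strweak\molecin{\clset{x} \gray \clset{y}} \) is just \( \strweak\molecin{U \gray V} \) again, and nothing has been reduced to globes.

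\emph{The left-hand decomposition is not a diagram of globes.} \( \strweak\molecin{U} \) is not the colimit of a diagram of globes indexed by the cells of \( U \). It is obtained from \( \strweak\molecin{\bd{}{} U} \) by attaching \( \globe{k} \), with \( k = \dim U \), along a map \( \bd{}{}\globe{k} \to \molecin{\bd{}{} U} \). That map picks out the big cells of \( \bd{k-1}{-} U \) and \( \bd{k-1}{+} U \), and is not induced by a morphism of directed complexes.

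\emph{What is actually missing.} You need that applying \( - \gray \molecin{V} \) to this attachment gives a homotopy pushout, i.e.\ that \( \molecin{\bd{}{} U \gray V} \cup_{\molecin{\bd{}{} \globe{k} \gray V}} \molecin{\globe{k} \gray V} \to \molecin{U \gray V} \) is a flagged equivalence. Proposition \ref{prop:acyclic_into_flagged_preserves_colimits} does not give this, because \( \molecin{\bd{}{}\globe{k} \gray V} \to \molecin{\bd{}{} U \gray V} \) is not of the form \( \molecin{f} \). This statement is the homotopical correctness of the strict Gray product on such objects, which is the real content of the theorem. So ``matching cell by cell'' assumes what is to be shown.

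\emph{How the paper closes it.} By \cite[Theorem 11.2.18, Proposition 11.2.36]{hadzihasanovic2024combinatorics}, \( \molecin{U} \) is free on a strongly loop-free Steiner complex for every acyclic atom \( U \). Then \cite[Example 3.2 and Corollary 3.5]{campion2023gray} gives directly that \( \strweak \) is monoidal on that whole class, not only on globes.

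\emph{How your route could be repaired.} You would have to prove the homotopy pushout above. One way: both \( \molecin{\globe{k} \gray V} \) over \( \molecin{\bd{}{}\globe{k} \gray V} \) and \( \molecin{U \gray V} \) over \( \molecin{\bd{}{} U \gray V} \) are relative cell complexes, with one cell for each \( y \in V \) (the product of the top cell with \( y \)). Each attachment is a homotopy pushout by Proposition \ref{prop:acyclic_into_flagged_preserves_colimits}. An atom attachment is a globe attachment, since the big cell is the only active non-degenerate section of an atom. The functor \( \bigcell{U} \gray \idd{\molecin{V}} \) carries cells and attaching maps of the first complex to those of the second. The pasting lemma for homotopy pushouts then concludes. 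Note also that the induction must treat the pairs \( (\globe{k}, V) \) before \( (U, V) \). Decomposing \( U \) produces \( \globe{\dim U} \gray V \), which is not of smaller dimension.
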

\begin{proof}
    By Proposition \ref{prop:acyclic_into_flagged_preserves_colimits} and the universal property of the Day convolution \cite{kelly1986day}, it is enough to show that the restriction of \( \strweak\molecin{-} \) along the Yoneda embedding \( \atomcatac \incl \dcpxac \) is monoidal.
    By \cite[Theorem 11.2.18, Proposition 11.2.36]{hadzihasanovic2024combinatorics}, the functor \( \molecin{-} \colon \atomcatac \to \omegaCat \) essentially lands in strict \( \omega \)\nbd categories free on a strongly loop-free Steiner complex \cite{steiner2004omega}, and is monoidal for the Gray tensor product.
    Using \cite[Example 3.2 and Corollary 3.5]{campion2023gray}, we conclude that \( \strweak\molecin{-} \colon \atomcatac \to \nCatflag{\omega} \) is monoidal.
\end{proof}

\begin{comm}
    A similar result holds of the join of directed complexes with acyclic atoms.
\end{comm}

\noindent Next, we give an important class of directed complexes with acyclic atoms: the semi-simplicial sets. 
We write \( - \join - \) for the join of regular directed complexes, see \cite[Section 7.4]{hadzihasanovic2024combinatorics} for more details.

\begin{dfn} [Oriented simplex]
    Let \( k \in \nat \).
    The \emph{oriented \( k \)\nbd simplex} is the atom
    \begin{equation*}
        \ospx k \eqdef \underbrace{\pt \join \ldots \join \pt}_{(k + 1) \text{ times}}.
    \end{equation*}
\end{dfn}

\begin{exm}
    Let \( k \in \nat \).
    Then \( \molecin{\ospx{k}} \) is the \( k \)\nbd th oriental \cite{street1987algebra}.
\end{exm}

\begin{dfn} [\( \ospx{} \)\nbd complex]
    A \emph{\( \ospx{} \)\nbd complex} is a directed complex \( X \) with the property that for all cells \( x \colon U \to X \), the atom \( U \) is an oriented simplex.
\end{dfn}

\begin{rmk} \label{rmk:semi_simplicial_cubical_site}
    The result of \cite[Proposition 9.2.14]{hadzihasanovic2024combinatorics} implies that the full subcategory of \( \atomcat \) on oriented simplices is isomorphic to the semi-simplex category (finite linear orders and order-preserving injections).
    By generalities on left Kan extensions, the category of semi-simplicial sets\footnote{Also called \( \Delta \)\nbd complexes.} coincides with the full subcategory of \( \dcpx \) on \( \ospx{} \)\nbd complexes.
\end{rmk}

\begin{thm} \label{thm:simplicial_homotopy_polygraph}
    Let \( X \) be a \( \ospx{} \)\nbd complex.
    Then \( \molecin{X} \) is a polygraph and a homotopy polygraph.
\end{thm}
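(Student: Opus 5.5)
The plan is to reduce the statement to Corollary \ref{cor:dcpx_with_acyclic_atom_is_homotopy_polygraph}. That corollary already gives that \( \molecin{X} \) is a polygraph and a homotopy polygraph whenever \( X \) is a directed complex with acyclic atoms. So it suffices to show that every \( \ospx{} \)\nbd complex is a directed complex with acyclic atoms. Concretely, we need that for every cell \( x \colon U \to X \), the atom \( U \), which by definition is an oriented simplex \( \ospx{k} \) for some \( k \in \nat \), is acyclic in the sense that its oriented Hasse diagram has no directed cycles.

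The key step is therefore to check that each oriented simplex \( \ospx{k} = \pt \join \ldots \join \pt \) is an acyclic atom. I would argue this by induction on \( k \) using the join. The case \( \ospx{0} = \pt \) is trivial, since its Hasse diagram has a single vertex and no edges. For the inductive step, I would invoke the result from \cite{hadzihasanovic2024combinatorics} (in the section on acyclicity, around Section 8.3) that the join of acyclic regular directed complexes is acyclic; this is the join analogue of the fact used for Gray products in Theorem \ref{thm:gray_monoidal_acyclic}, where acyclic atoms are closed under \( \gray \).

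If that closure result is not directly available, I would fall back on a hands-on check. The elements of \( \ospx{k} \) correspond to nonempty subsets of \( \set{0 < \ldots < k} \). The faces of a subset \( \set{v_0 < \ldots < v_m} \) are obtained by deleting some \( v_j \), with the sign given by the parity of \( j \). I would then exhibit a strictly increasing function on the vertices of the oriented Hasse diagram, for instance a suitable lexicographic or alternating-sum functional on subsets. This is Street's classical observation that orientals are loop-free; in fact \( \molecin{\ospx{k}} \) is free on a strongly loop-free Steiner complex, and \cite[Proposition 11.2.36]{hadzihasanovic2024combinatorics} relates this to acyclicity.

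Once each \( \ospx{k} \) is known to be acyclic, the rest is bookkeeping. By Remark \ref{rmk:semi_simplicial_cubical_site}, \( \ospx{} \)\nbd complexes form a full subcategory of \( \dcpx \) in which every cell has an oriented simplex as its shape. Hence \( X \) lies in the full subcategory identified with \( \dcpxac \). Applying Corollary \ref{cor:dcpx_with_acyclic_atom_is_homotopy_polygraph}, or equivalently Lemma \ref{lem:acyclic_dcpx_is_frame_acyclic} followed by Theorem \ref{thm:frame_acyclic_is_homotopy_polygraph}, then concludes the proof.

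The only step with real content is the acyclicity of oriented simplices. I expect it to be a matter of locating the right citation rather than a genuine obstacle.
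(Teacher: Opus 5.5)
Your proposal is correct and matches the paper's proof: the paper also reduces to Corollary \ref{cor:dcpx_with_acyclic_atom_is_homotopy_polygraph}, with acyclicity of oriented simplices as the only input. That fact is cited directly as \cite[Lemma 9.2.3]{hadzihasanovic2024combinatorics}, so neither your join-closure induction nor your hands-on check of the oriented Hasse diagram is needed.
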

\begin{proof}
    By \cite[Lemma 9.2.3]{hadzihasanovic2024combinatorics}, oriented simplices are acyclic.
    We conclude by Corollary \ref{cor:dcpx_with_acyclic_atom_is_homotopy_polygraph}.
\end{proof}

\noindent Finally, we conclude the article with a few corollaries in low dimension, where frame-acyclicity conditions are automatically true.
The following is well-known, see also \cite[\href{https://kerodon.net/tag/00J6}{Tag 00J6}]{kerodon}.

\begin{cor} \label{cor:free_categories_on_graph}
    The free category on a (directed) graph is also its free \( \infty \)\nbd category.
\end{cor}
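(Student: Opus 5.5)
We identify a directed graph \( G \) with a directed \( 1 \)\nbd complex. The atoms of dimension \( \le 1 \) are \( \pt \) and \( \arr \), so presheaves on them are exactly graphs, and \( \molecin{G} \) is the free category on \( G \). The plan is to apply Theorem \ref{thm:frame_acyclic_is_homotopy_polygraph} with \( n = 1 \), after checking that \( G \) has frame-acyclic molecules.

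The key step is that every \( 1 \)\nbd molecule is frame-acyclic. A pasting diagram \( f \colon U \to G \) has \( U \) a molecule of dimension \( \le 1 \), so \( U \) is either \( \pt \) or \( k\arr \) for some \( k \geq 1 \).
\begin{itemize}
    \item For \( U = \pt \) or \( U = \arr \), \( U \) is an atom. Its frame dimension is \( -1 \), and the maximal flow graphs of its submolecules have at most one vertex and no edges, hence are acyclic.
\end{itemize}
For \( U = k\arr \) with \( k \geq 2 \), distinct maximal elements (the arrows) meet in at most a point, so \( \frdim U = 0 \). Every submolecule \( V \) is of the form \( j\arr \) or \( \pt \). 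In \( \maxflow{0}{V} \), the vertices are the arrows of \( V \), and there is an edge \( x \to y \) exactly when the target of \( x \) is the source of \( y \), i.e.\ \( y \) is the next arrow after \( x \). So \( \maxflow{0}{V} \) is a linear chain, hence acyclic. Alternatively, one can invoke the paper's remark that all molecules of dimension \( \le 3 \) are frame-acyclic (\cite[Theorem 8.4.11]{hadzihasanovic2024combinatorics}), or observe that \( \pt \) and \( \arr \) are oriented simplices, so that Theorem \ref{thm:simplicial_homotopy_polygraph} applies directly.

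Theorem \ref{thm:frame_acyclic_is_homotopy_polygraph} then shows that \( \molecin{G} \) is a homotopy polygraph in \( \nCatflag{1} \). Concretely, the cellular extension
\begin{equation*}
    \coprod_{e \in \gr{1}{G}} \molecin{\bd{}{}\arr} \to \coprod_{e \in \gr{1}{G}} \molecin{\arr}, \qquad \molecin{\skel{0}{G}} \to \molecin{G}
\end{equation*}
is a homotopy pushout. Since \( \molecin{\skel{0}{G}} \) is the discrete set of vertices and \( \molecin{\arr} = \globe{1} \), this says that \( \strweak\molecin{G} \) is the homotopy pushout attaching one free arrow for each edge of \( G \) to the discrete space of vertices.

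That homotopy pushout is, by definition, the free (flagged) \( (\infty,1) \)\nbd category on \( G \). Passing to the localisation \( \nCatinf{1} \) preserves homotopy pushouts, so \( \molecin{G} \) is also the free \( \infty \)\nbd category on \( G \).

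The only real point is the frame-acyclicity check, and it is elementary. The main care needed is bookkeeping: matching the phrase ``free \( \infty \)\nbd category on a graph'' with this cell-attachment description. If one prefers to avoid that, a minimal workaround is to cite Corollary \ref{cor:spine_of_frame_acyclic_rdcpx_is_flagged_equivalence}, whose \( n = 1 \) case is the classical spine statement.
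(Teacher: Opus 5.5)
Your proposal is correct and is essentially the paper's argument. The paper observes that a graph is a directed \( 1 \)\nbd complex whose atoms \( \pt \) and \( \arr \) are oriented simplices, and applies Theorem~\ref{thm:simplicial_homotopy_polygraph}, which reaches Theorem~\ref{thm:frame_acyclic_is_homotopy_polygraph} via acyclic atoms. You instead check frame-acyclicity of the molecules \( k\arr \) directly from the linear flow graphs \( \maxflow{0}{j\arr} \), a more elementary verification of the same hypothesis, and you also list the paper's route as an alternative.

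One small caveat concerns only your closing aside. Corollary~\ref{cor:spine_of_frame_acyclic_rdcpx_is_flagged_equivalence} is stated for regular directed complexes, so it covers only graphs without loops. Your main argument via Theorem~\ref{thm:frame_acyclic_is_homotopy_polygraph} handles arbitrary graphs.
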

\begin{proof}
    A directed graph is equivalently a directed \( 1 \)\nbd complex, which is also a \( \ospx{} \)\nbd complex, since the only atoms of dimension \( \le 1 \) are directed simplices.
    Since the free category on a graph \( G \) is given by \( \molecin{G} \), we conclude by Theorem \ref{thm:simplicial_homotopy_polygraph}.
\end{proof}

\begin{thm} \label{thm:dim_le_3_frame_acyclic}
    Let \( X \) be a directed \( 3 \)\nbd complex.
    Then \( \molecin{X} \) is a polygraph and a homotopy polygraph.
\end{thm}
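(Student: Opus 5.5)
The plan is to reduce the statement to Theorem \ref{thm:frame_acyclic_is_homotopy_polygraph} by checking that every directed \( 3 \)\nbd complex \( X \) has frame-acyclic molecules. Take a pasting diagram \( f \colon U \to X \). Since \( f \) is a morphism of presheaves on atoms and \( X \) has no cells of dimension \( > 3 \), the molecule \( U \) has dimension \( \le 3 \). Indeed, if \( U \) had an element \( x \) with \( \dim x > 3 \), then \( f \after \mapel{x} \) would be a cell of \( X \) of dimension \( > 3 \), which is impossible because \( \skel{3}{X} = X \).

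It then suffices to know that every molecule of dimension \( \le 3 \) is frame-acyclic. This is the content of \cite[Theorem 8.4.11]{hadzihasanovic2024combinatorics}, which the paper already invokes for the same purpose in Corollary \ref{cor:spine_of_3_rdcpx_is_flagged_equivalence}.

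One point needs care: whether that reference is stated for regular directed \( 3 \)\nbd complexes or for molecules directly. If it is stated for regular directed complexes, I apply it to \( U \) itself. A molecule is a regular directed complex, so \( U \) is a regular directed \( 3 \)\nbd complex, and frame-acyclicity of \( U \) is a property of \( U \) alone, independent of the map \( f \). Frame-acyclicity also quantifies over all submolecules \( V \submol U \). This is harmless, since each such \( V \) is again a molecule of dimension \( \le 3 \).

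With \( X \) shown to have frame-acyclic molecules, Theorem \ref{thm:frame_acyclic_is_homotopy_polygraph} with \( n = 3 \) gives that \( \molecin{X} \) is a polygraph and a homotopy polygraph.

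The only step with real content is the low-dimensional frame-acyclicity input, and it is entirely imported from \cite{hadzihasanovic2024combinatorics}. The main risk is a mismatch between the hypotheses of the cited result and the notion of frame-acyclicity in Definition \ref{dfn:frame_acyclic}, which is stated with \( r = \frdim U \) for all submolecules. If such a mismatch occurs, I would fall back on the observation that for \( \dim U \le 3 \) the relevant maximal flow graphs live in dimension \( \le 2 \). There acyclicity follows from the standard fact that \( 2 \)\nbd molecules, and the flow graphs of \( 3 \)\nbd molecules, are acyclic, as in \cite{hadzihasanovic2024combinatorics}.
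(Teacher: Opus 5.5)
Your proposal is correct and follows the paper's proof exactly: every pasting diagram $f \colon U \to X$ has $\dim U \le 3$, so $U$ is frame-acyclic by \cite[Theorem 8.4.11]{hadzihasanovic2024combinatorics}, and Theorem~\ref{thm:frame_acyclic_is_homotopy_polygraph} then concludes. Your justification of $\dim U \le 3$ via the cells $f \after \mapel{x}$ is a useful detail that the paper leaves implicit, and the fallback in your last paragraph is not needed.
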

\begin{proof}
    Let \( f \colon U \to X \) be a pasting diagram.
    Then \( \dim U \le 3 \) and by \cite[Theorem 8.4.11]{hadzihasanovic2024combinatorics}, \( U \) is frame-acyclic.
    Thus \( X \) has frame-acyclic molecules and we conclude by Theorem \ref{thm:frame_acyclic_is_homotopy_polygraph}.
\end{proof}

\begin{cor} \label{cor:3regular_polygraph_are_homotopy_polygraph}
    Let \( C \) be a regular \( 3 \)\nbd polygraph.
    Then \( C \) is a homotopy polygraph.
\end{cor}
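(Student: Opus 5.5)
The plan is to transport the statement along the comparison of Theorem \ref{thm:from_polyplexes_to_molecules} to directed \( 3 \)\nbd complexes, and then apply Theorem \ref{thm:dim_le_3_frame_acyclic}.

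Let \( C \) be a regular \( 3 \)\nbd polygraph, viewed as an object of \( \nPol 3 \), and set \( X \eqdef \posplex(C) \in \dcpxn{3} \). By Theorem \ref{thm:from_polyplexes_to_molecules} with \( n = 3 \), the lower square commutes up to natural isomorphism, so
\begin{equation*}
    \rs C \cong \molecin{X},
\end{equation*}
and \( \rs \colon \nCat 3 \to \snCat 3 \) is an equivalence. I expect it is in fact the identity on underlying strict \( 3 \)\nbd categories: \cite[Theorem 2.66]{chanavat2025stricter} says the localisation at fundamental spines is trivial in dimension \( \le 3 \), so every strict \( 3 \)\nbd category is stricter and the unit \( C \to \rs C \) is an isomorphism. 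Hence \( C \cong \molecin{X} \) as strict \( 3 \)\nbd categories.

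Next I check that this isomorphism respects the polygraph structure. Being a polygraph is a mere property, and the generating \( d \)\nbd arrows of \( C \) are its \( d \)\nbd dimensional plexes. Since \( \posplex \) sends plexes to atoms of the same dimension (Corollary \ref{cor:oriented_poset_is_functor_strict_functor}), these are sent to the \( d \)\nbd cells of \( X \). The pushout exhibiting \( \skel{d}{C} \) as a cellular extension of \( \skel{d-1}{C} \) in \( \nCat 3 \) is therefore identified with the cellular extension square of Theorem \ref{thm:cellular_extension_of_frame_acyclic} for \( X \), using \( \molecin{\globe{d}} \) replaced by \( \molecin{U} \) for \( U \) the atom shape of each cell. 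The point to verify is that these two presentations agree. I would argue that both squares are pushouts in \( \nCat 3 \), the second by Theorem \ref{thm:dim_le_3_frame_acyclic}, and that they are isomorphic via the identification above.

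Then Theorem \ref{thm:dim_le_3_frame_acyclic} gives that \( \molecin{X} \) is a homotopy polygraph. Since being a homotopy polygraph is invariant under isomorphism of polygraphs, \( C \) is a homotopy polygraph.

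The main obstacle is the mismatch between the two notions of cellular extension. Definition \ref{dfn:polygraph} attaches globes \( \globe{d} \) along \( \bd{}{}\globe{d} \), while Theorem \ref{thm:cellular_extension_of_frame_acyclic} attaches the atoms \( \molecin{U} \) along \( \molecin{\bd{}{}U} \). To bridge this, note that \( \molecin{U} \) for an atom \( U \) of dimension \( d \) is itself obtained from \( \molecin{\bd{}{}U} \) by a single cellular extension along a globe, namely the big cell. So the square for \( \molecin{U} \) is a (homotopy) pushout of \( \bd{}{}\globe{d} \to \globe{d} \) provided \( \molecin{\bd{}{}U} \) and \( \molecin{U} \) are homotopy polygraphs. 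This holds by Theorem \ref{thm:dim_le_3_frame_acyclic} applied to the directed \( 3 \)\nbd complexes \( U \) and \( \bd{}{}U \). Pasting this square with the one from Theorem \ref{thm:cellular_extension_of_frame_acyclic} then yields the globe-based homotopy pushout required in Definition \ref{dfn:htpy_polygraph}.
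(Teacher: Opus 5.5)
Your proposal is correct and follows the paper's proof, which simply combines Theorem \ref{thm:from_polyplexes_to_molecules} (giving \( C \cong \rs C \cong \molecin{\posplex(C)} \) when \( n \le 3 \)) with Theorem \ref{thm:dim_le_3_frame_acyclic}, and then uses that being a homotopy polygraph is invariant under isomorphism. Your last paragraph on globes versus atoms is not needed. Theorem \ref{thm:dim_le_3_frame_acyclic} already states that \( \molecin{X} \) is a homotopy polygraph in the globe-based sense of Definition \ref{dfn:htpy_polygraph}, and your justification of that bridge appeals to the same theorem anyway, so it adds nothing.
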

\begin{proof}
    By Theorem \ref{thm:from_polyplexes_to_molecules} and Theorem \ref{thm:dim_le_3_frame_acyclic}.
\end{proof}

\begin{cor} \label{cor:2positive_are_homotopy}
    Let \( C \) be a positive \( 2 \)\nbd polygraph.
    Then \( C \) is a homotopy polygraph.
\end{cor}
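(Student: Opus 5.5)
The plan is to reduce the statement to Corollary \ref{cor:3regular_polygraph_are_homotopy_polygraph} by showing that every positive \( 2 \)\nbd polygraph is a regular \( 2 \)\nbd polygraph. Regular \( 2 \)\nbd polygraphs are in particular regular \( 3 \)\nbd polygraphs: they have no generating arrows in dimension \( 3 \), and a regular polygraph is required to be positive, which \( C \) is by hypothesis. It therefore only remains to check that every plex of a positive \( 2 \)\nbd polygraph has spherical boundaries.

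For this, I would argue by dimension on the generating arrows. In dimension \( 0 \) and \( 1 \) the claim is immediate. In dimension \( 1 \), the plex of a generator is the arrow \( \arr \), and its two \( 0 \)\nbd boundaries are distinct points, so the comparison morphism is a monomorphism.

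In dimension \( 2 \), a generator \( a \) has source and target which are non-identity \( 1 \)\nbd arrows, by positivity. In the free category on the underlying graph, these are non-empty paths \( s = s_1 \cdots s_p \) and \( t = t_1 \cdots t_q \) with common endpoints. The plex of \( a \) is then the positive polygraph with one \( 2 \)\nbd generator glued along two chains \( p\arr \) and \( q\arr \) of \( 1 \)\nbd generators. These chains are freshly named, since plexes are "universal", so they are distinct and share only their two endpoints. Concretely, the boundaries are \( \bd{1}{-} = p\arr \) and \( \bd{1}{+} = q\arr \), and \( \bd{0}{-} \), \( \bd{0}{+} \) are the two distinct endpoint objects. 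The comparison \( p\arr \amalg_{\pt \amalg \pt} q\arr \to \pl{a} \) is a monomorphism because the two chains intersect exactly in their endpoints; when \( p = q = 1 \) one gets the globe.

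Equivalently, one can see this through the dictionary of Theorem \ref{thm:from_polyplexes_to_molecules}: the atom \( p\arr \celto q\arr \) is round, because \( p\arr \) and \( q\arr \) are round \( 1 \)\nbd molecules with \( p, q \geq 1 \). Then \( C \) is regular, and Corollary \ref{cor:3regular_polygraph_are_homotopy_polygraph} applies.

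The main obstacle is making precise that the plex of a \( 2 \)\nbd generator is the "free" shape with fresh chains, rather than its image in \( C \). Loops or repeated generators in \( C \) must not break the monomorphism condition. I would handle this by invoking Henry's description of plexes as the representing objects: the plex depends only on the shape \( (p, q) \), not on the actual generators.

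A cleaner alternative is to observe directly that \( C \cong \molecin{X} \) for a directed \( 2 \)\nbd complex \( X \). Here \( X \) has cells \( p\arr \celto q\arr \) attached via the source and target paths, as a presheaf on \( \natomcat{2} \). The equality \( C \cong \molecin{X} \) follows from Lemma \ref{lem:cellular_colimits_are_omega_colimits}, since \( \rs \) is trivial in dimension \( \le 3 \). One then concludes by Theorem \ref{thm:dim_le_3_frame_acyclic}.
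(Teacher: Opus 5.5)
Your proposal is correct and follows the same route as the paper, which simply notes that every positive \( 2 \)\nbd polygraph is regular and then applies Corollary \ref{cor:3regular_polygraph_are_homotopy_polygraph}. Your explicit check that the \( 2 \)\nbd plexes are the shapes \( p\arr \celto q\arr \) with \( p, q \geq 1 \), and that these have spherical boundaries, supplies the regularity step that the paper leaves implicit.
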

\begin{proof}
    Any positive \( 2 \)\nbd polygraph is regular.
    We conclude by the previous Corollary.
\end{proof}

\begin{comm}
    It is believed that Corollary \ref{cor:3regular_polygraph_are_homotopy_polygraph} holds of any regular (and even positive) polygraph.
    As the results of this section demonstrate, Campion's proof should suffice once one knows that \( \Sd \pl{u} \) is contractible when \( \pl{u} \) is a regular polyplex which is not a plex.
    However, it will not be possible to deduce this fact purely from combinatorial arguments involving the underlying poset of a polyplex, since the latter is that of a molecule by Theorem \ref{thm:from_polyplexes_to_molecules}, and is not contractible in general, as per Comment \ref{comm:sd_u_not_contractible}.
\end{comm}

\bibliographystyle{alpha}
\small\bibliography{main.bib}

\end{document}